\tikzset{
  .../.tip={[sep=2pt 1]
    Round Cap[]. Circle[length=0.5pt 1] Circle[length=0.5pt 1] Circle[length=0.5pt 1, sep=2pt]}}
\tikzset{%
  half dotted/.style={
    decoration={show path construction, 
      lineto code={
          \draw[#1] (\tikzinputsegmentfirst) --($(\tikzinputsegmentfirst)!.3!(\tikzinputsegmentlast)$);,
          \draw[loosely dotted,-,#1] ($(\tikzinputsegmentfirst)!.5!(\tikzinputsegmentlast)$)--(\tikzinputsegmentlast);,
      }
    },
    decorate
  },
}
\renewcommand{\marginpar}[2][]{}
\newcommand{\Figure}{\textsc{Figure}\ }
\newcommand{\ZFC}{{\rm ZFC}}
\renewcommand{\emptyset}{\varnothing}
\renewcommand{\P}{{\mathbb P}}
\newcommand{\R}{{\mathscr R}}
\newcommand{\I}{{\mathscr I}}
\newcommand{\restrict}{\upharpoonright}
\newcommand{\concat}{\mathbin{{}^\smallfrown}}
\newcommand{\<}{\langle}
\renewcommand{\>}{\rangle}
\newcommand{\st}{\mid}
\newcommand{\ot}{\mathop{\rm ot}\nolimits}
\newcommand{\crit}{\mathop{\rm crit}}
\newcommand{\NS}{{\mathop{\rm NS}}}
\renewcommand{\and}{\mathop{\&}}
\newtheorem{theorem}{Theorem}[section]
\newtheorem{lemma}[theorem]{Lemma}
\newtheorem{corollary}[theorem]{Corollary}
\newtheorem{proposition}[theorem]{Proposition}
\theoremstyle{definition}
\newtheorem{remark}[theorem]{Remark}
\newtheorem{definition}[theorem]{Definition}
\newtheorem*{theorem_intro}{Theorem 1.2}
\thanks{The author would like to thank Sean Cox, Victoria Gitman and Chris Lambie-Hanson for many detailed conversations related to the topics of this work. Specifically, the author thanks Victoria Gitman for suggesting the proofs of Theorem \ref{theorem_ramsey_equiv} and Theorem \ref{theorem_ramsey_reflection}. Additionally, the author thanks Joan Bagaria, Philip Welch and the anonymous referee for their helpful comments.}
\subjclass[2000]{03E35, 03E55}
\keywords{Ramsey, indescribable, large cardinal, generic embedding}
\date{\today}
\begin{document}

\title{A refinement of the Ramsey hierarchy via indescribability}

\author[Brent Cody]{Brent Cody}
\address[Brent Cody]{ 
Virginia Commonwealth University,
Department of Mathematics and Applied Mathematics,
1015 Floyd Avenue, PO Box 842014, Richmond, Virginia 23284, United States
} 
\email[B. ~Cody]{bmcody@vcu.edu} 
\urladdr{http://www.people.vcu.edu/~bmcody/}

\begin{abstract}

A subset $S$ of a cardinal $\kappa$ is Ramsey if for every function $f:[S]^{<\omega}\to \kappa$ with $f(a)<\min a$ for all $a\in[S]^{<\omega}$, there is a set $H\subseteq S$ of cardinality $\kappa$ which is \emph{homogeneous} for $f$, meaning that $f\restrict[H]^n$ is constant for each $n<\omega$. Baumgartner proved \cite{MR0384553} that if $\kappa$ is a Ramsey cardinal, then the collection of non-Ramsey subsets of $\kappa$ is a normal ideal on $\kappa$. Sharpe and Welch \cite{MR2817562}, and independently Bagaria \cite{MR3894041}, extended the notion of $\Pi^1_n$-indescribability where $n<\omega$ to that of $\Pi^1_\xi$-indescribability where $\xi\geq\omega$. We study large cardinal properties and ideals which result from Ramseyness properties in which homogeneous sets are demanded to be $\Pi^1_\xi$-indescribable. By iterating Feng's Ramsey operator \cite{MR1077260} on the various $\Pi^1_\xi$-indescribability ideals, we obtain new large cardinal hierarchies and corresponding nonlinear increasing hierarchies of normal ideals. We provide a complete account of the containment relationships between the resulting ideals and show that the corresponding large cardinal properties yield a strict linear refinement of Feng's original Ramsey hierarchy. We also show that, given any ordinals $\beta_0,\beta_1<\kappa$ the increasing chains of ideals obtained by iterating the Ramsey operator on the $\Pi^1_{\beta_0}$-indescribability ideal and the $\Pi^1_{\beta_1}$-indescribability ideal respectively, are eventually equal; moreover, we identify the least degree of Ramseyness at which this equality occurs. As an application of our results we show that one can characterize our new large cardinal notions and the corresponding ideals in terms of generic elementary embeddings; as a special case this yields generic embedding characterizations of $\Pi^1_\xi$-indescribability and Ramseyness.

\end{abstract}

\subjclass[2010]{Primary 03E55; Secondary 03E02, 03E05}

\keywords{}

\maketitle




\section{Introduction}\label{section_introduction}

In his work on decidability problems, Ramsey \cite{MR1576401} proved his famous combinatorial theorem which states that if $m,n<\omega$ and $f:[\omega]^m\to n$ is a function then $f$ has an infinite \emph{homogeneous set} $H\subseteq\omega$, meaning that $f\restrict[H]^m$ is constant. The investigation of analogues of Ramsey's theorem for uncountable sets begun by Erd\H{o}s, Hajnal, Tarski, Rado and others (see \cite{MR0008249}, \cite{MR0065615}, \cite{MR81864} and \cite{MR95124}), quickly led to the definition of many large cardinal notions including weak compactness, Ramseyness, measurability and strong compactness (see \cite[Section 7]{MR1994835} for an account of the emergence of certain large cardinal axioms from the theory of partition relations). We say that $\kappa>\omega$ is a \emph{Ramsey cardinal} if for every function $f:[\kappa]^{<\omega}\to 2$ there is a set $H\subseteq\kappa$ of size $\kappa$ which is homogeneous for $f$, meaning that $f\restrict[H]^n$ is constant for all $n<\omega$.\footnote{See \cite{MR2830415} for additional motivation and an explanation of how Ramsey cardinals fit into the large cardinal hierarchy.} The study of Ramsey-like properties of uncountable cardinals has been a central concern of set theorists working on large cardinals and infinitary combinatorics, with renewed interest in recent years (see  \cite{MR0540770}, \cite{MR534574}, \cite{MR1077260}, \cite{MR2817562}, \cite{MR2830415}, \cite{MR2830435}, \cite{MR3348040}, \cite{MR3800756}, \cite{MR3922802}, \cite{carmody_gitman_habic} and \cite{Holy-Lucke}). In this article, we study Ramsey-like properties of uncountable cardinals in which homogeneous sets are demanded to have degrees of indescribability: for example, a cardinal $\kappa$ is \emph{$1$-$\Pi^1_n$-Ramsey} where $n<\omega$ if and only if every function $f:[\kappa]^{<\omega}\to 2$ has a $\Pi^1_n$-indescribable homogeneous set $H\subseteq\kappa$. Among other things, we show that hypotheses of this kind and their generalizations lead to a strict refinement of Feng's \cite{MR1077260} original Ramsey hierarchy: we isolate large cardinal hypotheses which provide strictly increasing hierarchies between Feng's $\Pi_\alpha$-Ramsey and $\Pi_{\alpha+1}$-Ramsey cardinals for all $\alpha<\kappa$.

Baumgartner showed (see \cite{MR0384553} and \cite{MR0540770}) that in many cases large cardinal properties can be viewed as properties of subsets of cardinals and not just of the cardinals themselves. Recall that for $S\subseteq\kappa$ where $\kappa$ is a cardinal, a function $f:[S]^{<\omega}\to\kappa$ is \emph{regressive} if $f(a)<\min a$ for all $a\in[S]^{<\omega}$. It is well-known (see \cite[Section 4]{MR0540770} or \cite[Lemma 2.42]{MR2710923}) that $\kappa$ is a Ramsey cardinal if and only if for every regressive function $f:[\kappa]^{<\omega}\to \kappa$ there is a set $H\subseteq\kappa$ of size $\kappa$ which is homogeneous for $f$. A set $S\subseteq\kappa$ is \emph{Ramsey} if every regressive function $f:[S]^{<\omega}\to \kappa$ has a homogeneous set $H\subseteq S$ of size $\kappa$.\footnote{Let us point out here that several authors, including Baumgartner \cite{MR0540770} and Feng \cite{MR1077260}, use a different definition of Ramsey set which is equivalent to ours (see Proposition \ref{proposition_4_5_6} below): in \cite{MR0540770}, a set $S\subseteq\kappa$ is Ramsey if for every club $C\subseteq\kappa$ and every regressive function $f:[S]^{<\omega}\to\kappa$ there is a set $H\subseteq S\cap C$ of size $\kappa$ which is homogeneous for $f$.} This leads naturally to the consideration of large cardinal ideals: for example, Baumgartner showed that if $\kappa$ is a Ramsey cardinal then the collection of non-Ramsey subsets of $\kappa$ is a nontrivial normal ideal on $\kappa$ called the \emph{Ramsey ideal}. Similarly, a set $S\subseteq\kappa$ is \emph{$\Pi^1_n$-indescribable} if for all $A\subseteq V_\kappa$ and all $\Pi^1_n$ sentences $\varphi$, if $(V_\kappa,\in,A)\models\varphi$ then there is an $\alpha\in S$ such that $(V_\alpha,\in,A\cap V_\alpha)\models\varphi$, and the collection $\Pi^1_n(\kappa)$ of non--$\Pi^1_n$-indescibable subsets of $\kappa$ is a normal ideal on $\kappa$ when $\kappa$ is $\Pi^1_n$-indescribable. Baumgartner proved that a cardinal $\kappa$ is Ramsey if and only if $\kappa$ is \emph{pre-Ramsey},\footnote{Pre-Ramseyness is defined below in Section \ref{section_feng}.} $\kappa$ is $\Pi^1_1$-indescribable and the union of the $\Pi^1_1$-indescribability ideal on $\kappa$ and the pre-Ramsey ideal on $\kappa$ generate a nontrivial ideal\footnote{An ideal on $\kappa$ is nontrivial if it is not equal to the entire powerset of $\kappa$.} which equals the Ramsey ideal; furthermore, reference to these ideals cannot be removed from this characterization because the least cardinal which is both pre-Ramsey and $\Pi^1_1$-indescribable is not Ramsey. Thus, Baumgartner's work shows that consideration of large cardinal ideals is, in a sense, necessary for certain results.

Generalizing the definition of Ramseyness, Feng \cite{MR1077260} defined the \emph{Ramsey operator} $\R$ as follows. Given an ideal $I\supseteq[\kappa]^{<\kappa}$ on $\kappa$, we define an ideal $\R(I)$ on $\kappa$ by letting $S\notin\R(I)$ if and only if for every regressive function $f:[S]^{<\omega}\to \kappa$ there is a set $H\in I^+$ homogeneous for $f$. It is easy to see that $I\subseteq\R(I)$ and that $I\subseteq J$ implies $\R(I)\subseteq \R(J)$ for all ideals $I,J\supseteq[\kappa]^{<\kappa}$ on $\kappa$. Feng proved that if $\kappa$ is a regular cardinal and $I\supseteq[\kappa]^{<\kappa}$ is an ideal on $\kappa$, then $\R(I)$ is a normal ideal on $\kappa$.\footnote{Feng used a different definition of $\R(I)$ which is equivalent to ours when either $I\supseteq\NS_\kappa$ or $I=[\kappa]^{<\kappa}$ (see Theorem \ref{theorem_ramsey_equiv} below).} Notice that $\kappa$ is a Ramsey cardinal if and only if $\kappa\notin\R([\kappa]^{<\kappa})$, and in this case $\R([\kappa]^{<\kappa})$ is the Ramsey ideal on $\kappa$. Building on Baumgartner's work \cite{MR0540770} on the ineffability hierarchy below a completely ineffable cardinal, Feng showed that one can iterate the Ramsey operator to obtain an increasing chain of ideals on $\kappa$ corresponding to a strict hierarchy of large cardinals as follows. Define $I^\kappa_{-2}=[\kappa]^{<\kappa}$ and $I^\kappa_{-1}=\NS_\kappa$. For $n<\omega$ let $I^\kappa_n=\R(I^\kappa_{n-2})$. Let $I^\kappa_{\alpha+1}=\R(I^\kappa_\alpha)$. If $\alpha$ is a limit ordinal let $I^\kappa_\alpha=\bigcup_{\xi<\alpha}I^\kappa_\xi$. It may at first appear strange that Feng's definition of $I^\kappa_n$ refers to $\NS_\kappa$ for odd $n<\omega$. We will return to this issue below in Remark \ref{remark_fengs_defintion} after introducing some notation which clarifies this issue and which will be important for the rest of the paper. In Feng's terminology,\footnote{We tend to avoid Feng's terminology because his ``$\Pi_\alpha$-Ramsey'' notation may create confusion with notation we employ for Ramsey properties defined using the $\Pi^1_\xi$-indescribability ideals.} a cardinal $\kappa$ is \emph{$\Pi_\alpha$-Ramsey} if and only if $\kappa\notin I^\kappa_\alpha$ and $\kappa$ is \emph{completely Ramsey} if and only if $\kappa\notin I^\kappa_\alpha$ for all $\alpha$. Generalizing a result of Baumgartner, Feng proved that $I^\kappa_m\supseteq\Pi^1_{m+1}(\kappa)$ for $1\leq m <\omega$, and as a consequence the axioms ``$\exists\kappa$($\kappa$ is $\Pi_n$-Ramsey)'' form a strictly increasing hierarchy. Using canonical functions, which were introduced by Baumgartner \cite{MR0540770} in his study of the ineffability hierarchy, Feng proved that this hierarchy of large cardinal axioms can be extended to obtain a strictly increasing hierarchy of axioms of the form ``$\exists\kappa$($\kappa$ is $\Pi_\alpha$-Ramsey)''. Moreover, Feng gave characterizations of the $\Pi_n$-Ramsey cardinals for $n<\omega$ in terms of indescribability ideals, which are similar to Baumgartner's above mentioned characterization of Ramseyness in that they use generalizations of pre-Ramseyness and the reference to ideals in the characterizations cannot be removed.


We introduce some notation that differs slightly from Feng's and which simplifies the presentation of our results. For an ideal $I\supseteq[\kappa]^{<\kappa}$ we define $\R^\alpha(I)$ for all ordinals $\alpha$ as follows. Let $\R^0(I)=I$. Assuming $\R^\alpha(I)$ has been defined let $\R^{\alpha+1}(I)=\R(\R^\alpha(I))$. If $\alpha$ is a limit ordinal, let $\R^\alpha(I)=\bigcup_{\xi<\alpha}\R^\xi(I)$. Feng's increasing chain of ideals can then be written as
\[[\kappa]^{<\kappa}\subseteq\NS_\kappa\subseteq\R([\kappa]^{<\kappa})\subseteq\R(\NS_\kappa)\subseteq\R^2([\kappa]^{<\kappa})\subseteq\R^2(\NS_\kappa)\subseteq\cdots.\tag{F}\label{feng}\]
\begin{remark}\label{remark_fengs_defintion}
Notice that $\R^\omega([\kappa]^{<\kappa})=\R^\omega(\NS_\kappa)$, and thus $\R^\alpha([\kappa]^{<\kappa})=\R^\alpha(\NS_\kappa)$ for $\alpha\geq\omega$. 
\end{remark}


Sharpe and Welch \cite[Definition 3.21]{MR2817562} extended the notion of $\Pi^1_n$-indescribability of a cardinal $\kappa$ where $n<\omega$ to that of $\Pi^1_\xi$-indescribability where $\xi<\kappa^+$ by demanding that the existence of a winning strategy for a particular player in a certain finite game played at $\kappa$ implies that the same player has a winning strategy in the analogous game played at some cardinal less than $\kappa$. Later, Bagaria \cite[Definition 4.2]{MR3894041} gave an alternative definition of the $\Pi^1_\xi$-indescribability of a cardinal $\kappa$ for $\xi<\kappa$ using the indescribability of rank-initial segments of the set-theoretic universe by certain sentences in an infinitary logic. In what follows we will use Bagaria's definition since it seems easier to work with in this context. Bagaria extended the definitions of the classes of $\Pi^1_n$ and $\Sigma^1_n$ formulas to define the natural classes of $\Pi^1_\xi$ and $\Sigma^1_\xi$ formulas for all ordinals $\xi$. For example, a formula is $\Pi^1_\omega$ if it is of the form $\bigwedge_{n<\omega}\varphi_n$ where each $\varphi_n$ is $\Pi^1_n$ and it contains finitely-many free second order variables.\footnote{See Section \ref{section_bagaria} below or \cite{MR3894041} for details.} A set $S\subseteq\kappa$ is said to be \emph{$\Pi^1_\xi$-indescribable} if for all $A\subseteq V_\kappa$ and all $\Pi^1_\xi$ sentences $\varphi$, if $(V_\kappa,\in,A)\models\varphi$ then there is some $\alpha\in S$ such that $(V_\alpha,\in,A\cap V_\alpha)\models\varphi$. Furthermore, Bagaria showed that when $\xi>0$, if $\kappa$ is $\Pi^1_\xi$-indescribable then the collection
\[\Pi^1_\xi(\kappa)=\{X\subseteq\kappa\st\text{$X$ is not $\Pi^1_\xi$-indescribable}\}\]
is a nontrivial normal ideal on $\kappa$. As a matter of notational convenience we let $\Pi^1_{-1}(\kappa)=[\kappa]^{<\kappa}$.


In this article we study ideals of the form $\R^\alpha(\Pi^1_\beta(\kappa))$ for ordinals $\alpha,\beta<\kappa$ and the corresponding hierarchy of large cardinals, which provides a strict refinement of Feng's original hierarchy.\footnote{We restrict the values of $\alpha$ and $\beta$ for which we consider $\R^\alpha(\Pi^1_\beta(\kappa))$ to be less than $\kappa$ because, as explained in Section \ref{section_bagaria}, for Bagaria's version of indescribability, if the $\Pi^1_\beta$-indescribability ideal $\Pi^1_\beta(\kappa)$ is nontrivial then $\beta<\kappa$, and if $\alpha\geq\kappa$ and $\beta<\kappa$ then $\R^\alpha(\Pi^1_{\beta}(\kappa))=\R^\alpha([\kappa]^{<\kappa})$ by Theorem \ref{theorem_culmination} and Corollary \ref{corollary_main_redundnacy}. Thus, apparently, consideration of the ideals $\R^\alpha(\Pi^1_\beta(\kappa))$ for $\alpha\geq\kappa$ and $\beta<\kappa$ is redundant given Feng's work on $\R^\alpha([\kappa]^{<\kappa})$.} For $\alpha,\beta<\kappa$ we say that $\kappa$ is \emph{$\alpha$-$\Pi^1_\beta$-Ramsey} if $\kappa\notin\R^\alpha(\Pi^1_\beta(\kappa))$. We show that, even though $\kappa$ being $\alpha$-$\Pi^1_\beta$-Ramsey may be equivalent to $\kappa$ being $\alpha$-$\Pi^1_{\beta'}$-Ramsey for some $\beta<\beta'<\kappa$ (see Theorem \ref{theorem_culmination} below), by choosing $\beta$'s appropriately, hypotheses of the form ``$\exists\kappa$ ($\kappa$ is $\alpha$-$\Pi^1_\beta$-Ramsey)'' for $\beta<\kappa$ yield a strict hierarchy of hypotheses between ``$\exists\kappa$ ($\kappa$ is $\Pi_\alpha$-Ramsey)'' and ``$\exists\kappa$ ($\kappa$ is $\Pi_{\alpha+1}$-Ramsey)''. In order to prove this hierarchy result, it seems that a careful analysis of the corresponding ideals is required (see Remark \ref{remark_hierarchy} and \Figure \ref{figure_a_refinement_of_the_ramsey_hierarchy} below). This seems to be a natural sequel to Feng's work, given that he included $\R^n(\NS_\kappa)$ in his hierarchy and when $\kappa$ is inaccessible the $\Pi^1_0$-indescribability ideal $\Pi^1_0(\kappa)$ equals $\NS_\kappa$. 

As a first observation, it is not hard to see that the ideals $\R^n(\Pi^1_1(\kappa))$ for $n<\omega$ fit into Feng's increasing chain (\ref{feng}) as expected:
\begin{align*}
[\kappa]^{<\kappa}\subseteq\NS_\kappa\subseteq\Pi^1_1(\kappa)\subseteq\R([\kappa]^{<\kappa})&\subseteq\R(\NS_\kappa)\subseteq\R(\Pi^1_1(\kappa))\subseteq\\
	&\R^2([\kappa]^{<\kappa})\subseteq\R^2(\NS_\kappa)\subseteq\R^2(\Pi^1_1(\kappa))\subseteq\cdots.
\end{align*}
However, since the Ramseyness of a cardinal $\kappa$ can be expressed by a $\Pi^1_2$ sentence over $V_\kappa$, it follows that the least Ramsey cardinal is not $\Pi^1_2$-indescribable and hence it is not true in general that $\Pi^1_2(\kappa)\subseteq\R([\kappa]^{<\kappa})$. We give a complete account of the nonlinear structure consisting of ideals $\R^\alpha(\Pi^1_\beta(\kappa))$ for $\alpha,\beta<\kappa$ under the containment relations $\subseteq$ and $\subsetneq$. 

After reviewing the relevant results of Baumgatner, Feng and Bagaria in Section \ref{section_preliminaries} and after establishing some basic properties of the ideals $\R^\alpha(\Pi^1_\beta(\kappa))$ in Section \ref{section_basic_properties}, we prove our first reflection result in Section \ref{section_a_first_reflection_result} concerning the ideals $\R^\alpha(\Pi^1_\beta(\kappa))$ for $\alpha,\beta<\kappa$. It follows from a result of Baumgartner \cite[Theorem 4.1]{MR0384553} that if $\kappa$ is a Ramsey cardinal then the set of cardinals less than $\kappa$ which are $\Pi^1_n$-indescribable for all $n$ is in the Ramsey filter on $\kappa$. We generalize this result by proving that for all $\alpha<\kappa$, if $\kappa\notin\R^{\alpha+1}([\kappa]^{<\kappa})$ (i.e. $\kappa$ is $\Pi_{\alpha+1}$-Ramsey in Feng's terminology) then the set of $\xi<\kappa$ such that $\xi\notin\R^\alpha(\Pi^1_\beta(\xi))$ \emph{for all} $\beta<\xi$ is in the filter dual to $\R^{\alpha+1}([\kappa]^{<\kappa})$. Hence ``$\exists\kappa$($\kappa\notin\R^{\alpha+1}([\kappa]^{<\kappa})$)'' is strictly stronger than ``$\exists\kappa$ $\forall \beta<\kappa$ ($\kappa\notin\R^\alpha(\Pi^1_\beta(\kappa))$)''. 

In Section \ref{section_describing_ramseyness}, we prove a technical lemma which is fundamental for the rest of the paper and which establishes an ordinal $\gamma(\alpha,\beta)$ which suffices to express the fact that a set $S\subseteq\kappa$ is in $\R^\alpha(\Pi^1_\beta(\kappa))^+$ using a $\Pi^1_{\gamma(\alpha,\beta)}$ sentence over $V_\kappa$.  This lemma provides a generalization of a result of Sharpe and Welch \cite[Remark 3.17]{MR2817562} which states that ``$S\in\R^\alpha([\kappa]^{<\kappa})^+$'' is a $\Pi^1_{2\cdot(1+\alpha)}$ property. 





In Section \ref{section_indescribability_in_finite_ramseyness}, we give a full account of the nonlinear containment structure of the ideals $\R^m(\Pi^1_\beta(\kappa))$ for $m\leq\omega$ and $\beta<\kappa$ (see \Figure \ref{figure_finite_ideal_diagram} below).  We derive several corollaries from this result. For example, we provide characterizations of the large cardinal property $\kappa\notin\R^m(\Pi^1_\beta(\kappa))$ which are analogous to Baumgartner's characterization of Ramseyness discussed above. As a consequence, $\kappa\notin\R^m(\Pi^1_\beta(\kappa))$ implies $\kappa$ is $\Pi^1_{\beta+2m}$-indescribable, and moreover the ideal $\R^m(\Pi^1_\beta(\kappa))$ equals the ideal generated by the $\Pi^1_{\beta+2m}$-indescribability ideal and a generalization of the pre-Ramsey ideal (see Corollary \ref{corollary_indescribability_in_finite_ramseyness} below). Furthermore, we prove that ``$\exists\kappa$($\kappa\notin\R^m(\Pi^1_{\beta+1}(\kappa))$'' is strictly stronger than ``$\exists\kappa$($\kappa\notin\R^m(\Pi^1_\beta(\kappa))$'' and that the large cardinal axioms associated to the ideals $\R^m(\Pi^1_\beta(\kappa))$ fit into a linear strict hierarchy when the ideals are nontrivial. Furthermore, in analogy with the fact quoted in Remark \ref{remark_fengs_defintion} above, we show that if $\kappa\notin\R^\omega(\Pi^1_\beta(\kappa))$ then for all $n<\omega$,
\[\R^\omega(\Pi^1_\beta(\kappa))=\R^\omega(\Pi^1_{\beta+n}(\kappa)).\]
Let us point out that the proof of this result is substantially different from the observations made in Remark \ref{remark_fengs_defintion} since the relevant ideals 
\[\{\R^m(\Pi^1_{\beta+n}(\kappa))\st \text{$m,n<\omega$ and $\beta<\kappa$}\}\] do not form an increasing chain. Another way of phrasing this result is that at the $\omega$-th level of the Ramsey hierarchy, the ideal chains $\<\R^\alpha(\Pi^1_\beta(\kappa))\st\alpha<\kappa\>$ and $\<\R^\alpha(\Pi^1_{\beta+n}(\kappa))\st\alpha<\kappa\>$ become equal. 

In Section \ref{section_indescribability_in_infinite_ramseyness}, we extend these results to the ideals $\R^\alpha(\Pi^1_\beta(\kappa))$ for $\omega<\alpha<\kappa$ and $\beta\in\{-1\}\cup\kappa$. That is, we provide a complete account of the containment relationships between ideals of the form $\R^\alpha(\Pi^1_\beta(\kappa))$. As a culmination of these results, given $\beta_0<\beta_1$ in $\{-1\}\cup\kappa$ we isolate the precise location in the Ramsey hierarchy at which the ideal chains $\<\R^\alpha(\Pi^1_{\beta_0}(\kappa))\st\alpha<\kappa\>$ and $\<\R^\alpha(\Pi^1_{\beta_1}(\kappa))\st\alpha<\kappa\>$ become equal by proving the following theorem (see \Figure \ref{figure_culmination} below for an illustration of this result). In what follows, $\Pi^1_{-1}(\kappa)=[\kappa]^{<\kappa}$ and $\Pi^1_0(\kappa)=\NS_\kappa$.

\begin{theorem}\label{theorem_culmination}
Suppose $\beta_0<\beta_1$ are in $\{-1\}\cup\kappa$ and let $\sigma=\ot(\beta_1\setminus\beta_0)$. Define $\alpha=\sigma\cdot\omega$. Suppose $\kappa\in\R^\alpha(\Pi^1_{\beta_1}(\kappa))^+$ so that the ideals under consideration are nontrivial. Then $\alpha$ is the least ordinal such that $\R^\alpha(\Pi^1_{\beta_0}(\kappa))=\R^\alpha(\Pi^1_{\beta_1}(\kappa))$.
\end{theorem}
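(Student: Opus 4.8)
The plan is to prove two inequalities: that the chains coincide at $\sigma\cdot\omega$ (so the least such ordinal is $\le\sigma\cdot\omega$) and that they differ cofinally often below $\sigma\cdot\omega$ (so the least such ordinal is $\ge\sigma\cdot\omega$). First I would record the two structural facts that organize everything. Since $\beta_0<\beta_1$ we have $\Pi^1_{\beta_0}(\kappa)\subseteq\Pi^1_{\beta_1}(\kappa)$, and monotonicity of the Ramsey operator gives $\R^\alpha(\Pi^1_{\beta_0}(\kappa))\subseteq\R^\alpha(\Pi^1_{\beta_1}(\kappa))$ for every $\alpha$; thus only the reverse containment is ever at issue. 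Moreover, if the two ideals agree at some $\alpha$ then, applying $\R$ at successors and taking unions at limits, they agree at every ordinal $\ge\alpha$, so the set of $\alpha$ at which they agree is an end segment of the ordinals. Consequently it suffices to show (a) equality holds at $\sigma\cdot\omega$ and (b) equality fails at $\sigma\cdot k$ for every $k<\omega$, since $\{\sigma\cdot k:k<\omega\}$ is cofinal in $\sigma\cdot\omega$.

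For (a), the key is an \emph{absorption lemma}: $\Pi^1_{\beta_1}(\kappa)\subseteq\R^{\sigma\cdot j_0}(\Pi^1_{\beta_0}(\kappa))$ for some finite $j_0$. I would obtain this from the complexity analysis of Section~\ref{section_describing_ramseyness}, which expresses ``$S\in\R^{\sigma\cdot j_0}(\Pi^1_{\beta_0}(\kappa))^+$'' by a $\Pi^1_{\gamma(\sigma\cdot j_0,\beta_0)}$ sentence over $V_\kappa$, together with the reach results generalizing Corollary~\ref{corollary_indescribability_in_finite_ramseyness}, which guarantee that positive sets of these normal ideals reflect the sentences defining their own positivity. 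As $j_0$ grows, $\gamma(\sigma\cdot j_0,\beta_0)$ increases to $\beta_0+\sigma\cdot\omega$, so for sufficiently large finite $j_0$ it exceeds $\beta_1=\beta_0+\sigma$; hence every $\R^{\sigma\cdot j_0}(\Pi^1_{\beta_0}(\kappa))$-positive set is $\Pi^1_{\beta_1}$-indescribable, which is exactly the absorption lemma. Applying the monotone operator $\R^{\sigma\cdot\omega}$ to this containment, and using the composition law together with the ordinal identity $\sigma\cdot j_0+\sigma\cdot\omega=\sigma\cdot(j_0+\omega)=\sigma\cdot\omega$, yields $\R^{\sigma\cdot\omega}(\Pi^1_{\beta_1}(\kappa))\subseteq\R^{\sigma\cdot j_0+\sigma\cdot\omega}(\Pi^1_{\beta_0}(\kappa))=\R^{\sigma\cdot\omega}(\Pi^1_{\beta_0}(\kappa))$, which with the trivial containment gives equality at $\sigma\cdot\omega$. (The special case $\beta_0=-1,\beta_1=0$ recovers the phenomenon of Remark~\ref{remark_fengs_defintion}.)

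For (b), fix $k<\omega$ and set $a=\sigma\cdot k$; I must separate $\R^{a}(\Pi^1_{\beta_0}(\kappa))$ from $\R^{a}(\Pi^1_{\beta_1}(\kappa))$. The natural witness is the localized set $X=\{\nu<\kappa:\nu\in\R^{a}(\Pi^1_{\beta_0}(\nu))^+\text{ and }\nu\notin\R^{a}(\Pi^1_{\beta_1}(\nu))^+\}$ of ordinals that are $a$-$\Pi^1_{\beta_0}$-Ramsey but not $a$-$\Pi^1_{\beta_1}$-Ramsey. That $X$ is \emph{null} for the $\beta_1$-chain follows from a reflection argument in the style of Section~\ref{section_a_first_reflection_result} and Theorem~\ref{theorem_ramsey_reflection}: were $X\in\R^{a}(\Pi^1_{\beta_1}(\kappa))^+$, then, the positivity statement ``$\nu\in\R^{a}(\Pi^1_{\beta_1}(\nu))^+$'' being of the appropriate self-reflecting $\Pi^1_{\gamma(a,\beta_1)}$ form, normality would reflect it to some $\nu\in X$, making $\nu$ genuinely $a$-$\Pi^1_{\beta_1}$-Ramsey and contradicting $\nu\in X$. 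The crux, and the step I expect to be the main obstacle, is to prove that $X$ is \emph{positive} for the $\beta_0$-chain, i.e. $X\in\R^{a}(\Pi^1_{\beta_0}(\kappa))^+$. This amounts to showing that the gap between $a$-$\Pi^1_{\beta_0}$-Ramseyness and $a$-$\Pi^1_{\beta_1}$-Ramseyness is realized on an $\R^{a}(\Pi^1_{\beta_0}(\kappa))$-positive set below $\kappa$, for which I would use the nontriviality hypothesis $\kappa\in\R^{\sigma\cdot\omega}(\Pi^1_{\beta_1}(\kappa))^+$ (ensuring all the ideals in play are proper) together with an induction feeding on the strict hierarchy results for smaller iterates established in Sections~\ref{section_indescribability_in_finite_ramseyness} and~\ref{section_indescribability_in_infinite_ramseyness}. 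Since $\{\sigma\cdot k:k<\omega\}$ is cofinal in $\sigma\cdot\omega$, (b) then forces the least ordinal of agreement to be $\ge\sigma\cdot\omega$, completing the proof.

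The conceptual point that makes (b) delicate, and which I would emphasize, is that equality of the two chains is \emph{not} governed by indescribability degree alone: the absorption lemma shows $\Pi^1_{\beta_1}(\kappa)\subseteq\R^{a}(\Pi^1_{\beta_0}(\kappa))$ already holds for many $a<\sigma\cdot\omega$, so the $\beta_0$-chain ``sees'' full $\Pi^1_{\beta_1}$-indescribability long before the chains merge. What fails below $\sigma\cdot\omega$ is the \emph{reproducibility} of this indescribability through every inner stage of the $\beta_1$-iteration; detecting this failure requires the separating-set construction above rather than a mere computation of reach, and it is precisely at the iteration count $\sigma\cdot\omega$ that, via the identity $\sigma\cdot j_0+\sigma\cdot\omega=\sigma\cdot\omega$, a single absorption at a finite multiple of $\sigma$ finally propagates through the whole transfinite iteration.
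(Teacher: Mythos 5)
Your first half is sound and is essentially the paper's argument: absorb $\Pi^1_{\beta_1}(\kappa)$ into a proper initial iterate $\R^{a_0}(\Pi^1_{\beta_0}(\kappa))$ with $a_0<\sigma\cdot\omega$ (for limit $a_0$ this follows from Lemma \ref{lemma_indescribability_from_ramseyness}(2) once $\beta_0+a_0>\beta_1$, e.g.\ $a_0=\sigma\cdot 2$), then push through with monotonicity, the composition law $\R^{\gamma}(\R^{\delta}(I))=\R^{\delta+\gamma}(I)$, and $a_0+\sigma\cdot\omega=\sigma\cdot\omega$. One caution: your justification conflates the complexity $\gamma(a,\beta_0)$ of \emph{expressing} positivity with the degree of indescribability that positive sets actually \emph{possess}; the latter is what the absorption lemma needs, and it is always strictly smaller (roughly $\beta_0+a$ versus $\gamma(a,\beta_0)$). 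This slippage is harmless in part (a), where any sufficiently large finite multiple of $\sigma$ works, but it becomes fatal in part (b).

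The lower bound is where the proposal breaks. Your separating set $X=\{\nu<\kappa:\nu\in\R^{a}(\Pi^1_{\beta_0}(\nu))^+\wedge\nu\notin\R^{a}(\Pi^1_{\beta_1}(\nu))^+\}$ is in fact \emph{positive} for $\R^{a}(\Pi^1_{\beta_1}(\kappa))$, not null: by Theorem \ref{theorem_hierarchy_result_for_infinite_alpha} the first conjunct defines a set in $\R^{a}(\Pi^1_{\beta_1}(\kappa))^*$, and by Lemma \ref{lemma_set_of_nons_is_positive} the second defines a set in $\R^{a}(\Pi^1_{\beta_1}(\kappa))^+$, so their intersection is positive for both ideals and separates nothing. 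Your nullity argument fails because it requires the ``self-absorption'' $\Pi^1_{\gamma(a,\beta_1)}(\kappa)\subseteq\R^{a}(\Pi^1_{\beta_1}(\kappa))$, which is false: already for $a=1$, $\beta_1=-1$ this would say every Ramsey-positive set is $\Pi^1_2$-indescribable, contradicting the fact that the least Ramsey cardinal is not $\Pi^1_2$-indescribable. The statement ``$\kappa\in\R^{a}(J)^+$'' is only absorbed one level up, by $\R^{a+1}(J)$, never by $\R^{a}(J)$ itself. The correct witness exploits the \emph{gap} between $\beta_0$ and $\beta_1$ rather than self-reflection: take $S=\{\nu<\kappa:\nu\in\R^{\bar\alpha+1}(\Pi^1_{\beta_0}(\nu))\}$ (the set of ordinals that are \emph{not} $(\bar\alpha+1)$-$\Pi^1_{\beta_0}$-Ramsey) for a limit $\bar\alpha$ with $\hat\alpha<\bar\alpha+1<\sigma\cdot\omega$. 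Lemma \ref{lemma_set_of_nons_is_positive} makes $S$ positive for $\R^{\bar\alpha+1}(\Pi^1_{\beta_0}(\kappa))$, while $\kappa\setminus S$ lies in $\Pi^1_{\beta_0+\bar\alpha+2}(\kappa)^*$ by Lemma \ref{lemma_complexity}, and $\beta_0+\bar\alpha+2<\beta_0+\sigma+\bar\alpha+1$ precisely because $\bar\alpha<\sigma\cdot\omega$, so Theorem \ref{theorem_indescribability_in_infinite_ramseyness} puts $\Pi^1_{\beta_0+\bar\alpha+2}(\kappa)$ inside $\R^{\bar\alpha+1}(\Pi^1_{\beta_1}(\kappa))$ and hence $S$ is null there. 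The delicate positivity question you flag as ``the main obstacle'' disappears entirely with this choice of witness.
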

\noindent Furthermore, we prove that the large cardinal hypotheses of the form ``$\exists \kappa$ $\kappa\notin\R^\alpha(\Pi^1_\beta(\kappa))$'' provide a strict linear refinement of Feng's original hierarchy up to $\Pi_\kappa$-Ramseyness (see Theorem \ref{theorem_hierarchy_result_for_infinite_alpha} and \Figure \ref{figure_a_refinement_of_the_ramsey_hierarchy} below).

Finally, in Section \ref{section_generic_embeddings}, as an application of our results we provide characterizations of the ideals $\R^\alpha(\Pi^1_\beta(\kappa))$ for $\alpha,\beta<\kappa$ in terms of generic elementary embeddings. As a special case, this also yields generic embedding characterizations of $\Pi^1_\xi$-indescribability and Ramseyness.

\section{Preliminaries}\label{section_preliminaries}

Here we describe some notation that will be used throughout the paper and some results from the literature. We cover some results of Baumgartner (from \cite{MR0384553} and \cite{MR0540770}) and Feng \cite{MR1077260} which serve as motivation for our results. Then we give a brief account of Bagaria's extension \cite{MR3894041} of $\Pi^1_n$-indescribability to $\Pi^1_\xi$-indescribability where $\xi$ can be any ordinal.

\subsection{Definitions and Notation}

Given an ideal $I$ on a cardinal $\kappa$ we let 
\[I^+=\{X\subseteq\kappa\st X\notin I\}\]
be the corresponding collection of positive sets and we let
\[I^*=\{X\subseteq\kappa\st \kappa\setminus S\in I\}\]
be the filter dual to $I$. For notational convenience, and in order to avoid double negations, in what follows we will often write $X\in I^+$ instead of $X\notin I$. If $\mathcal{A}\subseteq P(\kappa)$ is a collection of subsets of $\kappa$ then we write $\overline{\mathcal{A}}$ to denote the ideal on $\kappa$ generated by $\mathcal{A}$:
\[\overline{\mathcal{A}}=\{X\subseteq\kappa\st (\exists \mathcal{B}\in[\mathcal{A}]^{<\omega}) X\subseteq \bigcup\mathcal{B}\}.\]
An ideal $I$ on $\kappa$ is called \emph{nontrivial} if $I\neq P(\kappa)$

We will be concerned with showing that certain large cardinal ideals are obtained by taking the ideal generated by a union of some other large cardinal ideals. We will make repeated use of the following simple remark, which was used implicitly by Baumgartner (see the proof of Theorem 4.4 in \cite{MR0540770}).

\begin{remark}\label{remark_ideal_generated}
Suppose $I_0$, $I_1$ and $J$ are ideals on $\kappa$. In order to prove that $J=\overline{I_0\cup I_1}$, part of what we must show is that $J\supseteq\overline{I_0\cup I_1}$, or in other words $J^+\subseteq\overline{I_0\cup I_1}^+$. Notice that we may obtain a chain of equivalences directly from the definitions involved:
\begin{align*}
J^+\subseteq\overline{I_0\cup I_1}^+ &\iff \overline{I_0\cup I_1}\subseteq J \\
	&\iff I_0\cup I_1\subseteq J\\
	&\iff J^+\subseteq I_0^+\cap I_1^+.
\end{align*}
In what follows, in order to prove that the property $J^+\subseteq\overline{I_0\cup I_1}^+$ (or equivalently the property $J\supseteq\overline{I_0\cup I_1}$) holds for various ideals, we will prove $J^+\subseteq I_0^+\cap I_1^+$ and include a reference to this remark.
\end{remark}


\subsection{Baumgartner's ineffability hierarchy}\label{section_baumgartners_ineffability_hierarchy}

Let us review a few results due to Baumgartner using slightly different notation than \cite{MR0384553} and \cite{MR0540770}. Suppose $\kappa>\omega$ is a cardinal and $S\subseteq\kappa$. We say that $\vec{S}=\<S_\alpha\st\alpha\in S\>$ is a \emph{$(1,S)$-sequence}\footnote{Such sequences are sometimes called \emph{$S$-lists} (see \cite{MR2959668} or \cite{MR3913154}). However, we prefer Baumgartner's terminology because we will need to distinguish $(1,S)$-sequences from Feng's $(\omega,S)$-sequences (see Section \ref{section_feng}).} if for each $\alpha\in S$ we have $S_\alpha\subseteq\alpha$. Given a $(1,S)$-sequence $\vec{S}=\<S_\alpha\st\alpha\in S\>$, a set $H\subseteq S$ is \emph{homogeneous} for $\vec{S}$ if for all $\alpha,\beta\in H$ with $\alpha<\beta$ we have $S_\alpha=S_\beta\cap\alpha$. 

Given an ideal $I\supseteq[\kappa]^{<\kappa}$ on $\kappa$ we define another ideal $\I(I)$ by letting $S\in\I(I)^+$ if and only if for every $(1,S)$-sequence $\vec{S}$ there is a set $H\subseteq S$ in $I^+$ such that $H$ is homogeneous for $\vec{S}$. A set $S\subseteq\kappa$ is \emph{ineffable} if $S\in\I(\NS_\kappa)^+$. Baugartner showed that when $\kappa$ is an ineffable cardinal the collection $\I(\NS_\kappa)$ of non-ineffable subsets of $\kappa$ is a normal ideal on $\kappa$, which we call the \emph{ineffability ideal} on $\kappa$. Notice that $\I$ can be viewed as a function mapping ideals to ideals, which we call the \emph{ineffability operator}.

Baumgartner \cite[Theorem 5.3]{MR0384553} gave several characterizations of ineffability in terms of partition properties. Given a set $S\subseteq\kappa$, a function $f:[S]^2\to\kappa$ is said to be \emph{regressive} if $f(a)<\min a$ for all $a\in[S]^2$. A set $H\subseteq S$ is \emph{homogeneous} for a function $f:[S]^2\to\kappa$ if $f\restrict [H]^2$ is constant. 

\begin{theorem}[Baumgartner]\label{theorem_baumgartner_ineff_char}
Let $\kappa$ be a cardinal and $S\subseteq\kappa$. The following are equivalent.
\begin{enumerate}
\item $S$ is ineffable.
\item For every regressive function $f:[S]^2\to\kappa$ there is a set $H\subseteq S$ stationary in $\kappa$ which is homogeneous for $f$.
\item $\kappa$ is regular and for every function $f:[S]^2\to 2$ there is a set $H\subseteq S$ stationary in $\kappa$ which is homogeneous for $f$.
\end{enumerate}
\end{theorem}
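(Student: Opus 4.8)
The plan is to prove the equivalence of the three characterizations of ineffability by establishing a cycle of implications, routing through the definition of $\I(\NS_\kappa)$. Recall that $S$ is ineffable means $S\in\I(\NS_\kappa)^+$, i.e. every $(1,S)$-sequence $\vec{S}=\<S_\alpha\st\alpha\in S\>$ admits a stationary homogeneous $H\subseteq S$. I would show $(1)\Rightarrow(2)\Rightarrow(3)\Rightarrow(1)$, since $(1)\Rightarrow(2)$ and $(3)\Rightarrow(1)$ are the substantive directions linking sequences to partitions, while $(2)\Rightarrow(3)$ is a near-triviality (a $2$-valued function is regressive on any $S$ avoiding $0$ and $1$, modulo small cardinals, and the regularity of $\kappa$ comes for free since ineffability implies inaccessibility).

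For $(1)\Rightarrow(2)$, given a regressive $f\colon[S]^2\to\kappa$, the idea is to code $f$ into a $(1,S)$-sequence and extract homogeneity for $f$ from homogeneity for the sequence. The natural coding is to set $S_\alpha$ to record the ``row'' of values $f(\{\xi,\alpha\})$ for $\xi<\alpha$ with $\xi\in S$; since $f$ is regressive these values lie below $\min\{\xi,\alpha\}=\xi<\alpha$, so one can encode the function $\xi\mapsto f(\{\xi,\alpha\})$ as a subset $S_\alpha\subseteq\alpha$ via a pairing function on $\alpha$ (using that $\kappa$ is inaccessible, so $|\alpha\times\alpha|=|\alpha|$ and the coding stays below $\alpha$). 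Applying ineffability yields a stationary $H$ with $S_\alpha=S_\beta\cap\alpha$ for all $\alpha<\beta$ in $H$; decoding, this says $f(\{\xi,\alpha\})=f(\{\xi,\beta\})$ whenever $\xi<\alpha<\beta$ are all in $H$. From this coherence one then thins $H$ to a stationary (in fact still stationary, by a further ineffability or pressing-down argument) homogeneous set: define $g(\alpha)$ to be the eventual common value $f(\{\alpha,\beta\})$ for $\beta\in H$ above $\alpha$, note $g$ is regressive on the stationary set $H$, and apply Fodor to get a stationary $H'\subseteq H$ on which $g$ is constant, which is then homogeneous for $f$.

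For $(3)\Rightarrow(1)$, I would show that if every $2$-colouring of pairs on $S$ has a stationary homogeneous set, then every $(1,S)$-sequence $\vec S$ has a stationary homogeneous set. Given $\vec S$, define $f\colon[S]^2\to 2$ by $f(\{\alpha,\beta\})=0$ iff $S_\alpha=S_\beta\cap\alpha$ (for $\alpha<\beta$), and $1$ otherwise. A stationary $H$ homogeneous for $f$ must take colour $0$: if it took colour $1$, then for distinct $\alpha<\beta$ in $H$ we would have $S_\alpha\neq S_\beta\cap\alpha$, and a standard argument (for each pair pick a witnessing ordinal in the symmetric difference and use regressivity plus Fodor, or directly contradict stationarity via a $\Delta$-system/diagonalisation on the witnesses) shows no stationary set can be homogeneous in colour $1$. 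Hence $H$ is colour $0$, which is exactly homogeneity for $\vec S$, giving $S\in\I(\NS_\kappa)^+$.

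The main obstacle I anticipate is the colour-$1$ impossibility in $(3)\Rightarrow(1)$: ruling out a stationary set that is $2$-homogeneous in the ``incoherent'' colour. The clean way is to argue that a stationary set homogeneous in colour $1$ would yield, via the witnessing ordinals $\delta_{\alpha\beta}\in S_\alpha\triangle(S_\beta\cap\alpha)$, a regressive function to which Fodor applies, producing a contradiction with the coherence forced on a stationary set; carrying this out carefully (and making sure the witness assignment is genuinely regressive and single-valued enough to press down) is the delicate point. A secondary technical care-point is verifying throughout that $\kappa$ is inaccessible so that the pairing/coding in $(1)\Rightarrow(2)$ keeps codes bounded below each $\alpha$; this follows because ineffable cardinals are weakly compact hence inaccessible, but I would want to note it explicitly since the coding depends on it.
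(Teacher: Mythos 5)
The paper does not actually prove this theorem; it quotes it from Baumgartner with a citation, so your argument has to stand on its own. Your $(1)\Rightarrow(2)$ is the standard argument and is correct: code the rows $\xi\mapsto f(\{\xi,\alpha\})$ into a $(1,S)$-sequence, extract a stationary coherent $H$, note that $f(\{\alpha,\beta\})$ is then independent of $\beta\in H$ above $\alpha$, and press down. Just make sure the coding uses a single fixed (G\"odel) pairing function and is carried out on the club of $\alpha$ closed under it; a genuinely ``per-$\alpha$'' pairing function would destroy the coherence you need to decode $S_\alpha=S_\beta\cap\alpha$ as agreement of the two rows. In $(2)\Rightarrow(3)$ there is a small circularity: inside the cycle $(1)\Rightarrow(2)\Rightarrow(3)\Rightarrow(1)$ you only have $(2)$ in hand, so you cannot get regularity of $\kappa$ from ``ineffability implies inaccessibility''; you must derive it from $(2)$ directly (a regressive function recording which member of a fixed cofinal sequence bounds $\alpha$ has only bounded homogeneous sets). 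This is routine but has to be said, or the cycle does not close for $(2)$.

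The genuine gap is in $(3)\Rightarrow(1)$. With your colouring ($0$ iff $S_\alpha=S_\beta\cap\alpha$), the claim that ``no stationary set can be homogeneous in colour $1$'' is not a ZFC fact, and neither of your proposed finishes can deliver it. The witnesses $\delta_{\alpha\beta}\in S_\alpha\triangle(S_\beta\cap\alpha)$ are indexed by \emph{pairs}, so there is no regressive function of one variable to which Fodor applies; and the $\Delta$-system variant would require finding three points on which a pair-indexed regressive function is constant, which is itself a regressive partition property of the kind being proved, not something hypothesis $(3)$ provides. More fundamentally, the assertion that every stationary set contains a coherent pair for every $(1,S)$-sequence is a strengthening of subtlety: by Theorem \ref{theorem_subtle}(4), if $\kappa$ is not subtle there is a $(1,\kappa)$-sequence and a club that is pairwise incoherent, i.e.\ a stationary set homogeneous in colour $1$. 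Since your rule-out step nowhere invokes hypothesis $(3)$, if it worked it would prove in ZFC that every regular uncountable cardinal is subtle. The standard repair is to feed the \emph{side} of the least difference point into the colouring: colour $\{\alpha,\beta\}$ according to whether the least element of $S_\alpha\triangle(S_\beta\cap\alpha)$ lies in $S_\alpha$ or in $S_\beta\cap\alpha$. A stationary set all of whose pairs are strict in one direction can then be refuted outright: a trichotomy analysis of the least-difference points over triples shows $\delta(\alpha,\cdot)$ is non-increasing, hence eventually constant; pressing down on its limit and stabilising once more yields a contradiction. (Alternatively, prove $(2)\Rightarrow(1)$ directly, letting the regressive value encode both the least difference point and its side, so that three points of a homogeneous set contradict each other immediately, and isolate the lexicographic device in the two-colour direction.) As written, the Fodor-on-witnesses plan does not close this direction.
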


Now suppose that $\vec{I}=\<I_\alpha\st\text{$\alpha\leq\kappa$ is an uncountable cardinal}\>$ is a sequence such that each $I_\alpha\supseteq[\alpha]^{<\alpha}$ is an ideal on $\alpha$ and $I_\alpha=P(\alpha)$ when $\alpha$ is a singular cardinal. We define an ideal $\I_0(\vec{I})$ on $\kappa$ by letting $S\in\I_0(\vec{I})^+$ if and only if for every $(1,S)$-sequence $\vec{S}$ and every club $C\subseteq\kappa$ there is an $\alpha\in S\cap C$ for which there is a set $H\subseteq S\cap C\cap\alpha$ in $I_\alpha^+$ homogeneous for $\vec{S}$. When no confusion will arise, as in the case where the nontrivial ideals $I_\alpha$ have a uniform definition, we write $\I_0(I_\kappa)$ instead of $\I_0(\vec{I})$. 

For example, let $\vec{J}=\<J_\alpha\st\text{$\alpha\leq\kappa$ is a cardinal}\>$ be defined by letting $J_\alpha=\NS_\alpha$ when $\alpha$ is regular and $J_\alpha=P(\alpha)$ when $\alpha$ is singular. A set $S\subseteq\kappa$ is \emph{subtle} if $S\in\I_0(\NS_\kappa)^+=\I_0(\vec{J})^+$.\footnote{Baumgartner showed that $S\in\I_0(\NS_\kappa)^+$ is equivalent to the more often used definition of subtlety of a set $S$ given in Theorem \ref{theorem_subtle} (4). We use the stated definition of subtlety of $S$ for ease of presentation.} Futhermore, Baumgartner proved that if $\kappa$ is a subtle cardinal then $\I_0(\NS_\kappa)$ is a normal ideal on $\kappa$, which we call the \emph{subtle ideal} on $\kappa$. We refer to $\I_0$ as the \emph{subtle operator}. Recall that every ineffable set is subtle, the least subtle cardinal is not $\Pi^1_1$-indescribable and, as shown by Baumgartner \cite[Theorem 4.1]{MR0384553}, the existence of a subtle cardinal is strictly stronger than the existence of a cardinal which is $\Pi^1_n$-indescribable for all $n<\omega$. 

As another example, let $\vec{J}=\<J_\alpha\st\text{$\alpha\leq\kappa$ is a cardinal}\>$ be a sequence of ideals defined by letting $J_\alpha=\Pi^1_1(\alpha)$ when $\alpha$ is $\Pi^1_1$-indescribable and $J_\alpha=P(\alpha)$ otherwise. Then $\I_0(\Pi^1_1(\kappa))=\I_0(\vec{J})$ and a set $S$ is in $\I_0(\Pi^1_1(\kappa))^+$ if and only if for every $(1,S)$-sequence $\vec{S}$ and every club $C\subseteq\kappa$ there is an $\alpha\in S\cap C$ for which there is a set $H\subseteq S\cap C\cap \alpha$ in $\Pi^1_1(\alpha)^+$which is homogeneous for $\vec{S}$.\footnote{Note that the set $H$ being in $\Pi^1_1(\alpha)^+$ implies $\alpha$ is $\Pi^1_1$-indescribable.}

Baumgartner showed \cite[Theorem 5.1]{MR0384553} that subtlety can be characterized using partition properties.

\begin{theorem}[Baumgartner]\label{theorem_subtle}
Let $\kappa$ be a cardinal and $S\subseteq\kappa$. The following are equivalent.
\begin{enumerate}
\item $S$ is subtle, that is, $S\in\I_0(\NS_\kappa)^+$.
\item For every regressive function $f:[S]^2\to\kappa$ and every club $C\subseteq\kappa$ there is a regular cardinal $\alpha\leq\kappa$ and a set $H\subseteq S\cap C\cap\alpha$ stationary in $\alpha$ which is homogeneous for $f$.
\item For every function $f:[S]^2\to 2$ and every club $C\subseteq\kappa$ there is a regular cardinal $\alpha\leq\kappa$ and a set $H\subseteq S\cap C\cap \alpha$ stationary in $\alpha$ which is homogeneous for $f$.
\item For every $(1,S)$-sequence $\vec{S}$ and every club $C\subseteq\kappa$ there is a set $\{\alpha_0,\alpha_1\}\in[S\cap C]^2$ which is homogeneous for $\vec{S}$.
\end{enumerate}
\end{theorem}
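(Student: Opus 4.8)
The plan is to establish the cycle of implications $(1)\Rightarrow(2)\Rightarrow(3)\Rightarrow(4)\Rightarrow(1)$. The implication $(2)\Rightarrow(3)$ is immediate, since any $f:[S]^2\to 2$ is regressive on $[S\setminus 2]^2$ and deleting the at most two points below $2$ affects neither stationarity nor the club. For $(1)\Rightarrow(2)$, I would fix a regressive $f:[S]^2\to\kappa$ and a club $C$, together with a pairing function $p$ on the ordinals and the club $C_p$ of ordinals closed under $p$. Code $f$ as the $(1,S)$-sequence $\vec S$ given by $S_\beta=\{p(\gamma,f(\{\gamma,\beta\}))\st \gamma\in S\cap\beta\}$, which is a subset of $\beta$ whenever $\beta\in C_p$. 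Applying $(1)$ to $\vec S$ and $C\cap C_p$ produces a regular $\alpha^*$ and a set $H\subseteq S\cap C\cap C_p\cap\alpha^*$ stationary in $\alpha^*$ and homogeneous for $\vec S$; using injectivity and closure of $p$, the homogeneity condition $S_\gamma=S_\delta\cap\gamma$ unwinds to end-homogeneity for $f$, i.e. $f(\{\gamma',\gamma\})=f(\{\gamma',\delta\})$ whenever $\gamma'<\gamma<\delta$ are in $H$. Thus $\gamma'\mapsto f(\{\gamma',\delta\})$ is a well-defined regressive function on $H$ (independent of the choice of $\delta>\gamma'$ in $H$), and one application of Fodor's lemma inside the regular cardinal $\alpha^*$ thins $H$ to a stationary set homogeneous for $f$.

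For $(3)\Rightarrow(4)$ I would argue by contraposition. Suppose $\vec S$ and a club $C$ witness the failure of $(4)$, so that $S_\gamma\neq S_\delta\cap\gamma$ for all $\gamma<\delta$ in $S\cap C$; for such a pair let $d(\gamma,\delta)<\gamma$ be the least ordinal at which $S_\gamma$ and $S_\delta\cap\gamma$ differ, and define $f(\{\gamma,\delta\})\in 2$ to record on which side this least difference lies (say $f=0$ iff $d(\gamma,\delta)\in S_\delta$). The crux is a pressing-down argument showing $f$ has no stationary homogeneous set: given a putative homogeneous $H\subseteq S\cap C\cap\alpha^*$ stationary in a regular $\alpha^*$, comparing least differences among triples $\gamma<\delta<\eta$ in $H$ forces, for each fixed $\gamma$, the value $d(\gamma,\delta)$ to be non-increasing in $\delta$ and hence eventually constant in $\delta$; Fodor's lemma applied to these eventual values then yields a single ordinal $w$ and three points $\gamma_1<\gamma_2<\gamma_3$ of $H$ for which $w$ must lie in $S_{\gamma_2}$ and simultaneously outside $S_{\gamma_2}$, a contradiction. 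Hence no such $f$ can have a stationary homogeneous set, contradicting $(3)$.

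The main obstacle is $(4)\Rightarrow(1)$, the genuine jump from a single coherent pair to a stationary homogeneous set. Again I would argue by contraposition: assume $\vec S$ and a club $C$ witness the failure of $(1)$. Unwinding the definition, a stationary homogeneous set below a regular $\alpha$ is exactly a set $A\subseteq\alpha$ for which $\{\gamma\in S\cap C\cap\alpha\st S_\gamma=A\cap\gamma\}$ is stationary in $\alpha$, so the failure of $(1)$ says that for every regular $\alpha\in S\cap C$ and every $A\subseteq\alpha$ this set is nonstationary. Applying this to the distinguished guess $A=S_\alpha$, fix for each regular $\alpha$ a club $D_\alpha\subseteq\alpha$ disjoint from $\{\gamma\in S\cap C\cap\alpha\st S_\gamma=S_\alpha\cap\gamma\}$. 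Using a pairing function I would then define a refined $(1,S)$-sequence $\vec T$ whose $\alpha$-th entry codes the pair $(S_\alpha,D_\alpha)$, so that coherence $T_\gamma=T_\delta\cap\gamma$ of $\vec T$ decodes to the conjunction of $S_\gamma=S_\delta\cap\gamma$ and $D_\gamma=D_\delta\cap\gamma$. A coherent pair for $\vec T$, supplied by $(4)$ along the club of closure points, now yields a contradiction: since $D_\gamma=D_\delta\cap\gamma$ is unbounded in $\gamma$ and $D_\delta$ is closed, we get $\gamma\in D_\delta$, whence $\gamma\notin\{\gamma'\st S_{\gamma'}=S_\delta\cap\gamma'\}$ and so $S_\gamma\neq S_\delta\cap\gamma$, contradicting the first decoded equality.

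The steps I expect to be genuinely delicate are all concentrated in this last implication: arranging the coding so that coherence of $\vec T$ faithfully transmits coherence of both coordinates on a club, and handling those indices $\alpha$ that are not regular (for which $D_\alpha$ is undefined), which I would dispose of by intersecting the club to which $(4)$ is applied with an appropriate club guaranteeing that the top index of the coherent pair carries a nontrivial coded club and is therefore regular. By contrast, $(1)\Rightarrow(2)$, $(2)\Rightarrow(3)$, and $(3)\Rightarrow(4)$ are routine once the correct coding and the triple-comparison for the least-difference coloring are set up.
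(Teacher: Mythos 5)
The paper does not prove this theorem; it quotes it from Baumgartner \cite{MR0384553}, so your argument has to stand on its own. The chain $(1)\Rightarrow(2)\Rightarrow(3)\Rightarrow(4)$ is essentially correct: the coding of a regressive $f$ into a $(1,S)$-sequence via a pairing function, the passage from end-homogeneity to homogeneity by Fodor, and the triple-comparison showing that for a homogeneous $H$ the least-difference function $d(\gamma,\cdot)$ is non-increasing (hence eventually constant, so that Fodor produces $\gamma_1<\gamma_2<\gamma_3$ with $d(\gamma_1,\gamma_2)=d(\gamma_2,\gamma_3)=w$ and the two colorings of $w$ relative to $S_{\gamma_2}$ collide) all check out, modulo the routine step of first intersecting $C$ with $[\omega_1,\kappa)$ so that the regular $\alpha$ carrying the stationary set is uncountable and Fodor applies.

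The gap is in $(4)\Rightarrow(1)$, precisely at the point you flagged as delicate, and your proposed repair fails. You cannot ``intersect the club to which $(4)$ is applied with an appropriate club guaranteeing that the top index of the coherent pair \ldots is therefore regular'': the ordinals of cofinality $\omega$ below $\kappa$ are stationary, so every club contains singular limit ordinals, and clause $(4)$ gives no control over where in $S\cap C$ the coherent pair lands. If $D_\alpha$ is defined (or nontrivial) only for regular $\alpha$, then a coherent pair $\gamma<\delta$ with $\delta$ singular carries no usable information in the second coordinate, and a priori $(4)$ might only ever produce such pairs. The standard repair is to define $D_\alpha$ for \emph{every} limit $\alpha\in S\cap C$: for singular $\alpha$ take $D_\alpha$ closed and cofinal in $\alpha$ of order type exactly $\cf(\alpha)$ with $\min(D_\alpha)=\cf(\alpha)$, and for regular $\alpha$ take the club avoiding $\{\gamma\in S\cap C\cap\alpha\st S_\gamma=S_\alpha\cap\gamma\}$ with $0$ prepended. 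Coherence $D_\gamma=D_\delta\cap\gamma$ then forces $\min(D_\gamma)=\min(D_\delta)$, so either both indices are regular --- in which case your argument ($\gamma$ is a limit point of the closed set $D_\delta$, hence $\gamma\in D_\delta$, hence $S_\gamma\neq S_\delta\cap\gamma$) gives the contradiction --- or both are singular with $\cf(\gamma)=\cf(\delta)$, which is impossible because $D_\gamma$ is a proper initial segment of $D_\delta$ and therefore $\cf(\gamma)\leq\ot(D_\gamma)<\ot(D_\delta)=\cf(\delta)$. Without some such coding of witnesses to singularity, the implication is not established.
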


The following theorem, perhaps one of the most noteworthy of \cite{MR0384553}, shows that in order to have a full understanding of certain large cardinals, one \emph{must} consider large cardinal ideals. Taking $n=0$ in the following theorem, one can easily see that a cardinal $\kappa$ is ineffable if and only if it is subtle, $\Pi^1_2$-indescribable and additionally the subtle ideal and the $\Pi^1_2$-indescribable ideal generate a nontrivial ideal which equals the ineffability ideal; moreover, reference to these ideals cannot be removed from this characterization.

\begin{theorem}[Baumgartner]\label{theorem_baumgarnter_ineffability}
Suppose $\kappa$ is a cardinal and $n<\omega$. Then $\kappa\in\I(\Pi^1_n(\kappa))^+$ if and only if both of the following hold.
\begin{enumerate}
\item $\kappa\in\I_0(\Pi^1_n(\kappa))^+$ and $\kappa\in \Pi^1_{n+2}(\kappa)^+$.
\item The ideal generated by $\I_0(\Pi^1_n(\kappa))\cup\Pi^1_{n+2}(\kappa)$ is nontrivial and equals $\I(\Pi^1_n(\kappa))$.
\end{enumerate}
Moreover, reference to the ideals in the above characterization cannot be removed because the least cardinal $\kappa$ such that $\kappa\in\I_0(\Pi^1_n(\kappa))^+$ and $\kappa\in\Pi^1_{n+2}(\kappa)^+$ is not in $\I(\Pi^1_n(\kappa))^+$.
\end{theorem}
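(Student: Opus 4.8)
The plan is to prove the theorem by establishing, for \emph{every} cardinal $\kappa$, the ideal identity $\I(\Pi^1_n(\kappa))=\overline{I_0\cup I_1}$ where $I_0=\I_0(\Pi^1_n(\kappa))$ and $I_1=\Pi^1_{n+2}(\kappa)$, and then reading off the biconditional. The backward direction is immediate: if (1) and (2) hold then $\overline{I_0\cup I_1}=\I(\Pi^1_n(\kappa))$ is nontrivial, so $\kappa\notin\I(\Pi^1_n(\kappa))$. For the forward direction, assuming $\kappa\in\I(\Pi^1_n(\kappa))^+$, item (1) is the instance $S=\kappa$ of the inclusion $\I(\Pi^1_n(\kappa))^+\subseteq I_0^+\cap I_1^+$ proved below, and the nontriviality in (2) holds because $\kappa\in\I(\Pi^1_n(\kappa))^+$ forces $\I(\Pi^1_n(\kappa))\neq P(\kappa)$. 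Following Remark \ref{remark_ideal_generated}, the inclusion $\I(\Pi^1_n(\kappa))\supseteq\overline{I_0\cup I_1}$ is equivalent to $\I(\Pi^1_n(\kappa))^+\subseteq I_0^+\cap I_1^+$, so this is what I would establish first.

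To show $\I(\Pi^1_n(\kappa))^+\subseteq\Pi^1_{n+2}(\kappa)^+$, fix $S\in\I(\Pi^1_n(\kappa))^+$ and a $\Pi^1_{n+2}$ sentence $\varphi=\forall X\,\exists Y\,\theta(X,Y,A)$ with $\theta\in\Pi^1_n$ true in $(V_\kappa,\in,A)$, and suppose toward a contradiction that no $\alpha\in S$ reflects $\varphi$. Then for each $\alpha\in S$ choose a counterexample $X_\alpha\subseteq V_\alpha$ witnessing $(V_\alpha,\in,A\cap V_\alpha)\models\forall Y\,\neg\theta(X_\alpha,Y,\cdot)$, code $X_\alpha$ together with $A\cap V_\alpha$ as a set $S_\alpha\subseteq\alpha$ to form a $(1,S)$-sequence, and apply positivity to obtain $H\subseteq S$ in $\Pi^1_n(\kappa)^+$ homogeneous for it; homogeneity glues the $X_\alpha$ into a coherent $X$ with $X\cap V_\alpha=X_\alpha$ for $\alpha\in H$. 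Since $\varphi$ holds at $\kappa$ there is $Y$ with $\theta(X,Y,A)$, and as $\theta\in\Pi^1_n$ and $H\in\Pi^1_n(\kappa)^+$ this reflects to some $\alpha\in H$, contradicting the choice of $X_\alpha$. For $\I(\Pi^1_n(\kappa))^+\subseteq I_0^+$, fix $S\in\I(\Pi^1_n(\kappa))^+$, a $(1,S)$-sequence $\vec S$, and a club $C$; positivity gives $H_0\subseteq S$ in $\Pi^1_n(\kappa)^+$ homogeneous for $\vec S$, and $H=H_0\cap C\in\Pi^1_n(\kappa)^+$ is still homogeneous and contained in $S\cap C$. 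The statement $\Theta$ asserting ``$H$ is $\Pi^1_n$-indescribable, homogeneous for $\vec S$, and $H\subseteq S\cap C$'' is $\Pi^1_{n+1}$ over $(V_\kappa,\in,H,\vec S,S,C)$ and true; since $S$ is $\Pi^1_{n+2}$-indescribable by the previous inclusion, $S\cap C\in\Pi^1_{n+1}(\kappa)^+$, so $\Theta$ reflects to some $\alpha\in S\cap C$, and $H\cap\alpha$ then lies in $\Pi^1_n(\alpha)^+$, is homogeneous for $\vec S\restrict\alpha$, and is contained in $S\cap C\cap\alpha$ --- exactly the witness required for $S\in\I_0(\Pi^1_n(\kappa))^+$.

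For the reverse inclusion $\I(\Pi^1_n(\kappa))\subseteq\overline{I_0\cup I_1}$, fix $S\in\I(\Pi^1_n(\kappa))$ witnessed by a $(1,S)$-sequence $\vec S$ with no $\Pi^1_n(\kappa)^+$ homogeneous subset of $S$, and write $S=T\cup B$, where $T$ is the set of $\alpha\in S$ for which $S\cap\alpha$ has no $\Pi^1_n(\alpha)^+$ subset homogeneous for $\vec S\restrict\alpha$, and $B=S\setminus T$. Taking the club $C=\kappa$ and the sequence $\vec S\restrict T$, the definition of $T$ directly witnesses $T\in\I_0(\Pi^1_n(\kappa))$. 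For $B$, observe that the ``badness'' sentence $\chi$ asserting ``no subset of $S$ homogeneous for $\vec S$ is $\Pi^1_n$-indescribable'' is $\Pi^1_{n+2}$, since its kernel ``$H$ is $\Pi^1_n$-describable'' is $\Sigma^1_{n+1}$ and it is preceded by $\forall H$; it holds at $(V_\kappa,\in,S,\vec S)$, so if $B$ were $\Pi^1_{n+2}$-indescribable then $\chi$ would reflect to some $\alpha\in B$, forcing $\alpha\in T$, a contradiction. Hence $B\in\Pi^1_{n+2}(\kappa)$ and $S=T\cup B\in\overline{I_0\cup I_1}$, completing the identity.

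For the ``moreover'', I would argue at the least cardinal $\kappa_0$ that is simultaneously $\I_0(\Pi^1_n(\kappa_0))$-positive and $\Pi^1_{n+2}$-indescribable: assuming toward a contradiction that $\kappa_0\in\I(\Pi^1_n(\kappa_0))^+$, minimality assigns to each $\xi<\kappa_0$ a canonical witness to the failure of ``$\I_0(\Pi^1_n(\xi))$-positive and $\Pi^1_{n+2}$-indescribable'' at $\xi$, and these are assembled into a $(1,\kappa_0)$-sequence; the aim is to arrange the coding so that any $\Pi^1_n(\kappa_0)^+$ homogeneous set produces, via coherence, a global object contradicting the subtlety or the $\Pi^1_{n+2}$-indescribability of $\kappa_0$ itself, yielding $\kappa_0\in\I(\Pi^1_n(\kappa_0))$. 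The main obstacle throughout is the bookkeeping of indescribability degrees --- matching the mere $\Pi^1_n$-indescribability one gets from homogeneous sets against the $\Pi^1_{n+2}$ sentences being reflected --- and it is sharpest in the ``moreover'', where one must engineer the local witnesses so that a single $\Pi^1_n$-indescribable homogeneous set defeats both failure modes at once; this is the step I expect to require the most care, and where I would most closely follow Baumgartner's original construction in \cite{MR0384553}.
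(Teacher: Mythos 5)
The paper itself states this theorem as a quoted result of Baumgartner and gives no proof, so the natural benchmark is the paper's proofs of the exact Ramsey-operator analogues (Lemma \ref{lemma_baumgartner_bagaria}, Theorem \ref{theorem_finite_ideal_diagram}, Corollary \ref{corollary_necessity}). Your proof of the ideal identity $\I(\Pi^1_n(\kappa))=\overline{\I_0(\Pi^1_n(\kappa))\cup\Pi^1_{n+2}(\kappa)}$ is correct and follows the same template: the inclusion $\I(\Pi^1_n(\kappa))^+\subseteq\Pi^1_{n+2}(\kappa)^+$ is exactly the successor case of Lemma \ref{lemma_baumgartner_bagaria} with $Q=\Pi^1_n(\kappa)^+$; the inclusion into $\I_0(\Pi^1_n(\kappa))^+$ via reflecting the $\Pi^1_{n+1}$ statement ``$H$ is $\Pi^1_n$-indescribable, homogeneous, and contained in $S\cap C$'' matches the corresponding step in Theorem \ref{theorem_finite_ideal_diagram}; and your decomposition $S=T\cup B$ with $B$ killed by reflecting the $\Pi^1_{n+2}$ ``badness'' sentence is the same splitting $X=(X\cap C)\cup(X\setminus C)$ used there. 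Via Remark \ref{remark_ideal_generated} the biconditional then falls out as you say.

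The genuine gap is the ``moreover'' clause. What you offer there is a plan, not an argument: you propose to work at the least cardinal $\kappa_0$ with both properties, assemble ``canonical witnesses to failure'' at each $\xi<\kappa_0$ into a single $(1,\kappa_0)$-sequence, and ``arrange the coding'' so that one $\Pi^1_n$-indescribable homogeneous set defeats both failure modes at once; you yourself flag that you do not know how to engineer this. The difficulty is real: a witness to $\xi\notin\I_0(\Pi^1_n(\xi))^+$ is a pair (a $(1,\xi)$-sequence and a club) while a witness to $\xi\notin\Pi^1_{n+2}(\xi)^+$ is a $\Pi^1_{n+2}$ sentence with a predicate, and it is not clear that a single homogeneous set yields a coherent global object contradicting either property of $\kappa_0$. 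The standard and much simpler route --- the one the paper uses for the Ramsey analogue in Corollary \ref{corollary_necessity} --- is a reflection argument: whenever $\kappa\in\I(\Pi^1_n(\kappa))^+$, the set $C_0=\{\xi<\kappa\st\xi\in\I_0(\Pi^1_n(\xi))^+\}$ is in $\Pi^1_1(\kappa)^*$ because ``$\kappa\in\I_0(\Pi^1_n(\kappa))^+$'' is $\Pi^1_1$-expressible over $V_\kappa$ and true there, and the set $C_1=\{\xi<\kappa\st\xi\in\Pi^1_{n+2}(\xi)^+\}$ is in the subtle filter $\I_0(\NS_\kappa)^*$ by Baumgartner's reflection theorem (\cite[Theorem 4.1]{MR0384553}, quoted in Section \ref{section_a_first_reflection_result}); both dual ideals are contained in the nontrivial ideal $\I(\Pi^1_n(\kappa))$, so $C_0\cap C_1\neq\emptyset$, and hence the \emph{least} cardinal with both properties lies strictly below every $\kappa$ in $\I(\Pi^1_n(\kappa))^+$ and so cannot itself be in $\I(\Pi^1_n(\kappa_0))^+$. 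Without this (or a fully executed version of your diagonal construction) the final clause of the theorem remains unproved.
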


In his second article \cite{MR0540770} on ineffability properties, Baumgartner iterated the ineffability operator $\I$ and defined an increasing chain of ideals as follows. Define $\I^0(\NS_\kappa)=\NS_\kappa$ and $\I^{\alpha+1}(\NS_\kappa)=\I(\I^\alpha(\NS_\kappa))$. If $\alpha$ is a limit ordinal let $I^\alpha(\NS_\kappa)=\bigcup_{\xi<\alpha}\I^\xi(\NS_\kappa)$. Since the ideals $\I^\alpha(\NS_\kappa)$ form an increasing chain and there are only $2^\kappa$ subsets of $\kappa$, there must be an $\alpha<(2^\kappa)^+$ such that $\I^\alpha(\NS_\kappa)=\I^{\alpha+1}(\NS_\kappa)$. A cardinal $\kappa$ is \emph{completely ineffable} if when $\alpha$ is the least ordinal such that $\I^\alpha(\NS_\kappa)=\I^{\alpha+1}(\NS_\kappa)$ the ideal $\I^\alpha(\NS_\kappa)$ is nontrivial. Baumgartner introduced \emph{canonical function} in order to prove \cite[Theorem 3.7]{MR0540770} that if $\beta<\kappa^+$ and $\kappa\in\I^\beta(\NS_\kappa)^+$ (i.e. $\I^\beta(\NS_\kappa)$ is a nontrivial ideal) then for all $\alpha<\beta$ the containment $\I^\alpha(\NS_\kappa)\subsetneq\I^{\alpha+1}(\NS_\kappa)$ is proper.

\begin{remark}
Although Baumgartner briefly mentions the ideals $\I^m(\Pi^1_n(\kappa))$ in \cite{MR0540770} (see the discussion following Corollary 3.5), they, as well as ideals of the form $\I^\alpha(\Pi^1_n(\kappa))$ for ordinals $\alpha>1$, seem to be otherwise absent from both \cite{MR0384553} and \cite{MR0540770}.
\end{remark}

\subsection{Baumgartner's result on the Ramsey ideal and Feng'€™s Ramsey hierarchy}\label{section_feng}


Recall from Section \ref{section_introduction}, that Feng \cite{MR1077260} defined the Ramsey operator $\R$, which is analogous to the ineffability operator $\I$, and iterated $\R$ in order to define completely Ramsey cardinals. 
As noted by Feng \cite[Definition 2.1]{MR1077260}, it is easy to see that if $I\subseteq J$ are ideals on $\kappa$ then $\R(I)\subseteq\R(J)$. Although it will not be used in what follows, let us show that under reasonable assumptions one obtains proper containment $\R(I)\subsetneq\R(J)$.

\begin{proposition}
If $[\kappa]^{<\kappa}\subseteq I\subsetneq J$ are nontrivial ideals on $\kappa$ such that $\R(I)^+\cap J\neq\emptyset$ and for all $X\in I^+$ there exist $X_0,X_1\in I^+$ such that $X=X_0\sqcup X_1$, then $\R(I)\subsetneq\R(J)$.
\end{proposition}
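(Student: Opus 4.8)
The plan is to produce a single witness. Since the monotonicity recalled in the excerpt already gives $\R(I)\subseteq\R(J)$ from $I\subseteq J$, it suffices to exhibit one set lying in $\R(J)\setminus\R(I)$, i.e.\ a set that is $\R(I)$-positive but nonetheless belongs to $\R(J)$. The hypothesis $\R(I)^+\cap J\neq\emptyset$ hands me such a set directly, so the bulk of the work is simply to check that it does what I want.

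First I would fix some $S\in\R(I)^+\cap J$. Since $S\in\R(I)^+$ we have $S\notin\R(I)$. On the other hand, the inflationary property $J\subseteq\R(J)$—which is the instance $I'=J$ of the observation $I'\subseteq\R(I')$ noted in the excerpt—together with $S\in J$ yields $S\in\R(J)$. Hence $S\in\R(J)\setminus\R(I)$, and combined with $\R(I)\subseteq\R(J)$ this gives the proper containment $\R(I)\subsetneq\R(J)$.

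The only point needing care is the verification of $J\subseteq\R(J)$, and this is routine: if $S\in\R(J)^+$, then choosing any regressive $f:[S]^{<\omega}\to\kappa$ produces a homogeneous $H\subseteq S$ with $H\in J^+$, and since $J$ is an ideal the superset $S$ is then in $J^+$ as well; thus $\R(J)^+\subseteq J^+$, which is exactly $J\subseteq\R(J)$. I therefore expect no genuine obstacle in the argument.

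For completeness I would also record that the remaining two hypotheses are not invoked by this route. Any $S\in\R(I)^+\cap J$ lies in $I^+\cap J=J\setminus I$, using $\R(I)^+\subseteq I^+$ (the dual form of $I\subseteq\R(I)$), so the strictness $I\subsetneq J$ is automatic; and the splitting assumption that every $X\in I^+$ decomposes as $X=X_0\sqcup X_1$ with $X_0,X_1\in I^+$ is never used. That splitting condition is the natural tool for a more hands-on construction—building, from a set in $J\setminus I$, a regressive function admitting no $J$-positive homogeneous set while still retaining $I$-positive homogeneous sets—but appealing to $J\subseteq\R(J)$ makes such a construction unnecessary.
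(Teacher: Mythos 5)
Your argument is correct, but it takes a genuinely different---and shorter---route than the paper's. The paper also begins by fixing a witness $X\in\R(I)^+\cap J$, but it then invokes the splitting hypothesis to write $X=X_0\sqcup X_1$ with $X_0,X_1\in I^+$ (so $X_0,X_1\in J$ by downward closure), defines the $2$-coloring $f(\vec{\alpha})=1$ iff $\vec{\alpha}\in X_0^{<\omega}\cup X_1^{<\omega}$, and notes that any homogeneous set for $f$ is contained in $X_0$ or in $X_1$ and hence lies in $J$; this exhibits a concrete coloring of $X$ with no $J$-positive homogeneous set, giving $X\in\R(J)$ while $X\in\R(I)^+$. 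You bypass the construction entirely by observing that $X\in J$ already forces $X\in\R(J)$ via the inflationary property $J\subseteq\R(J)$, which the paper itself records and which you verify correctly: a $J$-positive homogeneous subset of $X$ would be a $J$-positive subset of a member of $J$. You are also right that on this route the splitting hypothesis is never used and that $I\subsetneq J$ is implied by the remaining hypotheses. What the paper's approach buys is an explicit partition witnessing $X\in\R(J)$; what yours buys, besides brevity, is that it works directly from the regressive-function definition of $\R(J)$, whereas the paper's $2$-coloring argument implicitly leans on the equivalence of the partition and regressive formulations of positivity (Theorem \ref{theorem_omega_S} and Proposition \ref{proposition_4_5_6}).
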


\begin{proof} 
Suppose $I\subsetneq J$ are as in the statement of the lemma. Let us show that $\R(I)\neq\R(J)$. Choose $X\in \R(I)^+\cap J$. Since $X\in I^+$, there exist $X_0,X_1\in I^+$ such that $X=X_0\sqcup X_1$. Notice that $X_0,X_1\in J$ since $X_0,X_1\subseteq X\in J$. Define a function $f:[X]^{<\omega}\to 2$ by
\[
f(\vec{\alpha})=
\begin{cases}
1 & \textrm{if $\vec{\alpha}\in X_0^{<\omega}\cup X_1^{<\omega}$}\\
0 & \textrm{otherwise}
\end{cases}
\]
Since $X\in\R(I)^+$ there is a homogeneous set for $f$ in $I^+$. However, it is straightforward to show that if $H\subseteq X$ is homogeneous for $f$, then $H$ is either a subset of $X_0$ or a subset of $X_1$, and is therefore in $J$. Thus $X\in \R(J)$.
\end{proof}

Let us define another operator $\R_0$ which is analogous to $\I_0$. Suppose that $\vec{I}=\<I_\alpha\st\text{$\alpha\leq\kappa$ is a cardinal}\>$ is a sequence such that each $I_\alpha\supseteq[\alpha]^{<\alpha}$ is an ideal on $\alpha$. Recall that for a set $S\subseteq\kappa$ a function $f:[S]^{<\omega}\to\kappa$ is \emph{regressive} if $f(a)<\min(a)$ for all $a\in[S]^{<\omega}$. We define an ideal $\R_0(\vec{I})$ on $\kappa$ by letting $S\in\R_0(\vec{I})^+$ if and only if for every regressive function $f:[S]^{<\omega}\to\kappa$ and every club $C\subseteq\kappa$ there is an $\alpha\in S\cap C$ for which there is a set $H\subseteq S\cap C\cap\alpha$ in $I_\alpha^+$ which is homogeneous for $f$, meaning that $f\restrict[H]^n$ is constant for each $n<\omega$. As before (see the discussion after Theorem \ref{theorem_baumgartner_ineff_char}), many of the ideals $I_\alpha$ will be understood to be trivial, and when no confusion will arise, as in the case where the ideals $I_\alpha$ have a uniform definition, we write $\R_0(I_\kappa)$ instead of $\R_0(\vec{I})$. Baumgartner defined a set $S\subseteq\kappa$ to be \emph{pre-Ramsey} if and only if $S\in\R_0([\kappa]^{<\kappa})^+$. Thus, pre-Ramseyness is to Ramseyness as subtlety is to ineffability.


Feng \cite[Theorem 2.3]{MR1077260} gave a characterization of Ramseyness which resembles the definition of ineffability. Suppose $S\subseteq\kappa$. For each $n<\omega$ and for all increasing sequences $\alpha_1<\cdots<\alpha_n$ taken from $S$ suppose that $S_{\alpha_1,\ldots,\alpha_n}\subseteq\alpha_1$. Then we say that
\[\vec{S}=\<S_{\alpha_1,\ldots,\alpha_n}\st n<\omega\land (\alpha_1,\ldots,\alpha_n)\in[S]^n\>\]
is an $(\omega,S)$-sequence. A set $H\subseteq S$ is said to be \emph{homogeneous} for an $(\omega,S)$-sequences $\vec{S}$ if for all $0<n<\omega$ and for all increasing sequences $\alpha_1<\cdots<\alpha_n$ and $\beta_1<\cdots<\beta_n$ taken from $H$ with $\alpha_1\leq\beta_1$ we have $S_{\alpha_1\cdots\alpha_n}=S_{\beta_1\cdots\beta_n}\cap\alpha_1$.


The following theorem is essentially due to Feng (see \cite[Theorem 2.2 and Theorem 2.3]{MR1077260}), with the exception of clause (4) which appears in \cite[Theorem 3.2]{MR2817562}.



\begin{theorem}[Feng]\label{theorem_omega_S}
Let $\kappa$ be a regular cardinal and suppose $I$ is an ideal on $\kappa$ such that $I\supseteq \NS_\kappa$. For $S\subseteq \kappa$ the following are equivalent.
\begin{enumerate}
\item Every function $f:[S]^{<\omega}\to 2$ has a homogeneous set $H\in P(S)\cap I^+$.
\item For all $\gamma<\kappa$, every function $f:[S]^{<\omega}\to\gamma$ has a homogeneous set $H\in P(S)\cap I^+$.
\item Every structure $\mathcal{A}$ in a language of size less than $\kappa$ with $\kappa\subseteq\mathcal{A}$ has a set of indiscernibles $H\in P(S)\cap I^+$.
\item $S\in\R(I)^+$, that is, for every regressive function $f:[S]^{<\omega}\to\kappa$ there is a set $H\in P(S)\cap I^+$ which is homogeneous for $f$.
\item For every club $C\subseteq\kappa$, every regressive function $f:[S]^{<\omega}\to\kappa$ has a homogeneous set $H\in P(S\cap C)\cap I^+$.
\item For all $(\omega,S)$-sequences $\vec{S}$ there is a set $H\in P(S)\cap I^+$ which is homogeneous for $\vec{S}$.
\end{enumerate}
\end{theorem}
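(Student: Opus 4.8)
The plan is to establish that the six clauses are equivalent by proving a connected web of implications, relying throughout on two standing consequences of the hypothesis $I\supseteq\NS_\kappa$: first, every $H\in I^+$ is stationary in $\kappa$, hence unbounded of order type $\kappa$, so that Fodor's lemma applies to regressive functions on $H$ and one can always find finite blocks of $H$ lying above any bounded set; second, deleting a nonstationary set (in particular a bounded set, or the complement of a club) from a set in $I^+$ leaves it in $I^+$. The implications (2)$\Rightarrow$(1), (5)$\Rightarrow$(4) (take $C=\kappa$) and (6)$\Rightarrow$(4) are immediate; for the last, given a regressive $f$ one sets $S_{\alpha_1\cdots\alpha_n}=\{f(\alpha_1,\dots,\alpha_n)\}\subseteq\alpha_1$ and checks that a set homogeneous for this $(\omega,S)$-sequence (whose defining condition is imposed for $\min\bar\alpha\le\min\bar\beta$) is homogeneous for $f$. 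Likewise (4)$\Rightarrow$(1) follows by replacing $S$ with $S\setminus 2$, so that a $\{0,1\}$-valued $f$ becomes regressive.

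For the partition cluster (1)$\Leftrightarrow$(2)$\Leftrightarrow$(3) I would, to prove (1)$\Rightarrow$(2), take $f:[S]^{<\omega}\to\gamma$ with $\gamma<\kappa$ and define a $2$-coloring $c$ of $[S]^{<\omega}$ by declaring, for a tuple of even length $2n$ split into its bottom and top halves $\bar\alpha,\bar\beta$, that $c=1$ iff $f(\bar\alpha)=f(\bar\beta)$ (and $c=0$ on odd lengths). On a $c$-homogeneous $H\in I^+$ the value at level $2n$ cannot be $0$, since an increasing $\kappa$-sequence of separated blocks would then carry $\kappa$ distinct $f$-values into $\gamma<\kappa$; hence it is $1$, and a ``bridge above both'' argument (using $\ot(H)=\kappa$) gives $f\restrict[H]^n$ constant for every $n$. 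For (2)$\Rightarrow$(3) I would color a $2n$-tuple by the least index of a formula on which the types of its two halves disagree (or a symbol $\infty$ when they agree), a coloring into fewer than $\kappa$ colors; a finite-index value is impossible by a three-block cancellation (if $A,B$ and $B,C$ first disagree at $\varphi_\xi$ then $A,C$ agree there, contradicting first-disagreement at $\xi$), so the value is $\infty$, the halves always realize the same type, and $H$ is a set of indiscernibles. The converse (3)$\Rightarrow$(2) is routine: code $f$ together with $\gamma<\kappa$ many constants into a structure on $\kappa$ and read homogeneity off from indiscernibility.

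To connect the partition cluster to the regressive clauses I would prove (1)$\Rightarrow$(4) directly. Given regressive $f$, $2$-color $[S]^{<\omega}$ by $[f(\bar\alpha)=f(\bar\beta)]$ on halves as above; on a homogeneous $H\in I^+$ the level-$2n$ value cannot be $0$, for otherwise the regressive map $\eta\mapsto f(\eta,\eta_1,\dots,\eta_{n-1})$ (next elements of $H$) would, by Fodor, be constant on a stationary subset of $H$, producing two separated blocks with equal $f$-value. Hence the value is $1$ and the bridge argument again yields constancy. Then (4)$\Rightarrow$(5) follows because $S\cap C\in\R(I)^+$ whenever $S\in\R(I)^+$ and $C$ is club, so the definition of $\R(I)^+$ applied to $S\cap C$ supplies a homogeneous $H\subseteq S\cap C$ in $I^+$.

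The step I expect to be most delicate is (4)$\Rightarrow$(6). Given an $(\omega,S)$-sequence $\vec S$ I would define a regressive $f$ on even-length tuples by letting $f(\bar\alpha\concat\bar\beta)$ be the least $\gamma<\min\bar\alpha$ in the symmetric difference $S_{\bar\alpha}\triangle S_{\bar\beta}$ if one exists and $0$ otherwise, and apply (4) to obtain $H\in I^+$ with $f\restrict[H]^{2n}$ constantly equal to some $\delta_n$. Deleting from $H$ an initial segment absorbing $\sup_n\delta_n$ together with the first $\omega$ points (a bounded, hence $I^+$-preserving, deletion) guarantees enough room below each tuple to run the ineffability-style cancellation: if a separated pair $\bar\alpha<\bar\beta$ were incoherent, a third block $\bar\alpha'$ chosen in $H$ between $\delta_n$ and $\min\bar\alpha$ would force $\delta_n\notin S_{\bar\alpha}\triangle S_{\bar\beta}$, a contradiction. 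A final ``bridge above'' comparison upgrades coherence of separated pairs to the full homogeneity condition for all $\bar\alpha,\bar\beta\in[H]^n$ with $\min\bar\alpha\le\min\bar\beta$. The main obstacle throughout is organizing these reservoir-of-elements and pressing-down arguments so that every set produced remains in $I^+$; this is exactly where the hypothesis $\NS_\kappa\subseteq I$ is indispensable, since it makes positive sets stationary of order type $\kappa$ and stable under bounded and club modifications.
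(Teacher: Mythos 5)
The paper does not actually prove this theorem: it is quoted from Feng and Sharpe--Welch, and the only portion argued in the paper itself is Proposition \ref{proposition_4_5_6}, the equivalence of clauses (4), (5) and (6) under the weaker hypothesis $I\supseteq[\kappa]^{<\kappa}$. Your reconstruction therefore does more than the paper, and most of it is a sound rendering of the standard Feng/Baumgartner machinery: the trivial implications, the ``compare the two halves of a $2n$-tuple'' colorings for (1)$\Rightarrow$(2), (2)$\Rightarrow$(3) and (1)$\Rightarrow$(4) (with Fodor on the stationary set $H$ ruling out the ``always unequal'' color), the coding of a regressive $f$ by singleton sets for (6)$\Rightarrow$(4), and the three-block cancellation after a bounded deletion for (4)$\Rightarrow$(6) all check out, modulo the usual harmless conventions about tuples whose minimum is $0$ or $1$ and the conflation of ``no disagreement'' with ``least disagreement equals $0$'' in your symmetric-difference coloring (fixable by recording $\chi_{S_{\bar\alpha}}(0)$ with a separate two-valued coordinate).

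The one genuine gap is (4)$\Rightarrow$(5). You reduce it to the assertion that $S\in\R(I)^+$ and $C$ club imply $S\cap C\in\R(I)^+$, but that assertion \emph{is} the content of the implication, and you give no argument for it. It does follow from Feng's theorem that $\R(I)$ is a (normal) ideal containing $I\supseteq\NS_\kappa$, but that theorem is not free --- already the closure of $\R(I)$ under finite unions requires a combinatorial argument --- and you neither cite it nor prove it. The paper's Proposition \ref{proposition_4_5_6} shows how to argue directly: extend $f$ to a regressive $h$ on $[S]^{<\omega}$ with $h(\{\xi\})=\sup(C\cap\xi)$ for $\xi\in S\setminus C$ and $h=f$ on $[S\cap C]^{<\omega}$; homogeneity of $H$ for $h$ at level $1$ forces $H\cap(S\setminus C)$ to be bounded (otherwise $\sup(C\cap\xi)$ would be constant on an unbounded set, contradicting the unboundedness of $C$), whence $H\cap C\in I^+$ and is homogeneous for $f$. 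Either reproduce such an argument or explicitly invoke, and separately justify, the statement that $\R(I)$ is an ideal extending $\NS_\kappa$.
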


\begin{proposition}\label{proposition_4_5_6}
Suppose $I\supseteq[\kappa]^{<\kappa}$ is an ideal on a regular cardinal $\kappa$. Then clauses (4), (5) and (6) of Theorem \ref{theorem_omega_S} are equivalent.
\end{proposition}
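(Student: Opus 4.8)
The plan is to prove the cycle of implications $(4)\Rightarrow(6)\Rightarrow(5)\Rightarrow(4)$, which suffices since these are exactly the three clauses at issue. The implication $(5)\Rightarrow(4)$ is immediate by taking $C=\kappa$. The reason the hypothesis can be weakened from $I\supseteq\NS_\kappa$ (as in Theorem \ref{theorem_omega_S}) to $I\supseteq[\kappa]^{<\kappa}$ is that the only feature of $I$ the remaining two implications use is the following consequence of $[\kappa]^{<\kappa}\subseteq I$ together with the regularity of $\kappa$: every set in $I^+$ has cardinality $\kappa$ and is therefore unbounded in $\kappa$, and deleting a bounded set from a set in $I^+$ leaves a set in $I^+$. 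Stationarity is never invoked for $(4)$, $(5)$, $(6)$; it is only the passage through clauses $(1)$--$(3)$ that needs $\NS_\kappa$.

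For $(4)\Rightarrow(6)$, I would fix an $(\omega,S)$-sequence $\vec S=\langle S_a\mid a\in[S]^{<\omega}\rangle$ and define a regressive function $f\colon[S]^{<\omega}\to\kappa$ which, on a tuple $a=(\alpha_1,\dots,\alpha_n)$ with $n\ge 2$, records the least ordinal below $\alpha_1$ witnessing a disagreement between the two adjacent sets obtained by deleting the last and the first coordinate, namely $S_{\alpha_1\cdots\alpha_{n-1}}$ and $S_{\alpha_2\cdots\alpha_n}\cap\alpha_1$ (with a sentinel value marking the absence of disagreement). Applying $(4)$ to $f$ yields $H\in P(S)\cap I^+$ on which $f\restrict[H]^n$ is constant for every $n$. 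This constant cannot encode a genuine disagreement at a fixed ordinal $\gamma$: the coloring $x\mapsto[\gamma\in S_x]$ of $[H]^{n-1}$ would then flip under the shift $x\mapsto(\text{delete first, append a larger last entry})$ on all increasing tuples, which is impossible on the infinite set $H$ by the ordinary finite-exponent Ramsey theorem applied to this two-valued coloring. Hence the constant signals ``no disagreement,'' so $S_{\alpha_1\cdots\alpha_{n-1}}=S_{\alpha_2\cdots\alpha_n}\cap\alpha_1$ for all increasing tuples from $H$. I would then promote this adjacency coherence to full homogeneity for $\vec S$: given $\vec\alpha,\vec\beta\in[H]^{m}$ with $\alpha_1\le\beta_1$, use unboundedness of $H$ to choose $\vec\delta\in[H]^m$ lying entirely above both, chain the adjacency relations to get $S_{\vec\alpha}=S_{\vec\delta}\cap\alpha_1$ and $S_{\vec\beta}=S_{\vec\delta}\cap\beta_1$, and intersect to conclude $S_{\vec\alpha}=S_{\vec\beta}\cap\alpha_1$.

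For $(6)\Rightarrow(5)$, I would fix a regressive $f\colon[S]^{<\omega}\to\kappa$ and a club $C\subseteq\kappa$, and let $C'$ denote the (club) set of limit points of $C$, so that $C'\subseteq C$. The idea is to define an $(\omega,S)$-sequence $\vec S$ storing two pieces of information on disjoint tracks of each $S_a\subseteq\alpha_1$, so that no ordinal pairing is needed: on the even ordinals the singleton $\{2f(a)\}$, and on the odd ordinals the single marker $\{2\sup(C\cap\alpha_1)+1\}$, included precisely when $\alpha_1\notin C'$. Applying $(6)$ gives $H\in P(S)\cap I^+$ homogeneous for $\vec S$. The even track yields, exactly as in the easy argument $(6)\Rightarrow(4)$, that $f\restrict[H]^n$ is constant for each $n$, so $H$ is homogeneous for $f$. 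The odd track forces $H\setminus C'$ to be bounded: otherwise one could choose $\alpha_1<\beta_1$ in $H\setminus C'$ with a point of $C$ in the interval $[\alpha_1,\beta_1)$ (possible since $C$ is unbounded), whence $\sup(C\cap\beta_1)\ge\alpha_1$, and comparing the markers of the singleton tuples $(\alpha_1)$ and $(\beta_1)$ under homogeneity would give $\{2\sup(C\cap\alpha_1)+1\}=\emptyset$, a contradiction. Thus $H'=H\cap C'$ arises from $H$ by removing a bounded set, so $H'\in P(S\cap C)\cap I^+$ and is homogeneous for $f$, which is $(5)$.

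The main obstacle is $(4)\Rightarrow(6)$, and specifically the step of upgrading homogeneity for a single \emph{ordinal}-valued regressive function into homogeneity for the \emph{set}-valued $(\omega,S)$-sequence. Since a regressive function can report only one ordinal per tuple, the coding detects disagreements one witness at a time, and the two devices bridging this gap are the Ramsey-theorem argument ruling out a spurious constant value and the unboundedness of $H$ used to chain adjacent coherences into full coherence. The remaining bookkeeping---arranging the sentinel so that $f$ is genuinely regressive, and handling the finitely many small ordinals and successor boundaries---is routine and does not affect the argument.
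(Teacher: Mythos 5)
Your cycle $(4)\Rightarrow(6)\Rightarrow(5)\Rightarrow(4)$ is a genuinely different decomposition from the paper's, which proves $(4)\Rightarrow(5)$ directly --- by adding to $f$ a unary component $\xi\mapsto\sup(C\cap\xi)$ whose homogeneity forces $H\setminus C$ to be bounded --- and simply cites Feng for $(5)\Leftrightarrow(6)$. The difference is not cosmetic: your route requires decoding an $(\omega,S)$-sequence from a single regressive function \emph{without} first intersecting with a club, and that is where there is a real gap. In $(4)\Rightarrow(6)$, the inference ``hence the constant signals no disagreement'' is a false dichotomy. When the constant value of $f\restrict[H]^n$ equals the sentinel, a given tuple $a$ may realize either reading --- no disagreement, or least disagreement exactly at the sentinel ordinal --- and different tuples may realize different readings; your Ramsey-theorem argument only rules out the case in which \emph{every} tuple realizes the second reading. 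The mixed case genuinely occurs: with sentinel $0$ and unary components $S_{(\alpha)}=\{0\}$ for $\alpha$ in one cofinal piece of $H$ and $S_{(\alpha)}=\emptyset$ on a complementary cofinal piece, $f$ is constantly $0$ on $[H]^2$ yet $H$ is not homogeneous for $\vec{S}$. Repairing this needs a further idea: either split $H$ according to whether the sentinel ordinal lies in $S_x$ and use that an ideal cannot contain both halves of a positive set (and then handle all arities simultaneously, which is itself another partition problem), or encode the \emph{side} of the disagreement together with its position, which needs a pairing function below $\alpha_1$ and hence a restriction to a club of closure points. Clause $(4)$, unlike clause $(5)$, supplies no club --- this is exactly why Feng's formulation carries one and why the paper routes the new content through $(4)\Rightarrow(5)$.

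There is also a flaw in your $(6)\Rightarrow(5)$ coding, though this one is repairable. For $\gamma<\alpha_1$, neither $2\gamma$ nor $2\gamma+1$ (under any reasonable reading of the ordinal arithmetic) need be below $\alpha_1$ when $\alpha_1$ is not closed under the relevant operation (take $\alpha_1=\omega+2$ and $\gamma=\omega+1$), so the ``even track'' and ``odd track'' of a subset of $\alpha_1$ are not available and your $S_a$ is not a subset of $\alpha_1$. The fix is to separate the two pieces of data by arity rather than by parity: put the marker $\{\sup(C\cap\alpha)\}$ (for $\alpha\notin\acc(C)$) on the unary components only, and let the $(n+1)$-ary component be $\{f(\alpha_1,\ldots,\alpha_n)\}$; unboundedness of $H$ lets you pad any $n$-tuple from $H$ to an $(n+1)$-tuple from $H$, so homogeneity still makes $f$ constant on each $[H]^n$, while the unary track still bounds $H\setminus\acc(C)$. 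Your $(5)\Rightarrow(4)$ and your remark about why $I\supseteq[\kappa]^{<\kappa}$ plus regularity suffice for these three clauses are both correct.
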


\begin{proof}
The equivalence of (5) and (6) is due to Feng \cite[Theorem 2.3]{MR1077260}. It is easy to see that (5) implies (4). Let us show that (4) implies (5).\footnote{The author would like to thank the anonymous referee for this argument as well as Victoria Gitman for an earlier version.} Suppose (4) holds and fix a regressive function $f:[S]^{<\omega}\to\kappa$ and a club $C\subseteq\kappa$. First, let us argue that $|S\cap C|=\kappa$. Suppose $|S\cap C|<\kappa$ and let $\alpha=\sup(S\cap C)$. Define $g:S\setminus(\alpha+1)\to\kappa$ by letting $g(\xi)$ be the greatest element of $C$ which is less than $\xi$. Notice that $g$ is regressive. Now let $G:[S]^{<\omega}\to\kappa$ be any regressive function with $G(\{\xi\})=g(\xi)$ for $\xi\in S\setminus(\alpha+1)$. By (4), there is a homogeneous set $H\in P(S)\cap I^+$ for $G$, but then since $G\restrict[H]^1$ is constant, it follows that $g$ is constant on the unbounded set $H$, but this contradicts the fact that $C$ is unbounded. Thus $|S\cap C|=\kappa$. To prove (5) we must show that there is a homogeneous set $H\in P(S\cap C)\cap I^+$ for $f$. Let us define another function $h:[S\cap C]^{<\omega}\to \kappa$ by letting 
\[h(\{\xi\})=\begin{cases}
f(\{\xi\}) & \text{if $x\in S\cap C$}\\
\sup(C\cap\xi) & \text{if $x\in S\setminus C$}
\end{cases}
\]
and for $n>1$ let $h\restrict[S\cap C]^n=f\restrict[S\cap C]^n$. Then $h$ is regressive, and using (4), there is a homogeneous set $H\in P(S)\cap I^+$ for $h$. Using an argument similar to the above argument for $|S\cap C|=\kappa$, we see that $|H\cap (S\setminus C)|<\kappa$ and hence $H\cap C\in I^+$. Since $h\restrict[S\cap C]^{<\omega}=f\restrict[S\cap C]^{<\omega}$, it follows that $H\cap C$ is homogeneous for $f$, yielding (5).
\end{proof}

We will need the next easy consequence of Proposition \ref{proposition_4_5_6}.

\begin{corollary}\label{corollary_one_S}
Suppose $\kappa$ is a cardinal, $I\supseteq\NS_\kappa$ is an ideal on $\kappa$ and $S\in\R(I)^+$. Then every $(1,S)$-sequence $\vec{S}=\<S_\alpha\st\alpha\in S\>$ has a homogeneous set $H\subseteq S$ in $I^+$.
\end{corollary}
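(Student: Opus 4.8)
The plan is to recognize the given $(1,S)$-sequence as a degenerate $(\omega,S)$-sequence and then invoke the equivalence of clauses (4) and (6) of Theorem \ref{theorem_omega_S}. Before doing so I would observe that the hypotheses put us in a position to apply that theorem: we have $I\supseteq\NS_\kappa$ by assumption, and $\kappa$ is regular (if $\kappa$ were singular, then fixing a cofinal sequence of order type $\cf(\kappa)<\kappa$ one can define a regressive $f:[S]^{<\omega}\to\kappa$ whose first-order part records which block of this sequence a point falls into; every set homogeneous for $f$ is then bounded, hence lies in $[\kappa]^{<\kappa}\subseteq I$, contradicting $S\in\R(I)^+$). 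Thus Theorem \ref{theorem_omega_S}, and with it the equivalence established in Proposition \ref{proposition_4_5_6}, is available.

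Next, given a $(1,S)$-sequence $\vec{S}=\<S_\alpha\st\alpha\in S\>$, I would define an $(\omega,S)$-sequence $\vec{T}=\<T_{\alpha_1,\ldots,\alpha_n}\st n<\omega\land(\alpha_1,\ldots,\alpha_n)\in[S]^n\>$ by letting $T_{\alpha_1,\ldots,\alpha_n}=S_{\alpha_1}$, i.e.\ by making each entry depend only on the least coordinate. Since $S_{\alpha_1}\subseteq\alpha_1$, we have $T_{\alpha_1,\ldots,\alpha_n}\subseteq\alpha_1$, so $\vec{T}$ genuinely satisfies the defining constraint of an $(\omega,S)$-sequence. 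Because $S\in\R(I)^+$ is precisely clause (4) of Theorem \ref{theorem_omega_S}, clause (6) of that theorem supplies a set $H\in P(S)\cap I^+$ that is homogeneous for $\vec{T}$.

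Finally I would read off that this same $H$ is homogeneous for the original $(1,S)$-sequence. Specializing the definition of homogeneity for $\vec{T}$ to length $n=1$: for any $\alpha,\beta\in H$ with $\alpha<\beta$ we have $S_\alpha=T_\alpha=T_\beta\cap\alpha=S_\beta\cap\alpha$, which is exactly the condition that $H$ is homogeneous for $\vec{S}$. This yields the desired conclusion. There is no real combinatorial obstacle here; the only points requiring care are the two bookkeeping checks just made, namely that the ``forget all but the first coordinate'' sequence $\vec{T}$ is a legitimate $(\omega,S)$-sequence and that its $(\omega,S)$-homogeneity collapses, at the $n=1$ level, to $(1,S)$-homogeneity. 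The one hypothesis to keep in mind when applying Theorem \ref{theorem_omega_S} is the regularity of $\kappa$, addressed in the first paragraph.
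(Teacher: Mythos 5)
Your proof is correct and follows essentially the same route as the paper's: the paper likewise extends the given $(1,S)$-sequence to an $(\omega,S)$-sequence and applies clause (6) of Theorem \ref{theorem_omega_S}, merely saying ``any extension'' where you write down the explicit one $T_{\alpha_1,\ldots,\alpha_n}=S_{\alpha_1}$. Your added check that $S\in\R(I)^+$ forces $\kappa$ to be regular (so that Theorem \ref{theorem_omega_S} applies) is a harmless extra precaution that the paper leaves implicit.
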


\begin{proof}
Let $\vec{S}_*$ be any $(\omega,S)$-sequence extending $\vec{S}$. Since $S\in\R(I)^+$ there is a set $H\subseteq S$ in $I^+$ which is homogeneous for $\vec{S}_*$. Clearly $H$ is also homogeneous for $\vec{S}$.
\end{proof}

In order to prove a certain reflection result (see Theorem \ref{theorem_ramsey_reflection} below), we will use a characterization of Ramsey sets which is given in terms of elementary embeddings; the analogue of this characterization for Ramsey cardinals is essentially due to Michell \cite{MR534574} and was further explored by Gitman \cite{MR2830415}. Recall that a transitive set $M\models \ZFC^-$ of size $\kappa$ with $\kappa\in M$ is called a \emph{weak $\kappa$-model}. If $M$ is a weak $\kappa$-model we say that an $M$-ultrafilter $U$ on $\kappa$ is \emph{weakly amenable} if for every sequence $\<X_\alpha\st\alpha<\kappa\>$ in $M$ of subsets of $\kappa$, the set $\{\alpha<\kappa\st X_\alpha\in U\}$ is an element of $M$. An $M$-ultrafilter $U$ on $\kappa$ is \emph{countably complete} if whenever $\<A_n\st n<\omega\>$ is a sequence of elements of $U$, possibly external to $M$, it follows that $\bigcap_{n<\omega}A_n\neq\emptyset$. Taking $S=\kappa$ in the following theorem one obtains Mitchell's characterization of Ramsey cardinals \cite[Proposition 2.8(3)]{MR2830415}.

\begin{theorem}[Mitchell]\label{theorem_ramsey_equiv}
Suppose $\kappa$ is a regular cardinal and $S\subseteq\kappa$. Then $S\in\R([\kappa]^{<\kappa})^+$ (i.e. $S$ is a Ramsey set) if and only if for every $A\subseteq\kappa$ there is a weak $\kappa$-model $M$ with $A,S\in M$ for which there exists a weakly amenable countably complete $M$-ultrafilter $U$ on $\kappa$ with $S\in U$.
\end{theorem}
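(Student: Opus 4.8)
The plan is to prove both directions of the equivalence by the standard embeddings-from-partitions technology underlying Mitchell's theorem for Ramsey cardinals (the case $S=\kappa$), adapting each step so that the homogeneous sets, and hence the generic indiscernibles, are captured inside $S$ and so that $S$ itself lands in the ultrafilter. Throughout I would freely pass between the various formulations of $S\in\R([\kappa]^{<\kappa})^+$ supplied by Theorem \ref{theorem_omega_S} and Proposition \ref{proposition_4_5_6}; in particular, for the forward direction I would use the indiscernibles formulation (clause (3), with the trivial ideal $[\kappa]^{<\kappa}$, so that $I^+$ means subsets of $\kappa$ of size $\kappa$), and for the reverse direction it suffices, by the equivalence of clauses (1) and (4), to produce a size-$\kappa$ homogeneous subset of $S$ for an arbitrary coloring $f:[S]^{<\omega}\to 2$.

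For the forward direction, assume $S\in\R([\kappa]^{<\kappa})^+$ and fix $A\subseteq\kappa$. First I would build a continuous $\elesub$-increasing chain $\langle N_\alpha\mid\alpha<\kappa\rangle$ of elementary submodels of a structure $\langle H_{\kappa^+},\in,A,S,<^*\rangle$ (where $<^*$ well-orders $H_{\kappa^+}$, providing definable Skolem functions), arranged so that $A,S\in N_0$, each $|N_\alpha|<\kappa$, each $\kappa_\alpha:=N_\alpha\cap\kappa$ is an ordinal, and the chain is coded into later models (so that $N_\alpha$ is uniformly definable from $\kappa_\alpha$). Then $M=\bigcup_{\alpha<\kappa}N_\alpha$ is a weak $\kappa$-model with $A,S\in M$ and $M\cap\kappa=\kappa$, and $C=\{\kappa_\alpha\mid\alpha<\kappa\}$ is a club. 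Applying Ramseyness of $S$ (in the club-sensitive form of Proposition \ref{proposition_4_5_6}) to the expanded structure $\langle M,\in,A,S,\langle N_\alpha\rangle\rangle$, I would extract a set of indiscernibles $H\subseteq S\cap C$ of size $\kappa$ and define
\[U=\{X\in P(\kappa)\cap M\mid H\setminus X\text{ is bounded in }\kappa\}.\]
That $U$ is a normal $M$-ultrafilter follows from indiscernibility of $H$; that $S\in U$ is immediate from $H\subseteq S$; and countable completeness follows because an $\omega$-sequence of tails of $H$ has a common tail (using $\cf\kappa>\omega$). The crucial point is weak amenability, which I would obtain by showing $U\cap N_\alpha\in M$ for every $\alpha$: picking any $\eta\in H$ above $\kappa_\alpha$, indiscernibility of $H$ over $N_\alpha$ gives $U\cap N_\alpha=\{X\in P(\kappa)\cap N_\alpha\mid\eta\in X\}$, which is an element of $M$; weak amenability for an arbitrary $\kappa$-sequence $\langle X_\xi\mid\xi<\kappa\rangle\in M$ then follows since such a sequence lies in some $N_\alpha$.

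For the reverse direction, fix $f:[S]^{<\omega}\to 2$ and let $A\subseteq\kappa$ code $f$ together with $S$. Applying the hypothesis to $A$, obtain a weak $\kappa$-model $M\ni A,S$ with a weakly amenable countably complete $M$-ultrafilter $U$ with $S\in U$; forming $\Ult(M,U)$ gives an embedding $j:M\to N$ with $\crit j=\kappa$ and $\kappa\in j(S)$, and one checks that every $X\in U$ is unbounded in $\kappa$, hence of size $\kappa$. I would then construct a homogeneous set by a recursion of length $\kappa$: maintaining an increasing sequence $\langle\xi_\eta\mid\eta<\kappa\rangle$ of elements of $S$ realizing the ``generic type'' determined by $U$ (a tuple $a$ followed by the generic point behaves like $a$ followed by $\kappa$, i.e.\ membership decisions are read off from $U$), at stage $\eta$ the set of admissible next points is the intersection of the fewer-than-$\kappa$ many $U$-large constraint sets coming from the already-chosen tuples. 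Weak amenability guarantees that this family of constraints is an element of $M$, so $M$-$\kappa$-completeness of $U$ makes the intersection $U$-large, hence unbounded; intersecting with $S$ and with a tail above $\sup_{\zeta<\eta}\xi_\zeta$, I would choose $\xi_\eta\in S$. The resulting $H=\{\xi_\eta\mid\eta<\kappa\}\subseteq S$ has size $\kappa$ and, realizing a single complete type per arity, is homogeneous for $f$, witnessing $S\in\R([\kappa]^{<\kappa})^+$.

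The main obstacle is the bookkeeping around weak amenability in both directions: in the forward direction, verifying that the ultrafilter restrictions $U\cap N_\alpha$ are captured by $M$ (which is exactly what lets the $\kappa$ indiscernibles encode a weakly amenable ultrafilter), and in the reverse direction, verifying that at each stage the relevant family of $U$-large constraint sets genuinely lies in $M$ so that $\kappa$-completeness can be invoked to keep the pool of admissible continuation points unbounded. Countable completeness plays the supporting roles of guaranteeing well-foundedness of $\Ult(M,U)$ in the reverse direction and of furnishing the common-tail argument in the forward direction.
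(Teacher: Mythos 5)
Your proposal is correct in outline. The forward direction is essentially the paper's argument (which follows Gitman's): use Ramseyness of $S$ to extract a size-$\kappa$ set of indiscernibles $H\subseteq S\cap C$ and let $U$ be the tail filter on $P(\kappa)\cap M$ generated by $H$. The one step you assert rather than derive is that $H$ is indiscernible \emph{over} each $N_\alpha$, i.e.\ for formulas with arbitrary parameters from $N_\alpha$. Plain indiscernibility for the expanded structure (even with the chain coded in) does not give this, and it is precisely here that the regressive-function form of Ramseyness is needed: one colors each tuple by a regressive code for the least witness to a failure of indiscernibility over parameters below its minimum, and a homogeneous set for that coloring is a set of \emph{good} indiscernibles. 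The paper routes explicitly through good indiscernibles for $(L_\kappa[A,S],A,S)$ for exactly this reason; without goodness neither the ultrafilter property of $U$ nor your computation of $U\cap N_\alpha$ goes through.

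Your reverse direction takes a genuinely different route. The paper forms the product ultrafilters $U^n$ from weak amenability, applies $j_{U^n}$ to $f\restrict[S]^n$ to find the generic value, pulls the resulting $U^n$-large set back to a set $H_n\in U$, and intersects the countably many $H_n$, invoking countable completeness only to see that the intersection is unbounded. Your transfinite recursion realizing the generic type is an unwound version of the same mathematics (in effect a direct proof of the Rowbottom-type lemma the paper cites), and it buys a more hands-on picture of where each hypothesis is used, at the cost of more delicate bookkeeping. Two points there need care. First, the set of points chosen by stage $\eta$ is built externally and need not be an element of $M$, so you cannot literally apply $M$-$\kappa$-completeness to a family of constraint sets indexed by its tuples; the repair is to define inside $M$, using weak amenability, a family indexed by all of $[\kappa]^{<\omega}$ (replacing a constraint set by $\kappa$ when it fails to be in $U$), take a diagonal intersection, and run the recursion inside that single $U$-large set. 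Second, the equivalence of clauses (1) and (4) of Theorem \ref{theorem_omega_S} is stated only for ideals containing $\NS_\kappa$, so for $I=[\kappa]^{<\kappa}$ you should either run the recursion directly for a regressive $f:[S]^{<\omega}\to\kappa$ (which it handles without change, since regressiveness forces the generic value below $\kappa$) or justify the reduction to $2$-colorings separately.
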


\begin{proof} Since the proof of this theorem is a straightforward modification of arguments appearing in \cite{MR2710923} and \cite{MR2830415}, we provide a brief sketch together with citations to more detailed arguments.\footnote{The author would like to thank Victoria Gitman for a helpful conversation regarding this argument.}


Suppose $S$ is Ramsey. Fix $A\subseteq\kappa$. We follow the argument in \cite[Section 4]{MR2830415}, which is also given in more detail in \cite[Chapter 2]{MR2710923}. First we argue that there is a set $H\in [\kappa]^\kappa$ of good indiscernibles for $(L_\kappa[A,S],A,S)$ (recall that a set of indiscernibles $H\subseteq\kappa$ for $(L_\kappa[A,S],A,S)$ is \emph{good} if for all $\gamma\in I$, $\gamma$ is a cardinal, $(L_\gamma[A,S],A,S)\prec(L_\kappa[A,S],A,S)$ and $I\setminus\gamma$ is a set of indiscernibles for $(L_\kappa[A,S],A,S,\xi)_{\xi\in \gamma}$). Using the argument for \cite[Lemma 2.43]{MR2710923}, it follows that there is a club $C\subseteq\kappa$ and a regressive function $h:[C]^{<\omega}\to\kappa$ such that any homogeneous set for $h$ is a set of good indiscernibles for $(L_\kappa[A,S],A,S)$. Since $S$ is Ramsey, it follows that $S\cap C$ is Ramsey, and thus the regressive function $h:[S\cap C]^{<\omega}\to \kappa$ has a homogeneous set $H\in P(S\cap C)\cap[\kappa]^\kappa$. Using the fact that $H$ is a good set of indiscernibles for $(L_\kappa[A,S],A,S)$, and by following the argument for \cite[Theorem 2.35, pp. 104--114]{MR2710923}, one can construct a weak $\kappa$-model $M$ with $A,S\in M$ for which there is a weakly amenable countably complete $M$-ultrafilter $U$ on $\kappa$ such that a set $X\in P(\kappa)^M$ is in $U$ if and only if there exists $\alpha<\kappa$ with $H\setminus \alpha\subseteq X$ (see \cite[Lemma 2.44.12]{MR2710923} for this characterization of $U$). Since $H\subseteq S\cap C$, it follows that $S\in U$.

Conversely, suppose that for every $A\subseteq\kappa$ there is a weak $\kappa$-model $M$ with $A,S\in M$ for which there is a weakly amenable countably complete $M$-ultrafilter $U$ on $\kappa$ with $S\in U$. To see that $S$ is Ramsey, fix a regressive function $f:[S]^{<\omega}\to\kappa$. We follow the proof of \cite[Theorem 3.10]{MR2830415}. Let $M$ be a weak $\kappa$-model with $f\in M$ and let $U$ be a weakly amenable countably complete $M$-ultrafilter with $S\in U$. Using the weak amenability of $U$ we can define the product ultrafilters $U^n$ on $P(\kappa^n)^M$ for all $n<\omega$ as follows. Let $U^0=U$. Given $U^n$ let $U^{n+1}$ be the ultrafilter on $\kappa^n\times\kappa$ defined by $X\in U^{n+1}$ if and only if $X\in P(\kappa^n\times\kappa)^M$ and $\{\vec{\alpha}\in \kappa^n\st \{\xi<\kappa\st \vec{\alpha}\concat \xi\in X\}\in U\}\in U^n$. Since $S\in U$, it follows by induction that $S^n\in U^n$ for all $n<\omega$. For each $n<\omega$, let $j_{U^n}:M\to N_n$ be the ultrapower of $M$ by $U^n$ and let $f_n=f\restrict[S]^n$. Recall that for each $n<\omega$, the critical point of $j_{U^n}$ is $\kappa$ and $X\in U^n$ if and only if $(\kappa,j_U(\kappa),\ldots,j_{U^{n-1}}(\kappa))\in j_{U^n}(X)$ (see \cite[Lemma 2.33]{MR2710923}). Fix $n<\omega$. By elementarity $j_{U^n}(f_n)$ is regressive and hence $j_{U^n}(f_n)(\kappa,j_U(\kappa),\ldots,j_{U^{n-1}}(\kappa))=\eta<\kappa$. Thus $H_n'=\{\vec{\alpha}\in S^n\st f_n(\vec{\alpha})=\eta\}\in U^n$. By \cite[Proposition 2.5]{MR2830415}, there is a set $H_n\in U$ such that for all increasing sequences $\alpha_1<\cdots<\alpha_n$ from $H_n$ we have $(\alpha_1,\ldots,\alpha_n)\in H_n'$. Thus, $H_n$ is homogeneous for $f_n$. Notice that $H=\bigcap_{n<\omega} H_n$ must have size $\kappa$ because if $H$ were bounded, say $H\subseteq\alpha<\kappa$, then the set $(\kappa\setminus\alpha)\cap \bigcap_{n<\omega}H_n$ would be empty, which is impossible by countable completeness since $\kappa\setminus\alpha\in U$. Since $H$ is homogeneous for $f$ we have established $S\in\R([\kappa]^{<\kappa})$.
\end{proof}

Theorem \ref{theorem_ramsey_equiv} naturally leads to the following characterization of $\R([\kappa]^{<\kappa})$ due to Mitchell in terms of elementary embeddings (see \cite{MR2830415} for more information).

\begin{theorem}[Mitchell]\label{theorem_gitman}
A set $S\subseteq\kappa$ is Ramsey, or, in other words, $S\in\R([\kappa]^{<\kappa})^+$, if and only if for every $A\subseteq\kappa$ there is a weak $\kappa$-model $M$ with $A,S\in M$ and there is an elementary embedding $j:M\to N$ such that
\begin{enumerate}
\item The critical point of $j$ is $\kappa$.
\item $N$ is transitive.
\item $P(\kappa)^M=P(\kappa)^N$
\item Whenever $\<A_n\st n<\omega\>$ is a sequence of elements of $P(\kappa)^M$ which is possibly external to $M$ and $\kappa\in j(A_n)$ for all $n<\omega$, then $\bigcap_{n<\omega}A_n\neq\emptyset$.
\item $\kappa\in j(S)$.
\end{enumerate}
\end{theorem}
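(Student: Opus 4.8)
The plan is to deduce this embedding characterization from the ultrafilter characterization already established in Theorem~\ref{theorem_ramsey_equiv}, using the standard correspondence between weakly amenable countably complete $M$-ultrafilters on $\kappa$ and their ultrapower embeddings. There are two directions, and in each I would pass between an $M$-ultrafilter $U$ on $\kappa$ and an embedding $j\colon M\to N$ by means of the relation $X\in U\iff\kappa\in j(X)$, which holds for $X\in P(\kappa)^M$ once one knows $\crit(j)=\kappa$ and $[\id]_U=\kappa$.

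For the forward direction, suppose $S$ is Ramsey and fix $A\subseteq\kappa$. Applying Theorem~\ref{theorem_ramsey_equiv} yields a weak $\kappa$-model $M$ with $A,S\in M$ and a weakly amenable countably complete $M$-ultrafilter $U$ on $\kappa$ with $S\in U$. I would then take $j=j_U\colon M\to N=\Ult(M,U)$ to be the ultrapower map and verify properties (1)--(5) in order. Property (1) follows from normality and nonprincipality of $U$ together with the computation $[\id]_U=\kappa$; property (2) follows by passing to the transitive collapse, the ultrapower being well-founded precisely because $U$ is countably complete, since an infinite descending $\in$-chain in the ultrapower would produce a sequence of sets in $U$ with empty intersection. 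Property (3) is exactly the manifestation of weak amenability of $U$ at the level of the ultrapower. Property (4) is a restatement of countable completeness of $U$, once one observes that for $X\in P(\kappa)^M$ one has $\kappa\in j(X)$ iff $X\in U$, so $\kappa\in j(A_n)$ for all $n$ means $A_n\in U$ for all $n$. Finally, property (5) is immediate from $S\in U$.

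Conversely, assume the embedding hypothesis, fix $A\subseteq\kappa$, and let $M$, $N$, $j$ be as provided. I would define $U=\{X\in P(\kappa)^M\st\kappa\in j(X)\}$ and check, using elementarity of $j$ and $\crit(j)=\kappa$, that $U$ is a normal nonprincipal $\kappa$-complete $M$-ultrafilter on $\kappa$ of the kind required by Theorem~\ref{theorem_ramsey_equiv}; property (3) then yields weak amenability of $U$, property (4) yields countable completeness, and property (5) gives $S\in U$. Theorem~\ref{theorem_ramsey_equiv} applied to this $U$ concludes that $S$ is Ramsey. The bulk of the work is routine, the genuinely delicate point being the equivalence, in property (3), between weak amenability of $U$ and the equality $P(\kappa)^M=P(\kappa)^N$; this is a standard but nontrivial lemma, and since all of its constituents already appear in \cite{MR2710923} and \cite{MR2830415} (the same references underlying Theorem~\ref{theorem_ramsey_equiv}), I would cite it rather than reprove it.
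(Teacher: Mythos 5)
Your proposal is correct and follows exactly the route the paper intends: the paper gives no separate proof of Theorem~\ref{theorem_gitman}, stating only that it ``naturally leads'' from Theorem~\ref{theorem_ramsey_equiv} via the standard correspondence between weakly amenable countably complete $M$-ultrafilters and their ultrapower embeddings, with a citation to \cite{MR2830415}. Your verification of properties (1)--(5) from the ultrafilter side (well-foundedness from countable completeness, $P(\kappa)^M=P(\kappa)^N$ from weak amenability, etc.) and the converse extraction of $U=\{X\in P(\kappa)^M\st\kappa\in j(X)\}$ is precisely the standard argument being invoked.
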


Baumgartner \cite[Theorem 4.4]{MR0540770} gave a characterization of Ramsey cardinals which is similar to his characterization of ineffable cardinals (Theorem \ref{theorem_baumgarnter_ineffability} above): $\kappa$ is Ramsey if and only if it is pre-Ramsey, $\Pi^1_1$-indescribable and additionally the pre-Ramsey ideal and the $\Pi^1_1$-indescribability ideal generate a nontrivial ideal which equals the Ramsey ideal; moreover, reference to these ideals cannot be removed from this characterization. Feng \cite[Theorem 4.8]{MR1077260} generalized Baumgartner's characterization of Ramseyness. Taking $m=1$ and $n=0$ in the following theorem yields Baumgartner's result.

\begin{theorem}[Feng]
Suppose $\kappa$ is a cardinal and let $I_{-1}=[\kappa]^{<\kappa}$ and $I_0=\NS_\kappa$. Let $1\leq m<\omega$ and $n\in\{-1,0\}$. Then $\kappa\in\R^m(I_n)^+$ if and only if both of the following hold.
\begin{enumerate}
\item $\kappa\in\R_0(\R^{m-1}(I_n))^+$ and $\kappa\in \Pi^1_{n+2m}(\kappa)^+$.
\item The ideal generated by $\R_0(\R^{m-1}(I_n))\cup\Pi^1_{n+2m}(\kappa)$ is nontrivial and equals $\R^m(I_n)$.
\end{enumerate}
Moreover, reference to the ideals in the above characterization cannot be removed because the least cardinal $\kappa$ such that $\kappa\in\R_0(\R^{m-1}(I_n))^+$ and $\kappa\in\Pi^1_{n+2m}(\kappa)^+$ is not in $\R(I_n)^+$.
\end{theorem}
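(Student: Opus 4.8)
The two displayed conditions in the theorem package together into the single assertion that, whenever the relevant ideals are nontrivial, $\R^m(I_n)=\overline{\R_0(\R^{m-1}(I_n))\cup\Pi^1_{n+2m}(\kappa)}$. So by Remark~\ref{remark_ideal_generated} the proof splits into the containment ``$\supseteq$'', which is equivalent to $\R^m(I_n)^+\subseteq\R_0(\R^{m-1}(I_n))^+\cap\Pi^1_{n+2m}(\kappa)^+$, the reverse containment ``$\subseteq$'', i.e. $\overline{\R_0(\R^{m-1}(I_n))\cup\Pi^1_{n+2m}(\kappa)}^+\subseteq\R^m(I_n)^+$, and the final non-removability clause. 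The engine for all three parts is an elementary induction on $k$ showing that the predicate ``$X\in\R^k(I_n)^+$'' is expressible over $(V_\kappa,\in,X)$ by a $\Pi^1_{n+2k+1}$ formula: the base cases are ``$X$ unbounded'' ($\Pi^1_0$, for $n=-1$) and ``$X$ stationary'' ($\Pi^1_1$, for $n=0$), and each application of $\R$ prefixes a block $\forall f\,\exists H$ to a $\Pi^1_{n+2k-1}$ matrix, adding exactly two quantifier alternations. I would record this complexity bound first, as both later steps invoke it for the specific level $n+2m$.

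For the indescribability half of ``$\supseteq$'' I would argue by induction on $m$ that $\R^m(I_n)^+\subseteq\Pi^1_{n+2m}(\kappa)^+$, the crux being the \emph{plus-two lemma}: if every regressive $f\colon[S]^{<\omega}\to\kappa$ has a $\Pi^1_\ell$-indescribable homogeneous set, then $S$ is $\Pi^1_{\ell+2}$-indescribable (applied with $\ell=n+2m-2$, using the inductive hypothesis $\R^{m-1}(I_n)^+\subseteq\Pi^1_{n+2m-2}(\kappa)^+$). Given a $\Pi^1_{\ell+2}$ sentence $\forall X\,\exists Y\,\theta$ with $\theta\in\Pi^1_\ell$ true at $(V_\kappa,\in,A)$, one codes the two outer second-order quantifiers into a regressive coloring so that any homogeneous set $H$ furnishes, through indiscernibility, a uniform witness for $\exists Y$ and a reflection point for the $\Pi^1_\ell$ matrix $\theta$ extracted from the fact that $H$ is itself $\Pi^1_\ell$-indescribable; the reflection point lands inside $S$ because $H\subseteq S$. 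Once indescribability is in hand, the $\R_0(\R^{m-1}(I_n))^+$ half is a short reflection: given a regressive $f$ and a club $C$, normality of $\R^m(I_n)$ gives a global homogeneous $H^*\subseteq S\cap C$ in $\R^{m-1}(I_n)^+$, and since the statement ``$H^*\in\R^{m-1}(I_n)^+$ and $H^*$ is homogeneous for $f$'' is $\Pi^1_{n+2m-1}$ while $S$ is $\Pi^1_{n+2m}$-indescribable (with filter refining the club filter), this statement reflects to some $\alpha\in S\cap C$, yielding the required $\R^{m-1}(I_n)$-positive homogeneous subset of $\alpha$.

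The reverse containment is, pleasingly, the cleanest step. Suppose $S$ lies in the positive class of the generated ideal, which unwinds to the statement that $S\setminus A\in\R_0(\R^{m-1}(I_n))^+$ for every $A\in\Pi^1_{n+2m}(\kappa)$, and suppose toward a contradiction that some regressive $f$ has no homogeneous $H\subseteq S$ in $\R^{m-1}(I_n)^+$. The sentence $\varphi$ asserting ``$f$ has no $\R^{m-1}(I_n)$-positive homogeneous subset of $S$'' is $\Pi^1_{n+2m}$ by the complexity bound and holds at $(V_\kappa,\in,f,S)$. Put $A=\{\alpha<\kappa:(V_\alpha,\in,f\cap V_\alpha,S\cap\alpha)\models\neg\varphi\}$; were $A$ to be $\Pi^1_{n+2m}$-indescribable, reflecting the true $\Pi^1_{n+2m}$ sentence $\varphi$ into $A$ would produce $\alpha\in A$ at which $\varphi$ holds, contradicting the definition of $A$. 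Hence $A\in\Pi^1_{n+2m}(\kappa)$, so $S\setminus A\in\R_0(\R^{m-1}(I_n))^+$; applying $\R_0$-positivity to $f$ and an arbitrary club then produces some $\alpha\in S\setminus A$ admitting a homogeneous $H\subseteq S\cap\alpha$ that is $\R^{m-1}(I_n)$-positive at $\alpha$, which forces $\alpha\in A$ — the desired contradiction.

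Finally, for the ``moreover'' clause I would let $\kappa$ be least with $\kappa\in\R_0(\R^{m-1}(I_n))^+$ and $\kappa\in\Pi^1_{n+2m}(\kappa)^+$ and construct a regressive $f$ (equivalently an $(\omega,\kappa)$-sequence) witnessing $\kappa\in\R^m(I_n)$: minimality guarantees that no $\xi<\kappa$ enjoys both properties, and $f$ is designed so that any $\R^{m-1}(I_n)$-positive homogeneous set would have to concentrate on ordinals $\xi$ sharing a single ``reason'' for failing one of the two properties, which, played against the $\R_0$-positivity forcing cofinally many genuine reflection points, is contradictory; this is the Baumgartner--Feng minimality construction, which I would follow closely. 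I expect the plus-two indescribability lemma of the second paragraph to be the main obstacle. In the $[\kappa]^{<\kappa}$ case one can read $\Pi^1_1$-indescribability off the embeddings of Theorem~\ref{theorem_gitman}, but for a general $\R^{m-1}(I_n)$-positive homogeneous set there is no ready-made embedding, so the reflection of the two outer quantifiers must be wrung purely combinatorially out of homogeneity, and pinning the indescribability degree to exactly $\ell+2$ (rather than being off by one) is precisely what the delicate coloring design — the same design underpinning the complexity bound of the first paragraph — is needed for.
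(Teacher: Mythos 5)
The paper states this result as Feng's theorem without proof, but it proves the generalization to all $\Pi^1_\beta(\kappa)$ in Section \ref{section_indescribability_in_finite_ramseyness} (Theorem \ref{theorem_finite_ideal_diagram} and Corollaries \ref{corollary_indescribability_in_finite_ramseyness}--\ref{corollary_necessity}), and specializing $\beta\in\{-1,0\}$ recovers the statement. Your treatment of the main biconditional is essentially that proof: the complexity bound $\Pi^1_{n+2k+1}$ for ``$X\in\R^k(I_n)^+$'' is Lemma \ref{lemma_complexity}, your ``plus-two lemma'' is Lemma \ref{lemma_baumgartner_bagaria} (stated there for $(1,S)$-sequences, which is equivalent via Corollary \ref{corollary_one_S}), and both containments are carried out by exactly the reflection arguments you describe. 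That part is correct.

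The genuine gap is in the ``moreover'' clause. What must be shown is that if $\kappa\in\R^m(I_n)^+$ then there exist $\xi<\kappa$ with $\xi\in\R_0(\R^{m-1}(I_n\restrict\xi))^+$ and $\xi\in\Pi^1_{n+2m}(\xi)^+$ (so the \emph{least} such cardinal is not $\R^m(I_n)$-positive). The $\R_0$-positivity of $\xi$ reflects cheaply, since it is $\Pi^1_1$-expressible over $V_\xi$. But the $\Pi^1_{n+2m}$-indescribability of $\xi$ is a $\Pi^1_{n+2m+1}$ assertion about $V_\xi$, and the $\Pi^1_{n+2m}$-indescribability of $\kappa$ you have already extracted is one level too weak to reflect it; this is precisely why the conjunction does not follow from condition (1) alone and why a separate argument is needed. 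The paper supplies it via Theorem \ref{theorem_ramsey_reflection} and Corollary \ref{corollary_ramseyness_reflects_indescribability} (a weak $\kappa$-model/embedding argument in the style of Baumgartner's subtle-cardinal reflection), placing the set of such $\xi$ in $\R^1([\kappa]^{<\kappa})^*\subseteq\R^m(I_n)^*$. Your proposed alternative --- constructing, at the minimal $\kappa$, a regressive $f$ whose positive homogeneous sets would ``concentrate on ordinals sharing a single reason for failing one of the two properties'' --- is not carried out, and as described it does not identify where the extra reflection strength comes from; the coloring design alone cannot push a $\Pi^1_{n+2m+1}$ property of $V_\xi$ down from the $\Pi^1_{n+2m}$-indescribability of $\kappa$. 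You need to isolate and prove the analogue of Theorem \ref{theorem_ramsey_reflection} before the minimality argument closes.
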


Feng \cite[Theorem 5.2]{MR1077260} also proved that the $\Pi_\alpha$-Ramsey cardinals form a hierarchy which is strictly increasing in consistency strength.

\begin{theorem}[Feng]
Let $\<f_\alpha\st\alpha<\kappa^+\>$ be a sequence of canonical functions on a regular uncountable cardinal $\kappa$.\footnote{See \cite{MR0540770} and \cite{MR1077260} for the definition and relevant facts concerning canonical sequences of functions.} If $\kappa$ is $\Pi_{\alpha+1}$-Ramsey and $\alpha<\kappa^+$, then $\{\gamma<\kappa\st\text{$\gamma$ is $\Pi_{f_\alpha(\gamma)}$-Ramsey}\}$ is in the $\Pi_{\alpha+1}$-Ramsey filter on $\kappa$.
\end{theorem}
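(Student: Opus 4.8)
The plan is to exploit an elementary-embedding characterization of Ramsey-type positivity together with the defining property of canonical functions, namely that $j(f_\alpha)(\kappa)=\alpha$ for embeddings with critical point $\kappa$ arising from a normal ultrafilter. Write $J=\R(I^\kappa_\alpha)=I^\kappa_{\alpha+1}$, so that $\kappa$ being $\Pi_{\alpha+1}$-Ramsey means $\kappa\in J^+$, and set $C=\{\gamma<\kappa\mid\gamma\text{ is }\Pi_{f_\alpha(\gamma)}\text{-Ramsey}\}$. Since $J$ is a normal ideal (Feng), $C$ lies in the dual filter $J^*$ if and only if its complement $B=\kappa\setminus C$ lies in $J$, i.e.\ $B\notin\R(I^\kappa_\alpha)^+$. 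I would therefore assume toward a contradiction that $B\in\R(I^\kappa_\alpha)^+$ and extract a suitable embedding.

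The first ingredient is a strengthening of Theorem \ref{theorem_gitman}: for a normal ideal $I\supseteq\NS_\kappa$, a set $S$ is in $\R(I)^+$ if and only if for every $A\subseteq\kappa$ there is a weak $\kappa$-model $M$ containing $A,S$ and a weakly amenable, countably complete $M$-ultrafilter $U$ with $S\in U$ and $U\cap I=\emptyset$ (so that $U$ concentrates on $I$-positive sets); this is proved by induction on $\alpha$ for $I=I^\kappa_\alpha$, the base case $I=[\kappa]^{<\kappa}$ being Theorem \ref{theorem_ramsey_equiv}. Applying it with $I=I^\kappa_\alpha$, $S=B$, and $A$ coding $f_\alpha$, I would arrange that $M$ correctly computes $f_\alpha$ and the ideals $I^\kappa_\xi$ for $\xi\le\alpha$ and that $\alpha<(\kappa^+)^M$, then pass to the ultrapower $j=j_U\colon M\to N=\Ult(M,U)$. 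The derived ultrafilter is normal, since regressive functions in $M$ are pressed below $\kappa$ by $j$; hence $[\id]_U=\kappa$ and the canonical-function identity yields $j(f_\alpha)(\kappa)=[f_\alpha]_U=\alpha$. As $B\in U$ we have $\kappa\in j(B)$, and unpacking the definition of $B$ inside $N$—using that $\Pi_\beta$-Ramseyness is expressible over rank-initial segments via the complexity computation of Section \ref{section_describing_ramseyness}—this says exactly that $N\models$ ``$\kappa$ is not $\Pi_{j(f_\alpha)(\kappa)}$-Ramsey'', i.e.\ $N\models$ ``$\kappa$ is not $\Pi_\alpha$-Ramsey''.

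To finish I would derive a contradiction by showing $N\models$ ``$\kappa$ is $\Pi_\alpha$-Ramsey'', that is $N\models\kappa\in(I^\kappa_\alpha)^+$. This is precisely where the extra hypothesis $U\cap I^\kappa_\alpha=\emptyset$ is used: weak amenability and countable completeness of $U$, together with the fact that $U$ avoids $I^\kappa_\alpha$, reflect the embedding characterization of $I^\kappa_\alpha$-positivity into $N$, giving the desired positivity of $\kappa$ there. The main obstacle is exactly this transfer step, which must be run by induction on $\alpha$ in tandem with the strengthened characterization of the previous paragraph: at successor stages one reapplies the Ramsey-operator characterization, while at limit stages one uses $I^\kappa_\lambda=\bigcup_{\xi<\lambda}I^\kappa_\xi$ together with $j(f_\lambda)(\kappa)=\lambda$ to pin down the correct level. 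Verifying the canonical-function identity inside a weak $\kappa$-model rather than in $V$, and ensuring that the nested ultrafilters witnessing $I^\kappa_\alpha$-positivity survive into $N$, are the delicate points; the remainder is routine bookkeeping with normal ideals.
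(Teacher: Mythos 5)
This theorem is quoted from Feng \cite[Theorem 5.2]{MR1077260}; the paper gives no proof of it, so your proposal can only be judged on its own terms, and as written it has a genuine gap at its central step. Everything hinges on the ``transfer step'': showing $N\models$ ``$\kappa\in (I^\kappa_\alpha)^+$'' for the ultrapower $N=\Ult(M,U)$ of a weak $\kappa$-model. You assert this follows from $U\cap I^\kappa_\alpha=\emptyset$ together with weak amenability and countable completeness, but no mechanism is given, and the obvious candidates fail. If one tries to verify the positivity of $\kappa$ directly inside $N$, one must, for each regressive $f\in P(\kappa)^N=P(\kappa)^M$, exhibit a homogeneous set $H$ that is an \emph{element} of $N$ and that $N$ believes lies in $(I^\kappa_{\alpha-1})^+$; the homogeneous sets supplied by $V$ (or by the product-ultrafilter construction, which produces $\bigcap_n H_n$ external to $M$) need not belong to $M$, and this is precisely the obstruction that makes the hierarchy strict. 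If instead one tries to transfer the $\Pi^1_{2\cdot(1+\alpha)}$ sentence expressing ``$\kappa\in(I^\kappa_\alpha)^+$'' (Lemma \ref{lemma_complexity}) into $N$ by arguing that its reflecting set lies in $U$, one needs $\Pi^1_{2\cdot(1+\alpha)}(\kappa)\subseteq I^\kappa_\alpha$, which is false: already $\Pi^1_2(\kappa)\not\subseteq\R([\kappa]^{<\kappa})$, as noted in Section \ref{section_introduction}. Contrast this with Theorem \ref{theorem_ramsey_reflection}, where the analogous transfer succeeds only because the failure of $\Pi^1_\beta$-indescribability in $N$ is witnessed by a single $R\in P(\kappa)^N=P(\kappa)^M$ that can be pulled back to $M$ and pushed up by elementarity; no such first-order witness is available for the failure of $\Pi_\alpha$-Ramseyness. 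So the induction you defer to ``routine bookkeeping'' is in fact the entire content of the theorem.

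Two smaller points. First, your strengthened version of Theorem \ref{theorem_gitman} (adding $U\cap I=\emptyset$) is stated as an equivalence, but the backward direction is problematic: the homogeneous set extracted from the product ultrafilters is a countable intersection of sets in $U$, and countable intersections of $I^\kappa_\alpha$-positive sets need not be $I^\kappa_\alpha$-positive. Only the forward direction is needed for your argument (take the good-indiscernible homogeneous set $H\in(I^\kappa_\alpha)^+$ and let $U$ be generated by its tails), so this is repairable, but it should not be asserted as an ``iff''. Second, your use of $[f_\alpha]_U=\alpha$ is the right idea for handling $\alpha\geq\kappa$, and the closest results the paper actually proves (Theorem \ref{theorem_r_reflects_pi1n} and Corollary \ref{corollary_ramseyness_reflects_indescribability}) close the analogous reflection argument by a purely combinatorial induction: assuming the bad set is positive, using normality to fix a level, coding the witnessing regressive functions into a $(1,E)$-sequence, gluing a homogeneous set into a single regressive $F$, and contradicting the inductive hypothesis at a reflection point. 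That combinatorial route, rather than an ultrapower transfer, is what actually carries arguments of this shape.
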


\subsection{Transfinite indescribability}\label{section_bagaria}

Sharpe and Welch \cite[Definition 3.21]{MR2817562} introduced a version of $\Pi^1_\xi$-indescribability for $\xi\geq\omega$ which is defined in terms of the existence of winning strategies in certain finite games. Inedpendently, Bagaria \cite[Section 4]{MR3894041} defined a natural notion of $\Pi^1_\xi$-formula for $\xi\geq\omega$ using infinitary logic, and then gave a definition of $\Pi^1_\xi$-indescribability in terms of rank initial segments of the set theoretic universe. As mentioned in Section \ref{section_introduction}, it is not difficult to see that Sharpe-Welch notion of the $\Pi^1_\xi$-indescribability of a cardinal $\kappa$ is equivalent to Bagaria's notion for $\xi<\kappa$. For the reader's convenience, we summarize Bagaria's definition, which we will use throughout the paper.

\begin{definition}[Bagaria]\label{definition_bagaria}
In what follows all quantifiers which are explicitly displayed are of second order. For any ordinal $\xi$, we say that a formula is $\Sigma^1_{\xi+1}$ if it is of the form 
\[\exists X_0,\ldots,X_k\varphi(X_0,\ldots,X_k)\]
where $\varphi(X_0,\ldots,X_k)$ is $\Pi^1_\xi$. And a formula is $\Pi^1_{\xi+1}$ if it is of the form 
\[\forall X_0,\ldots,X_k\varphi(X_0,\ldots,X_k)\]
where $\varphi(X_0,\ldots,X_k)$ is $\Sigma^1_\xi$.

If $\xi$ is a limit ordinal, we say that a formula is $\Pi^1_\xi$ if it is of the form
\[\bigwedge_{\zeta<\xi}\varphi_\zeta\]
where $\varphi_\zeta$ is $\Pi^1_\zeta$ for all $\zeta<\xi$ and the infinite conjunction has only finitely-many free second-order variables. And we say that a formula is $\Sigma^1_\xi$ if it is of the form 
\[\bigvee_{\zeta<\xi}\varphi_\zeta\]
where $\varphi_\zeta$ is $\Sigma^1_\zeta$ for all $\zeta<\xi$ and the infinite disjunction has only finitely-many free second-order variables.
\end{definition}


\begin{definition}[Bagaria]
Suppose $\kappa$ is a cardinal. A set $S\subseteq\kappa$ is \emph{$\Pi^1_\xi$-indescribable} if for all subsets $A\subseteq V_\kappa$ and every $\Pi^1_\xi$ sentence $\varphi$, if $(V_\kappa,\in,A)\models\varphi$ then there is some $\alpha\in S$ such that $(V_\alpha,\in,A\cap V_\alpha)\models\varphi$.
\end{definition}

\begin{remark}
As pointed out by Bagaria, it is clear from the definition that if $\kappa$ is $\Pi^1_\xi$-indescribable then $\xi<\kappa$. When we write $\kappa\in\Pi^1_\xi(\kappa)^+$, this indicates that $\kappa$ is $\Pi^1_\xi$-indescribable, and hence it should be understood that $\xi<\kappa$.
\end{remark}

There is a natural normal ideal associated to the $\Pi^1_\xi$-indescribability of a cardinal. The following result is due to Bagaria \cite{MR3894041}, and independently Brickhill and Welch (see \cite[Lemma 3.21]{BrickhillWelch} and \cite[Lemma 4.3.7]{Brickhill:Thesis}).


\begin{proposition}[Bagaria; Brickhill-Welch]
If $\kappa$ is a $\Pi^1_\xi$-indescribable cardinal where $\xi<\kappa$ then the collection
\[\Pi^1_\xi(\kappa)=\{X\subseteq\kappa\st\text{$X$ is not $\Pi^1_\xi$-indescribable}\}\]
is a nontrivial normal ideal on $\kappa$.
\end{proposition}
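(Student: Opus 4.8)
The plan is to verify four things: that $\Pi^1_\xi(\kappa)$ is nontrivial, that it is closed downward under $\subseteq$, that it contains every bounded subset of $\kappa$ (so that, since $\kappa$ is regular, it extends $[\kappa]^{<\kappa}$), and that it is normal, i.e.\ closed under diagonal unions; these together make $\Pi^1_\xi(\kappa)$ a normal ideal. Two of these are immediate. Nontriviality is exactly the hypothesis: since $\kappa$ is $\Pi^1_\xi$-indescribable we have $\kappa\in\Pi^1_\xi(\kappa)^+$, so $\Pi^1_\xi(\kappa)\neq P(\kappa)$. Downward closure is witness reuse: if $Y\subseteq X$ and $(A,\varphi)$ witnesses that $X$ is not $\Pi^1_\xi$-indescribable (so $(V_\kappa,\in,A)\models\varphi$ while no $\alpha\in X$ reflects $\varphi$), then, as $Y\subseteq X$, no $\alpha\in Y$ reflects $\varphi$ either, so the same pair witnesses $Y\in\Pi^1_\xi(\kappa)$.

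For the bounded sets, fix $\beta<\kappa$ and $X\subseteq\beta$. Let $A=\{\beta\}\subseteq V_\kappa$ and let $\varphi$ be the first-order (hence $\Pi^1_0$) sentence $\exists x\,(x\in A)$. Since $\beta<\kappa$ we have $\beta\in V_\kappa$, so $(V_\kappa,\in,A)\models\varphi$; but for any ordinal $\alpha$ one has $\beta\in V_\alpha$ iff $\beta<\alpha$, so for every $\alpha\leq\beta$ we get $A\cap V_\alpha=\emptyset$ and $(V_\alpha,\in,A\cap V_\alpha)\not\models\varphi$. In particular no $\alpha\in X$ reflects $\varphi$, so $X$ is not $\Pi^1_0$-indescribable; as the indescribability hierarchy is increasing, $\Pi^1_\xi$-indescribability implies $\Pi^1_0$-indescribability, whence $X$ is not $\Pi^1_\xi$-indescribable, i.e.\ $X\in\Pi^1_\xi(\kappa)$. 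The same conjunction-of-witnesses idea gives closure under finite unions directly: given witnesses $(A_0,\varphi_0)$ and $(A_1,\varphi_1)$ for $X_0,X_1\in\Pi^1_\xi(\kappa)$, the sentence $\varphi_0\wedge\varphi_1$, which is $\Pi^1_\xi$ since the class is closed under finite conjunction, together with a predicate coding the pair $(A_0,A_1)$, witnesses $X_0\cup X_1\in\Pi^1_\xi(\kappa)$.

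The substance is normality. Given $\langle X_\alpha\st\alpha<\kappa\rangle$ with each $X_\alpha\in\Pi^1_\xi(\kappa)$, I would fix for each $\alpha$ a witness $(A_\alpha,\varphi_\alpha)$ and code the sequence $\langle (A_\alpha,\varphi_\alpha)\st\alpha<\kappa\rangle$ into a single predicate $A^*\subseteq V_\kappa$, arranged so that $A^*\cap V_\gamma$ decodes, for each $\gamma$, to the restricted sequence $\langle (A_\alpha\cap V_\gamma,\varphi_\alpha)\st\alpha<\gamma\rangle$. I would then write a single sentence $\Phi$ over $(V_\kappa,\in,A^*)$ of the form ``for every ordinal $\alpha$, the $\alpha$-th coded sentence holds of the $\alpha$-th coded predicate,'' using a uniform satisfaction predicate for $\Pi^1_\xi$ sentences. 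At $V_\kappa$ this $\Phi$ holds, because each $\varphi_\alpha$ holds of $A_\alpha$ by choice of witness. At any $\gamma\in\diagonalunion_{\alpha<\kappa}X_\alpha$ there is $\alpha<\gamma$ with $\gamma\in X_\alpha$, whence $(V_\gamma,\in,A_\alpha\cap V_\gamma)\not\models\varphi_\alpha$, so the corresponding conjunct of $\Phi$ fails and $(V_\gamma,\in,A^*\cap V_\gamma)\not\models\Phi$. Thus $(A^*,\Phi)$ witnesses $\diagonalunion_{\alpha<\kappa}X_\alpha\in\Pi^1_\xi(\kappa)$. Note that, as in the finite case, indescribability of $\kappa$ is not used here: normality is purely a matter of combining witnesses.

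The main obstacle is making the single sentence $\Phi$ genuinely $\Pi^1_\xi$ and genuinely correct at every rank-initial segment. This requires (i) a coding of $V_\kappa$-sequences of pairs by a single subset of $V_\kappa$ that commutes with the operation $B\mapsto B\cap V_\gamma$, which is routine via G\"odel pairing on $V_\kappa$, and, more seriously, (ii) a \emph{uniform} $\Pi^1_\xi$ satisfaction relation (equivalently, a universal $\Pi^1_\xi$ formula) for $\Pi^1_\xi$ sentences, so that prefixing the first-order quantifier $\forall\alpha$ leaves the complexity at $\Pi^1_\xi$ and so that internal satisfaction agrees with actual satisfaction over each $(V_\gamma,\in,A^*\cap V_\gamma)$. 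I would establish (ii) by induction on $\xi$, the successor steps adding a block of second-order quantifiers and the limit steps handling Bagaria's infinite conjunctions $\bigwedge_{\zeta<\xi}\varphi_\zeta$; verifying that the infinitary satisfaction relation at limit $\xi$ is itself $\Pi^1_\xi$, with only finitely many free second-order variables, is the delicate point and is exactly the technical core developed in \cite{MR3894041} and \cite{BrickhillWelch}.
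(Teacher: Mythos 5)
The paper does not actually prove this proposition; it is quoted from Bagaria \cite{MR3894041} and Brickhill--Welch \cite{BrickhillWelch}, so there is no in-paper argument to compare against. Judged on its own terms, your proposal is the standard witness-combination proof and its architecture is sound: nontriviality is the hypothesis, downward closure is witness reuse, bounded sets are handled by a first-order sentence plus monotonicity of the hierarchy (a fact the paper itself uses freely, e.g.\ in the form $\Pi^1_{\zeta}(\kappa)\subseteq\Pi^1_{\xi}(\kappa)$ for $\zeta\leq\xi$), and normality is proved by coding the $\kappa$-many witnesses into one predicate and observing that at any $\gamma\in\diagonalunion_{\alpha<\kappa}X_\alpha$ some conjunct indexed by an $\alpha<\gamma$ fails. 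This is exactly how the cited sources proceed.

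The one caveat is that the entire mathematical weight of the argument sits in the step you defer: the existence of a uniform (universal) $\Pi^1_\xi$ satisfaction formula whose truth value agrees with actual satisfaction in every $(V_\gamma,\in,A^*\cap V_\gamma)$, together with closure of the classes $\Pi^1_\xi$ and $\Sigma^1_\xi$ (up to equivalence preserved at every rank-initial segment) under finite conjunction and under prefixing a first-order universal quantifier. At limit $\xi$ this is genuinely delicate because of Bagaria's infinitary conjunctions and the finitely-many-free-variables constraint, and you correctly name it as the technical core, but you do not carry it out; as written the proof is an outline resting on the cited lemmas rather than a self-contained argument. Two smaller points you should make explicit rather than bury in ``routine coding'': (i) the decoding of $A^*\cap V_\gamma$ only behaves correctly at limit $\gamma$ above the rank of the formulas $\varphi_\alpha$ (which, for limit $\xi$, are sets of rank roughly $\xi$), so you should either conjoin a first-order sentence that fails at the exceptional $\gamma$ or note that those $\gamma$ form a bounded set already shown to lie in the ideal; and (ii) closure under finite unions follows from downward closure plus closure under diagonal unions, so your separate conjunction-of-witnesses argument for $X_0\cup X_1$ can be dropped once normality is in hand.
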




In some cases, it will be convenient to work with a weak version of $\Pi^1_\xi$-indescribability.

\begin{definition} A set $S\subseteq\kappa$ is \emph{weakly $\Pi^1_\xi$-indescribable} if for all $A\subseteq\kappa$ and all $\Pi^1_\xi$ sentences $\varphi$, if $(\kappa,\in,A)\models\varphi$ then there is an $\alpha\in S$ such that $(\alpha,\in,A\cap\alpha)\models\varphi$.
\end{definition}

\begin{remark} It is easy to check that if $\kappa$ is inaccessible a set $S\subseteq\kappa$ is weakly $\Pi^1_\xi$-indescribable if and only if it is $\Pi^1_\xi$-indescribable, and hence
\[\Pi^1_\xi(\kappa)=\{X\subseteq\kappa\st\text{$X$ is weakly $\Pi^1_\xi$-indescribable}\}.\]
\end{remark}

Next, we show that one of Baumgartner'€™s fundamental technical lemmas from \cite{MR0384553} can be extended from $\Pi^1_n$-indescribability to Bagaria'€™s notion of $\Pi^1_\xi$-indesc\-ribability. The following lemma also extends a result of Brickhill and Welch \cite[Theorem 5.3]{BrickhillWelch} concerning their notion of \emph{$\gamma$-ineffability}, where the $\gamma$-ineffability of $\kappa$ is equivalent under the assumption $V=L$ to $\kappa\in\I(\Pi^1_\gamma(\kappa))^+$ for $\gamma<\kappa$.

\begin{lemma}\label{lemma_baumgartner_bagaria}
Suppose $S\subseteq\kappa$ and for every $(1,S)$-sequence $\vec{S}=\<S_\alpha\st\alpha\in S\>$ there is a $B\in Q$ such that $B$ is homogeneous for $\vec{S}$. If $Q\subseteq \bigcap_{\xi\in\{-1\}\cup\beta}\Pi^1_\xi(\kappa)^+$ where $\beta<\kappa$, then $S$ is a $\Pi^1_{\beta+1}$-indescribable subset of $\kappa$. (Notice that if $\beta=\eta+1$ is a successor ordinal, the result states that if $Q\subseteq\Pi^1_\eta(\kappa)^+$ where $\eta<\kappa$, then $S$ is a $\Pi^1_{\eta+2}$-indescribable subset of $\kappa$.)
\end{lemma}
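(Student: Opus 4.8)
The plan is to prove the $\Pi^1_{\beta+1}$-indescribability of $S$ directly from the definition, by contradiction. Fix a $\Pi^1_{\beta+1}$ sentence $\varphi=\forall X_0\cdots X_k\,\psi(\vec X)$ with $\psi\in\Sigma^1_\beta$ and a predicate $A\subseteq V_\kappa$ with $(V_\kappa,\in,A)\models\varphi$, and suppose toward a contradiction that no $\alpha\in S$ reflects $\varphi$. Then for each $\alpha\in S$ I may fix a tuple $\vec X^\alpha$ of subsets of $V_\alpha$ witnessing $(V_\alpha,\in,A\cap V_\alpha)\models\neg\psi(\vec X^\alpha)$, where $\neg\psi$ is $\Pi^1_\beta$. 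I will first treat the main case $\beta\geq 1$, where the hypotheses force $\kappa$ to be inaccessible: since $0\in\beta$, every $B\in Q$ is $\Pi^1_0$-indescribable, hence $\kappa$ is, hence $\kappa$ is inaccessible.

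The first real step is to code the witnesses into a single $(1,S)$-sequence in a restriction-coherent way. Since $\kappa$ is inaccessible, fix a bijection $b\colon V_\kappa\to\kappa$; the set $C=\{\alpha<\kappa : b[V_\alpha]=\alpha \text{ and } V_\alpha \text{ is closed under a fixed pairing}\}$ is club, and for $\alpha\leq\gamma$ in $C$ and $Y\subseteq V_\gamma$ one has the end-restriction identity $b[Y\cap V_\alpha]=b[Y]\cap\alpha$. Using a restriction-commuting pairing to collapse the finite tuple $\vec X^\alpha$ to one subset of $V_\alpha$, set $S_\alpha=b[\,\mathrm{code}(\vec X^\alpha)\,]\subseteq\alpha$ for $\alpha\in S\cap C$ and $S_\alpha=\emptyset$ otherwise. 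By hypothesis there is $B\in Q$ homogeneous for $\vec S=\langle S_\alpha:\alpha\in S\rangle$. Because $0\in\beta$, $B$ is stationary, so $B'=B\cap C$ is still stationary and, since $\NS_\kappa\subseteq\Pi^1_\xi(\kappa)$ for $0\leq\xi<\beta$, lies in $\bigcap_{\xi\in\{-1\}\cup\beta}\Pi^1_\xi(\kappa)^+$ while remaining homogeneous for $\vec S$. Coherence of $b$ and the pairing then yields $X_i^\alpha=X_i^\gamma\cap V_\alpha$ for all $\alpha<\gamma$ in $B'$ and each $i\leq k$, so the threads $X_i=\bigcup_{\alpha\in B'}X_i^\alpha\subseteq V_\kappa$ satisfy $X_i\cap V_\alpha=X_i^\alpha$ for every $\alpha\in B'$.

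The crux is to reach a contradiction by showing $(V_\kappa,\in,A)\models\neg\psi(\vec X)$, against the instance $(V_\kappa,\in,A)\models\psi(\vec X)$ of $\varphi$. So assume $(V_\kappa,\in,A,\vec X)\models\psi$, which is $\Sigma^1_\beta$, and reflect this to some $\alpha\in B'$, where the coherence $X_i\cap V_\alpha=X_i^\alpha$ makes the restricted parameters match, contradicting $(V_\alpha,\in,A\cap V_\alpha)\models\neg\psi(\vec X^\alpha)$. If $\beta=\eta+1$ is a successor, write $\psi=\exists\vec W\,\chi(\vec W)$ with $\chi\in\Pi^1_\eta$, fix witnesses $\vec W$ at $\kappa$, absorb $A,\vec X,\vec W$ into a single predicate, and use $B'\in\Pi^1_\eta(\kappa)^+$ to reflect the true $\Pi^1_\eta$ sentence $\chi$ down to some $\alpha\in B'$; re-quantifying gives $(V_\alpha,\in,A\cap V_\alpha)\models\psi(\vec X^\alpha)$. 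If $\beta$ is a limit, write $\psi=\bigvee_{\zeta<\beta}\psi_\zeta$ with $\psi_\zeta\in\Sigma^1_\zeta$; some disjunct $\psi_\zeta$ holds at $\kappa$, and since $\Sigma^1_\zeta\subseteq\Pi^1_{\zeta+1}$ with $\zeta+1<\beta$, the set $B'\in\Pi^1_{\zeta+1}(\kappa)^+$ reflects $\psi_\zeta$ to some $\alpha\in B'$, again giving $(V_\alpha,\in,A\cap V_\alpha)\models\psi(\vec X^\alpha)$. Either way this contradicts the choice of $\vec X^\alpha$.

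I expect the main obstacle to be organizing this final reflection uniformly—keeping track of exactly which degree of indescribability of $B'$ is invoked (the $\Sigma^1_\zeta\subseteq\Pi^1_{\zeta+1}$ inclusion for limit $\beta$, and the peel-off-witness maneuver for successor $\beta$), so that the reflected sentence always has complexity strictly below $\beta$—together with verifying the restriction-coherence of the coding. The base case $\beta=0$ must be handled separately, since there $B$ is only guaranteed unbounded, and an unbounded set need not meet the club $C$, so the intersection trick above is unavailable; instead one argues with the weak form of indescribability (working in $(\alpha,\in,A\cap\alpha)$ with witnesses $X_\alpha\subseteq\alpha$, which needs no $V_\alpha$-coding) and codes the first-order elementary diagram into $\vec S$ so that homogeneity of the resulting unbounded set makes $\langle(\alpha,\in,A\cap\alpha,X_\alpha):\alpha\in B'\rangle$ an elementary chain, whence the first-order sentence $\neg\psi$ passes to the union; this is Baumgartner's original technique for recovering $\Pi^1_1$-indescribability from the list property, and it also yields the inaccessibility of $\kappa$ in this case.
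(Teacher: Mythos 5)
Your proof is correct and follows essentially the same route as the paper's: form the $(1,S)$-sequence of counterexample witnesses, homogenize it by some $B\in Q$, thread the union, and reflect the $\Sigma^1_\beta$ matrix to a point of $B$ using $B$'s $\Pi^1_\eta$-indescribability in the successor case and its $\Pi^1_{\zeta+1}$-indescribability (via $\Sigma^1_\zeta\subseteq\Pi^1_{\zeta+1}$, $\zeta+1<\beta$) in the limit case. The only differences are presentational: the paper notes that $\kappa$ is inaccessible and verifies \emph{weak} $\Pi^1_{\beta+1}$-indescribability over $(\kappa,\in,A)$ with $A\subseteq\kappa$, which makes your bijection-and-club coding apparatus unnecessary, and it simply cites Baumgartner for the case $\beta<\omega$ that you argue out by hand.
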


\begin{proof} The case in which $\beta<\omega$ is due to Baumgartner (see \cite[Lemma 7.1]{MR0384553} for the case in which $1\leq\beta<\omega$ and the discussion after Theorem 7.2 in \cite{MR0384553} for the case in which $\beta=0$). Notice that the assumption that every $(1,S)$-sequence $\vec{S}=\<S_\alpha\st\alpha\in S\>$ has a homogeneous set $H\in\bigcap_{\xi\in\{-1\}\cup\beta}\Pi^1_\xi(\kappa)^+$ implies that $\kappa$ is inaccessible, and hence, in order to show that $\kappa$ is $\Pi^1_{\beta+1}$-indescribable, it suffices to show that $\kappa$ is weakly $\Pi^1_\beta$-indescribable.
First, let us consider the case in which $\beta$ is a limit ordinal. Suppose $(\kappa,\in,A)\models\forall X\left(\bigvee_{\xi<\beta}\varphi_\xi\right)$ where $\varphi_\xi$ is $\Sigma^1_\xi$ for $\xi<\beta$. Suppose for all $\alpha\in S$ there is an $S_\alpha\subseteq \alpha$ such that $(\alpha,\in,A\cap \alpha)\models\bigwedge_{\xi<\beta}\lnot\varphi_\xi[S_\alpha]$. By assumption there is a $B\in Q$ which is homogeneous for $\vec{S}=\<S_\alpha\st\alpha\in S\>$. Let $X=\bigcup_{\alpha\in B}S_\alpha$. Then for some $\zeta<\beta$ we have $(\kappa,\in,A)\models\varphi_\zeta[X]$. Since $B\in \bigcap_{\xi\in\{-1\}\cup\beta}\Pi^1_\xi(\kappa)^+$, there is an $\alpha\in B$ such that $(\alpha,\in,A\cap \alpha)\models\varphi_\zeta[X\cap \alpha]$. Since $B$ is homogeneous for $\vec{X}$ we see that $X\cap \alpha= S_\alpha$ and thus $(\alpha,\in,A\cap \alpha)\models\varphi_\zeta[S_\alpha]$, a contradiction.

When $\beta=\eta+1$ is a successor ordinal the argument is very similar to Baumgartner's argument for \cite[Lemma 7.1]{MR0384553}. We must show that if $Q\subseteq\Pi^1_\eta(\kappa)^+$ then $S$ is a $\Pi^1_{\eta+2}$-indescribable subset of $\kappa$. Suppose $(\kappa,\in,A)\models\forall X\exists Y\psi(X,Y)$ where $\psi(X,Y)$ is a $\Pi^1_\eta$ formula. Further suppose that for each $\alpha\in S$ there is an $S_\alpha\subseteq \alpha$ such that $(\alpha,\in,A\cap \alpha,S_\alpha)\models\forall Y\lnot\psi(S_\alpha,Y)$. This defines a $(1,S)$-sequence $\vec{S}=\<S_\alpha\st\alpha\in S\>$. By assumption there is a $B\in Q$ which is homogeneous for $\vec{S}$. Let $X=\bigcup_{\alpha\in B}S_\alpha$. Then there is a $Y\subseteq \kappa$ such that $(\kappa,\in,A,X,Y)\models\psi(X,Y)$. Since $B$ is $\Pi^1_\eta$-indescribable, there is an $\alpha\in B$ such that $(\alpha,\in,A\cap \alpha,X\cap \alpha,Y\cap \alpha)\models\psi(X\cap \alpha,Y\cap \alpha)$. Since $X\cap \alpha=X_\alpha$, this is a contradiction. 
\end{proof}




\section{Basic properties of the ideals $\R^\alpha(\Pi^1_\beta(\kappa))$}\label{section_basic_properties}

In this section we begin our study of the ideals $\R^\alpha(\Pi^1_\beta(\kappa))$ obtained from iterating Feng's Ramsey operator on Bagaria's $\Pi^1_\beta$-indescribability ideals. The following straightforward lemmas will be used in Section \ref{section_indescribability_in_finite_ramseyness} and Section \ref{section_indescribability_in_infinite_ramseyness} below to prove that a proper containment holds between two particular ideals. 

Recall that if $S_\xi$ is a stationary subset of $\xi$ for all $\xi$ in some set $S\subseteq\kappa$ which is stationary in $\kappa$, then $\bigcup_{\xi\in S}S_\xi$ is stationary in $\kappa$. The next lemma shows that the analogous fact is true for the ideals $\R^\alpha(\Pi^1_\beta(\kappa))$.

\begin{lemma}\label{lemma_pos_union_of_pos_sets_is_pos}
Suppose $\alpha<\kappa$ and $\beta\in\{-1\}\cup\kappa$. Further suppose $S\in\R^\alpha(\Pi^1_\beta(\kappa))^+$ and for each $\xi\in S$ let $S_\xi\in\R^\alpha(\Pi^1_\beta(\xi))^+$. Then $\bigcup_{\xi\in S} S_\xi\in\R^\alpha(\Pi^1_\beta(\kappa))^+$.
\end{lemma}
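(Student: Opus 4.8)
The plan is to prove the lemma by induction on $\alpha$, stated uniformly in the base cardinal and in $\beta$ so that the inductive hypothesis may be applied both at $\kappa$ and at the cardinals $\xi \in S$. For the base case $\alpha=0$ with $\beta=-1$ the statement is a cardinality computation: $S\in\R^0([\kappa]^{<\kappa})^+$ means $|S|=\kappa$, each $|S_\xi|=\xi$, and $\sup S=\kappa$ forces $|\bigcup_{\xi\in S}S_\xi|=\kappa$. For $\beta\geq 0$ I would argue by a \emph{two-step reflection}. Writing $T=\bigcup_{\xi\in S}S_\xi$, fix $A\subseteq V_\kappa$ and a $\Pi^1_\beta$ sentence $\varphi$ with $(V_\kappa,\in,A)\models\varphi$. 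Since $S$ is $\Pi^1_\beta$-indescribable there is $\xi\in S$ with $(V_\xi,\in,A\cap V_\xi)\models\varphi$ (here $\xi$ being $\Pi^1_\beta$-indescribable guarantees $\beta<\xi$, so $\varphi$ is a legitimate $\Pi^1_\beta$ sentence at level $\xi$); then, since $S_\xi$ is a $\Pi^1_\beta$-indescribable subset of $\xi$, applying its indescribability to $A\cap V_\xi$ and $\varphi$ yields $\eta\in S_\xi\subseteq T$ with $(V_\eta,\in,A\cap V_\eta)\models\varphi$. Hence $T$ is $\Pi^1_\beta$-indescribable (for $\beta=0$ this recovers the stationary-union fact recalled before the lemma). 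The limit case is immediate: for limit $\alpha$, positivity in $\R^\alpha(\Pi^1_\beta(\lambda))$ is equivalent to positivity in $\R^\gamma(\Pi^1_\beta(\lambda))$ for all $\gamma<\alpha$, so $S$ and each $S_\xi$ are $\R^\gamma$-positive for every $\gamma<\alpha$, and the inductive hypothesis at each $\gamma$ gives $\bigcup_{\xi\in S}S_\xi\in\R^\gamma(\Pi^1_\beta(\kappa))^+$, whence the union is $\R^\alpha$-positive.

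The successor case carries all the content. Assume the lemma at $\alpha$ and that $S$ and each $S_\xi$ are $\R^{\alpha+1}$-positive; since positivity forces the ambient cardinals to be inaccessible, the relevant ideals contain the nonstationary ideal and Theorem \ref{theorem_omega_S} applies. To show $T\in\R^{\alpha+1}(\Pi^1_\beta(\kappa))^+=\R(\R^\alpha(\Pi^1_\beta(\kappa)))^+$ I fix a regressive $f:[T]^{<\omega}\to\kappa$ and seek a homogeneous $H\in P(T)\cap\R^\alpha(\Pi^1_\beta(\kappa))^+$. First, for each $\xi\in S$ the restriction $f\restrict[S_\xi]^{<\omega}$ maps into $\xi$ and is regressive, so $S_\xi\in\R(\R^\alpha(\Pi^1_\beta(\xi)))^+$ produces, by Theorem \ref{theorem_omega_S}, a \emph{block} $H_\xi\in P(S_\xi)\cap\R^\alpha(\Pi^1_\beta(\xi))^+$ homogeneous for $f$, where $f\restrict[H_\xi]^n$ has constant value $g_n(\xi)<\xi$. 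The remaining task is to choose a positive collection $W\subseteq S$ of indices whose blocks amalgamate into a single set homogeneous for $f$; once such $W$ is found, the inductive hypothesis at level $\alpha$, applied to $W$ and $\langle H_\xi\st\xi\in W\rangle$, gives $\bigcup_{\xi\in W}H_\xi\in\R^\alpha(\Pi^1_\beta(\kappa))^+$, as required.

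The main obstacle is \emph{cross-block} homogeneity: a tuple drawn from several distinct blocks need not receive a constant $f$-value. This cannot be repaired by passing to a selector $\xi\mapsto\eta_\xi\in H_\xi$, since such a map is regressive and normality (Fodor) would collapse its range to a single point, destroying positivity; hence whole blocks must be retained, which is exactly what reintroduces the cross-block tuples. Aligning the within-block constants is easy, as each $g_n$ is regressive and normality of $\R^{\alpha+1}(\Pi^1_\beta(\kappa))$ thins $S$ to a positive set on which every $g_n$ is constant. The genuinely delicate step is controlling $f$ on tuples spanning distinct blocks, and I would handle it by invoking the $\R^{\alpha+1}$-positivity of $S$ itself through the indiscernibles characterization of $\R(\cdot)^+$ (clause (3) of Theorem \ref{theorem_omega_S}), or, at the base of the hierarchy, through the elementary-embedding characterization (Theorem \ref{theorem_gitman}): applied to a structure on $V_\kappa$ coding $f$ together with the sequence $\langle H_\xi\st\xi\in S\rangle$, it yields a positive index set $W\subseteq S$ mutually indiscernible over the blocks, chosen so that the $H_\xi$ are good indiscernibles. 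A standard gluing argument then shows $H=\bigcup_{\xi\in W}H_\xi$ is homogeneous for $f$. Verifying that the indiscernibility of $W$ and the goodness of the individual blocks combine to force a constant $f$-value on every cross-block tuple is the point that requires the most care; the positivity of the resulting amalgam is then a free consequence of the inductive hypothesis.
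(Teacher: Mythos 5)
Your base and limit cases match the paper's proof exactly (the two-step reflection for $\alpha=0$ and the union formula at limits), and your successor case is set up correctly: restrict $f$ to each $S_\xi$, extract blocks $H_\xi\in P(S_\xi)\cap\R^{\alpha}(\Pi^1_\beta(\xi))^+$ homogeneous for $f\restrict[S_\xi]^{<\omega}$, and then try to amalgamate a positive family of blocks. You have also correctly identified the crux, namely cross-block tuples. But your proposed resolution of that crux is a genuine gap. Indiscernibility of an index set $W\subseteq S$ for a structure coding $f$ and $\langle H_\xi\st\xi\in S\rangle$ constrains only formulas evaluated at the \emph{indices} $\xi$; it says nothing about the value of $f$ on a tuple whose entries are particular elements of several distinct blocks, since those elements are not definable from the indices. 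The phrase ``mutually indiscernible over the blocks'' has no precise meaning here, the ``standard gluing argument'' is exactly the missing content, and Theorem \ref{theorem_gitman} characterizes only $\R([\kappa]^{<\kappa})^+$, so it cannot drive the induction at higher levels. As written, the successor step does not close.

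The idea you are missing is that the cross-block problem should be dissolved rather than attacked: since each $H_\xi\subseteq S_\xi\subseteq\xi$, the family $\langle H_\xi\st\xi\in S\rangle$ is itself a $(1,S)$-sequence, and Corollary \ref{corollary_one_S} (applicable because $S\in\R(\R^{\alpha}(\Pi^1_\beta(\kappa)))^+$) yields a set $H\subseteq S$ in $\R^{\alpha}(\Pi^1_\beta(\kappa))^+$ homogeneous for this sequence, i.e.\ $H_\xi=H_{\xi'}\cap\xi$ for all $\xi<\xi'$ in $H$. The blocks indexed by $H$ then form an increasing chain under end-extension, so every finite tuple from $\bigcup_{\xi\in H}H_\xi$ already lies inside a single block $H_\xi$, and homogeneity of that one block for $f\restrict[S_\xi]^{<\omega}$ gives $f(\vec{a})=f(\vec{b})$ for any two $n$-tuples. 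In particular there are no genuine cross-block tuples to control, and your separate step of aligning the within-block constants $g_n(\xi)$ via Fodor is also unnecessary, since any two blocks are comparable. The inductive hypothesis at level $\alpha$ applied to $H$ and $\langle H_\xi\st\xi\in H\rangle$ then gives $\bigcup_{\xi\in H}H_\xi\in\R^{\alpha}(\Pi^1_\beta(\kappa))^+$, completing the successor step as in the paper.
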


\begin{proof}

Suppose $\alpha=0$. Fix $A\subseteq V_\kappa$ and let $\varphi$ be a $\Pi^1_\beta(\kappa)$ sentence such that $(V_\kappa,\in,A)\models\varphi$. Since $S\in\Pi^1_\beta(\kappa)^+$, there is a $\xi\in S$ such that $(V_\xi,\in,A\cap V_\xi)\models\varphi$. Now since $S_\xi\in\Pi^1_\beta(\xi)^+$, there is a $\zeta\in S_\xi$ such that $(V_\zeta,\in,A\cap V_\zeta)\models \varphi$. Hence $\bigcup_{\xi\in S}S_\xi\in\Pi^1_\beta(\kappa)^+$.

If $\alpha$ is a limit and the result holds for all ordinals less than $\alpha$, it can easily be checked that the result holds for $\alpha$ using the fact that $\R^\alpha(\Pi^1_\beta(\kappa))=\bigcup_{\zeta<\alpha}\R^\zeta(\Pi^1_\beta(\kappa))$.

Now suppose $\alpha>0$ is a successor ordinal and the result holds for $\alpha-1$, let us show that it holds for $\alpha$. Fix a regressive function $f:\left[\bigcup_{\xi\in S}S_\xi\right]^{<\omega}\to \kappa$. For each $\xi\in S$ there is a set $H_\xi\subseteq S_\xi$ in $\R^{\alpha-1}(\Pi^1_\beta(\xi))^+$ homogeneous for $f\restrict[S_\xi]^{<\omega}$. Since $S\in\R^\alpha(\Pi^1_\beta(\kappa))^+$, it follows by Corollary \ref{corollary_one_S} that the $(1,S)$-sequence $\vec{H}=\<H_\xi\st\xi\in S\>$ has a homogeneous set $H\subseteq S$ in $\R^{\alpha-1}(\Pi^1_\beta(\kappa))^+$. By our inductive hypothesis, $\bigcup_{\xi\in H} H_\xi\in\R^{\alpha-1}(\Pi^1_\beta(\kappa))^+$. It will suffice to show that $\bigcup_{\xi\in H} H_\xi$ is homogeneous for $f$. Suppose $\vec{\alpha},\vec{\beta}\in\left[\bigcup_{\xi\in H} H_\xi\right]^n$. By the homogeneity of $H$, it follows that there is a $\xi\in H$ such that $\vec{\alpha},\vec{\beta}\in [H_\xi]^n$. Since $H_\xi$ is homogeneous for $f\restrict[S_\xi]^{<\omega}$, we have $f(\vec{\alpha})=f(\vec{\beta})$.
\end{proof}

Recall that if $\kappa$ is a weakly compact cardinal, then the set of non--weakly compact cardinals less than $\kappa$ is a weakly compact subset of $\kappa$. The next lemma shows that the corresponding fact is true for the ideals $\R^\alpha(\Pi^1_\beta(\kappa))$.

\begin{lemma}\label{lemma_set_of_nons_is_positive}
For all $\alpha<\kappa$ and all $\beta\in\{-1\}\cup\kappa$, if $\kappa\in\R^\alpha(\Pi^1_\beta(\kappa))^+$ then the set
\[S=\{\xi<\kappa\st \xi\in\R^\alpha(\Pi^1_\beta(\xi))\}\]
is in $\R^\alpha(\Pi^1_\beta(\kappa))^+$.
\end{lemma}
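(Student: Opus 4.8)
The plan is to prove the lemma by induction on $\alpha<\kappa$, simultaneously for all $\beta\in\{-1\}\cup\kappa$, writing $S_\alpha=\{\xi<\kappa\st \xi\in\R^\alpha(\Pi^1_\beta(\xi))\}$ for the set in question and using throughout that membership in $\R^\alpha(\Pi^1_\beta(\kappa))^+$ is upward closed. For the base case $\alpha=0$, the set $S_0$ is precisely the collection of $\xi<\kappa$ that are not $\Pi^1_\beta$-indescribable, and I would show it is $\Pi^1_\beta$-indescribable by a least-reflection-point argument. Given $A\subseteq V_\kappa$ and a $\Pi^1_\beta$ sentence $\varphi$ with $(V_\kappa,\in,A)\models\varphi$, the indescribability of $\kappa$ guarantees the reflection set $C=\{\xi<\kappa\st(V_\xi,\in,A\cap V_\xi)\models\varphi\}$ is nonempty; its least element $\xi_0$ satisfies $(V_{\xi_0},\in,A\cap V_{\xi_0})\models\varphi$ while no $\eta<\xi_0$ does, so $A\cap V_{\xi_0}$ together with $\varphi$ witnesses that $\xi_0$ is not $\Pi^1_\beta$-indescribable, i.e. $\xi_0\in S_0\cap C$. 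Since $A$ and $\varphi$ were arbitrary, $S_0\in\Pi^1_\beta(\kappa)^+$.

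For the limit case, if $\alpha$ is a limit and $\kappa\in\R^\alpha(\Pi^1_\beta(\kappa))^+$, then $\kappa\in\R^\zeta(\Pi^1_\beta(\kappa))^+$ for every $\zeta<\alpha$, and since the operator iteration gives $S_\alpha=\bigcup_{\zeta<\alpha}S_\zeta$ (computed locally at each $\xi$), the inductive hypothesis yields $S_\zeta\in\R^\zeta(\Pi^1_\beta(\kappa))^+$ for each $\zeta<\alpha$. As $S_\alpha\supseteq S_\zeta$ and positivity is upward closed, $S_\alpha\in\R^\zeta(\Pi^1_\beta(\kappa))^+$ for all $\zeta<\alpha$; hence $S_\alpha\notin\bigcup_{\zeta<\alpha}\R^\zeta(\Pi^1_\beta(\kappa))=\R^\alpha(\Pi^1_\beta(\kappa))$, as required.

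The successor case is the main obstacle. Write $\alpha=\delta+1$ and $J=\R^\delta(\Pi^1_\beta(\kappa))$, so that $\kappa\in\R(J)^+$; the inductive hypothesis gives $S_\delta\in J^+$, and since $\R^\delta\subseteq\R^{\delta+1}$ at each coordinate we have $S_\delta\subseteq S_{\delta+1}$, so at least $S_{\delta+1}\in J^+$. To upgrade this to $S_{\delta+1}\in\R(J)^+$ I must produce, for each regressive $f\colon[S_{\delta+1}]^{<\omega}\to\kappa$, a homogeneous set $H\subseteq S_{\delta+1}$ lying in $J^+$. The plan is to feed a suitable regressive function into $\kappa\in\R(J)^+$ to obtain a $J$-positive homogeneous set and then to force it to concentrate on the non-$(\delta+1,\Pi^1_\beta)$-Ramsey points by a least-witness argument generalizing the base case: the genuinely new points of $S_{\delta+1}$ are those $\xi$ which are $(\delta,\Pi^1_\beta)$-Ramsey yet fail to reflect a positive homogeneous-set configuration below them, exactly as $\xi_0=\min C$ failed to reflect $\varphi$ in the base case.

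The hard part will be controlling where the homogeneous set produced by $\kappa$'s Ramseyness lands. Because the non-Ramsey points form only a $J$-positive, not a $J$-co-small, set, a naive extension of $f$ to $[\kappa]^{<\omega}$ may return a homogeneous set supported on the Ramsey points, and the diagonalization against the local failure-witnesses must be arranged so that positivity is pushed onto $S_{\delta+1}$ rather than away from it; this asymmetry, rather than any of the bookkeeping, is the crux of the lemma. Throughout the successor step I would use Corollary \ref{corollary_one_S} to convert between homogeneity for regressive functions and for $(1,S)$-sequences, Lemma \ref{lemma_pos_union_of_pos_sets_is_pos} to assemble positive pieces at positively many coordinates, and the normality of $\R^\delta(\Pi^1_\beta(\kappa))$; the degenerate case $\delta=0$, $\beta=-1$, where $J=[\kappa]^{<\kappa}\not\supseteq\NS_\kappa$, I would treat separately since Feng's characterization in Theorem \ref{theorem_omega_S} is unavailable there.
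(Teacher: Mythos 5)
Your proposal has a genuine gap: the successor step, which you correctly identify as the crux, is never actually carried out. You describe the obstacle precisely --- given a regressive $f$ on $[S_{\delta+1}]^{<\omega}$, the Ramseyness of $\kappa$ only hands you a $J$-positive homogeneous set for some extension of $f$ to $[\kappa]^{<\omega}$, and nothing forces that set to concentrate on $S_{\delta+1}$, since $\kappa\setminus S_{\delta+1}$ is merely assumed positive rather than small --- but you do not resolve it, and I do not see how the ``least-witness diagonalization'' you gesture at would resolve it either. The base case ($\alpha=0$, least reflection point) and the limit case are correct, but they are the easy parts; without the successor case the induction on $\alpha$ does not close.

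The missing idea is to induct on $\kappa$ rather than on $\alpha$. Fix $\alpha$ and $\beta$ and let $\kappa$ be the \emph{least} cardinal with $\kappa\in\R^\alpha(\Pi^1_\beta(\kappa))^+$ for which $S=\{\xi<\kappa\st\xi\in\R^\alpha(\Pi^1_\beta(\xi))\}$ lies in the ideal $\R^\alpha(\Pi^1_\beta(\kappa))$. Then $\kappa\setminus S$ is in the dual filter $\R^\alpha(\Pi^1_\beta(\kappa))^*$, hence positive; every $\zeta\in\kappa\setminus S$ satisfies $\zeta\in\R^\alpha(\Pi^1_\beta(\zeta))^+$ and, by minimality of $\kappa$, is not a counterexample, so $S\cap\zeta\in\R^\alpha(\Pi^1_\beta(\zeta))^+$. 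Lemma \ref{lemma_pos_union_of_pos_sets_is_pos} (which you cite only as auxiliary bookkeeping, but which is the entire engine here) then gives $S=\bigcup_{\zeta\in\kappa\setminus S}(S\cap\zeta)\in\R^\alpha(\Pi^1_\beta(\kappa))^+$, a contradiction. This argument is uniform in $\alpha$, needs no case split on successor versus limit, and completely sidesteps the question of where a homogeneous set lands, because the only place Ramseyness is invoked is inside the already-proved Lemma \ref{lemma_pos_union_of_pos_sets_is_pos}.
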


\begin{proof}
Let $\kappa$ be the least counterexample. In other words, $\kappa$ is the least cardinal such that $\kappa\in\R^\alpha(\Pi^1_\beta(\kappa))^+$ and $S=\{\xi<\kappa\st\xi\in\R^\alpha(\Pi^1_\beta(\xi))\}\in\R^\alpha(\Pi^1_\beta(\kappa))$. Then the set $\kappa\setminus S$ is in $\R^\alpha(\Pi^1_\beta(\kappa))^*$ and hence also in $\R^\alpha(\Pi^1_\beta(\kappa))^+$. For each $\zeta\in\kappa\setminus S$, by the minimality of $\kappa$, the set $S_\zeta=S\cap\zeta$ is in $\R^\alpha(\Pi^1_\beta(\zeta))^+$. Thus, by Lemma \ref{lemma_pos_union_of_pos_sets_is_pos}, the set $S=\bigcup_{\zeta\in\kappa\setminus S}S_\zeta$ is in $\R^\alpha(\Pi^1_\beta(\kappa))^+$, a contradiction.
\end{proof}

\section{A first reflection result}\label{section_a_first_reflection_result}

Baumgartner showed \cite[Theorem 4.1]{MR0384553} that if $\kappa$ is a subtle cardinal then the set 
\[\{\xi<\kappa\st(\forall n<\omega)\ \xi\in\Pi^1_n(\xi)^+\}\]
is in the subtle filter. Since Ramsey cardinals are subtle, Baumgartner'€™s result shows that the existence of a Ramsey cardinal is strictly stronger than the existence of a cardinal that is $\Pi^1_n$-indescribable for every $n<\omega$. Our next goal will be to show that the existence of a Ramsey cardinal is strictly stronger than the existence of a cardinal $\kappa$ which is $\Pi^1_\beta$-indescribable for all $\beta<\kappa$; the proof is implicit in \cite{MR3894041} and is obtained by combining the methods of \cite{MR3894041}, \cite{MR2830415} and \cite[Theorem 17.33 and Exercise 17.29]{MR1940513}. In order to prove this result we will use the elementary embedding characterization of Ramseyness due to Mitchell \cite{MR534574} (see Theorem \ref{theorem_gitman} above) and further explored by Gitman \cite{MR2830415} and Sharpe-Welch \cite{MR2817562}. 

\begin{theorem}\label{theorem_ramsey_reflection}
If $S\in\R([\kappa]^{<\kappa})^+$, then the set
\[T=\{\xi<\kappa\st(\forall \beta<\xi)\ S\cap\xi\in\Pi^1_\beta(\xi)^+\}\]
is in $\R([\kappa]^{<\kappa})^*$.\marginpar{\tiny Need to generalize this so that it talks about reflection points of sets. This is needed for the following result.}
\end{theorem}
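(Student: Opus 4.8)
The statement asserts that if $S$ is Ramsey (i.e. $S\in\R([\kappa]^{<\kappa})^+$), then the set $T$ of $\xi<\kappa$ such that $S\cap\xi$ is $\Pi^1_\beta$-indescribable in $\xi$ \emph{for all} $\beta<\xi$ is in the Ramsey filter $\R([\kappa]^{<\kappa})^*$. The plan is to use the elementary embedding characterization of Ramsey sets from Theorem \ref{theorem_gitman}. It suffices to show that $\kappa\setminus T\in\R([\kappa]^{<\kappa})$; equivalently, that $\kappa\setminus T$ cannot be Ramsey positive, i.e. no witnessing embedding can put $\kappa$ into $j(\kappa\setminus T)$. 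So I would argue: whenever $j:M\to N$ is an embedding as in Theorem \ref{theorem_gitman} with $\kappa\in j(S)$, it must also be the case that $\kappa\in j(T)$. By elementarity, $\kappa\in j(T)$ holds if and only if $N$ believes that $j(S)\cap\kappa = S$ is $\Pi^1_\beta$-indescribable in $\kappa$ for every $\beta<\kappa$.

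\textbf{Key steps.} First I would fix $A\subseteq\kappa$ arbitrary and, using that $S$ is Ramsey, obtain via Theorem \ref{theorem_gitman} a weak $\kappa$-model $M$ with $A,S,T\in M$ (absorbing $T$ into $M$ is fine since $T$ is a single subset of $\kappa$) and an embedding $j:M\to N$ with critical point $\kappa$, $N$ transitive, $P(\kappa)^M=P(\kappa)^N$, the countable-completeness clause (4), and $\kappa\in j(S)$. The goal is to verify clause (5) of Theorem \ref{theorem_gitman} for $T$, namely $\kappa\in j(T)$. Since $T$ is defined by a first-order formula (relativized to the subtle/indescribability structure) over the target model, I would compute what $\kappa\in j(T)$ means in $N$: it says that in $N$, for all $\beta<\kappa$, the set $j(S)\cap\kappa$ is $\Pi^1_\beta$-indescribable at $\kappa$. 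Now $j(S)\cap\kappa=S$ because $\crit(j)=\kappa$, so I must show $N\models$ ``$S$ is $\Pi^1_\beta$-indescribable in $\kappa$'' for each $\beta<\kappa$.

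\textbf{The core indescribability argument.} Fix $\beta<\kappa$. To show $S$ is $\Pi^1_\beta$-indescribable in $\kappa$ (as computed in $N$, but this should reduce to a genuine indescribability statement), suppose $(V_\kappa,\in,B)\models\varphi$ for some $\Pi^1_\beta$ sentence $\varphi$ and $B\subseteq V_\kappa$. The standard move is to lift this to $N$: by elementarity $j(\varphi)$ is a $\Pi^1_{j(\beta)}=\Pi^1_\beta$ sentence (as $\beta<\crit(j)$, $j(\beta)=\beta$), and since $\Pi^1_\beta$ truth is upward-absorbed into the embedding, $N\models (V_\kappa,\in,B)\models\varphi$ with $\kappa\in j(S)$. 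The key reflection step is that $\Pi^1_\beta$ satisfaction at $V_\kappa$ reflects down to some $\alpha\in S$ below $\kappa$: one uses that $N$ sees $\kappa\in j(S)$ together with the fact that being $\Pi^1_\beta$-describable is itself expressible, so that $j(S)$ reflecting $\varphi$ at $\kappa$ forces $S$ to reflect $\varphi$ at some $\alpha\in S$. The delicate point, and the main obstacle, is handling the infinitary nature of $\Pi^1_\beta$ formulas for $\beta\geq\omega$: the conjunction $\bigwedge_{\zeta<\beta}\varphi_\zeta$ lives in $M$ (since $\beta<\kappa$ and $M$ is a weak $\kappa$-model, $M$ can code the formula), so I would verify that the relevant $\Pi^1_\beta$ formula and its satisfaction relation are elements of $M$, that their $j$-images agree below $\kappa$ by weak amenability and $P(\kappa)^M=P(\kappa)^N$, and that absoluteness of satisfaction for $(V_\alpha,\in,\cdot)$ between $M$, $N$, and $V$ holds for $\alpha\le\kappa$. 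Once these absoluteness and coding facts are in place, elementarity of $j$ combined with $\kappa\in j(S)$ delivers an $\alpha\in S\cap\kappa$ reflecting $\varphi$, establishing that $S$ is $\Pi^1_\beta$-indescribable and hence $\kappa\in j(T)$. Since $A$ was arbitrary, Theorem \ref{theorem_gitman} gives that $T$ is Ramsey positive relative to every witness, and a standard normality/complementation argument (or applying the embedding characterization to $\kappa\setminus T$ and deriving a contradiction) upgrades this to $T\in\R([\kappa]^{<\kappa})^*$. I expect the main technical burden to be exactly the verification that Bagaria's infinitary $\Pi^1_\beta$ satisfaction relation is sufficiently absolute between $M$, $N$ and $V$ for $\beta<\kappa$, since this is what makes the elementarity argument go through for transfinite $\beta$ rather than merely finite $n$.
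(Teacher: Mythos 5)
Your overall strategy matches the paper's: apply Theorem \ref{theorem_gitman} with $A=S$, and reduce the claim $\kappa\in j(T)$ to showing that $N\models$ ``$S\in\Pi^1_\beta(\kappa)^+$'' for every $\beta<\kappa$. But the heart of your argument contains a genuine error. You write that ``$N$ believes $S$ is $\Pi^1_\beta$-indescribable'' should \emph{reduce to a genuine indescribability statement}, and you then set out to verify that $S$ reflects an arbitrary $\Pi^1_\beta$ sentence with an arbitrary predicate $B\subseteq V_\kappa$. That cannot work: a Ramsey set need not be $\Pi^1_2$-indescribable (the least Ramsey cardinal is not, since Ramseyness is $\Pi^1_2$-expressible over $V_\kappa$), so the statement you are trying to prove is false in general. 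The whole point of the theorem is that the conclusion concerns indescribability of $S\cap\xi$ \emph{in} $\xi<\kappa$, not of $S$ in $\kappa$, and the proof only goes through because $N$ is a small model: $P(V_\kappa)^N=P(V_\kappa)^M$ has size $\kappa$, so ``$N\models S\in\Pi^1_\beta(\kappa)^+$'' is strictly weaker than $S\in\Pi^1_\beta(\kappa)^+$, yet it becomes the genuine assertion when reflected to $\xi<\kappa$.

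Correspondingly, your ``key reflection step'' is a restatement of the goal rather than an argument. The actual mechanism in the paper is a contradiction argument internal to the two models: if $N$ thinks $S$ is not $\Pi^1_\beta$-indescribable, fix a witnessing predicate $R\in P(\kappa)^N$ and a $\Pi^1_\beta$ sentence $\varphi$ with $N\models$ ``$(\kappa,\in,R)\models\varphi$'' and $N\models$ ``$(\forall\xi\in S)\,(\xi,\in,R\cap\xi)\models\lnot\varphi$''. Since $P(\kappa)^M=P(\kappa)^N$, the set $R$ and each $R\cap\xi$ lie in $M$, and satisfaction over the small structures $(\xi,\in,R\cap\xi)$ for $\xi<\kappa$ is absolute between $M$ and $N$; hence $M\models$ ``$(\forall\xi\in S)\,(\xi,\in,R\cap\xi)\models\lnot\varphi$''. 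Applying $j$ and instantiating at $\xi=\kappa\in j(S)$, using $j(R)\cap\kappa=R$, contradicts $N\models$ ``$(\kappa,\in,R)\models\varphi$''. Nothing in your outline isolates this use of $P(\kappa)^M=P(\kappa)^N$ to transfer the counterexample predicate from $N$ back to $M$; instead you locate the ``main technical burden'' in absoluteness of infinitary $\Pi^1_\beta$ satisfaction at $V_\kappa$ between $N$ and $V$, which is precisely the absoluteness that fails (and must fail, or the theorem would assert a falsehood). With the core step recast as above, the rest of your outline would align with the paper's proof.
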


\begin{proof}\footnote{The author would like to thank Victoria Gitman for suggesting this proof.}
Suppose $S$ is Ramsey. To show that $T\in\R([\kappa]^{<\kappa})^*$ we must show that there is a set $A\subseteq\kappa$ such that whenever $M$ is a weak $\kappa$-model with $A,T\in M$ and whenever $j:M\to N$ is an elementary embedding satisfying properties (1)--(4) from Theorem \ref{theorem_gitman}, then it must be the case that $\kappa\in j(T)$. Take $A=S$. Since $S$ is Ramsey, by Theorem \ref{theorem_gitman}, we may let $M$ be a weak $\kappa$-model with $S,T\in M$ and suppose $j:M\to N$ is an elementary embedding satisfying properties (1)--(4) from Theorem \ref{theorem_gitman} such that $\kappa\in j(S)$. To show that $\kappa\in j(T)$ we must show that for every $\beta<\kappa$ we have $N\models$ $S\in\Pi^1_\beta(\kappa)^+$. Suppose not, that is, suppose that for some fixed $\beta<\kappa$, $N$ thinks $S$ is not a $\Pi^1_\beta$-indescribable subset of $\kappa$. Since $N$ thinks $\kappa$ is strongly inaccessible, it follows that $N$ thinks $S$ is not \emph{weakly} $\Pi^1_\beta$-indescribable.\marginpar{\tiny Check that when $\kappa$ is strongly inaccessible weak $\Pi^1_\beta$-indescribability implies $\Pi^1_\beta$-indescribability. We don'€™t necessarily need this, because we could code subsets of $V_\kappa$ as subsets of $\kappa$, but this seems more convenient.} Thus, there is an $R\in P(\kappa)^N$ and a $\Pi^1_\beta$ sentence $\varphi$ such that 

\[N\models\text{``$(\kappa,\in,R)\models\varphi$''}\]
and 
\[N\models \text{``$(\forall\xi\in S) (\xi,\in,R\cap\xi)\models\lnot\varphi$''}.\]
Now, for each $\xi <\kappa$ we have $R\cap\xi\in M$ because $P(\kappa)^M=P(\kappa)^N$. Furthermore, since $j$ is elementary and $\crit(j)=\kappa$, it follows that for each $\xi\in S$ we have $M\models$ ``$(\xi,\in,R\cap\xi)\models\lnot\varphi$.''€™ Since $S,R\in M$ we see that
\[M\models\text{``$(\forall\xi\in S) (\xi,\in,R\cap\xi)\models\lnot\varphi$''}.\]
By elementarity
\[N\models\text{``$(\forall\xi\in j(S)) (\xi,\in,j(R)\cap\xi)\models\lnot\varphi$''€™},\]
but this is a contradiction since $\kappa\in j(S)$.
\end{proof}

Next we show that Theorem \ref{theorem_ramsey_reflection} can, in a sense, be pushed up the Ramsey hierarchy.

\begin{theorem}\label{theorem_r_reflects_pi1n}
For all ordinals $\alpha<\kappa$, if $S\in\R^{\alpha+1}([\kappa]^{<\kappa})^+$ then the set
\[T=\{\xi<\kappa\st (\forall\beta<\xi)\  S\cap\xi\in\R^\alpha(\Pi^1_\beta(\xi))^+\}\]
is in $\R^{\alpha+1}([\kappa]^{<\kappa})^*$.\marginpar{\tiny The $\Pi^1_\beta(\xi)$ is redundant when $\alpha=\omega$ and $\beta<\omega$ because $\R^\omega([\kappa]^{<\kappa})=\R^\omega(\Pi^1_n(\kappa))$ for all $n<\omega$ by the ideal diagram.}
\end{theorem}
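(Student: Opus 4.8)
The plan is to proceed by induction on $\alpha$, with the base case $\alpha=0$ being exactly Theorem \ref{theorem_ramsey_reflection}. Throughout, I will exploit the recursive structure of the Ramsey operator: $\R^{\alpha+1}(I)=\R(\R^\alpha(I))$, so positivity for $\R^{\alpha+1}([\kappa]^{<\kappa})$ means that every regressive $f:[S]^{<\omega}\to\kappa$ has a homogeneous set in $\R^\alpha([\kappa]^{<\kappa})^+$. The goal is to show $\kappa\setminus T\in\R^{\alpha+1}([\kappa]^{<\kappa})$, equivalently $T\in\R^{\alpha+1}([\kappa]^{<\kappa})^*$. First I would set up the minimal-counterexample machinery exactly as in the proof of Lemma \ref{lemma_set_of_nons_is_positive}: assuming $T\notin\R^{\alpha+1}([\kappa]^{<\kappa})^*$, the set $\kappa\setminus T$ is $\R^{\alpha+1}([\kappa]^{<\kappa})$-positive, and for each $\zeta\in\kappa\setminus T$ there is some $\beta_\zeta<\zeta$ witnessing $S\cap\zeta\notin\R^\alpha(\Pi^1_{\beta_\zeta}(\zeta))^+$, i.e. $S\cap\zeta\in\R^\alpha(\Pi^1_{\beta_\zeta}(\zeta))$.

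The key idea is to extract reflection from a single elementary embedding, just as in Theorem \ref{theorem_ramsey_reflection}, but now at the level of $\R^\alpha(\Pi^1_\beta)$-positivity rather than bare $\Pi^1_\beta$-indescribability. Taking $A=S$ and applying the embedding characterization (Theorem \ref{theorem_gitman}) to the Ramsey set $S$, I would obtain a weak $\kappa$-model $M$ with $S,T\in M$ and an embedding $j:M\to N$ with $\crit(j)=\kappa$, $\kappa\in j(S)$, and $P(\kappa)^M=P(\kappa)^N$ together with the countable-completeness property (4). The strategy is then to show $\kappa\in j(T)$, which contradicts $\kappa\setminus T$ being positive if we have arranged $T$ correctly. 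To do this I must show $N\models$ ``$S\in\R^\alpha(\Pi^1_\beta(\kappa))^+$'' for every $\beta<\kappa$. Here the fundamental technical lemma of Section \ref{section_describing_ramseyness} becomes essential: the statement ``$S\cap\xi\in\R^\alpha(\Pi^1_\beta(\xi))^+$'' is expressible by a $\Pi^1_{\gamma(\alpha,\beta)}$ sentence over $V_\xi$ (with a predicate coding $S$), so membership in $\R^\alpha(\Pi^1_\beta)$ transfers between $M$ and $N$ via $P(\kappa)^M=P(\kappa)^N$ and reflects downward just as $\Pi^1_\beta$-indescribability did in the base case. Assuming $N$ thinks $S$ is $\R^\alpha(\Pi^1_\beta)$-\emph{negative}, I would extract the witnessing predicate $R\in P(\kappa)^N=P(\kappa)^M$ and the $\Pi^1_{\gamma(\alpha,\beta)}$ sentence, pull the negative reflection statement back to $M$ using that $R\cap\xi\in M$ for each $\xi$, and push it forward by elementarity to conclude $N\models$ ``$(\forall\xi\in j(S))$ the reflection fails at $\xi$''—contradicting $\kappa\in j(S)$.

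The main obstacle I anticipate is justifying that $\R^\alpha(\Pi^1_\beta)$-positivity is genuinely absolute and locally computable in the right way for \emph{infinite} $\alpha$, which is precisely why the technical lemma of Section \ref{section_describing_ramseyness} bounding the complexity by $\gamma(\alpha,\beta)$ is indispensable; without a uniform syntactic description of positivity one cannot run the single-embedding argument. A subtle secondary point is that the induction must be organized so the \emph{quantifier over all} $\beta<\xi$ is handled correctly at limit stages of $\alpha$: since $\R^\alpha(\Pi^1_\beta(\kappa))=\bigcup_{\zeta<\alpha}\R^\zeta(\Pi^1_\beta(\kappa))$ at limits, I would need the complexity bound $\gamma(\alpha,\beta)$ to remain below $\kappa$ so that the relevant sentences remain legitimate $\Pi^1_\gamma$ sentences recognizable by $N$, and I would verify that the embedding $j$ simultaneously handles the single fixed $\beta$ extracted from the hypothetical counterexample. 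I would also need to confirm, as in the marginal note, that strong inaccessibility of $\kappa$ in $N$ lets me pass freely between weak and genuine $\Pi^1_\gamma$-indescribability, so that the reflection statements about $V_\xi$ and about $\xi$ agree. Once these absoluteness and complexity bookkeeping steps are in place, the contradiction is obtained exactly as in Theorem \ref{theorem_ramsey_reflection}, and the minimal-counterexample setup closes the argument.
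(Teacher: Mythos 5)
Your base case is the paper's, but the inductive step has a genuine gap, and I think the approach cannot be repaired as stated. The embedding $j:M\to N$ you invoke comes from Theorem \ref{theorem_gitman}, which characterizes $S\in\R([\kappa]^{<\kappa})^+$ only; nothing in your sketch uses the stronger hypothesis $S\in\R^{\alpha+1}([\kappa]^{<\kappa})^+$ rather than mere Ramseyness of $S$. If your argument succeeded, it would show that every Ramsey set reflects $\R^\alpha(\Pi^1_\beta)$-positivity for \emph{every} $\alpha$, which is false: by Corollary \ref{corollary_ramseyness_reflects_indescribability} the hypothesis ``$\exists\kappa\ \kappa\in\R^{\alpha+1}([\kappa]^{<\kappa})^+$'' strictly exceeds ``$\exists\kappa\ \forall\beta<\kappa\ \kappa\in\R^\alpha(\Pi^1_\beta(\kappa))^+$'' in strength, so a single Ramsey cardinal cannot produce cardinals $\xi$ with $\xi\in\R^{\alpha}(\Pi^1_\beta(\xi))^+$ for large $\alpha$. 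The mechanism that makes the base case work is also specific to indescribability: the failure of ``$N\models S\in\Pi^1_\beta(\kappa)^+$'' hands you a predicate $R$ and a $\Pi^1_\beta$ sentence $\varphi$ \emph{true} at $(\kappa,\in,R)$ and \emph{false} at every $\xi\in S$, and it is this true-at-$\kappa$/false-on-$S$ dichotomy that, pushed through $j$, collides with $\kappa\in j(S)$. The failure of ``$N\models S\in\R^\alpha(\Pi^1_\beta(\kappa))^+$'' has no such shape: it is witnessed by a regressive function with no sufficiently positive homogeneous set. Lemma \ref{lemma_complexity} bounds the syntactic complexity of positivity, but it does not convert non-positivity into an instance of failed $\Pi^1$-reflection of a sentence that you independently know to hold at $\kappa$ --- that is exactly what you are trying to prove. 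So the step ``pull the negative reflection statement back to $M$ and push it forward by elementarity to contradict $\kappa\in j(S)$'' has no content at levels $\alpha\geq 1$.

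The paper's inductive step is instead purely combinatorial and makes essential use of the full hypothesis. Assuming $\kappa\setminus T$ is $\R^{\alpha_0+2}([\kappa]^{<\kappa})$-positive, normality fixes a single $\beta_0$ (and, at limit $\alpha$, a single $\alpha_0<\alpha$) so that $E=\{\xi\st S\cap\xi\in\R^{\alpha_0+1}(\Pi^1_{\beta_0}(\xi))\}$ is positive; for each $\xi\in E$ one codes a counterexample regressive function $f_\xi$ on $[S\cap\xi]^{<\omega}$ into a $(1,E)$-sequence, uses Corollary \ref{corollary_one_S} to find a homogeneous $X\in\R^{\alpha_0+1}([\kappa]^{<\kappa})^+$ so that the $f_\xi$ for $\xi\in X$ cohere into a single regressive $F:[S]^{<\omega}\to\kappa$, applies $S\in\R^{\alpha_0+2}([\kappa]^{<\kappa})^+$ to get $H\in\R^{\alpha_0+1}([\kappa]^{<\kappa})^+$ homogeneous for $F$, and then --- this is where the induction actually bites --- applies the theorem's inductive hypothesis to $H$ to find $\xi\in X$ with $H\cap\xi\in\R^{\alpha_0}(\Pi^1_{\beta_0}(\xi))^+$ homogeneous for $f_\xi$, contradicting the choice of $f_\xi$. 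If you want to salvage an embedding-style proof you would first need an elementary embedding characterization of $\R^{\alpha+1}([\kappa]^{<\kappa})^+$ itself (not just of Ramseyness), which the paper does not develop and your proposal does not supply.
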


\begin{proof}
Let us proceed by induction on $\alpha$. If $\alpha=0$, the result follows directly from Theorem \ref{theorem_ramsey_reflection}.

Suppose $\alpha=\alpha_0+1$ is a successor ordinal and the result holds for ordinals less than $\alpha$. Let us show it holds for $\alpha$. Suppose not. Then $S\in\R^{\alpha_0+2}([\kappa]^{<\kappa})^+$ and the set
\[\kappa\setminus T=\{\xi<\kappa\st(\exists\beta<\xi)\ S\cap\xi\in\R^{\alpha_0+1}(\Pi^1_\beta(\xi))\}\]
is in $\R^{\alpha_0+2}([\kappa]^{<\kappa})^+$. Since $\R^{\alpha_0+2}([\kappa]^{<\kappa})$ is a normal ideal on $\kappa$, there is a fixed $\beta_0<\kappa$ such that the set
\[E=\{\xi<\kappa\st S\cap\xi\in\R^{\alpha_0+1}(\Pi^1_{\beta_0}(\xi))\land \xi>\beta_0\}\subseteq\kappa\setminus T\]
is in $\R^{\alpha_0+2}([\kappa]^{<\kappa})^+$. We will define a $(1,E)$-sequence $\vec{S}=\<E_\xi\st\xi\in E\>$. Without loss of generality, by intersecting with a club, we can assume that every element of $E$ is closed under G\"odel pairing. For each $\xi\in E$, let $f_\xi:[S\cap\xi]^{<\omega}\to \xi$ be a regressive function such that no homogeneous set for $f_\xi$ is in $\R^{\alpha_0}(\Pi^1_{\beta_0}(\xi))^+$. Let $E_\xi$ code $f_\xi$ as a subset of $\xi$ in a sufficiently nice way. Since $E\in\R^{\alpha_0+2}([\kappa]^{<\kappa})^+$, there is a set $X\in P(E)\cap\R^{\alpha_0+1}([\kappa]^{<\kappa})^+$ homogeneous for $\vec{E}$. Let $F=\bigcup_{\xi\in X}f_\xi$ and notice that $F:[S]^{<\omega}\to \kappa$ is a regressive function and $F\restrict [S\cap\xi]^{<\omega}=f_\xi$ for all $\xi\in X$. Since $S\in\R^{\alpha_0+2}([\kappa]^{<\kappa})^+$ there is an $H\in\R^{\alpha_0+1}([\kappa]^{<\kappa})^+$ homogeneous for $F$. By our inductive assumption, the set
\[C=\{\xi<\kappa\st H\cap\xi\in\R^{\alpha_0}(\Pi^1_{\beta_0}(\xi))^+\}\]
is in $\R^{\alpha_0+1}([\kappa]^{<\kappa})^*$ and since $X\in\R^{\alpha_0+1}([\kappa]^{<\kappa})^+$ it follows that $X\cap C\in\R^{\alpha_0+1}([\kappa]^{<\kappa})^+$. Choose an ordinal $\xi\in X\cap C$. Then $H\cap\xi\in\R^{\alpha_0}(\Pi^1_{\beta_0}(\xi))^+$ and since $H$ is homogeneous for $F$ and $\xi\in X$ we see that $H\cap\xi$ is homogeneous for $f_\xi$. This contradicts the fact that $f_\xi:[S\cap\xi]^{<\omega}\to \xi$ has no homogeneous set in $\R^{\alpha_0}(\Pi^1_{\beta_0}(\xi))^+$. 

Suppose $\alpha$ is a limit ordinal, the result holds for ordinals less than $\alpha$ and, for the sake of contradiction, the result is false for $\alpha$. Suppose $S\in\R^{\alpha+1}([\kappa]^{<\kappa})^+$ and the set
\[\kappa\setminus T=\{\xi<\kappa\st(\exists\beta<\kappa)\ S\cap\xi\in\R^\alpha(\Pi^1_\beta(\xi))\}\]
is in $\R^{\alpha+1}([\kappa]^{<\kappa})^+$. Since $\R^{\alpha+1}([\kappa]^{<\kappa})$ is a normal ideal and $\alpha<\kappa$ is a limit ordinal, there are fixed $\alpha_0<\alpha$ and $\beta_0<\kappa$ such that the set 
\[E=\{\xi<\kappa\st S\cap\xi\in\R^{\alpha_0+1}(\Pi^1_{\beta_0}(\xi))\}\]
is in $\R^{\alpha+1}([\kappa]^{<\kappa})^+$. The rest of the argument is essentially the same as that of the successor case. 
\end{proof}


Since we will refer to it later, let us state the following corollary which asserts that ``$\exists\kappa$ $\kappa\in\R^{\alpha+1}([\kappa]^{<\kappa})^+$'' is strictly stronger than ``$\exists\kappa$ ($\forall\beta<\kappa$) $\kappa\in\R^\alpha(\Pi^1_\beta(\kappa))^+$''.

\begin{corollary}\label{corollary_ramseyness_reflects_indescribability}
For all ordinals $\alpha<\kappa$, if $\kappa\in\R^{\alpha+1}([\kappa]^{<\kappa})^+$ then the set
\[\{\xi<\kappa\st(\forall\beta<\xi)\ \xi\in\R^\alpha(\Pi^1_\beta(\xi))^+\}\]
is in $\R^{\alpha+1}([\kappa]^{<\kappa})^*$.
\end{corollary}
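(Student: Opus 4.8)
The plan is simply to specialize Theorem~\ref{theorem_r_reflects_pi1n} to the case $S=\kappa$. Viewing $\kappa$ as the set $\{\xi\mid\xi<\kappa\}$, it is a subset of $\kappa$ and hence a legitimate instance of the parameter $S$ in that theorem. With this choice the hypothesis $S\in\R^{\alpha+1}([\kappa]^{<\kappa})^+$ becomes precisely $\kappa\in\R^{\alpha+1}([\kappa]^{<\kappa})^+$, which is exactly the hypothesis of the corollary.

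It then remains only to check that the conclusion specializes correctly. For every $\xi<\kappa$ one has $S\cap\xi=\kappa\cap\xi=\xi$, so the defining condition ``$(\forall\beta<\xi)\ S\cap\xi\in\R^\alpha(\Pi^1_\beta(\xi))^+$'' of the set $T$ in Theorem~\ref{theorem_r_reflects_pi1n} reads ``$(\forall\beta<\xi)\ \xi\in\R^\alpha(\Pi^1_\beta(\xi))^+$'', which is verbatim the defining condition of the set in the corollary. Thus the two sets coincide, and Theorem~\ref{theorem_r_reflects_pi1n} delivers membership in $\R^{\alpha+1}([\kappa]^{<\kappa})^*$ immediately.

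I expect no real obstacle here: the corollary is nothing more than the $S=\kappa$ instance of the preceding theorem, recorded separately because the statement about the cardinal $\kappa$ itself (rather than an arbitrary positive set $S$) is the form that will be invoked later and that yields the intended consistency-strength separation between ``$\exists\kappa\ \kappa\in\R^{\alpha+1}([\kappa]^{<\kappa})^+$'' and ``$\exists\kappa\ (\forall\beta<\kappa)\ \kappa\in\R^\alpha(\Pi^1_\beta(\kappa))^+$''. (Should one prefer a self-contained argument, one could instead replay the induction in the proof of Theorem~\ref{theorem_r_reflects_pi1n} with $S\cap\xi$ everywhere replaced by $\xi$, but this is unnecessary given the theorem.)
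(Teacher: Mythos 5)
Your proposal is correct and matches the paper exactly: the corollary is stated immediately after Theorem~\ref{theorem_r_reflects_pi1n} with no separate proof, precisely because it is the $S=\kappa$ instance, and your check that $S\cap\xi=\xi$ makes the two conclusions coincide is all that is needed.
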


\section{Describing degrees of Ramseyness}\label{section_describing_ramseyness}


In order to prove that certain relationships hold between ideals of the form $\R^\alpha(\Pi^1_\beta(\kappa))$, we will need to know what $\xi$ will suffice to be able to express the fact that a set $S\subseteq\kappa$ is in $\R^\alpha(\Pi^1_\beta(\kappa))^+$ using a $\Pi^1_\xi$ sentence. The following lemma is a generalization of a result of Sharpe and Welch \cite[Remark 3.17]{MR2817562}, which states that ``$X\in\R^\alpha([\kappa]^{<\kappa})^+$'' is a $\Pi^1_{2\cdot(1+\alpha)}$ property.

\begin{lemma}[The case $\beta=-1$ is due to Sharpe and Welch]\label{lemma_complexity}
Suppose $\beta$ is an ordinal. For each ordinal $\alpha$, if $\alpha>\omega$ let $\alpha=\bar{\alpha}+m_\alpha$ where $\bar{\alpha}$ is the greatest limit ordinal which is less or equal to $\alpha$ and $m_\alpha<\omega$. Define an ordinal $\gamma(\alpha,\beta)$ by
\[
\gamma(\alpha,\beta)=\begin{cases}
\beta+2\alpha+1 & \text{if $\alpha<\omega$}\\
\beta+\alpha & \text{if $\alpha$ is a limit}\\
\beta+\bar{\alpha}+2m_\alpha & \text{if $\alpha>\omega$ is a successor}\\
\end{cases}
\]
Then, for all ordinals $\alpha$, there is a $\Pi^1_{\gamma(\alpha,\beta)}$ sentence $\varphi$ such that for all cardinals $\delta$ with $\max(\alpha,\beta)<\delta$ and all sets $X\subseteq\delta$ we have
\[X\in\R^\alpha(\Pi^1_\beta(\delta))^+\iff (V_\delta,\in,X)\models\varphi.\]
\end{lemma}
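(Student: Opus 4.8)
The plan is to construct, by transfinite recursion on $\alpha$, a single Bagaria-style formula $\varphi_{\alpha,\beta}$ in the language with one distinguished second-order predicate $\dot X$, of complexity $\Pi^1_{\gamma(\alpha,\beta)}$, and to prove simultaneously by induction that for every cardinal $\delta>\max(\alpha,\beta)$ and every $X\subseteq\delta$ one has $X\in\R^\alpha(\Pi^1_\beta(\delta))^+$ iff $(V_\delta,\in,X)\models\varphi_{\alpha,\beta}$. The hypothesis $\max(\alpha,\beta)<\delta$ guarantees $\gamma(\alpha,\beta)<\delta$, so that $\varphi_{\alpha,\beta}$ is a legitimate $\Pi^1_{\gamma(\alpha,\beta)}$ sentence interpretable over $V_\delta$. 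The content of the lemma is entirely the complexity bookkeeping; the equivalences themselves fall out of the recursive definition of $\R$.

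For the base case $\alpha=0$, where $\gamma(0,\beta)=\beta+1$ and $\R^0(\Pi^1_\beta(\delta))=\Pi^1_\beta(\delta)$, the task is to show that ``$X$ is $\Pi^1_\beta$-indescribable'' is expressible by a $\Pi^1_{\beta+1}$ sentence over $V_\delta$. Here I would invoke Bagaria's construction of a \emph{universal} $\Pi^1_\beta$ formula $U(\dot A,\dot e)$ together with the definability of the corresponding satisfaction predicate, so that reflection can be written as $\forall A\,\forall e\,[\,U^{V_\delta}(A,e)\to\exists\alpha\in\dot X\;U^{V_\alpha}(A\cap V_\alpha,e)\,]$. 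Since $U$ is $\Pi^1_\beta$ and the bounded clause $U^{V_\alpha}(\cdots)$, being evaluated inside a set, has complexity bounded by $\Pi^1_\beta$ as well, the matrix is $\Sigma^1_\beta$ and the leading universal second-order block lifts the whole sentence to $\Pi^1_{\beta+1}$, as required.

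For the successor step, using $\R^{\alpha+1}(\Pi^1_\beta(\delta))=\R(\R^\alpha(\Pi^1_\beta(\delta)))$, I would set $\varphi_{\alpha+1,\beta}$ to be $\forall f\,[\,\mathrm{reg}(f)\to\exists H\,(\mathrm{hom}(f,H)\wedge\varphi_{\alpha,\beta}[H])\,]$, where $\mathrm{reg}(f)$ and $\mathrm{hom}(f,H)$ are the essentially first-order, hence negligible-complexity, assertions that $f\colon[\dot X]^{<\omega}\to\delta$ is regressive and that $H\subseteq\dot X$ is homogeneous for $f$, and $\varphi_{\alpha,\beta}[H]$ denotes $\varphi_{\alpha,\beta}$ with its distinguished predicate reinterpreted as the bound variable $H$. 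By the inductive hypothesis $\varphi_{\alpha,\beta}[H]$ is $\Pi^1_{\gamma(\alpha,\beta)}$; prefixing $\exists H$ yields $\Sigma^1_{\gamma(\alpha,\beta)+1}$ and then prefixing $\forall f$ yields $\Pi^1_{\gamma(\alpha,\beta)+2}$. A direct check against the definition of $\gamma$ shows $\gamma(\alpha+1,\beta)=\gamma(\alpha,\beta)+2$ in all three cases ($\alpha<\omega$, $\alpha$ a limit, and $\alpha>\omega$ a successor), so the complexity matches.

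For the limit step, since the ideals $\R^\zeta(\Pi^1_\beta(\delta))$ increase with $\zeta$ we have $X\in\R^\alpha(\Pi^1_\beta(\delta))^+$ iff $X\in\R^\zeta(\Pi^1_\beta(\delta))^+$ for all $\zeta<\alpha$, so I would take $\varphi_{\alpha,\beta}=\bigwedge_{\zeta<\alpha}\varphi_{\zeta,\beta}$, a conjunction with the single free second-order variable $\dot X$. One checks that $\gamma(\zeta,\beta)<\beta+\alpha$ for every $\zeta<\alpha$ and that $\sup_{\zeta<\alpha}\gamma(\zeta,\beta)=\beta+\alpha=\gamma(\alpha,\beta)$, the values $\gamma(\zeta,\beta)=\beta+\zeta$ for limit $\zeta$ being already cofinal; regrouping the conjuncts by their level and padding upward via the cumulativity of Bagaria's classes then exhibits $\varphi_{\alpha,\beta}$ in the canonical form $\bigwedge_{\eta<\beta+\alpha}\psi_\eta$ with each $\psi_\eta$ of complexity $\Pi^1_\eta$. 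The main obstacle I anticipate is the base case: making the ``one level up'' computation precise requires the universal-formula and satisfaction-definability apparatus for Bagaria's transfinite classes, with care that reflecting a $\Pi^1_\beta$ assertion to $V_\alpha$ does not raise complexity; a secondary point is verifying that these classes are closed under the set-sized conjunctions and upward padding used in the limit step, which I would cite from \cite{MR3894041}.
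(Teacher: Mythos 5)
Your proposal is correct and follows essentially the same route as the paper's proof: induction on $\alpha$, with the base case delegated to Bagaria's fact that $\Pi^1_\beta$-indescribability of $X$ is $\Pi^1_{\beta+1}$ over $V_\delta$, the successor step expressed as ``$\forall f\,\exists H\,\varphi_{\alpha,\beta}[H]$'' raising the index by $2$, and the limit step taken as the conjunction over $\zeta<\alpha$ using cofinality of $\gamma(\zeta,\beta)$ in $\beta+\alpha$. Your uniform verification that $\gamma(\alpha+1,\beta)=\gamma(\alpha,\beta)+2$ in all cases is just a cleaner packaging of the paper's explicit case split between $\alpha<\omega$ and $\alpha>\omega$.
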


\begin{proof}
First we consider the case in which $\alpha<\omega$. If $\alpha=0$ then the result holds because there is a $\Pi^1_{\beta+1}$ sentence $\varphi$ such that if $\beta<\delta$ and $X\subseteq\delta$ then $X\in\R^0(\Pi^1_\beta(\delta))^+=\Pi^1_\beta(\delta)^+$ if and only if $(V_\delta,\in,X)\models\varphi$ (see \cite{MR3894041}). Assume $0<\alpha<\omega$ and the result holds for the ordinal $\alpha-1<\omega$. Then there is a $\Pi^1_{\beta+2\alpha-1}$ sentence $\psi$ such that whenever $\max(\alpha,\beta)<\delta$ and $X\subseteq\delta$ we have $X\in\R^{\alpha-1}(\Pi^1_\beta)^+$ if and only if $(V_\delta,\in,X)\models\psi$. By definition of the Ramsey operator, for any relevant $\delta$ and $X\subseteq\delta$, we have $X\in\R^\alpha(\Pi^1_\beta(\delta))^+$ if and only if for every regressive function $f:[X]^{<\omega}\to \delta$ there is a set $H\in\R^{\alpha-1}(\Pi^1_\beta(\delta))^+$ homogeneous for $f$. Thus there is a $\Pi^1_{\beta+2\alpha+1}$ sentence $\varphi$ (namely, the sentence ``$\forall f\exists H \psi$'') such that $X\in\R^\alpha(\Pi^1_\beta(\delta))^+$ if and only if $(V_\delta,\in,X)\models\varphi$.

Suppose $\alpha<\kappa$ is a limit ordinal and the result holds for all ordinals $\eta€™<\alpha$. By definition of the Ramsey hierarchy, for all relevant $\delta$ we have $\R^\alpha(\Pi^1_\beta(\delta))=\bigcup_{\xi<\alpha}\R^\xi(\Pi^1_\beta(\delta))$, and thus, for sets $X\subseteq\delta$ we have 
\[X\in\R^\alpha(\Pi^1_\beta(\delta))^+\iff V_\delta\models\bigwedge_{\xi<\alpha}\left(X\in\R^\xi(\Pi^1_\beta(\delta))^+\right).\]
For each $\xi<\alpha$ there is a $\Pi^1_{\gamma(\xi,\beta)}$ sentence $\varphi_\xi$ such that $X\in\R^\xi(\Pi^1_\beta(\delta))^+$ if and only if $(V_\delta,\in,X)\models\varphi_\xi$. Since the sequence $\<\gamma(\xi,\beta)\st\xi<\alpha\>$ is strictly increasing and $\gamma(\xi,\beta)<\beta+\alpha$ for all $\xi<\alpha$, it follows that there is a $\Pi^1_{\beta+\alpha}$ sentence $\varphi$ such that $X\in\R^\alpha(\Pi^1_\beta(\delta))^+$ if and only if $(V_\delta,\in,X)\models\varphi$ for all relevant $\delta$ and $X$.

Suppose $\alpha=\bar{\alpha}+m_\alpha>\omega$ is a successor ordinal and the result holds for all ordinals less than $\alpha$. Notice that $m_\alpha\geq 1$ since $\bar{\alpha}$ is a limit ordinal. Suppose $m_\alpha=1$. By our inductive hypothesis there is a $\Pi^1_{\beta+\bar{\alpha}}$ sentence $\psi$ such that for all relevant $\delta$ and $X\subseteq\delta$ we have $X\in\R^{\bar{\alpha}}(\Pi^1_\beta(\delta))^+$ if and only if $(V_\delta,\in,X)\models\psi$. By definition of the Ramsey operator, for all relevant $\delta$ and $X\subseteq\delta$ we have $X\in\R^{\bar{\alpha}+1}(\Pi^1_\beta(\delta))^+$ if and only if for every regressive function $f:[X]^{<\omega}\to \kappa$ there is a set $H\in\R^{\bar{\alpha}}(\Pi^1_\beta(\delta))^+$ homogeneous for $f$. This implies that there is a $\Pi^1_{\beta+\bar{\alpha}+2}$ sentence $\varphi$ such that for all relevant $\delta$ and $X\subseteq\delta$ we have $X\in\R^{\bar{\alpha}+1}(\Pi^1_\beta(\delta))^+$ if and only if $(V_\delta,\in,X)\models\varphi$. Now, suppose $m_\alpha>1$ and assume the result holds for the ordinal $\bar{\alpha}+m_\alpha-1$. Then for all relevant $\delta$ and $X\subseteq\delta$ there is a $\Pi^1_{\beta+\bar{\alpha}+2(m_\alpha-1)}$ sentence $\psi$ such that $X\in\R^{\bar{\alpha}+m_\alpha-1}(\Pi^1_\beta(\delta))^+$ if and only if $(V_\delta,\in,X)\models\psi$. For all relevant $\delta$ and $X\subseteq\delta$ we have $X\in\R^{\bar{\alpha}+m_\alpha}(\Pi^1_\beta(\delta))^+$ if and only if for every regressive function $f:[X]^{<\omega}\to \kappa$ there is a set $H\in\R^{\bar{\alpha}+m_\alpha-1}(\Pi^1_\beta(\delta))^+$ homogeneous for $f$. Since $H\in\R^{\bar{\alpha}+m_\alpha-1}(\Pi^1_\beta(\delta))^+$ is expressible by a $\Pi^1_{\beta+\bar{\alpha}+2(m_\alpha-1)}$ sentence $\psi$ over $(V_\delta,\in,H)$, it follows that there is a $\Pi^1_{\beta+\bar{\alpha}+2m_\alpha}$ sentence $\varphi$ such that $X\in\R^{\bar{\alpha}+m_\alpha}(\Pi^1_\beta(\delta))^+$ if and only if $(V_\delta,\in,X)\models\varphi$.
\end{proof}

\section{Indescribability in finite degrees of Ramseyness}\label{section_indescribability_in_finite_ramseyness}

Next we prove that for $0<m<\omega$ and $\beta<\kappa$, the ideal $\R^m(\Pi^1_\beta(\kappa))$ is obtained by using a generating set consisting of the pre-Ramsey operator applied to the ideal $\R^{m-1}(\Pi^1_\beta(\kappa))$ one-level down in the Ramsey hierarchy together with the ideal $\R^{m-1}(\Pi^1_{\beta+2}(\kappa))$. This result also gives more information about the ideals $\R^m([\kappa]^{<\kappa})$ considered by Feng \cite[Theorem 4.8]{MR1077260}.

\begin{theorem}\label{theorem_finite_ideal_diagram}
For all $0<m<\omega$ and all $\beta\in\{-1\}\cup\kappa$, if $\kappa\in\R^m(\Pi^1_\beta(\kappa))^+$ then
\[\R^m(\Pi^1_\beta(\kappa))=\overline{\R_0(\R^{m-1}(\Pi^1_\beta(\kappa)))\cup\R^{m-1}(\Pi^1_{\beta+2}(\kappa))}.\]
\end{theorem}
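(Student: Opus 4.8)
The plan is to invoke Remark~\ref{remark_ideal_generated} with $J=\R^m(\Pi^1_\beta(\kappa))$, $I_0=\R_0(\R^{m-1}(\Pi^1_\beta(\kappa)))$ and $I_1=\R^{m-1}(\Pi^1_{\beta+2}(\kappa))$, which reduces the theorem to the two containments $J^+\subseteq I_0^+\cap I_1^+$ (equivalently $J\supseteq\overline{I_0\cup I_1}$) and $J\subseteq\overline{I_0\cup I_1}$. Before treating either, I would record two auxiliary facts. The \emph{base case} is $\Pi^1_{\beta+2}(\kappa)\subseteq\R(\Pi^1_\beta(\kappa))$: given $S\in\R(\Pi^1_\beta(\kappa))^+$, Corollary~\ref{corollary_one_S} (or Proposition~\ref{proposition_4_5_6} directly when $\beta=-1$) shows that every $(1,S)$-sequence has a homogeneous set in $\Pi^1_\beta(\kappa)^+$, and since $\Pi^1_\beta(\kappa)^+\subseteq\bigcap_{\xi\in\{-1\}\cup(\beta+1)}\Pi^1_\xi(\kappa)^+$ by monotonicity of the indescribability ideals, Lemma~\ref{lemma_baumgartner_bagaria} applied with parameter $\beta+1$ yields that $S$ is $\Pi^1_{\beta+2}$-indescribable. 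The \emph{indescribability lemma} is $\Pi^1_{\beta+2m}(\kappa)\subseteq\R^m(\Pi^1_\beta(\kappa))$ for all $0<m<\omega$; it follows by induction on $m$ from the base case and monotonicity of $\R$: assuming it at $m-1$ for every index, $\Pi^1_{\beta+2m}(\kappa)=\Pi^1_{(\beta+2)+2(m-1)}(\kappa)\subseteq\R^{m-1}(\Pi^1_{\beta+2}(\kappa))\subseteq\R^{m-1}(\R(\Pi^1_\beta(\kappa)))=\R^m(\Pi^1_\beta(\kappa))$. In particular every $S\in J^+$ is $\Pi^1_{\beta+2m}$-indescribable.

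For the containment $J^+\subseteq I_0^+\cap I_1^+$, the part $J^+\subseteq I_1^+$ is immediate from the base case and monotonicity, since $I_1=\R^{m-1}(\Pi^1_{\beta+2}(\kappa))\subseteq\R^{m-1}(\R(\Pi^1_\beta(\kappa)))=J$. For $J^+\subseteq I_0^+$ I would argue by reflection. Fix $S\in J^+$, a regressive $f$ and a club $C$; by the club form of $\R^{m-1}(\Pi^1_\beta(\kappa))$-positivity (clause~(5) of Theorem~\ref{theorem_omega_S}, available via Proposition~\ref{proposition_4_5_6}) there is a homogeneous $H_0\in P(S\cap C)\cap\R^{m-1}(\Pi^1_\beta(\kappa))^+$ for $f$. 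By Lemma~\ref{lemma_complexity} the assertion ``$H_0\in\R^{m-1}(\Pi^1_\beta(\kappa))^+$'' is $\Pi^1_{\beta+2m-1}$ over $V_\kappa$, so its conjunction with the low-complexity assertions that $C$ is club, $H_0\subseteq S\cap C$, and $H_0$ is homogeneous for $f$ is a $\Pi^1_{\beta+2m-1}$ sentence holding at $V_\kappa$. Since $S$ is $\Pi^1_{\beta+2m}$-indescribable, hence $\Pi^1_{\beta+2m-1}$-indescribable, this sentence reflects to some $\alpha\in S$; the uniformity across levels in Lemma~\ref{lemma_complexity} guarantees that the reflected instance asserts $\alpha\in C$ and that $H_0\cap\alpha\subseteq S\cap C\cap\alpha$ is homogeneous for $f$ and lies in $\R^{m-1}(\Pi^1_\beta(\alpha))^+$, which is exactly the witness required for $\R_0$-positivity of $S$.

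For the containment $J\subseteq\overline{I_0\cup I_1}$, fix $S\in J$ and a regressive $f:[S]^{<\omega}\to\kappa$ with no homogeneous set in $\R^{m-1}(\Pi^1_\beta(\kappa))^+$. Split $S=A_0\cup A_1$, where $A_1$ consists of those $\alpha\in S$ for which $f\restrict[S\cap\alpha]^{<\omega}$ has a homogeneous set in $\R^{m-1}(\Pi^1_\beta(\alpha))^+$ and $A_0=S\setminus A_1$. Unwinding the definition of $\R_0$ gives $A_0\in I_0$: a witness to $A_0\in\R_0(\R^{m-1}(\Pi^1_\beta(\kappa)))^+$ for the function $f\restrict[A_0]^{<\omega}$ would produce, at some $\alpha\in A_0$, a homogeneous set for $f\restrict[S\cap\alpha]^{<\omega}$ in $\R^{m-1}(\Pi^1_\beta(\alpha))^+$, contradicting $\alpha\in A_0$. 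To see $A_1\in I_1$, suppose otherwise, so that $A_1\in I_1^+$ and hence, by the indescribability lemma, $A_1$ is $\Pi^1_{\beta+2m}$-indescribable. By Lemma~\ref{lemma_complexity} the statement ``$f$ has no homogeneous set in $\R^{m-1}(\Pi^1_\beta(\kappa))^+$'' has the form $\forall H(\cdots)$ with a $\Sigma^1_{\beta+2m-1}$ matrix, so it is $\Pi^1_{\beta+2m}$ and holds at $V_\kappa$; reflecting it to some $\alpha\in A_1$ yields that $f\restrict[S\cap\alpha]^{<\omega}$ has no homogeneous set in $\R^{m-1}(\Pi^1_\beta(\alpha))^+$, contradicting $\alpha\in A_1$. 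Thus $A_1\in I_1$ and $S=A_0\cup A_1\in\overline{I_0\cup I_1}$.

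I expect the main obstacle to be the $\Pi^1$-complexity bookkeeping supplied by Lemma~\ref{lemma_complexity}, on which both reflection arguments turn: one must verify that ``$H_0\in\R^{m-1}(\Pi^1_\beta(\kappa))^+$'' is exactly $\Pi^1_{\beta+2m-1}$ and that the universally quantified ``$f$ has no homogeneous set in $\R^{m-1}(\Pi^1_\beta(\kappa))^+$'' is exactly $\Pi^1_{\beta+2m}$, these degrees matching precisely the indescribability of $S$ (respectively $A_1$) furnished by the indescribability lemma. The extra $+2$ separating $\Pi^1_{\beta+2}$ from $\Pi^1_\beta$ is just what the leading second-order quantifier $\forall H$ over a $\Pi^1_{\beta+2m-1}$ matrix contributes. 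The genuinely delicate point is ensuring that the single sentence from Lemma~\ref{lemma_complexity} defines $\R^{m-1}(\Pi^1_\beta(\delta))^+$ \emph{uniformly} at $\delta=\kappa$ and at the reflection point $\delta=\alpha$, so that the reflected sentences really capture the local $\alpha$-level assertions; a secondary point of care is the edge-case bookkeeping in the base case (the parameter shift to $\beta+1$ in Lemma~\ref{lemma_baumgartner_bagaria} and the treatment of $\beta=-1$).
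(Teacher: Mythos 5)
Your proof is correct, and all of the paper's key ingredients appear in it (Lemma~\ref{lemma_baumgartner_bagaria}, the complexity bookkeeping of Lemma~\ref{lemma_complexity}, and the two reflection arguments), but the global organization is genuinely different. The paper proves the displayed identity itself by induction on $m$: in the inductive step it obtains $J^+\subseteq\R^{m-1}(\Pi^1_{\beta+2}(\kappa))^+$ by passing through the generated-ideal form of $\R^{m-1}(\Pi^1_\beta(\kappa))$, and it extracts the crucial containment $\Pi^1_{\beta+2m}(\kappa)\subseteq\R^{m-1}(\Pi^1_{\beta+2}(\kappa))$ by telescoping the inductive hypothesis $m-1$ times. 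You instead isolate the single containment $\Pi^1_{\beta'+2k}(\kappa)\subseteq\R^k(\Pi^1_{\beta'}(\kappa))$ and prove it by a short independent induction using only monotonicity of $\R$ together with the $k=1$ case; this yields both $I_1\subseteq J$ and $\Pi^1_{\beta+2m}(\kappa)\subseteq I_1$ at once and removes the induction from the main body entirely. Your converse direction is phrased as an explicit decomposition $S=A_0\cup A_1$ rather than the paper's ``$X$ is not the union of a set in $I_0$ and a set in $I_1$'' contradiction, but these are the same argument: your $A_1$ is, up to the club of good reflection points, the complement in $S$ of the paper's set $C$. What the paper's organization buys is that the fully telescoped expression for $\R^{m-1}(\Pi^1_{\beta+2}(\kappa))$ falls out as a by-product and is reused in Corollary~\ref{corollary_indescribability_in_finite_ramseyness}; with your organization that corollary needs a separate (easy) derivation from the theorem plus your indescribability lemma. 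Two small points you should make explicit: at each reflection step the witness $\alpha$ must exceed $\max(m-1,\beta)$ for the uniform sentence of Lemma~\ref{lemma_complexity} to define $\R^{m-1}(\Pi^1_\beta(\alpha))^+$ at $V_\alpha$ (harmless, since the set of good $\alpha$ is in the relevant dual filter), and in the $A_1$ argument the predicate $S$ must be carried along with $f$ so that the reflected sentence speaks about homogeneous subsets of $S\cap\alpha$; the paper handles both in the same implicit way.
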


\begin{proof}
We proceed by induction on $m$. For the base case of the induction in which $m=1$, we will show that for all $\beta\in\{-1\}\cup\kappa$ we have
\[\R(\Pi^1_\beta(\kappa))=\overline{\R_0(\Pi^1_\beta(\kappa))\cup\Pi^1_{\beta+2}(\kappa)}.\]
Fix $\beta\in\{-1\}\cup\kappa$ and let $I=\overline{\R_0(\Pi^1_\beta(\kappa))\cup\Pi^1_{\beta+2}(\kappa)}$. We will show that $X\in I^+$ if and only if $X\in\R(\Pi^1_\beta(\kappa))^+$.

Suppose $X\in I^+$ and $X\in\R(\Pi^1_\beta(\kappa))$. Let $f:[X]^{<\omega}\to \kappa$ be a regressive function and suppose that every homogeneous set for $f$ is not $\Pi^1_\beta$-indescribable. This can be expressed by a $\Pi^1_{\beta+2}$ sentence $\varphi$ over $(V_\kappa,\in,X,f)$, and thus the set
\[C=\{\xi<\kappa\st (V_\xi,\in,X\cap V_\xi,f\cap V_\xi)\models\varphi\}\] 
is in $\Pi^1_{\beta+2}(\kappa)^*$. Since $X\notin I$, $X$ is not the union of a set in $\R_0(\Pi^1_\beta(\kappa))$ and a set in $\Pi^1_{\beta+2}(\kappa)$, and since $X=(X\cap C)\cup(X\setminus C)$, it follows that $X\cap C\notin \R_0(\Pi^1_\beta(\kappa))$. Thus, by definition of $\R_0(\Pi^1_\beta(\kappa))$, there is a $\xi\in X\cap C$ with $\xi>\beta$ such that there is a set $H\subseteq X\cap C\cap\xi$ which is $\Pi^1_\beta$-indescribable in $\xi$ and homogeneous for $f$. This contradicts $\xi\in C$.

Now suppose $X\in\R(\Pi^1_\beta(\kappa))^+$. By Remark \ref{remark_ideal_generated}, it suffices to show that $X\in\R_0(\Pi^1_\beta(\kappa))^+$ and $X\in\Pi^1_{\beta+2}(\kappa)^+$. To see that $X\in\Pi^1_{\beta+2}(\kappa)^+$ notice that every $(1,X)$-sequence $\vec{X}=\<X_\xi\st\xi\in X\>$ has a homogeneous set in $\Pi^1_\beta(\kappa)^+$, and thus by Lemma \ref{lemma_baumgartner_bagaria}, $X$ is $\Pi^1_{\beta+2}$-indescribable. Let us show $X\in\R_0(\Pi^1_\beta(\kappa))^+$. Fix a regressive function $f:[X]^{<\omega}\to \kappa$ and a club $C\subseteq\kappa$. Since $\R(\Pi^1_\beta(\kappa))$ is a normal ideal it follows that $X\cap C\in\R(\Pi^1_\beta(\kappa))^+$ and thus there is a set $H\subseteq X\cap C$ which is $\Pi^1_\beta$-indescribable in $\kappa$ and homogeneous for $f$. The fact that $H$ is $\Pi^1_\beta$-indescribable can be expressed by a $\Pi^1_{\beta+1}$ sentence $\varphi$ over $(V_\kappa,\in,H)$. Since $X\cap C\in\Pi^1_{\beta+2}(\kappa)^+$, it follows that there is a $\xi\in X\cap C$ with $\xi>\beta$ such that $(V_\xi,\in,H\cap\xi)\models \varphi$ and hence $H\cap\xi\subseteq X\cap C\cap\xi$ is $\Pi^1_\beta$-indescribable in $\xi$ and homogeneous for $f$. Thus $X\in\R_0(\Pi^1_\beta(\kappa))^+$.

For the inductive step, suppose that for all $k<m$ and all $\beta\in\{-1\}\cup\kappa$ we have
\[\R^k(\Pi^1_\beta(\kappa))=\overline{\R_0(\R^{k-1}(\Pi^1_\beta(\kappa)))\cup\R^{k-1}(\Pi^1_{\beta+2}(\kappa))}.\]
Fix $\beta\in\{-1\}\cup\kappa$. Let us argue that 
\[\R^m(\Pi^1_\beta(\kappa))=\overline{\R_0(\R^{m-1}(\Pi^1_\beta(\kappa)))\cup\R^{m-1}(\Pi^1_{\beta+2}(\kappa))}.\] Let $I=\overline{\R_0(\R^{m-1}(\Pi^1_\beta(\kappa)))\cup\R^{m-1}(\Pi^1_{\beta+2}(\kappa))}$. We will show that $X\in I^+$ if and only if $X\in\R^m(\Pi^1_\beta(\kappa))^+$.

Suppose $X\in I^+$. For the sake of contradiction suppose that $X\in\R^m(\Pi^1_\beta(\kappa))$. Then there is a regressive function $f:[X]^{<\omega}\to \kappa$ such that every homogeneous set for $f$ is in $\R^{m-1}(\Pi^1_\beta(\kappa))$. By Lemma \ref{lemma_complexity}, the fact that every homogeneous set for $f$ is in $\R^{m-1}(\Pi^1_\beta(\kappa))$ can be expressed by a $\Pi^1_{\beta+2m}$ sentence $\varphi$ over $(V_\kappa,\in,X,f)$. Thus the set $C=\{\alpha<\kappa\st (V_\alpha,\in,X\cap\alpha,f\cap V_\alpha)\models\varphi\}$ is in $\Pi^1_{\beta+2m}(\kappa)^*$. Let us show that our inductive assumption implies that $\Pi^1_{\beta+2m}(\kappa)\subseteq\R^{m-1}(\Pi^1_{\beta+2}(\kappa))$. From our inductive assumption, it follows that
\begin{align*}
\R^{m-1}(\Pi^1_{\beta+2}(\kappa))&=\overline{\R_0(\R^{m-2}(\Pi^1_{\beta+2}(\kappa)))\cup\R^{m-2}(\Pi^1_{\beta+4}(\kappa))}\\
	&=\overline{\R_0(\R^{m-2}(\Pi^1_{\beta+2}(\kappa)))\cup\R_0(\R^{m-3}(\Pi^1_{\beta+4}(\kappa)))\cup\R^{m-3}(\Pi^1_{\beta+6}(\kappa))}\\
	&\ \ \vdots\\
	&=\overline{\left(\bigcup_{i=1}^{m-1} \R_0(\R^{m-(i+1)}(\Pi^1_{\beta+2i}(\kappa)))\right)\cup \Pi^1_{\beta+2m}(\kappa)}.
\end{align*}
Thus $C\in\Pi^1_{\beta+2m}(\kappa)^*\subseteq\R^{m-1}(\Pi^1_{\beta+2}(\kappa))^*$. Since $X\in I^+$, $X$ is not the union of a set in $\R_0(\R^{m-1}(\Pi^1_\beta(\kappa)))$ and a set in $\R^{m-1}(\Pi^1_{\beta+2}(\kappa))$, and since $X=(X\cap C)\cup (X\setminus C)$, it follows that $X\cap C\notin \R_0(\R^{m-1}(\Pi^1_\beta(\kappa)))$. Thus, by definition of $\R_0(\R^{m-1}(\Pi^1_\beta(\kappa)))$, there is a $\xi\in X\cap C$ with $\xi>\beta$ and an $H\subseteq X\cap C\cap\xi$ in $\R^{m-1}(\Pi^1_\beta(\xi))^+$ homogeneous for $f$. But, this contradicts $\xi\in C$.

Suppose $X\in \R^m(\Pi^1_\beta(\kappa))^+$. By Remark \ref{remark_ideal_generated}, it suffices to show that $X\in\R^{m-1}(\Pi^1_{\beta+2}(\kappa))^+$ and $X\in\R_0(\R^{m-1}(\Pi^1_\beta(\kappa))^+$. Since $X\in \R^m(\Pi^1_\beta(\kappa))^+$, every regressive function $f:[X]^{<\omega}\to \kappa$ has a homogeneous set $H\in\R^{m-1}(\Pi^1_\beta(\kappa))^+$. From our inductive assumption it follows that 
\[\R^{m-1}(\Pi^1_\beta(\kappa))=\overline{\R_0(\R^{m-2}(\Pi^1_\beta(\kappa))\cup\R^{m-2}(\Pi^1_{\beta+2}(\kappa))}\] 
and thus, every regressive function $f:[X]^{<\omega}\to \kappa$ has a homogeneous set $H\in\R^{m-2}(\Pi^1_{\beta+2}(\kappa))^+$. In other words, $X\in\R^{m-1}(\Pi^1_{\beta+2}(\kappa))^+$. It remains to show that $X\in\R_0(\R^{m-1}(\Pi^1_\beta(\kappa))^+$. Fix a regressive function $f:[X]^{<\omega}\to \kappa$ and a club $C\subseteq\kappa$. Since $\R^m(\Pi^1_\beta(\kappa))$ is a normal ideal it follows that $X\cap C\in\R^m(\Pi^1_\beta(\kappa))^+$ and thus every $(1,X\cap C)$-sequence has a homogeneous set in $\R^{m-1}(\Pi^1_\beta(\kappa))^+$ (by Theorem \ref{theorem_ramsey_equiv}). From our inductive assumption we see that every element of $\R^{m-1}(\Pi^1_\beta(\kappa))^+$ is $\Pi^1_{\beta+2m-2}$-indescribable, and thus, by Lemma \ref{lemma_baumgartner_bagaria}, $X\cap C$ is $\Pi^1_{\beta+2m}$-indescribable. Since $X\cap C\in\R^m(\Pi^1_\beta(\kappa))^+$ there is a set $H\subseteq X\cap C$ in $\R^{m-1}(\Pi^1_\beta(\kappa))^+$ which is homogeneous for $f$. By Lemma \ref{lemma_complexity}, the fact that $H\in\R^{m-1}(\Pi^1_\beta(\kappa))^+$ can be expressed by a $\Pi^1_{\beta+2m-1}$ sentence $\varphi$ over $(V_\kappa,\in,H)$. Since $X\cap C$ is $\Pi^1_{\beta+2m}$-indescribable we see that there is a $\xi\in X\cap C$ such that $(V_\xi,\in,H\cap\xi)\models\varphi$, in other words, $H\cap\xi\subseteq X\cap C\cap\xi$ and $H\cap \xi\in\R^{m-1}(\Pi^1_\beta(\xi))^+$. Thus $X\in\R_0(\R^{m-1}(\Pi^1_\beta(\kappa)))^+$.
\end{proof}

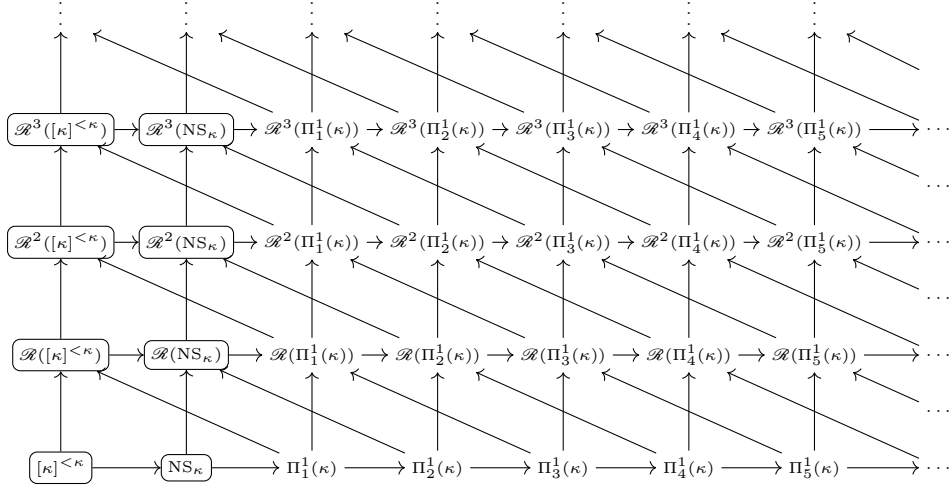
\begin{figure}
\centering

\begin{tikzpicture}[x = 1.67cm , y = 1.5cm]
\tiny
\node[draw,rounded corners=0.1cm] (pi1-1) at (0,0) {$[\kappa]^{<\kappa}$};
\node[draw,rounded corners=0.1cm] (pi10) at (1,0) {$\NS_\kappa$};
\node (pi11) at (2,0) {$\Pi^1_1(\kappa)$};
\node (pi12) at (3,0) {$\Pi^1_2(\kappa)$};
\node (pi13) at (4,0) {$\Pi^1_3(\kappa)$};
\node (pi14) at (5,0) {$\Pi^1_4(\kappa)$};
\node (pi15) at (6,0) {$\Pi^1_5(\kappa)$};
\node (dots) at (7,0) {$\cdots$};
\node (dots+) at (7,0.5) {$\cdots$};

\node[draw,rounded corners=0.1cm] (rpi1-1) at (0,1) {$\R([\kappa]^{<\kappa})$};
\node[draw,rounded corners=0.1cm] (rpi10) at (1,1) {$\R(\NS_\kappa)$};
\node (rpi11) at (2,1) {$\R(\Pi^1_1(\kappa))$};
\node (rpi12) at (3,1) {$\R(\Pi^1_2(\kappa))$};
\node (rpi13) at (4,1) {$\R(\Pi^1_3(\kappa))$};
\node (rpi14) at (5,1) {$\R(\Pi^1_4(\kappa))$};
\node (rpi15) at (6,1) {$\R(\Pi^1_5(\kappa))$};
\node (rdots) at (7,1) {$\cdots$};
\node (rdots+) at (7,1.5) {$\cdots$};

\node[draw,rounded corners=0.1cm] (r2pi1-1) at (0,2) {$\R^2([\kappa]^{<\kappa})$};
\node[draw,rounded corners=0.1cm] (r2pi10) at (1,2) {$\R^2(\NS_\kappa)$};
\node (r2pi11) at (2,2) {$\R^2(\Pi^1_1(\kappa))$};
\node (r2pi12) at (3,2) {$\R^2(\Pi^1_2(\kappa))$};
\node (r2pi13) at (4,2) {$\R^2(\Pi^1_3(\kappa))$};
\node (r2pi14) at (5,2) {$\R^2(\Pi^1_4(\kappa))$};
\node (r2pi15) at (6,2) {$\R^2(\Pi^1_5(\kappa))$};
\node (r2dots) at (7,2) {$\cdots$};
\node (r2dots+) at (7,2.5) {$\cdots$};

\node[draw,rounded corners=0.1cm] (r3pi1-1) at (0,3) {$\R^3([\kappa]^{<\kappa})$};
\node[draw,rounded corners=0.1cm] (r3pi10) at (1,3) {$\R^3(\NS_\kappa)$};
\node (r3pi11) at (2,3) {$\R^3(\Pi^1_1(\kappa))$};
\node (r3pi12) at (3,3) {$\R^3(\Pi^1_2(\kappa))$};
\node (r3pi13) at (4,3) {$\R^3(\Pi^1_3(\kappa))$};
\node (r3pi14) at (5,3) {$\R^3(\Pi^1_4(\kappa))$};
\node (r3pi15) at (6,3) {$\R^3(\Pi^1_5(\kappa))$};
\node (r3dots) at (7,3) {$\cdots$};
\node (r3dots+) at (6.89,3.5) {};

\node (r4pi1-1) at (0,4.1) {$\vdots$};
\node (r4pi1-1+) at (0.06,3.9) {};
\node (r4pi1-1++) at (0.2,3.88) {};

\node (r4pi10) at (1,4.1) {$\vdots$};
\node (r4pi10+) at (1.06,3.9) {};
\node (r4pi10++) at (1.2,3.88) {};

\node (r4pi11) at (2,4.1) {$\vdots$};
\node (r4pi11+) at (2.06,3.9) {};
\node (r4pi11++) at (2.2,3.88) {};

\node (r4pi12) at (3,4.1) {$\vdots$};
\node (r4pi12+) at (3.06,3.9) {};
\node (r4pi12++) at (3.2,3.88) {};

\node (r4pi13) at (4,4.1) {$\vdots$};
\node (r4pi13+) at (4.06,3.9) {};
\node (r4pi13++) at (4.2,3.88) {};

\node (r4pi14) at (5,4.1) {$\vdots$};
\node (r4pi14+) at (5.06,3.9) {};
\node (r4pi14++) at (5.2,3.88) {};

\node (r4pi15) at (6,4.1) {$\vdots$};
\node (r4pi15+) at (6.06,3.9) {};
\node (r4pi15++) at (6.2,3.88) {};


\draw [->] (pi1-1) -- (pi10);
\draw [->] (pi10) -- (pi11);
\draw [->] (pi11) -- (pi12);
\draw [->] (pi12) -- (pi13);
\draw [->] (pi13) -- (pi14);
\draw [->] (pi14) -- (pi15);
\draw [->] (pi15) -- (dots);

\draw [->] (pi1-1) -- (rpi1-1);
\draw [->] (pi11) -- (rpi1-1);

\draw [->] (pi10) -- (rpi10);
\draw [->] (pi12) -- (rpi10);

\draw [->] (pi11) -- (rpi11);
\draw [->] (pi13) -- (rpi11);

\draw [->] (pi12) -- (rpi12);
\draw [->] (pi14) -- (rpi12);

\draw [->] (pi13) -- (rpi13);
\draw [->] (pi15) -- (rpi13);

\draw [->] (pi14) -- (rpi14);
\draw [->] (dots) -- (rpi14);

\draw [->] (pi15) -- (rpi15);
\draw [->] (dots+) -- (rpi15);

\draw [->] (rpi1-1) -- (rpi10);
\draw [->] (rpi10) -- (rpi11);
\draw [->] (rpi11) -- (rpi12);
\draw [->] (rpi12) -- (rpi13);
\draw [->] (rpi13) -- (rpi14);
\draw [->] (rpi14) -- (rpi15);
\draw [->] (rpi15) -- (rdots);

\draw [->] (rpi1-1) -- (r2pi1-1);
\draw [->] (rpi11) -- (r2pi1-1);

\draw [->] (rpi10) -- (r2pi10);
\draw [->] (rpi12) -- (r2pi10);

\draw [->] (rpi11) -- (r2pi11);
\draw [->] (rpi13) -- (r2pi11);

\draw [->] (rpi12) -- (r2pi12);
\draw [->] (rpi14) -- (r2pi12);

\draw [->] (rpi13) -- (r2pi13);
\draw [->] (rpi15) -- (r2pi13);

\draw [->] (rpi14) -- (r2pi14);
\draw [->] (rdots) -- (r2pi14);

\draw [->] (rpi15) -- (r2pi15);
\draw [->] (rdots+) -- (r2pi15);

\draw [->] (r2pi1-1) -- (r2pi10);
\draw [->] (r2pi10) -- (r2pi11);
\draw [->] (r2pi11) -- (r2pi12);
\draw [->] (r2pi12) -- (r2pi13);
\draw [->] (r2pi13) -- (r2pi14);
\draw [->] (r2pi14) -- (r2pi15);
\draw [->] (r2pi15) -- (r2dots);

\draw [->] (r2pi1-1) -- (r3pi1-1);
\draw [->] (r2pi11) -- (r3pi1-1);

\draw [->] (r2pi10) -- (r3pi10);
\draw [->] (r2pi12) -- (r3pi10);

\draw [->] (r2pi11) -- (r3pi11);
\draw [->] (r2pi13) -- (r3pi11);

\draw [->] (r2pi12) -- (r3pi12);
\draw [->] (r2pi14) -- (r3pi12);

\draw [->] (r2pi13) -- (r3pi13);
\draw [->] (r2pi15) -- (r3pi13);

\draw [->] (r2pi14) -- (r3pi14);
\draw [->] (r2dots) -- (r3pi14);

\draw [->] (r2pi15) -- (r3pi15);
\draw [->] (r2dots+) -- (r3pi15);

\draw [->] (r3pi1-1) -- (r3pi10);
\draw [->] (r3pi10) -- (r3pi11);
\draw [->] (r3pi11) -- (r3pi12);
\draw [->] (r3pi12) -- (r3pi13);
\draw [->] (r3pi13) -- (r3pi14);
\draw [->] (r3pi14) -- (r3pi15);
\draw [->] (r3pi15) -- (r3dots);

\draw [->] (r3pi1-1) -- (r4pi1-1.south);
\draw [->] (r3pi11) -- (r4pi1-1++);

\draw [->] (r3pi10) -- (r4pi10.south);
\draw [->] (r3pi12) -- (r4pi10++);

\draw [->] (r3pi11) -- (r4pi11.south);
\draw [->] (r3pi13) -- (r4pi11++);

\draw [->] (r3pi12) -- (r4pi12.south);
\draw [->] (r3pi14) -- (r4pi12++);

\draw [->] (r3pi13) -- (r4pi13.south);
\draw [->] (r3pi15) -- (r4pi13++);

\draw [->] (r3pi14) -- (r4pi14.south);
\draw [->] (r3dots) -- (r4pi14++);

\draw [->] (r3pi15) -- (r4pi15.south);
\draw [->] (r3dots+) -- (r4pi15++);

\end{tikzpicture}
\caption{\tiny Diagram of ideal containments of $\R^m(\Pi^1_\beta(\kappa))$ for $m<\omega$ and $\beta<\kappa$. The circled ideals are those in Feng's original hierarchy. Each arrow $\rightarrow$ indicates a containment $\subseteq$ which is proper when the ideals are nontrivial by Theorem \ref{theorem_proper_containments_in_finite_diagram}.}\label{figure_finite_ideal_diagram}

\end{figure}

The next corollary generalizes a result of Feng \cite[Theorem 4.8]{MR1077260} and indicates precisely the degree of indescribability that can be derived from a given finite degree of Ramseyness.

\begin{corollary}\label{corollary_indescribability_in_finite_ramseyness}
For all $0<m<\omega$ and all $\beta\in\{-1\}\cup\kappa$, if $\kappa\in\R^m(\Pi^1_\beta(\kappa))^+$ then 
\[\R^m(\Pi^1_\beta(\kappa))=\overline{\R_0(\R^{m-1}(\Pi^1_\beta(\kappa)))\cup\Pi^1_{\beta+2m}(\kappa)}.\]\marginpar{\tiny We really only need $\R_0(\R^{m-1}(\Pi^1_n(\kappa)))$ here since the relevant ideals form a chain.}
\end{corollary}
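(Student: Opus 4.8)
The plan is to argue by induction on $m$, using Theorem~\ref{theorem_finite_ideal_diagram} as the engine. The only difference between the present statement and Theorem~\ref{theorem_finite_ideal_diagram} is that the latter uses the generator $\R^{m-1}(\Pi^1_{\beta+2}(\kappa))$ where we wish to use $\Pi^1_{\beta+2m}(\kappa)$; thus the entire content of the corollary is that, \emph{in the presence of the generator} $\R_0(\R^{m-1}(\Pi^1_\beta(\kappa)))$, one may replace the finer ideal $\R^{m-1}(\Pi^1_{\beta+2}(\kappa))$ by the single indescribability ideal $\Pi^1_{\beta+2m}(\kappa)$. The conceptual heart of the matter is the inclusion
\[\Pi^1_{\beta+2}(\delta)\subseteq\R(\Pi^1_\beta(\delta)),\]
valid at every cardinal $\delta$, which expresses that two extra degrees of $\Pi^1$-indescribability are absorbed by a single application of $\R$.

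First I would dispatch the base case $m=1$, which is literally the opening identity $\R(\Pi^1_\beta(\kappa))=\overline{\R_0(\Pi^1_\beta(\kappa))\cup\Pi^1_{\beta+2}(\kappa)}$ established in the proof of Theorem~\ref{theorem_finite_ideal_diagram}. From this base case I would extract the displayed inclusion: when $\delta\in\R(\Pi^1_\beta(\delta))^+$ it holds because an ideal generated by a union contains each of the unioned ideals, and when $\delta\notin\R(\Pi^1_\beta(\delta))^+$ the ideal $\R(\Pi^1_\beta(\delta))$ is all of $P(\delta)$ so the inclusion is trivial. Monotonicity of $\R$ (noted in Section~\ref{section_feng}) then upgrades this to $\R^{m-2}(\Pi^1_{\beta+2}(\delta))\subseteq\R^{m-1}(\Pi^1_\beta(\delta))$ for every $\delta$.

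For the inductive step I would first note that nontriviality propagates: since Theorem~\ref{theorem_finite_ideal_diagram} gives $\R^{m-1}(\Pi^1_{\beta+2}(\kappa))\subseteq\R^m(\Pi^1_\beta(\kappa))$, the hypothesis $\kappa\in\R^m(\Pi^1_\beta(\kappa))^+$ yields $\kappa\in\R^{m-1}(\Pi^1_{\beta+2}(\kappa))^+$, so the inductive hypothesis applies to level $m-1$ with $\beta$ replaced by $\beta+2$ (note $(\beta+2)+2(m-1)=\beta+2m$). Combining Theorem~\ref{theorem_finite_ideal_diagram} at level $m$ with that inductive hypothesis, and using the elementary fact $\overline{I\cup\overline{\mathcal B}}=\overline{I\cup\mathcal B}$, I obtain
\[\R^m(\Pi^1_\beta(\kappa))=\overline{\R_0(\R^{m-1}(\Pi^1_\beta(\kappa)))\cup\R_0(\R^{m-2}(\Pi^1_{\beta+2}(\kappa)))\cup\Pi^1_{\beta+2m}(\kappa)}.\]
Finally, since $\R_0$ is monotone (immediate from its definition, exactly as for $\R$) and $\R^{m-2}(\Pi^1_{\beta+2}(\delta))\subseteq\R^{m-1}(\Pi^1_\beta(\delta))$ at every $\delta$, we get $\R_0(\R^{m-2}(\Pi^1_{\beta+2}(\kappa)))\subseteq\R_0(\R^{m-1}(\Pi^1_\beta(\kappa)))$; hence the middle generator is redundant and may be dropped, giving exactly the claimed identity.

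The step I expect to require the most care is the absorption of the middle generator, i.e.\ checking that $\R_0$ is monotone as an operator on sequences of ideals $\<I_\delta\st\delta\leq\kappa\>$ and that the inclusion $\R^{m-2}(\Pi^1_{\beta+2}(\delta))\subseteq\R^{m-1}(\Pi^1_\beta(\delta))$ holds entrywise, including at the degenerate cardinals $\delta$ where one of these ideals is the full power set. Everything else reduces to bookkeeping with the ordinal subscripts and the generated-ideal identity $\overline{I\cup\overline{\mathcal B}}=\overline{I\cup\mathcal B}$.
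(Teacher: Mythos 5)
Your proposal is correct and follows essentially the same route as the paper: induct on $m$, combine Theorem \ref{theorem_finite_ideal_diagram} with the inductive hypothesis applied at $\beta+2$, and then absorb the generator $\R_0(\R^{m-2}(\Pi^1_{\beta+2}(\kappa)))$ into $\R_0(\R^{m-1}(\Pi^1_\beta(\kappa)))$ via monotonicity of $\R_0$ and the containment $\R^{m-2}(\Pi^1_{\beta+2}(\kappa))\subseteq\R^{m-1}(\Pi^1_\beta(\kappa))$ coming from Theorem \ref{theorem_finite_ideal_diagram}. Your extra care about the entrywise (per-$\delta$) inclusion needed for the monotonicity of $\R_0$, and about nontriviality propagating to the shifted parameters, is a sound elaboration of a step the paper leaves implicit.
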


\begin{proof}
Fix $\beta\in\{-1\}\cup\kappa$. If $m=1$ the result follows directly from Theorem \ref{theorem_finite_ideal_diagram}. Now suppose $1\leq m<\omega$ and the result holds for $m$, let us show that it holds for $m+1$. By Theorem \ref{theorem_finite_ideal_diagram}, we have
\[\R^{m+1}(\Pi^1_\beta(\kappa))=\overline{\R_0(\R^m(\Pi^1_\beta(\kappa)))\cup \R^m(\Pi^1_{\beta+2}(\kappa))}.\]
By our inductive assumption we see that
\[\R^{m+1}(\Pi^1_\beta(\kappa))=\overline{\R_0(\R^m(\Pi^1_\beta(\kappa)))\cup \R_0(\R^{m-1}(\Pi^1_{\beta+2}(\kappa)))\cup\Pi^1_{\beta+2m+2}(\kappa)}.\]
From Theorem \ref{theorem_finite_ideal_diagram} it follows that $\R_0(\R^{m-1}(\Pi^1_{\beta+2}(\kappa)))\subseteq \R_0(\R^m(\Pi^1_\beta(\kappa)))$ and hence
\[\R^{m+1}(\Pi^1_\beta(\kappa))=\overline{\R_0(\R^m(\Pi^1_\beta(\kappa)))\cup\Pi^1_{\beta+2(m+1)}(\kappa)}.\]
Thus the result holds for $m+1$.
\end{proof}

As in Baumgartner's characterization of ineffability \cite[Section 7]{MR0384553} in terms of the subtle ideal and the $\Pi^1_2$-indescribability ideal, and as in Baumgartner's characterization of Ramseyness \cite[Theorem 4.4 and Theorem 4.5]{MR0540770} in terms of the pre-Ramsey and $\Pi^1_1$-indescribability ideals, the next corollary demonstrates that large cardinal ideals are, in a sense, necessary for certain results.

\begin{corollary}\label{corollary_necessity}
For all $0<m<\omega$ and all $\beta\in\{-1\}\cup\kappa$ we have $\kappa\in\R^m(\Pi^1_\beta(\kappa))^+$ if and only if \emph{both} of the following properties hold.
\begin{enumerate}
\item $\kappa\in\R_0(\R^{m-1}(\Pi^1_\beta(\kappa)))^+$ and $\kappa\in\Pi^1_{\beta+2m}(\kappa)^+$.
\item The ideal generated by $\R_0(\R^{m-1}(\Pi^1_\beta(\kappa)))\cup\Pi^1_{\beta+2m}(\kappa)$ is nontrivial and equals $\R^m(\Pi^1_\beta(\kappa))$.
\end{enumerate}
Moreover, reference to the ideals in the above characterization cannot be removed because the least cardinal $\kappa$ such that $\kappa\in\R_0(\R^{m-1}(\Pi^1_\beta(\kappa)))^+$ and $\kappa\in\Pi^1_{\beta+2m}(\kappa)^+$ is not in $\R^m(\Pi^1_\beta(\kappa))^+$.
\end{corollary}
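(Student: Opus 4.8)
The plan is to establish the two directions of the equivalence and then the final ``moreover'' clause, treating the last as the one substantial point. The backward direction is immediate: if (2) holds then the ideal generated by $\R_0(\R^{m-1}(\Pi^1_\beta(\kappa)))\cup\Pi^1_{\beta+2m}(\kappa)$ is nontrivial and equals $\R^m(\Pi^1_\beta(\kappa))$, and a nontrivial ideal on $\kappa$ cannot contain $\kappa$, so $\kappa\in\R^m(\Pi^1_\beta(\kappa))^+$. For the forward direction I would assume $\kappa\in\R^m(\Pi^1_\beta(\kappa))^+$ and invoke Corollary~\ref{corollary_indescribability_in_finite_ramseyness}, which supplies exactly the ideal identity in (2); this ideal is nontrivial because it equals $\R^m(\Pi^1_\beta(\kappa))$ and $\kappa\notin\R^m(\Pi^1_\beta(\kappa))$. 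For (1), each of $\R_0(\R^{m-1}(\Pi^1_\beta(\kappa)))$ and $\Pi^1_{\beta+2m}(\kappa)$ is contained in the ideal it generates, hence in $\R^m(\Pi^1_\beta(\kappa))$; since $\kappa$ lies in neither, both conjuncts of (1) hold.

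The heart of the statement is the ``moreover'' clause, which I would prove by a direct diagonalization rather than by reflection, because the property in (1) is not in general reflected downward: already for $m=1$, $\beta=-1$ the least Ramsey cardinal has no Ramsey cardinal below it. Let $\kappa$ be least satisfying (1); since $\R^m(\Pi^1_\beta(\kappa))$ is normal, showing $\kappa\in\R^m(\Pi^1_\beta(\kappa))$ amounts to showing this ideal is trivial. First I would extract the key consequence of minimality. The statement ``$\delta\in\R_0(\R^{m-1}(\Pi^1_\beta(\delta)))^+$'' is expressible by a $\Pi^1_1$ sentence over $V_\delta$, uniformly in $\delta$: the operator $\R_0$ only reflects homogeneous sets \emph{downward}, so the clause ``$H\in\R^{m-1}(\Pi^1_\beta(\alpha))^+$'' for $\alpha<\delta$ is first order over $V_\delta$ by Lemma~\ref{lemma_complexity}, and the outer quantifiers over the regressive function and the club are second-order universal. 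As $\beta+2m\geq 1$ this is a $\Pi^1_{\beta+2m}$ sentence; it holds at $V_\kappa$ by (1), and $\kappa$ is $\Pi^1_{\beta+2m}$-indescribable, so by normality of the indescribability ideal its set of reflection points $D=\{\xi<\kappa\st \xi\in\R_0(\R^{m-1}(\Pi^1_\beta(\xi)))^+\}$ lies in $\Pi^1_{\beta+2m}(\kappa)^*$. By minimality no $\xi<\kappa$ satisfies both conjuncts of (1), so $D$ is disjoint from the set $I=\{\xi<\kappa\st \xi\in\Pi^1_{\beta+2m}(\xi)^+\}$ of $\Pi^1_{\beta+2m}$-indescribable cardinals below $\kappa$; hence $I\subseteq\kappa\setminus D\in\Pi^1_{\beta+2m}(\kappa)$, so $I$ is \emph{not} $\Pi^1_{\beta+2m}$-indescribable.

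With this in hand I would fix a witness to $I\in\Pi^1_{\beta+2m}(\kappa)$: a set $A\subseteq V_\kappa$ and a $\Pi^1_{\beta+2m}$ sentence $\psi$ with $(V_\kappa,\in,A)\models\psi$ yet $(V_\xi,\in,A\cap V_\xi)\not\models\psi$ for every $\Pi^1_{\beta+2m}$-indescribable $\xi<\kappa$. The goal is then to build a regressive $f\colon[\kappa]^{<\omega}\to\kappa$ admitting no homogeneous set in $\R^{m-1}(\Pi^1_\beta(\kappa))^+$, for by definition of the Ramsey operator this gives $\kappa\in\R^m(\Pi^1_\beta(\kappa))$. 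I would define $f$ to code $A$ together with Skolem-style witnesses for the existential second-order quantifiers in the $2m$-fold alternation of $\psi$, in the manner of Baumgartner's construction for \cite[Lemma 7.1, Theorem 7.2]{MR0384553} and its iteration by Feng~\cite{MR1077260}. The intended payoff is that any homogeneous set $H\in\R^{m-1}(\Pi^1_\beta(\kappa))^+$ is $\Pi^1_{\beta+2m-2}$-indescribable (Corollary~\ref{corollary_indescribability_in_finite_ramseyness}) and, through the coded witnesses and repeated application of the reflection mechanism of Lemma~\ref{lemma_baumgartner_bagaria}, would force $\psi$ to reflect to some $\Pi^1_{\beta+2m}$-indescribable $\xi<\kappa$, contradicting the choice of $\psi$; since no such $H$ can exist, $\kappa\in\R^m(\Pi^1_\beta(\kappa))$.

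The main obstacle is exactly this last construction: arranging the coding of $f$ so that the gain in indescribability needed to reflect $\psi$ is manufactured from the homogeneity of $H$ and the pre-Ramsey ($\R_0$) reflection available at $\kappa$, boosting $H$ from $\Pi^1_{\beta+2m-2}$-indescribability up to producing a $\Pi^1_{\beta+2m}$-indescribable reflection point. This step is also what explains why reference to the ideals cannot be removed: by Lemma~\ref{lemma_complexity} the property ``$\kappa\in\R^m(\Pi^1_\beta(\kappa))^+$'' has complexity $\Pi^1_{\beta+2m+1}$, one degree beyond the $\Pi^1_{\beta+2m}$-indescribability that (1) provides, and it is precisely this residual gap — rather than any reflection of (1) itself — that the diagonalization must exploit by hand.
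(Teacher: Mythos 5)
Your treatment of the equivalence is fine and matches the paper: both directions follow from Corollary \ref{corollary_indescribability_in_finite_ramseyness} together with the observation that a nontrivial ideal cannot contain $\kappa$. The problem is the ``moreover'' clause, where you reject the reflection approach and then leave your replacement unfinished. Your reason for rejecting reflection rests on a confusion: the property that needs to reflect is not ``$\xi\in\R^m(\Pi^1_\beta(\xi))^+$'' (which indeed fails to reflect below the least such cardinal) but the strictly weaker conjunction in (1), namely ``$\xi\in\R_0(\R^{m-1}(\Pi^1_\beta(\xi)))^+$ and $\xi\in\Pi^1_{\beta+2m}(\xi)^+$.'' That conjunction \emph{does} reflect below any $\kappa\in\R^m(\Pi^1_\beta(\kappa))^+$, and this is exactly how the paper argues: the first conjunct is $\Pi^1_1$-expressible over $V_\xi$ (as you yourself note), so its reflection points form a set $C_0\in\Pi^1_1(\kappa)^*\subseteq\R^m(\Pi^1_\beta(\kappa))^*$; and by Corollary \ref{corollary_ramseyness_reflects_indescribability} the set $C_1$ of $\xi<\kappa$ that are $\Pi^1_{\beta+2m}$-indescribable is in $\R^1([\kappa]^{<\kappa})^*\subseteq\R^m(\Pi^1_\beta(\kappa))^*$. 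Hence $C_0\cap C_1$ is in the (nontrivial) filter, so it is nonempty, and the least cardinal satisfying (1) cannot be in $\R^m(\Pi^1_\beta(\kappa))^+$. The very next corollary in the paper records an even stronger version of this reflection, so the phenomenon you claim fails is in fact the content of the proof.

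Having set aside the short argument, you attempt a Baumgartner-style diagonalization: isolate a witness to the non-indescribability of the set of $\Pi^1_{\beta+2m}$-indescribable cardinals below the least $\kappa$ with property (1), and code it into a regressive $f$ with no homogeneous set in $\R^{m-1}(\Pi^1_\beta(\kappa))^+$. You then explicitly concede that the construction of $f$ --- the only step that would carry any weight --- is ``the main obstacle'' and do not carry it out. As written this is a genuine gap: the preparatory observations (that $D\in\Pi^1_{\beta+2m}(\kappa)^*$ and that the set of $\Pi^1_{\beta+2m}$-indescribable cardinals below $\kappa$ is non-indescribable) are plausible, but nothing in the proposal converts them into the required bad coloring, and the needed bookkeeping of Skolem witnesses through the $2m$-fold quantifier alternation is precisely where such arguments live or die. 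Since the reflection argument you discarded proves the claim in a few lines, the correct fix is to restore it rather than to complete the diagonalization.
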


\begin{proof}
The characterization of $\kappa\in\R^m(\Pi^1_\beta(\kappa))^+$ follows directly from Corollary \ref{corollary_indescribability_in_finite_ramseyness}. For the additional statement, let us show that if $\kappa\in\R^m(\Pi^1_\beta(\kappa))^+$ then there are many cardinals $\xi<\kappa$ such that $\xi\in\R_0(\R^{m-1}(\Pi^1_\beta(\xi)))^+$ and $\xi\in\Pi^1_{\beta+2m}(\xi)^+$. Suppose $\kappa\in\R^m(\Pi^1_\beta(\kappa))^+$. Since 
\[\R^m(\Pi^1_\beta(\kappa))=\overline{\R_0(\R^{m-1}(\Pi^1_\beta(\kappa)))\cup\Pi^1_{\beta+2m}(\kappa)},\]
it follows that $\kappa\in\R_0(\R^{m-1}(\Pi^1_\beta(\kappa)))^+$ and $\kappa\in\Pi^1_{\beta+2m}(\kappa)^+$. Now $\kappa\in\R_0(\R^{m-1}(\Pi^1_\beta(\kappa)))^+$ is $\Pi^1_1$-expressible over $V_\kappa$ and thus the set
\[C_0=\{\xi<\kappa\st \xi\in\R_0(\R^{m-1}(\Pi^1_\beta(\xi)))^+\}\]
is in $\Pi^1_1(\kappa)^*\subseteq\Pi^1_{\beta+2m}(\kappa)^*\subseteq\R^m(\Pi^1_\beta(\kappa))^*$.
Furthermore, by Corollary \ref{corollary_ramseyness_reflects_indescribability}, we see that the set
\[C_1=\{\xi<\kappa\st\xi\in\Pi^1_{\beta+2m}(\kappa)^+\}\]
is in $\R^1([\kappa]^{<\kappa})^*\subseteq\R^m(\Pi^1_\beta(\kappa))^*$. Therefore, $C_0\cap C_1\in\R^m(\Pi^1_\beta(\kappa))^*$.
\end{proof}

In fact, essentially the same proof shows that the additional statement in Corollary \ref{corollary_necessity} can be improved.

\begin{corollary}
Suppose $0<m<\omega$ and $\beta\in\{-1\}\cup\kappa$. If $\kappa\in\R^m(\Pi^1_\beta(\kappa))^+$ then the set
\[\{\xi<\kappa\st \xi\in\R_0(\R^{m-1}(\Pi^1_\beta(\kappa)))^+\cap\R^{m-1}(\Pi^1_{\beta+2}(\kappa))^+\}\]
is in $\R^m(\Pi^1_\beta(\kappa))^*$.
\end{corollary}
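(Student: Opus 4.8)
The plan is to follow the proof of Corollary~\ref{corollary_necessity} almost verbatim, splitting the asserted set into its two positivity conditions and intersecting the resulting sets in the dual filter. (I read the displayed set with the evident reading that the ideals occurring inside are those on $\xi$, namely $\{\xi<\kappa\st \xi\in\R_0(\R^{m-1}(\Pi^1_\beta(\xi)))^+\cap\R^{m-1}(\Pi^1_{\beta+2}(\xi))^+\}$.) First I would record two consequences of the hypothesis $\kappa\in\R^m(\Pi^1_\beta(\kappa))^+$. Since $[\kappa]^{<\kappa}\subseteq\Pi^1_\beta(\kappa)$, monotonicity of the Ramsey operator gives $\R^m([\kappa]^{<\kappa})\subseteq\R^m(\Pi^1_\beta(\kappa))$, so $\kappa\in\R^m([\kappa]^{<\kappa})^+$ and, dually, $\R^m([\kappa]^{<\kappa})^*\subseteq\R^m(\Pi^1_\beta(\kappa))^*$. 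By Corollary~\ref{corollary_indescribability_in_finite_ramseyness} we also have $\Pi^1_{\beta+2m}(\kappa)\subseteq\R^m(\Pi^1_\beta(\kappa))$, so $\Pi^1_{\beta+2m}(\kappa)^*\subseteq\R^m(\Pi^1_\beta(\kappa))^*$.

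For the first coordinate I would argue exactly as in Corollary~\ref{corollary_necessity}: the decomposition of $\R^m(\Pi^1_\beta(\kappa))$ from Corollary~\ref{corollary_indescribability_in_finite_ramseyness} yields $\kappa\in\R_0(\R^{m-1}(\Pi^1_\beta(\kappa)))^+$, and since ``$\xi\in\R_0(\R^{m-1}(\Pi^1_\beta(\xi)))^+$'' is $\Pi^1_1$-expressible over $V_\xi$ and $\kappa$ is $\Pi^1_1$-indescribable, the set $C_0=\{\xi<\kappa\st \xi\in\R_0(\R^{m-1}(\Pi^1_\beta(\xi)))^+\}$ lies in $\Pi^1_1(\kappa)^*\subseteq\Pi^1_{\beta+2m}(\kappa)^*\subseteq\R^m(\Pi^1_\beta(\kappa))^*$.

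For the second coordinate the only change from Corollary~\ref{corollary_necessity} is that I would invoke the reflection principle Corollary~\ref{corollary_ramseyness_reflects_indescribability} at level $\alpha=m-1$ rather than at $\alpha=0$. Using $\kappa\in\R^m([\kappa]^{<\kappa})^+$, that corollary gives that $T=\{\xi<\kappa\st(\forall\beta'<\xi)\ \xi\in\R^{m-1}(\Pi^1_{\beta'}(\xi))^+\}$ belongs to $\R^m([\kappa]^{<\kappa})^*$. Intersecting $T$ with the tail $\{\xi<\kappa\st\xi>\beta+2\}$ (which is in the club filter, hence in the normal filter $\R^m([\kappa]^{<\kappa})^*$) and specializing the quantified $\beta'$ to $\beta+2$ shows $T\cap\{\xi>\beta+2\}\subseteq C_1$, where $C_1=\{\xi<\kappa\st \xi\in\R^{m-1}(\Pi^1_{\beta+2}(\xi))^+\}$; hence $C_1\in\R^m([\kappa]^{<\kappa})^*\subseteq\R^m(\Pi^1_\beta(\kappa))^*$. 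Since $\R^m(\Pi^1_\beta(\kappa))^*$ is a filter, $C_0\cap C_1\in\R^m(\Pi^1_\beta(\kappa))^*$, which is the desired conclusion.

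I expect the only genuinely nonroutine point to be the choice of reflection for the second coordinate. One \emph{cannot} reflect ``$\xi\in\R^{m-1}(\Pi^1_{\beta+2}(\xi))^+$'' using the indescribability of $\kappa$ alone, because by Lemma~\ref{lemma_complexity} this is a $\Pi^1_{\gamma(m-1,\beta+2)}=\Pi^1_{\beta+2m+1}$ assertion, one degree beyond the $\Pi^1_{\beta+2m}$-indescribability that $\kappa\in\R^m(\Pi^1_\beta(\kappa))^+$ guarantees. The fix is to route this through the Ramsey-degree reflection of Corollary~\ref{corollary_ramseyness_reflects_indescribability} (ultimately Theorem~\ref{theorem_r_reflects_pi1n}), landing in $\R^m([\kappa]^{<\kappa})^*$, and then pass to the larger filter $\R^m(\Pi^1_\beta(\kappa))^*$ via the containment established in the first paragraph. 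One should also take care not to be tempted by Lemma~\ref{lemma_set_of_nons_is_positive}, which produces positivity of the set of \emph{non}-positive $\xi$ and so points in the wrong direction.
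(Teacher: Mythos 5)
Your proposal is correct and matches the paper's intended argument: the paper gives no separate proof, stating only that ``essentially the same proof'' as the additional statement of Corollary \ref{corollary_necessity} works, and your write-up is exactly that adaptation --- the first coordinate via $\Pi^1_1$-expressibility of ``$\xi\in\R_0(\R^{m-1}(\Pi^1_\beta(\xi)))^+$'' and the second via Corollary \ref{corollary_ramseyness_reflects_indescribability} applied at level $m-1$ instead of $0$. Your closing observation about why plain $\Pi^1_{\beta+2m}$-indescribability cannot reflect the $\Pi^1_{\beta+2m+1}$ statement ``$\xi\in\R^{m-1}(\Pi^1_{\beta+2}(\xi))^+$'' correctly identifies the one point where the adaptation is not purely mechanical.
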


The following two corollaries of Theorem \ref{theorem_finite_ideal_diagram} show that the assumption ``$\exists\kappa$($\kappa\in\R^m(\Pi^1_{\beta+1}(\kappa))^+$)'' is strictly stronger than ``$\exists\kappa$($\kappa\in\R^m(\Pi^1_\beta(\kappa))^+$)''. In other words, each row of \Figure \ref{figure_finite_ideal_diagram} yields a strict hierarchy of large cardinals assuming the ideals are nontrivial.

\begin{corollary}
Suppose $0<m<\omega$, $\beta\in\{-1\}\cup\kappa$ and $\kappa\in\R^m(\Pi^1_\beta(\kappa))^+$. If $S\in\R^{\hat{m}}(\Pi^1_{\hat{\beta}}(\kappa))^+$ where $\hat{\beta}+2\hat{m}+1\leq \beta+2m$ then the set
\[T=\{\xi<\kappa\st S\cap\xi\in\R^{\hat{m}}(\Pi^1_{\hat{\beta}}(\xi))^+\}\]
is in $\R^m(\Pi^1_\beta(\kappa))^*$.
\end{corollary}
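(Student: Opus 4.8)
The plan is to reduce the claim to the standard fact that, for a $\Pi^1_n$ sentence, the set of points at which it reflects lies in the dual filter $\Pi^1_n(\kappa)^*$, and then to absorb that filter into $\R^m(\Pi^1_\beta(\kappa))^*$ via the complexity bound of Lemma \ref{lemma_complexity} together with Corollary \ref{corollary_indescribability_in_finite_ramseyness}; this is the sense in which the proof is ``essentially the same'' as that of the additional statement in Corollary \ref{corollary_necessity}. Since $\hat m<\omega$, Lemma \ref{lemma_complexity} supplies a single $\Pi^1_{\hat\beta+2\hat m+1}$ sentence $\varphi$ such that, for every cardinal $\delta>\max(\hat m,\hat\beta)$ and every $X\subseteq\delta$, we have $X\in\R^{\hat m}(\Pi^1_{\hat\beta}(\delta))^+$ iff $(V_\delta,\in,X)\models\varphi$. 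First I would apply this with $\delta=\kappa$ and $X=S$, so that the hypothesis $S\in\R^{\hat m}(\Pi^1_{\hat\beta}(\kappa))^+$ becomes $(V_\kappa,\in,S)\models\varphi$.

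Next I would identify $T$, up to a bounded error set, with the reflection set $T'=\{\xi<\kappa\st (V_\xi,\in,S\cap V_\xi)\models\varphi\}$. For $\xi>\max(\hat m,\hat\beta)$, Lemma \ref{lemma_complexity} gives $S\cap\xi\in\R^{\hat m}(\Pi^1_{\hat\beta}(\xi))^+$ iff $(V_\xi,\in,S\cap\xi)\models\varphi$, and since $S\cap V_\xi=S\cap\xi$ for such $\xi$, the sets $T$ and $T'$ agree above $\max(\hat m,\hat\beta)$. Their symmetric difference is bounded, hence in $[\kappa]^{<\kappa}\subseteq\R^m(\Pi^1_\beta(\kappa))$, so it suffices to prove $T'\in\R^m(\Pi^1_\beta(\kappa))^*$.

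The heart of the argument is a short contradiction showing $\kappa\setminus T'\in\Pi^1_{\hat\beta+2\hat m+1}(\kappa)$, i.e.\ that $\kappa\setminus T'$ is not $\Pi^1_{\hat\beta+2\hat m+1}$-indescribable: if it were, then, applying the definition of indescribability to the $\Pi^1_{\hat\beta+2\hat m+1}$ sentence $\varphi$ (which holds at $(V_\kappa,\in,S)$), we would obtain some $\xi\in\kappa\setminus T'$ with $(V_\xi,\in,S\cap V_\xi)\models\varphi$, that is $\xi\in T'$, a contradiction. Hence $T'\in\Pi^1_{\hat\beta+2\hat m+1}(\kappa)^*$. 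Finally I would absorb this filter exactly as in Corollary \ref{corollary_necessity}: monotonicity of the indescribability ideals gives $\Pi^1_{\hat\beta+2\hat m+1}(\kappa)\subseteq\Pi^1_{\beta+2m}(\kappa)$ because $\hat\beta+2\hat m+1\le\beta+2m$, and Corollary \ref{corollary_indescribability_in_finite_ramseyness} gives $\Pi^1_{\beta+2m}(\kappa)\subseteq\R^m(\Pi^1_\beta(\kappa))$; passing to duals yields $\Pi^1_{\hat\beta+2\hat m+1}(\kappa)^*\subseteq\R^m(\Pi^1_\beta(\kappa))^*$, so $T'$, and therefore $T$, lies in $\R^m(\Pi^1_\beta(\kappa))^*$ (the nontriviality of all these ideals being guaranteed by $\kappa\in\R^m(\Pi^1_\beta(\kappa))^+$).

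The only point requiring genuine care — and the main obstacle if any — is the bookkeeping around the inequality $\hat\beta+2\hat m+1\le\beta+2m$: this is precisely the condition ensuring that the complexity of the statement ``$S\cap\xi\in\R^{\hat m}(\Pi^1_{\hat\beta}(\xi))^+$'' is low enough to be captured by the degree of indescribability $\beta+2m$ that $\kappa$ inherits from $\kappa\in\R^m(\Pi^1_\beta(\kappa))^+$. Everything else (the equivalence $S\cap V_\xi=S\cap\xi$ and the bounded-error reduction of $T$ to $T'$) is routine.
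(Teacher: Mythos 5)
Your proposal is correct and follows essentially the same route as the paper: express ``$S\in\R^{\hat m}(\Pi^1_{\hat\beta}(\kappa))^+$'' by a $\Pi^1_{\hat\beta+2\hat m+1}$ sentence via Lemma \ref{lemma_complexity}, observe that the set of reflection points is in $\Pi^1_{\hat\beta+2\hat m+1}(\kappa)^*$, and absorb that filter into $\R^m(\Pi^1_\beta(\kappa))^*$ using the inequality $\hat\beta+2\hat m+1\le\beta+2m$ together with Corollary \ref{corollary_indescribability_in_finite_ramseyness}. The extra steps you include (the explicit contradiction showing the reflection set is in the dual filter, and the bounded-error identification of $T$ with $T'$) are details the paper leaves implicit.
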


\begin{proof}
By Lemma \ref{lemma_complexity}, the fact that $S\in\R^{\hat{m}}(\Pi^1_{\hat{\beta}}(\kappa))^+$ is expressible by a $\Pi^1_{\hat{\beta}+2\hat{m}+1}$ sentence $\varphi$ over $(V_\kappa,\in,S)$. Since $\hat{\beta}+2\hat{m}+1\leq\beta+2m$ we have $\Pi^1_{\hat{\beta}+2\hat{m}+1}(\kappa)\subseteq\Pi^1_{\beta+2m}(\kappa)$, and by Corollary \ref{corollary_indescribability_in_finite_ramseyness}, since $\kappa\in\R^m(\Pi^1_\beta(\kappa))^+$ we have $\Pi^1_{\beta+2m}(\kappa)\subseteq\R^m(\Pi^1_\beta(\kappa))$, and thus the set
\[T=\{\xi<\kappa\st (V_\xi,\in,S\cap\xi)\models\varphi\}=\{\xi<\kappa\st S\cap\xi\in\R^{\hat{m}}(\Pi^1_{\hat{\beta}}(\xi))^+\}\]
is in $\Pi^1_{\hat{\beta}+2\hat{m}+1}(\kappa)^*\subseteq\R^m(\Pi^1_\beta(\kappa))^*$.
\end{proof}

\begin{corollary}\label{corollary_beta_to_beta_plus_one_hierarchy}
For all $0< m <\omega$ and all $\beta\in\{-1\}\cup\kappa$, if $\kappa\in\R^m(\Pi^1_{\beta+1}(\kappa))^+$ then the set
\[T=\{\xi<\kappa\st \xi\in\R^m(\Pi^1_\beta(\xi))^+\}\]
is in $\R^m(\Pi^1_{\beta+1}(\kappa))^*$.
\end{corollary}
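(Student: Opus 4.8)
The plan is to follow the template of the preceding corollary, the key point being that the complexity index $\gamma(m,\beta)=\beta+2m+1$ produced by Lemma~\ref{lemma_complexity} matches \emph{exactly} the degree of indescribability that Corollary~\ref{corollary_indescribability_in_finite_ramseyness} shows is absorbed by $\R^m(\Pi^1_{\beta+1}(\kappa))$. Indeed, this corollary is essentially the special case of the preceding corollary obtained by setting $S=\kappa$, $\hat m=m$ and $\hat\beta=\beta$ and running the argument with the ambient ideal $\R^m(\Pi^1_\beta(\kappa))$ there replaced by $\R^m(\Pi^1_{\beta+1}(\kappa))$; the numerical hypothesis required there becomes $\beta+2m+1\le(\beta+1)+2m$, which holds with equality. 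I will nonetheless spell out the direct argument.

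First I would record that $\kappa\in\R^m(\Pi^1_\beta(\kappa))^+$. Since $\Pi^1_\beta(\kappa)\subseteq\Pi^1_{\beta+1}(\kappa)$ and the Ramsey operator is monotone, we have $\R^m(\Pi^1_\beta(\kappa))\subseteq\R^m(\Pi^1_{\beta+1}(\kappa))$, so passing to positive sets the hypothesis $\kappa\in\R^m(\Pi^1_{\beta+1}(\kappa))^+$ gives $\kappa\in\R^m(\Pi^1_\beta(\kappa))^+$. Next, by Lemma~\ref{lemma_complexity} with $\alpha=m<\omega$ there is a $\Pi^1_{\beta+2m+1}$ sentence $\varphi$ such that for every cardinal $\delta>\max(m,\beta)$ one has $\delta\in\R^m(\Pi^1_\beta(\delta))^+$ if and only if $(V_\delta,\in,\delta)\models\varphi$, where we have applied the lemma with the parameter $X=\delta$ naming the ordinals (which is definable over $V_\delta$). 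In particular $(V_\kappa,\in,\kappa)\models\varphi$.

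The heart of the argument is the standard indescribability reflection step. Suppose toward a contradiction that the complement $\kappa\setminus T=\{\xi<\kappa\st(V_\xi,\in,\xi)\models\lnot\varphi\}$ were $\Pi^1_{\beta+2m+1}$-indescribable. Since $\varphi$ is $\Pi^1_{\beta+2m+1}$ and $(V_\kappa,\in,\kappa)\models\varphi$, indescribability of $\kappa\setminus T$ would yield some $\xi\in\kappa\setminus T$ with $(V_\xi,\in,\kappa\cap V_\xi)\models\varphi$, i.e.\ $(V_\xi,\in,\xi)\models\varphi$, contradicting $\xi\in\kappa\setminus T$. Hence $\kappa\setminus T\in\Pi^1_{\beta+2m+1}(\kappa)$, that is,
\[T=\{\xi<\kappa\st\xi\in\R^m(\Pi^1_\beta(\xi))^+\}\in\Pi^1_{\beta+2m+1}(\kappa)^*.\]

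Finally, I would apply Corollary~\ref{corollary_indescribability_in_finite_ramseyness} with $\beta+1$ in place of $\beta$: because $\kappa\in\R^m(\Pi^1_{\beta+1}(\kappa))^+$, that corollary gives
\[\Pi^1_{\beta+2m+1}(\kappa)=\Pi^1_{(\beta+1)+2m}(\kappa)\subseteq\R^m(\Pi^1_{\beta+1}(\kappa)),\]
and taking dual filters yields $\Pi^1_{\beta+2m+1}(\kappa)^*\subseteq\R^m(\Pi^1_{\beta+1}(\kappa))^*$, so $T\in\R^m(\Pi^1_{\beta+1}(\kappa))^*$ as desired. The only delicate point — the ``main obstacle'' — is the bookkeeping of indices: one must check that the bound $\beta+2m+1$ coming from Lemma~\ref{lemma_complexity} coincides with the index $(\beta+1)+2m$ absorbed by Corollary~\ref{corollary_indescribability_in_finite_ramseyness}. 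This exact coincidence is precisely what makes the single step from $\beta$ to $\beta+1$ (rather than a larger jump) both sufficient and sharp for the hierarchy claim.
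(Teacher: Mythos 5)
Your proof is correct and follows essentially the same route the paper intends: the corollary is exactly the instance $S=\kappa$, $\hat m=m$, $\hat\beta=\beta$ of the immediately preceding corollary with ambient ideal $\R^m(\Pi^1_{\beta+1}(\kappa))$, using Lemma \ref{lemma_complexity} to express ``$\xi\in\R^m(\Pi^1_\beta(\xi))^+$'' by a $\Pi^1_{\beta+2m+1}$ sentence and Corollary \ref{corollary_indescribability_in_finite_ramseyness} to absorb $\Pi^1_{(\beta+1)+2m}(\kappa)=\Pi^1_{\beta+2m+1}(\kappa)$ into $\R^m(\Pi^1_{\beta+1}(\kappa))$. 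Your bookkeeping of the indices and the reduction to the preceding corollary match the paper's argument exactly.
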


Now let us show that the containments of the ideals from Theorem \ref{theorem_finite_ideal_diagram} as illustrated in \Figure \ref{figure_finite_ideal_diagram} are proper when the ideals involved are nontrivial.

\begin{theorem}\label{theorem_proper_containments_in_finite_diagram}
Suppose $0<m<\omega$ and $\beta<\kappa$.
\begin{enumerate}
\item If $\kappa\in\R^m(\Pi^1_{\beta+1}(\kappa))^+$ then $\R^m(\Pi^1_{\beta}(\kappa))\subsetneq\R^m(\Pi^1_{\beta+1}(\kappa))$.
\item If $\kappa\in\R^m(\Pi^1_{\beta}(\kappa))^+$ then $\R^{m-1}(\Pi^1_{\beta+2}(\kappa))\subsetneq\R^m(\Pi^1_\beta(\kappa))$.
\end{enumerate}
\end{theorem}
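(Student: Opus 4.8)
The plan is to observe that both inclusions are immediate from the earlier structural results and then, in each case, to exhibit an explicit witness lying in the larger ideal but not the smaller one. For the inclusions: $\Pi^1_\beta(\kappa)\subseteq\Pi^1_{\beta+1}(\kappa)$ together with monotonicity of the iterated Ramsey operator gives $\R^m(\Pi^1_\beta(\kappa))\subseteq\R^m(\Pi^1_{\beta+1}(\kappa))$ for (1), while Theorem \ref{theorem_finite_ideal_diagram} exhibits $\R^{m-1}(\Pi^1_{\beta+2}(\kappa))$ as one of the two generators of $\R^m(\Pi^1_\beta(\kappa))$, yielding the inclusion for (2). In both cases the witness is a ``non-positive-below'' set of the form $\{\xi<\kappa\st \xi\in\R^{a}(\Pi^1_{b}(\xi))\}$, which is automatically positive for the smaller ideal by Lemma \ref{lemma_set_of_nons_is_positive}; the real work is to show it is null for the larger ideal.

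For (1), assume $\kappa\in\R^m(\Pi^1_{\beta+1}(\kappa))^+$, so also $\kappa\in\R^m(\Pi^1_\beta(\kappa))^+$ by the inclusion. Take $S=\{\xi<\kappa\st \xi\in\R^m(\Pi^1_\beta(\xi))\}$. By Lemma \ref{lemma_set_of_nons_is_positive}, $S\in\R^m(\Pi^1_\beta(\kappa))^+$, so $S\notin\R^m(\Pi^1_\beta(\kappa))$. On the other hand, its complement $\kappa\setminus S=\{\xi<\kappa\st\xi\in\R^m(\Pi^1_\beta(\xi))^+\}$ is exactly the set shown to lie in $\R^m(\Pi^1_{\beta+1}(\kappa))^*$ by Corollary \ref{corollary_beta_to_beta_plus_one_hierarchy}, so $S\in\R^m(\Pi^1_{\beta+1}(\kappa))$. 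Hence $S\in\R^m(\Pi^1_{\beta+1}(\kappa))\setminus\R^m(\Pi^1_\beta(\kappa))$, witnessing the proper containment.

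For (2), assume $\kappa\in\R^m(\Pi^1_\beta(\kappa))^+$; then $\kappa\in\R^{m-1}(\Pi^1_{\beta+2}(\kappa))^+$ by the inclusion, and also $\kappa\in\R^m([\kappa]^{<\kappa})^+$, since $[\kappa]^{<\kappa}=\Pi^1_{-1}(\kappa)\subseteq\Pi^1_\beta(\kappa)$ forces $\R^m([\kappa]^{<\kappa})\subseteq\R^m(\Pi^1_\beta(\kappa))$ by monotonicity. Take $S=\{\xi<\kappa\st\xi\in\R^{m-1}(\Pi^1_{\beta+2}(\xi))\}$; by Lemma \ref{lemma_set_of_nons_is_positive} it lies in $\R^{m-1}(\Pi^1_{\beta+2}(\kappa))^+$. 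It remains to show $S\in\R^m(\Pi^1_\beta(\kappa))$, i.e.\ that $\kappa\setminus S=\{\xi<\kappa\st\xi\in\R^{m-1}(\Pi^1_{\beta+2}(\xi))^+\}$ lies in $\R^m(\Pi^1_\beta(\kappa))^*$. Here I would invoke Corollary \ref{corollary_ramseyness_reflects_indescribability} with $\alpha=m-1$ (legitimate since $\kappa\in\R^m([\kappa]^{<\kappa})^+$): the set $T=\{\xi<\kappa\st(\forall\beta'<\xi)\ \xi\in\R^{m-1}(\Pi^1_{\beta'}(\xi))^+\}$ lies in $\R^m([\kappa]^{<\kappa})^*\subseteq\R^m(\Pi^1_\beta(\kappa))^*$. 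Intersecting $T$ with the tail $(\beta+2,\kappa)$ (whose complement is bounded, hence in $[\kappa]^{<\kappa}$) and specializing $\beta'=\beta+2$ shows $T\cap(\beta+2,\kappa)\subseteq\kappa\setminus S$; since the filter $\R^m(\Pi^1_\beta(\kappa))^*$ is closed under finite intersections and supersets, $\kappa\setminus S\in\R^m(\Pi^1_\beta(\kappa))^*$. Thus $S\in\R^m(\Pi^1_\beta(\kappa))\setminus\R^{m-1}(\Pi^1_{\beta+2}(\kappa))$.

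The conceptual heart, and the step I expect to be most delicate to justify cleanly, is the nullity half of (2). The naive attempt would express ``$\xi\in\R^{m-1}(\Pi^1_{\beta+2}(\xi))^+$'' by a $\Pi^1_{\beta+2m+1}$ sentence via Lemma \ref{lemma_complexity} and try to conclude reflection from indescribability; this fails by exactly one level, because by Corollary \ref{corollary_indescribability_in_finite_ramseyness} both $\R^m(\Pi^1_\beta(\kappa))$ and $\R^{m-1}(\Pi^1_{\beta+2}(\kappa))$ carry the \emph{same} indescribability ceiling $\Pi^1_{\beta+2m}(\kappa)$, so no purely indescribability-theoretic argument can separate them. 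The separation must instead be drawn from the genuinely Ramsey-theoretic reflection of Corollary \ref{corollary_ramseyness_reflects_indescribability} (ultimately the generic elementary embedding argument behind Theorem \ref{theorem_r_reflects_pi1n}), which is exactly why the witness is framed through self-indescribability of the points $\xi$ rather than through a single coded first-order property.
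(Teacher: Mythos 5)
Your proposal is correct and follows essentially the same route as the paper: both parts use the witness $S=\{\xi<\kappa\st\xi\in\R^{a}(\Pi^1_{b}(\xi))\}$, positivity via Lemma \ref{lemma_set_of_nons_is_positive}, and nullity via Corollary \ref{corollary_beta_to_beta_plus_one_hierarchy} for (1) and Corollary \ref{corollary_ramseyness_reflects_indescribability} for (2). The only difference is that you spell out the tail-intersection step in (2) that the paper leaves implicit, and your closing remark about why the indescribability ceilings alone cannot separate the two ideals in (2) is a correct and worthwhile observation.
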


\begin{proof}
The containments follow from Theorem \ref{theorem_finite_ideal_diagram}, so we only need to show the properness of the containments.

For (1), let $S=\{\xi<\kappa\st\xi\in\R^m(\Pi^1_\beta(\xi))\}$. Then $S\in\R^m(\Pi^1_\beta(\kappa))^+$ by Lemma \ref{lemma_set_of_nons_is_positive}. Furthermore, by Corollary \ref{corollary_beta_to_beta_plus_one_hierarchy}, $S\in\R^m(\Pi^1_{\beta+1}(\kappa))$. Thus $\R^m(\Pi^1_{\beta}(\kappa))\subsetneq\R^m(\Pi^1_{\beta+1}(\kappa))$. 

For (2), let $S=\{\xi<\kappa\st\xi\in\R^{m-1}(\Pi^1_{\beta+2}(\xi))\}$. By Lemma \ref{lemma_set_of_nons_is_positive} we see that $S\in\R^{m-1}(\Pi^1_{\beta+2}(\kappa))^+$. From Corollary \ref{corollary_ramseyness_reflects_indescribability}, it follows that $\kappa\setminus S\in\R^m([\kappa]^{<\kappa})^*$ and since $\R^m([\kappa]^{<\kappa})\subseteq\R^m(\Pi^1_\beta(\kappa))$, this implies $S\in\R^m(\Pi^1_\beta(\kappa))$.
\end{proof}

The next corollary, which follows directly from Theorem \ref{theorem_finite_ideal_diagram}, shows that iterating the Ramsey operator on an indescribability ideal $\Pi^1_{\beta+n}(\kappa)$ infinitely many times leads to the same ideal, no matter what $n<\omega$ was initially chosen (see \Figure \ref{figure_first_collapse}).

\begin{corollary}\label{corollary_collapse}
The following hold.
\begin{enumerate}
\item If $\kappa\in\R^\omega([\kappa]^{<\kappa})^+$, then for all $n<\omega$ we have \[\R^\omega([\kappa]^{<\kappa})=\R^\omega(\Pi^1_n(\kappa)).\]
\item For all limit ordinals $\beta<\kappa$, if $\kappa\in\R^\omega(\Pi^1_\beta(\kappa))^+$, then for all $n<\omega$ we have \[\R^\omega(\Pi^1_\beta(\kappa))=\R^\omega(\Pi^1_{\beta+n}(\kappa)).\]
\end{enumerate}
\end{corollary}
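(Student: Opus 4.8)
The plan is to extract from Theorem~\ref{theorem_finite_ideal_diagram} the single \emph{diagonal} containment it encodes, namely
\[\R^{m-1}(\Pi^1_{\beta+2}(\kappa))\subseteq\R^m(\Pi^1_\beta(\kappa))\qquad(\text{valid whenever }\kappa\in\R^m(\Pi^1_\beta(\kappa))^+),\]
which holds because the right-hand side is the ideal generated by a union one of whose members is $\R^{m-1}(\Pi^1_{\beta+2}(\kappa))$. I would feed this, together with two monotonicity principles --- the horizontal containment $\Pi^1_\beta(\kappa)\subseteq\Pi^1_{\beta+1}(\kappa)$ (the bottom-row arrows of \Figure \ref{figure_finite_ideal_diagram}) and the fact that $I\subseteq J$ implies $\R^m(I)\subseteq\R^m(J)$ --- into the definition $\R^\omega(I)=\bigcup_{m<\omega}\R^m(I)$. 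Since clause~(1) is exactly the instance $\beta=-1$ (using $\Pi^1_{-1}(\kappa)=[\kappa]^{<\kappa}$), I would prove both clauses by one uniform sandwich argument.

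The easy half is immediate: from $\Pi^1_\beta(\kappa)\subseteq\Pi^1_{\beta+n}(\kappa)$ and monotonicity of $\R^m$ I get $\R^m(\Pi^1_\beta(\kappa))\subseteq\R^m(\Pi^1_{\beta+n}(\kappa))$ for each $m$, and taking the union over $m$ gives $\R^\omega(\Pi^1_\beta(\kappa))\subseteq\R^\omega(\Pi^1_{\beta+n}(\kappa))$.

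For the reverse inclusion the diagonal does the work, but first I must secure enough nontriviality to license Theorem~\ref{theorem_finite_ideal_diagram} at every node I touch. Unpacking the hypothesis, $\kappa\in\R^\omega(\Pi^1_\beta(\kappa))^+$ says $\kappa\in\R^m(\Pi^1_\beta(\kappa))^+$ for all $m<\omega$. I would then show by induction on $i$ that $\kappa\in\R^m(\Pi^1_{\beta+2i}(\kappa))^+$ for all $m<\omega$: granting the statement for $i$, apply Theorem~\ref{theorem_finite_ideal_diagram} with parameters $\beta+2i$ and $m+1$ to obtain $\R^m(\Pi^1_{\beta+2i+2}(\kappa))\subseteq\R^{m+1}(\Pi^1_{\beta+2i}(\kappa))$, so that $\kappa\notin\R^{m+1}(\Pi^1_{\beta+2i}(\kappa))$ forces $\kappa\notin\R^m(\Pi^1_{\beta+2i+2}(\kappa))$. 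With positivity established at every even column, a short induction on $k$ iterating the diagonal gives $\R^m(\Pi^1_{\beta+2k}(\kappa))\subseteq\R^{m+k}(\Pi^1_\beta(\kappa))$; an odd offset $n=2k+1$ is absorbed by one extra horizontal step $\Pi^1_{\beta+2k+1}(\kappa)\subseteq\Pi^1_{\beta+2k+2}(\kappa)$. Taking unions over $m$ then yields $\R^\omega(\Pi^1_{\beta+n}(\kappa))\subseteq\R^\omega(\Pi^1_\beta(\kappa))$, completing the sandwich.

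The hard part will not be any single estimate but the bookkeeping of nontriviality: the diagonal containment needs $\kappa$ positive for its \emph{target} ideal $\R^m(\Pi^1_\delta(\kappa))$, whereas the hypothesis supplies positivity only in the column $\delta=\beta$. The decisive observation is that the diagonal containment is itself the engine that transports positivity rightward across columns, so a single induction propagates it with no circularity (each step invokes the containment only in a column already handled). Once this is in place, both inclusions reduce to monotonicity and the limit clause of the definition of $\R^\alpha$, and the limitness of $\beta$ in clause~(2) enters the argument only as a way of naming a canonical representative of its $\omega$-block.
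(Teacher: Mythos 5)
Your proposal is correct and is essentially the argument the paper intends: the paper states the corollary "follows directly from Theorem \ref{theorem_finite_ideal_diagram}," meaning exactly the iteration of the diagonal containment $\R^{m-1}(\Pi^1_{\beta+2}(\kappa))\subseteq\R^m(\Pi^1_\beta(\kappa))$ together with monotonicity of $\R$ and the union definition of $\R^\omega$. Your explicit induction propagating positivity rightward across the columns $\beta+2i$ fills in a nontriviality bookkeeping step that the paper leaves implicit, but it is the same route, not a different one.
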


\begin{figure}
\centering
\begin{tikzpicture}[x=2.5cm,y=0.7cm]
\tiny
\node (pi1-1) at (0,0) {$\Pi^1_\beta(\kappa)$};
\node (pi10) at (1,0) {$\Pi^1_{\beta+1}(\kappa)$};
\node (pi11) at (2,0) {$\Pi^1_{\beta+2}(\kappa)$};
\node (pi13) at (2.7,0) {$\cdots$};

\node (rpi1-1) at (0,1) {$\R(\Pi^1_\beta(\kappa))$};
\node (rpi10) at (1,1) {$\R(\Pi^1_{\beta+1}(\kappa))$};
\node (rpi11) at (2,1) {$\R(\Pi^1_{\beta+2}(\kappa))$};
\node (rpi13) at (2.7,1) {$\cdots$};

\node (r2pi1-1) at (0,2) {$\R^2(\Pi^1_\beta(\kappa))$};
\node (r2pi10) at (1,2) {$\R^2(\Pi^1_{\beta+1}(\kappa))$};
\node (r2pi11) at (2,2) {$\R^2(\Pi^1_{\beta+2}(\kappa))$};
\node (r2pi13) at (2.7,2) {$\cdots$};

\node (r3pi1-1) at (0,3) {$\R^3(\Pi^1_\beta(\kappa))$};
\node (r3pi10) at (1,3) {$\R^3(\Pi^1_{\beta+1}(\kappa))$};
\node (r3pi11) at (2,3) {$\R^3(\Pi^1_{\beta+2}(\kappa))$};
\node (r3pi13) at (2.7,3) {$\cdots$};

\node (r4pi1-1) at (0,4) {$\vdots$};
\node (r4pi10) at (1,4) {$\vdots$};
\node (r4pi11) at (2,4) {$\vdots$};

\draw[rounded corners=0.1cm] (-0.5,-0.7) rectangle (2.9,4.5);

\node (rwpi1-1) at (1,6) {$\R^\omega(\Pi^1_\beta(\kappa))=\R^\omega(\Pi^1_{\beta+1}(\kappa))=\R^\omega(\Pi^1_{\beta+2}(\kappa))=\cdots$};

\draw [->] (r4pi1-1.north) -- (rwpi1-1);
\draw [->] (r4pi10.north) -- (rwpi1-1);
\draw [->] (r4pi11.north) -- (rwpi1-1);

\node (rw+1pi1-1) at (1,7.5) {$\R^{\omega+1}(\Pi^1_\beta(\kappa))$};
\node (rw+2pi1-1) at (1,9) {$\R^{\omega+2}(\Pi^1_\beta(\kappa))$};
\node (rw+3pi1-1) at (1,10.5) {};
\node (rw+4pi1-1) at (1,11) {$\vdots$};

\draw [->] (rwpi1-1) -- (rw+1pi1-1);
\draw [->] (rw+1pi1-1) -- (rw+2pi1-1);
\draw [->] (rw+2pi1-1) -- (rw+3pi1-1);



\end{tikzpicture}

\caption{\tiny Indescribability becomes redundant as one moves up the Ramsey hierarchy.}\label{figure_first_collapse}
\end{figure}
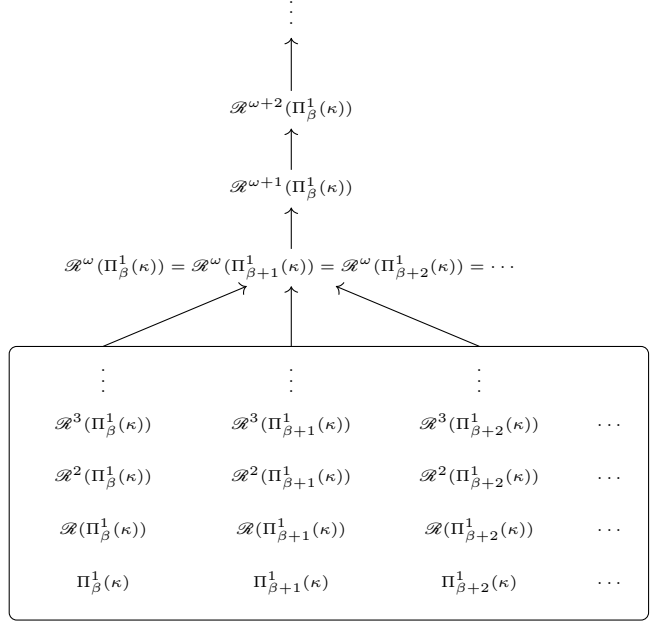

Note that, although Corollary \ref{corollary_collapse} is an easy consequence of Theorem \ref{theorem_finite_ideal_diagram}, its proof is substantially different from that of the observation $\R^\omega([\kappa]^{<\kappa})=\R^\omega(\NS_\kappa)$ made above in Remark \ref{remark_fengs_defintion}, because the ideals involved \emph{do not} fit into a chain.

\begin{remark}\label{remark_first_collapse}
Let us point out an easy consequence of Corollary \ref{corollary_collapse} which will serve as motivation for some of the results in Section \ref{section_indescribability_in_infinite_ramseyness} (see Remark \ref{remark_hierarchy}). The previous corollary easily implies that when $\omega\leq\alpha<\kappa$ and $\beta<\kappa$, the assertion $\kappa\in\R^\alpha(\Pi^1_{\beta}(\kappa))^+$ is equivalent to $\kappa\in\R^\alpha(\Pi^1_{\beta+n}(\kappa))^+$ for all $n<\omega$. In other words, for $\omega\leq\alpha<\kappa$, $\kappa$ being $\alpha$-$\Pi^1_\beta$-Ramsey is equivalent to $\kappa$ being $\alpha$-$\Pi^1_{\beta+n}$-Ramsey for all $n<\omega$.
\end{remark}

\section{Indescribability in infinite degrees of Ramseyness}\label{section_indescribability_in_infinite_ramseyness}


We now proceed to extend some of the results of Section \ref{section_indescribability_in_finite_ramseyness} to the ideals $\R^\alpha(\Pi^1_\beta(\kappa))$ for $\alpha>\omega$.

\begin{lemma}\label{lemma_indescribability_from_ramseyness}
For all ordinals $\alpha,\beta<\kappa$ the following hold.
\begin{enumerate}
\item If $\alpha$ is a successor ordinal then $\Pi^1_{\beta+\alpha}(\kappa)\subseteq\R^\alpha(\Pi^1_\beta(\kappa))$.
\item If $\alpha$ is a limit ordinal or if $\alpha=0$ then $\bigcup_{\xi<\beta+\alpha}\Pi^1_\xi(\kappa)\subseteq\R^\alpha(\Pi^1_\beta(\kappa))$.
\end{enumerate}
\end{lemma}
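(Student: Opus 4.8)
The plan is to reformulate both containments in terms of positive sets: the inclusion $\Pi^1_{\beta+\alpha}(\kappa)\subseteq\R^\alpha(\Pi^1_\beta(\kappa))$ is equivalent to $\R^\alpha(\Pi^1_\beta(\kappa))^+\subseteq\Pi^1_{\beta+\alpha}(\kappa)^+$, i.e.\ every $\R^\alpha(\Pi^1_\beta(\kappa))$-positive set is $\Pi^1_{\beta+\alpha}$-indescribable; likewise part (2) asserts that every such positive set is $\Pi^1_\xi$-indescribable for all $\xi<\beta+\alpha$. I would fix $\beta$ and run a single transfinite induction on $\alpha$, proving the combined statement that each $S\in\R^\alpha(\Pi^1_\beta(\kappa))^+$ is $\Pi^1_\xi$-indescribable for every $\xi<\beta+\alpha$, together with the strengthening to $\Pi^1_{\beta+\alpha}$-indescribability when $\alpha$ is a successor. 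At the outset I would dispose of the trivial case: if $\R^\alpha(\Pi^1_\beta(\kappa))=P(\kappa)$ the containment is immediate, so I may assume the ideal is nontrivial. Since $\R^\alpha(\Pi^1_\beta(\kappa))\supseteq\Pi^1_\beta(\kappa)$, any witnessing positive set is $\Pi^1_\beta$-indescribable, which forces $\kappa$ to be $\Pi^1_\beta$-indescribable and hence inaccessible. Inaccessibility guarantees $\beta+\alpha<\kappa$ (needed even to state the relevant indescribability) and gives $\Pi^1_\beta(\kappa)\supseteq\Pi^1_0(\kappa)=\NS_\kappa$, so all ideals in play extend $\NS_\kappa$. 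Throughout I would use the monotonicity $\Pi^1_\xi(\kappa)\subseteq\Pi^1_{\xi'}(\kappa)$ for $\xi\le\xi'$, i.e.\ that $\Pi^1_{\xi'}$-indescribability implies $\Pi^1_\xi$-indescribability.

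The base case $\alpha=0$ is immediate, since $\Pi^1_\beta$-indescribability implies $\Pi^1_\xi$-indescribability for all $\xi<\beta$. For $\alpha$ a limit ordinal I would use that $\R^\alpha(\Pi^1_\beta(\kappa))=\bigcup_{\eta<\alpha}\R^\eta(\Pi^1_\beta(\kappa))$, so $\R^\alpha(\Pi^1_\beta(\kappa))^+=\bigcap_{\eta<\alpha}\R^\eta(\Pi^1_\beta(\kappa))^+$. Given $S$ in this intersection and $\xi<\beta+\alpha=\sup_{\eta<\alpha}(\beta+\eta)$, I would pick $\eta<\alpha$ with $\xi<\beta+\eta$; since $S\in\R^\eta(\Pi^1_\beta(\kappa))^+$, the induction hypothesis at $\eta$ (which in particular asserts $\Pi^1_\zeta$-indescribability of positive sets for all $\zeta<\beta+\eta$) yields that $S$ is $\Pi^1_\xi$-indescribable. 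As $\xi<\beta+\alpha$ was arbitrary, this establishes part (2) at $\alpha$.

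The successor case $\alpha=\gamma+1$ is the crux, and is where Lemma \ref{lemma_baumgartner_bagaria} enters. Fix $S\in\R^{\gamma+1}(\Pi^1_\beta(\kappa))^+=\R(\R^\gamma(\Pi^1_\beta(\kappa)))^+$. Writing $I=\R^\gamma(\Pi^1_\beta(\kappa))\supseteq\NS_\kappa$, Corollary \ref{corollary_one_S} gives that every $(1,S)$-sequence has a homogeneous set in $I^+=\R^\gamma(\Pi^1_\beta(\kappa))^+$. The induction hypothesis at $\gamma$ tells me that $\R^\gamma(\Pi^1_\beta(\kappa))^+\subseteq\bigcap_{\xi<\beta+\gamma}\Pi^1_\xi(\kappa)^+$: if $\gamma$ is a successor this is the strengthened clause $\Pi^1_{\beta+\gamma}(\kappa)\subseteq\R^\gamma(\Pi^1_\beta(\kappa))$ combined with monotonicity, and if $\gamma$ is a limit or $0$ it is part (2) at $\gamma$ verbatim. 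Since positive sets are unbounded I also have $\R^\gamma(\Pi^1_\beta(\kappa))^+\subseteq\Pi^1_{-1}(\kappa)^+$. Thus $Q:=\R^\gamma(\Pi^1_\beta(\kappa))^+\subseteq\bigcap_{\xi\in\{-1\}\cup(\beta+\gamma)}\Pi^1_\xi(\kappa)^+$, and applying Lemma \ref{lemma_baumgartner_bagaria} with this $Q$ and parameter $\beta+\gamma<\kappa$ yields that $S$ is $\Pi^1_{\beta+\gamma+1}$-indescribable, i.e.\ $\Pi^1_{\beta+\alpha}$-indescribable, which is exactly part (1).

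The main obstacle is organizational rather than technical: I must set up the induction so that the successor step at $\gamma+1$ has available precisely the hypothesis $\R^\gamma(\Pi^1_\beta(\kappa))^+\subseteq\bigcap_{\xi\in\{-1\}\cup(\beta+\gamma)}\Pi^1_\xi(\kappa)^+$ needed to feed Lemma \ref{lemma_baumgartner_bagaria}, in both the case where $\gamma$ is a successor (where the stronger single-index clause is available) and the case where $\gamma$ is a limit (where only the intersection form is available). Packaging both parts of the statement into one combined inductive claim, as above, is what makes these two cases feed uniformly into the single application of Lemma \ref{lemma_baumgartner_bagaria}. The only other points requiring care are ensuring $\beta+\gamma<\kappa$ and $I\supseteq\NS_\kappa$, both of which follow once nontriviality of the ideal is seen to force $\kappa$ inaccessible.
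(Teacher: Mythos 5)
Your proposal is correct and follows essentially the same route as the paper: a single transfinite induction on $\alpha$ with the combined positive-set formulation, a trivial base and limit case, and a successor step that feeds the inductive containment $\R^\gamma(\Pi^1_\beta(\kappa))^+\subseteq\bigcap_{\xi<\beta+\gamma}\Pi^1_\xi(\kappa)^+$ into Lemma \ref{lemma_baumgartner_bagaria}. Your explicit appeal to Corollary \ref{corollary_one_S} to pass from regressive functions to $(1,S)$-sequences, and your attention to the side conditions ($\beta+\gamma<\kappa$, nontriviality, the $\xi=-1$ case of the hypothesis of Lemma \ref{lemma_baumgartner_bagaria}), only make explicit what the paper leaves implicit.
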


\begin{proof}
Fix an ordinal $\beta<\kappa$. We proceed by induction on $\alpha$. Clearly the result holds for $\alpha=0$, since $\bigcup_{\xi<\beta}\Pi^1_\xi(\kappa)\subseteq\Pi^1_\beta(\kappa)$. The case in which $\alpha$ is a limit is trivial.

For the successor step of the induction, let us argue that $\R^{\alpha+1}(\Pi^1_\beta(\kappa))^+\subseteq\Pi^1_{\beta+\alpha+1}(\kappa)^+$, assuming the result holds for $\alpha$. Suppose $X\in\R^{\alpha+1}(\Pi^1_\beta(\kappa))^+$. Then every regressive function $f:[X]^{<\omega}\to \kappa$ has a homogeneous set $H\in\R^\alpha(\Pi^1_\beta(\kappa))^+$. By our inductive hypothesis, $\R^\alpha(\Pi^1_\beta(\kappa))^+\subseteq\bigcap_{\xi<\beta+\alpha}\Pi^1_\xi(\kappa)^+$. Thus, by Lemma \ref{lemma_baumgartner_bagaria}, it follows that $X\in\Pi^1_{\beta+\alpha+1}(\kappa)^+$.
\end{proof}

Among other things, the next theorem shows that Lemma \ref{lemma_indescribability_from_ramseyness} (1) can be improved when $\alpha$ is a successor ordinal which is not an immediate successor of a limit ordinal.

\begin{theorem}\label{theorem_indescribability_in_infinite_ramseyness} 
Suppose $\kappa$ is a cardinal, $\alpha<\kappa$ is a limit ordinal and $\beta\in\{-1\}\cup\kappa$. For all $m<\omega$, if $\kappa\in\R^{\alpha+m+1}(\Pi^1_\beta(\kappa))^+$ then
\[\R^{\alpha+m+1}(\Pi^1_\beta(\kappa))=\overline{\R_0(\R^{\alpha+m}(\Pi^1_\beta(\kappa)))\cup\Pi^1_{\beta+\alpha+2m+1}(\kappa)}.\]
\end{theorem}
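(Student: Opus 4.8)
The plan is to follow the template of Theorem~\ref{theorem_finite_ideal_diagram} and Corollary~\ref{corollary_indescribability_in_finite_ramseyness}, proceeding by induction on $m$ and tracking degrees of indescribability via Lemma~\ref{lemma_complexity} and Lemma~\ref{lemma_baumgartner_bagaria}. Set $I=\overline{\R_0(\R^{\alpha+m}(\Pi^1_\beta(\kappa)))\cup\Pi^1_{\beta+\alpha+2m+1}(\kappa)}$; the goal is to show $X\in I^+$ iff $X\in\R^{\alpha+m+1}(\Pi^1_\beta(\kappa))^+$ for every $X\subseteq\kappa$. The arithmetic that drives the whole argument is this: by Lemma~\ref{lemma_complexity} the statement ``$H\in\R^{\alpha+m}(\Pi^1_\beta(\delta))^+$'' is $\Pi^1_{\gamma(\alpha+m,\beta)}$, where $\gamma(\alpha+m,\beta)=\beta+\alpha$ when $m=0$ (the limit clause) and $\gamma(\alpha+m,\beta)=\beta+\alpha+2m$ when $m\geq 1$ (the successor clause, since $\alpha$ a limit forces $\overline{\alpha+m}=\alpha$ and $m_{\alpha+m}=m$). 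Hence ``every homogeneous set for $f$ lies in $\R^{\alpha+m}(\Pi^1_\beta(\delta))$'' is $\Pi^1_{\gamma(\alpha+m,\beta)+1}$, and in both cases $\gamma(\alpha+m,\beta)+1=\beta+\alpha+2m+1$, which is exactly the index in the theorem.

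For $\R^{\alpha+m+1}(\Pi^1_\beta(\kappa))\subseteq I$ I would copy the first half of the proof of Theorem~\ref{theorem_finite_ideal_diagram}. Given $X\in I^+$, assume toward a contradiction that $X\in\R^{\alpha+m+1}(\Pi^1_\beta(\kappa))$, fix a witnessing regressive $f\colon[X]^{<\omega}\to\kappa$ all of whose homogeneous sets lie in $\R^{\alpha+m}(\Pi^1_\beta(\kappa))$, and let $\varphi$ be the $\Pi^1_{\beta+\alpha+2m+1}$ sentence above, so that $C=\{\xi<\kappa\st(V_\xi,\in,X\cap V_\xi,f\cap V_\xi)\models\varphi\}\in\Pi^1_{\beta+\alpha+2m+1}(\kappa)^*$. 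Since $X\setminus C\in\Pi^1_{\beta+\alpha+2m+1}(\kappa)$ and $X\in I^+$, Remark~\ref{remark_ideal_generated} forces $X\cap C\notin\R_0(\R^{\alpha+m}(\Pi^1_\beta(\kappa)))$; applying the definition of $\R_0$ against a club of $\xi>\max(\alpha+m,\beta)$ closed under G\"odel pairing produces such a $\xi\in X\cap C$ together with a homogeneous $H\subseteq X\cap C\cap\xi$ in $\R^{\alpha+m}(\Pi^1_\beta(\xi))^+$, contradicting $\xi\in C$ via the uniform definability in Lemma~\ref{lemma_complexity}.

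For the reverse containment it suffices, by Remark~\ref{remark_ideal_generated}, given $X\in\R^{\alpha+m+1}(\Pi^1_\beta(\kappa))^+$, to show $X\in\Pi^1_{\beta+\alpha+2m+1}(\kappa)^+$ and $X\in\R_0(\R^{\alpha+m}(\Pi^1_\beta(\kappa)))^+$. The induction enters through the bound
\[\R^{\alpha+m}(\Pi^1_\beta(\kappa))^+\subseteq\bigcap_{\xi<\beta+\alpha+2m}\Pi^1_\xi(\kappa)^+,\]
which for $m=0$ is Lemma~\ref{lemma_indescribability_from_ramseyness}(2) (using that $\alpha$ is a limit) and for $m\geq 1$ follows from the theorem at index $m-1$, since that gives $\Pi^1_{\beta+\alpha+2m-1}(\kappa)\subseteq\R^{\alpha+m}(\Pi^1_\beta(\kappa))$. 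By Corollary~\ref{corollary_one_S} (applicable as $\R^{\alpha+m}(\Pi^1_\beta(\kappa))\supseteq\NS_\kappa$, the latter being normal since $\alpha+m\geq\omega$), every $(1,X)$-sequence has a homogeneous set in $\R^{\alpha+m}(\Pi^1_\beta(\kappa))^+$, hence in $\bigcap_{\xi\in\{-1\}\cup(\beta+\alpha+2m)}\Pi^1_\xi(\kappa)^+$; Lemma~\ref{lemma_baumgartner_bagaria}, applied uniformly with parameter $\beta+\alpha+2m<\kappa$, then yields that $X$ is $\Pi^1_{\beta+\alpha+2m+1}$-indescribable. For $\R_0$-positivity I argue exactly as in the second half of the proof of Theorem~\ref{theorem_finite_ideal_diagram}: fixing a regressive $f$ and club $C$, normality of $\R^{\alpha+m+1}(\Pi^1_\beta(\kappa))$ gives $X\cap C\in\R^{\alpha+m+1}(\Pi^1_\beta(\kappa))^+$, which is $\Pi^1_{\beta+\alpha+2m+1}$-indescribable by the same computation; choosing a homogeneous $H\subseteq X\cap C$ in $\R^{\alpha+m}(\Pi^1_\beta(\kappa))^+$ and reflecting the $\Pi^1_{\gamma(\alpha+m,\beta)}$ sentence ``$H\in\R^{\alpha+m}(\Pi^1_\beta(\cdot))^+$'' (whose index $\gamma(\alpha+m,\beta)\leq\beta+\alpha+2m$ is small enough to reflect) produces $\xi\in X\cap C$ with $H\cap\xi\in\R^{\alpha+m}(\Pi^1_\beta(\xi))^+$ homogeneous for $f$.

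The step I expect to be the main obstacle is the degree bookkeeping: checking that the $m=0$ versus $m\geq 1$ split in Lemma~\ref{lemma_complexity} and the limit versus successor behaviour packaged inside Lemma~\ref{lemma_baumgartner_bagaria} both land on the single value $\beta+\alpha+2m+1$, so that the complexity of ``every homogeneous set is non-positive'' and the indescribability degree extracted from $(1,X)$-sequences agree. The case $\beta=-1$ introduces no new ideas and is handled with the conventions $\Pi^1_{-1}(\kappa)=[\kappa]^{<\kappa}$ and the ordinal-arithmetic reading of $\beta+\cdots$ already in force in Theorem~\ref{theorem_finite_ideal_diagram}.
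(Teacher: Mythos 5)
Your proposal is correct and follows essentially the same route as the paper: the same generated ideal $I$, the same induction on $m$ with the complexity bookkeeping $\gamma(\alpha+m,\beta)+1=\beta+\alpha+2m+1$ from Lemma \ref{lemma_complexity}, the same reflection argument for $\R_0$-positivity, and the same use of Lemma \ref{lemma_baumgartner_bagaria} (via Lemma \ref{lemma_indescribability_from_ramseyness} at $m=0$ and the inductive hypothesis at $m\geq 1$) to extract $\Pi^1_{\beta+\alpha+2m+1}$-indescribability. The only cosmetic difference is that at the base case the paper cites Lemma \ref{lemma_indescribability_from_ramseyness}(1) directly rather than rederiving it from part (2) and Lemma \ref{lemma_baumgartner_bagaria}.
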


\begin{proof}
For the base case, let $m=0$. Let $I=\overline{\R_0(\R^\alpha(\Pi^1_\beta(\kappa)))\cup\Pi^1_{\beta+\alpha+1}(\kappa)}$. We will show that $X\in\R^{\alpha+1}(\Pi^1_\beta(\kappa))^+$ if and only if $X\in I^+$. 

Suppose $X\in\R^{\alpha+1}(\Pi^1_\beta(\kappa))^+$. By Remark \ref{remark_ideal_generated}, it suffices to show that $X\in\Pi^1_{\beta+\alpha+1}(\kappa)^+$ and $X\in\R_0(\R^\alpha(\Pi^1_\beta(\kappa)))^+$. By Lemma \ref{lemma_indescribability_from_ramseyness}, we have $X\in\Pi^1_{\beta+\alpha+1}(\kappa)^+$. Let us show that $X\in\R_0(\R^\alpha(\Pi^1_\beta(\kappa)))^+$. Fix a regressive function $f:[X]^{<\omega}\to \kappa$ and a club $C\subseteq\kappa$. By assumption there is a set $H\in\R^\alpha(\Pi^1_\beta(\kappa))^+$ homogeneous for $f$. By Lemma \ref{lemma_complexity}, the fact that $H\in\R^\alpha(\Pi^1_\beta(\kappa))^+$ can be expressed by a $\Pi^1_{\beta+\alpha}$ sentence $\varphi$ over $(V_\kappa,\in,H)$, and since $X\cap C\in\Pi^1_{\beta+\alpha+1}(\kappa)^+$, there is a $\xi\in X\cap C$ with $\xi>\alpha,\beta$ such that $(V_\xi,\in,H\cap\xi)\models\varphi$, and hence  $H\cap\xi\in\R^\alpha(\Pi^1_\beta(\xi))^+$. Thus, $X\in\R_0(\R^\alpha(\Pi^1_\beta(\kappa)))^+$.

Now suppose $X\in I^+$. We argue that $X\in\R^{\alpha+1}(\Pi^1_\beta(\kappa))^+$. Let $f:[X]^{<\omega}\to \kappa$ be a regressive function. Suppose that every homogeneous set $H$ for $f$ is in $\R^\alpha(\Pi^1_\beta(\kappa))$. By Lemma \ref{lemma_complexity}, this can be expressed by a $\Pi^1_{\beta+\alpha+1}$ sentence $\varphi$ over $(V_\kappa,\in,f)$. This implies that the set $C=\{\xi<\kappa\st(V_\xi,\in,f\cap V_\xi)\models\varphi\}$ is in $\Pi^1_{\beta+\alpha+1}(\kappa)^*$. Since $X\in I^+$, it follows that $X$ is not the union of a set in $\R_0(\R^\alpha(\Pi^1_\beta(\kappa)))$ and a set in $\Pi^1_{\beta+\alpha+1}$. Since $X=(X\cap C)\cup(X\setminus C)$ and $X\setminus C\in\Pi^1_{\beta+\alpha+1}(\kappa)$, we see that $X\cap C\in\R_0(\R^\alpha(\Pi^1_\beta(\kappa)))^+$. Hence there is a $\xi\in X\cap C$ with $\xi>\alpha,\beta$ for which there is a set $H\subseteq X\cap C\cap\xi$ in $\R^\alpha(\Pi^1_\beta(\xi))^+$ homogeneous for $f$. This contradicts $\xi\in C$.

For the inductive step, we suppose 
\[\R^{\alpha+m}(\Pi^1_\beta(\kappa))=\overline{\R_0(\R^{\alpha+m-1}(\Pi^1_\beta(\kappa)))\cup\Pi^1_{\beta+\alpha+2m-1}(\kappa)}\]
and show
\[\R^{\alpha+m+1}(\Pi^1_\beta(\kappa))=\overline{\R_0(\R^{\alpha+m}(\Pi^1_\beta(\kappa)))\cup\Pi^1_{\beta+\alpha+2m+1}(\kappa)}.\]
Let $I=\overline{\R_0(\R^{\alpha+m}(\Pi^1_\beta(\kappa)))\cup\Pi^1_{\beta+\alpha+2m+1}(\kappa)}$. 

Suppose $X\in\R^{\alpha+m+1}(\Pi^1_\beta(\kappa))^+$. By Remark \ref{remark_ideal_generated}, it suffices to show that $X\in \Pi^1_{\beta+\alpha+2m+1}(\kappa)^+$ and $X\in \R_0(\R^{\alpha+m}(\Pi^1_\beta(\kappa)))^+$. By our inductive hypothesis $Q:=\R^{\alpha+m}(\Pi^1_\beta(\kappa))^+\subseteq\Pi^1_{\beta+\alpha+2m-1}(\kappa)^+$, and thus by Lemma \ref{lemma_baumgartner_bagaria}  we have $X\in\Pi^1_{\beta+\alpha+2m+1}(\kappa)^+$. Let us show that $X\in\R_0(\R^{\alpha+m}(\Pi^1_\beta(\kappa)))^+$. Fix a regressive function $f:[X]^{<\omega}\to \kappa$ and a club $C\subseteq\kappa$. Since $X\in\R^{\alpha+m+1}(\Pi^1_\beta(\kappa))^+$ there is a set $H\in\R^{\alpha+m}(\Pi^1_\beta(\kappa))^+$ homogeneous for $f$. By Lemma \ref{lemma_complexity}, the fact that $H\in\R^{\alpha+m}(\Pi^1_\beta(\kappa))^+$ can be expressed by a $\Pi^1_{\beta+\alpha+2m}$ sentence $\varphi$ over $(V_\kappa,\in,H)$. Since $X\cap C\in\Pi^1_{\beta+\alpha+2m+1}(\kappa)^+$, there is a $\xi\in X\cap C$ with $\xi>\alpha+m,\beta$, for which $H\cap\xi\in\R^{\alpha+m}(\Pi^1_\beta(\xi))^+$. Thus $X\in\R_0(\R^{\alpha+m}(\Pi^1_\beta(\kappa)))^+$.

Conversely, suppose $X\in I^+$. Let $f:[X]^{<\omega}\to \kappa$ be a regressive function. Suppose that every set which is homogeneous for $f$ is in $\R^{\alpha+m}(\Pi^1_\beta(\kappa))$. By Lemma \ref{lemma_complexity}, this can be expressed by a $\Pi^1_{\beta+\alpha+2m+1}$ sentence $\varphi$ over $(V_\kappa,\in,f)$. Thus the set $C=\{\xi<\kappa\st (V_\xi,\in,f\cap V_\xi)\models\varphi\}$ is in $\Pi^1_{\beta+\alpha+2m+1}(\kappa)^*$. Since $X\in I^+$, it follows that $X$ is not the union of a set in $\R_0(\R^{\alpha+m}(\Pi^1_\beta(\kappa)))$ and a set in $\Pi^1_{\beta+\alpha+2m+1}(\kappa)$, and since $X\setminus C\in\Pi^1_{\beta+\alpha+2m+1}(\kappa)$, we see that $X\cap C\in\R_0(\R^{\alpha+m}(\Pi^1_\beta(\kappa)))^+$. Hence there is a $\xi\in X\cap C$ with $\xi>\alpha+m,\beta$ such that there is a set $H\subseteq X\cap C\cap\xi$ in $\R^{\alpha+m}(\Pi^1_\beta(\xi))^+$ homogeneous for $f$. This contradicts $\xi\in C$.
\end{proof}

\begin{remark}\label{remark_hierarchy}
We would like to use Theorem \ref{theorem_indescribability_in_infinite_ramseyness} to prove an analogue of Corollary \ref{corollary_beta_to_beta_plus_one_hierarchy}, which would say that the strength of the hypothesis ``$\exists\kappa$ $\kappa\in\R^\alpha(\Pi^1_\beta(\kappa))$'' increases as $\beta$ increases. However, there is an added complication, as illustrated in Corollary \ref{corollary_collapse} and Remark \ref{remark_first_collapse}, which is that even if $\beta_0<\beta_1<\kappa$, it may be that $\kappa\in\R^\alpha(\Pi^1_{\beta_0}(\kappa))$ is equivalent to $\kappa\in\R^\alpha(\Pi^1_{\beta_1}(\kappa))$, if $\alpha$ is large enough. Thus, in order to show that the hypotheses $\kappa\in\R^\alpha(\Pi^1_\beta(\kappa))$ form a hierarchy as $\beta$ increases, we will need to determine at what $\alpha$ do the hypotheses $\kappa\in\R^\alpha(\Pi^1_{\beta_0}(\kappa))$ and $\kappa\in\R^\alpha(\Pi^1_{\beta_1}(\kappa))$ become equivalent.
\end{remark}

\begin{remark}
Using Theorem \ref{theorem_indescribability_in_infinite_ramseyness}, it is possible to formulate a characterization of $\kappa\in\R^{\alpha+m+1}(\Pi^1_\beta(\kappa))^+$ in terms of the relevant ideals along the lines of Corollary \ref{corollary_necessity} above. Moreover, one can show that reference to the ideals in such a characterization is, in fact, necessary. We leave the details to the reader.
\end{remark}

Let us prove Theorem 1.2 mentioned in Section \ref{section_introduction}. That is, we will show that for any two ordinals $\beta_0<\beta_1<\kappa$, the two increasing chains of ideals $\<\R^\alpha(\Pi^1_{\beta_0}(\kappa))\st\alpha<\kappa\>$ and $\<\R^\alpha(\Pi^1_{\beta_1}(\kappa))\st\alpha<\kappa\>$ are eventually equal, and we determine the precise index at which the equality begins (see \Figure \ref{figure_culmination} for an illustration of this result).

\begin{theorem_intro}
Suppose $\beta_0<\beta_1$ are in $\{-1\}\cup\kappa$ and let $\sigma=\ot(\beta_1\setminus\beta_0)$. Define $\alpha=\sigma\cdot\omega$. Suppose $\kappa\in\R^\alpha(\Pi^1_{\beta_1}(\kappa))^+$ so that the ideals under consideration are nontrivial. Then $\alpha$ is the least ordinal such that $\R^\alpha(\Pi^1_{\beta_0}(\kappa))=\R^\alpha(\Pi^1_{\beta_1}(\kappa))$.
\end{theorem_intro}

\begin{proof}
First, let us show $\R^\alpha(\Pi^1_{\beta_0}(\kappa))=\R^\alpha(\Pi^1_{\beta_1}(\kappa))$. Since $\beta_0<\beta_1$, it is clear that $\R^\alpha(\Pi^1_{\beta_0}(\kappa))\subseteq\R^\alpha(\Pi^1_{\beta_1}(\kappa))$. Let us show that $\R^\alpha(\Pi^1_{\beta_0}(\kappa))\supseteq\R^\alpha(\Pi^1_{\beta_1}(\kappa))$. If $\sigma=\ot(\beta_1\setminus\beta_0)=n$ is finite then $\alpha=\omega$ and the result follows from Corollary \ref{corollary_collapse} since $ \R^\omega(\Pi^1_{\beta_0}(\kappa))=\R^\omega(\Pi^1_{\beta_0+n}(\kappa))$. Suppose $\sigma=\ot(\beta_1\setminus\beta_0)\geq\omega$. Then $\alpha=\sigma\cdot\omega$ is a limit of limits. Let us show that $\R^\xi(\Pi^1_{\beta_1}(\kappa))\subseteq\R^\alpha(\Pi^1_{\beta_0}(\kappa))$ for each limit $\xi<\alpha$. Fix a limit ordinal $\xi<\alpha$. For ordinals $\zeta<\alpha$ we define $i(\zeta)$ to be the least $i<\omega$ such that $\zeta<\sigma\cdot i$. Notice that if we let $\gamma$ be the greatest limit ordinal which is less than or equal to $\sigma$ then $\beta_1=\beta_0+\sigma\leq\beta_0+\gamma+2m+1<\beta_0+\sigma\cdot 2$ for some odd natural number $2m+1<\omega$. Now, by Theorem \ref{theorem_indescribability_in_infinite_ramseyness}, we have 
\begin{align}
\Pi^1_{\beta_1}(\kappa)\subseteq\Pi^1_{\beta_0+\gamma+2m+1}(\kappa)\subseteq\R^{\gamma+m+1}(\Pi^1_{\beta_0}(\kappa)).\label{equation_ind}
\end{align} 
Applying the Ramsey operator $\xi$ times to (\ref{equation_ind}) yields
\[\R^\xi(\Pi^1_{\beta_1}(\kappa))\subseteq\R^{\gamma+m+1+\xi}(\Pi^1_{\beta_0}(\kappa))\]
Since $i(\gamma)+i(\xi)<\omega$ it follows that $\gamma+m+1+\xi=\gamma+\xi<\sigma\cdot i(\gamma)+\sigma\cdot i(\xi)$ must be less than $\alpha$. Thus $\R^\xi(\Pi^1_{\beta_1}(\kappa))\subseteq\R^{\alpha}(\Pi^1_{\beta_0}(\kappa))$.

Next, let us show that if $\hat{\alpha}<\alpha$ then $\R^{\hat{\alpha}}(\Pi^1_{\beta_0}(\kappa))\subsetneq\R^{\hat{\alpha}}(\Pi^1_{\beta_1}(\kappa))$. If $\sigma=\ot(\beta_1\setminus\beta_0)$ is finite, in which case $\alpha=\omega$, then the result follows from Theorem \ref{theorem_proper_containments_in_finite_diagram}. On the other hand, if $\sigma$ is infinite, then $\alpha=\sigma\cdot\omega>\omega$ and $\alpha$ is a limit of limits. Let $\bar{\alpha}$ be a limit ordinal with $\hat{\alpha}<\bar{\alpha}+1<\alpha$. It suffices to show that $\R^{\bar{\alpha}+1}(\Pi^1_{\beta_0}(\kappa))\subsetneq\R^{\bar{\alpha}+1}(\Pi^1_{\beta_1}(\kappa))$. Let 
\[S=\{\xi<\kappa\st\xi\in\R^{\bar{\alpha}+1}(\Pi^1_{\beta_0}(\xi))\}.\]
Since $\kappa\in\R^\alpha(\Pi^1_{\beta_1}(\kappa))^+$, it follows from Lemma \ref{lemma_set_of_nons_is_positive} that $S\notin\R^{\bar{\alpha}+1}(\Pi^1_{\beta_0}(\kappa))$. Furthermore, by Lemma \ref{lemma_complexity}, the fact that $S\notin\R^{\bar{\alpha}+1}(\Pi^1_{\beta_0}(\kappa))$ is $\Pi^1_{\beta_0+\bar{\alpha}+2}$-expressible over $V_\kappa$ and so the set $C=\kappa\setminus S$ is in $\Pi^1_{\beta_0+\bar{\alpha}+2}(\kappa)^*$.
By Theorem \ref{theorem_indescribability_in_infinite_ramseyness}, $\Pi^1_{\beta_0+\sigma+\bar{\alpha}+1}(\kappa)\subseteq\R^{\bar{\alpha}+1}(\Pi^1_{\beta_1}(\kappa))$. Since $\bar{\alpha}<\alpha=\sigma\cdot\omega$, it follows that $\beta_0+\bar{\alpha}+2<\beta_0+\sigma+\bar{\alpha}+1$ and thus $\Pi^1_{\beta_0+\bar{\alpha}+2}(\kappa)\subseteq\R^{\bar{\alpha}+1}(\Pi^1_{\beta_1}(\kappa))$. This implies that $C\in\R^{\bar{\alpha}+1}(\Pi^1_{\beta_1}(\kappa))^*$ and thus $S\in \R^{\bar{\alpha}+1}(\Pi^1_{\beta_1}(\kappa))$.
\end{proof}

\begin{figure}
\centering
\begin{tikzpicture}[x=0.18cm,y=0.18cm]

\tiny

\foreach \i in {0,10,20,40} {

\foreach \x in {1,...,6} {
	\foreach \y in {1,...,6} {
		\node[circle,draw=black, fill=black, inner sep=0pt,minimum size=1pt] (a\i\x\y) at ({6*(6-6*pow(\x+1,-0.1))-1.5+\i},{6*(6-6*pow(\y+1,-0.1))-1.5}) {};
	}
}
\draw[rounded corners=0.1cm] (\i,0) rectangle (5.7+\i,5.7);


\node[circle,draw=black, fill=white, inner sep=0pt,minimum size=3pt] (w\i) at ({6*(6-6*pow(3+1,-0.1))-1.5+\i-0.3},7.5) {};

\foreach \j in {1,2.65,6} {
	\draw ({6*(6-6*pow(\j+1,-0.1))-1.5+\i},6) -- (w\i);
}

\foreach \y in {2,...,6} {
	\node[circle,draw=black, fill=black, inner sep=0pt,minimum size=1pt] (w\i\y) at ({6*(6-6*pow(3+1,-0.1))-1.5+\i-0.3},{7.5+6*(6-6*pow(\y+1,-0.1))-2.5}) {};
}

\node[circle,draw=black, fill=white, inner sep=0pt,minimum size=3pt] (w\i7) at ({6*(6-6*pow(3+1,-0.1))-1.5+\i-0.3},{7.5+6*(6-6*pow(7+1,-0.1))-2.5}) {};

\draw (w\i)--(w\i7);

\node (ell\i) at ({6*(6-6*pow(3+1,-0.1))-1.5+\i-0.3},15) {$\vdots$};

\draw[rounded corners=0.1cm] (-1,-1) rectangle (26.7,17);

\node[circle,draw=black, fill=white, inner sep=0pt,minimum size=3pt] (lima) at ({6*(6-6*pow(3+1,-0.1))-1.5+10-0.3},22) {};

\foreach \k in {0,10,20} {
	\draw ({6*(6-6*pow(3+1,-0.1))-1.5+\k-0.3},18) -- (lima);
}

\node[circle,draw=black, fill=white, inner sep=0pt,minimum size=3pt] (limb) at ({6*(6-6*pow(3+1,-0.1))-1.5+40-0.3},17) {};

\foreach \y in {2,...,6} {
	\node[circle,draw=black, fill=black, inner sep=0pt,minimum size=1pt] (lim\y) at ({6*(6-6*pow(3+1,-0.1))-1.5+40-0.3},{7.5+6*(6-6*pow(\y+1,-0.1))-2.5+10}) {};
}

\node[circle,draw=black, fill=white, inner sep=0pt,minimum size=3pt] (lim7) at ({6*(6-6*pow(3+1,-0.1))-1.5+40-0.3},{7.5+6*(6-6*pow(7+1,-0.1))-2.5+10}) {};

\draw (limb) -- (lim7);


\foreach \y in {2,...,6} {
	\node[circle,draw=black, fill=black, inner sep=0pt,minimum size=1pt] (up10\y) at ({6*(6-6*pow(3+1,-0.1))-1.5+10-0.3},{7.5+6*(6-6*pow(\y+1,-0.1))-2.5+10+5}) {};
}

\node[circle,draw=black, fill=white, inner sep=0pt,minimum size=3pt] (up107) at ({6*(6-6*pow(3+1,-0.1))-1.5+10-0.3},{7.5+6*(6-6*pow(7+1,-0.1))-2.5+10+5}) {};

\foreach \y in {2,...,6} {
	\node[circle,draw=black, fill=black, inner sep=0pt,minimum size=1pt] (up40\y) at ({6*(6-6*pow(3+1,-0.1))-1.5+40-0.3},{7.5+6*(6-6*pow(\y+1,-0.1))-2.5+10+5}) {};
}

\node[circle,draw=black, fill=white, inner sep=0pt,minimum size=3pt] (up407) at ({6*(6-6*pow(3+1,-0.1))-1.5+40-0.3},{7.5+6*(6-6*pow(7+1,-0.1))-2.5+10+5}) {};

\draw (lim7) -- (up407);
\draw (lima) -- (up107);

\node (fdots10) at ({6*(6-6*pow(3+1,-0.1))-1.5+10-0.3},{7.5+6*(6-6*pow(7+1,-0.1))-2.5+10+5+3}) {$\vdots$};

\node (fdots40) at ({6*(6-6*pow(3+1,-0.1))-1.5+40-0.3},{7.5+6*(6-6*pow(7+1,-0.1))-2.5+10+5+3}) {$\vdots$};

\node[circle,draw=black, fill=white, inner sep=0pt,minimum size=3pt] (fnode) at (28,38) {};

\draw (fdots10.north) -- (fnode);
\draw (fdots40.north) -- (fnode);

\node[minimum size=0pt] (upperbound) at (58,38) {};
\node (middle) at (55,19) {$\alpha=\ot(\beta_1\setminus\beta_0)\cdot\omega$};
\node (lowerbound) at (58,0) {};

\draw (53,38) -- (upperbound);
\draw (53,0) -- (lowerbound);
\draw[->] (middle) -- (55,0);
\draw[->] (middle) -- (55,38);

}

\node (beta0) at ({6*(6-6*pow(2+1,-0.1))-1.5+10},{6*(6-6*pow(1+1,-0.1))-1.5-4}) {$\beta_0$};

\draw (a1021) -- (a1026);
\draw (a4031) -- (a4036);

\node (beta1) at ({6*(6-6*pow(3+1,-0.1))-1.5+40},{6*(6-6*pow(1+1,-0.1))-1.5-4}) {$\beta_1$};

\node (poop) at (28,40) {$\R^\alpha(\Pi^1_{\beta_0}(\kappa))=\R^\alpha(\Pi^1_{\beta_1}(\kappa))$};

\end{tikzpicture}

\caption{\tiny For $\beta_0,\beta_1<\kappa$ the ideal chains $\<\R^\alpha(\Pi^1_{\beta_0}(\kappa))\st\alpha<\kappa\>$ and $\<\R^\alpha(\Pi^1_{\beta_1}(\kappa))\st\alpha<\kappa\>$ become equal at $\alpha=\ot(\beta_1\setminus\beta_0)\cdot\omega$.}\label{figure_culmination}
\end{figure}
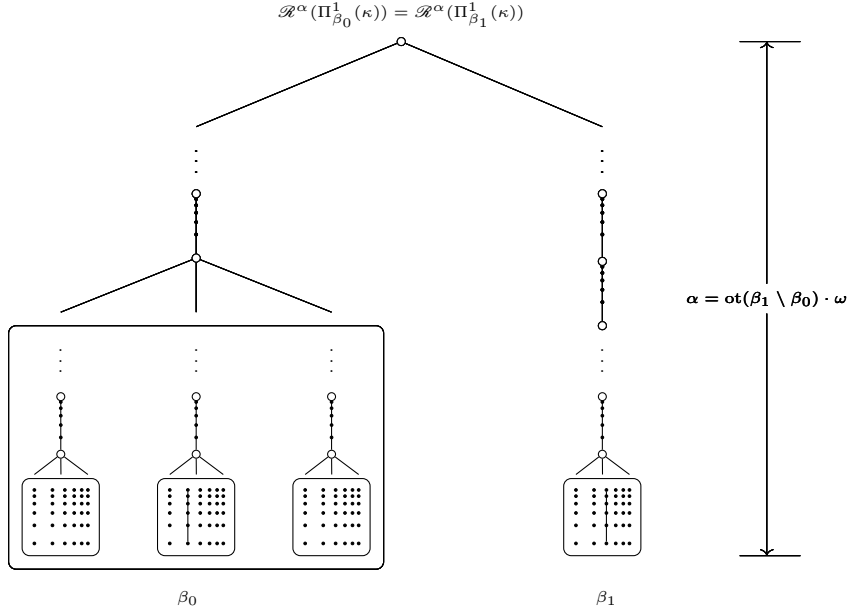

\begin{corollary}\label{corollary_main_redundnacy}
If $\kappa\in \R^\kappa([\kappa]^{<\kappa})^+$ then for all $\beta_0,\beta_1<\kappa$, assuming the ideals involved are nontrivial, we have
\[\R^\kappa(\Pi^1_{\beta_0}(\kappa))=\R^\kappa(\Pi^1_{\beta_1}(\kappa)).\]
\end{corollary}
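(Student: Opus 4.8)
The plan is to reduce the statement to Theorem \ref{theorem_culmination} together with a routine ``shift'' identity for iterates of the Ramsey operator. First I would dispose of the trivial case $\beta_0=\beta_1$ and, by symmetry, assume $\beta_0<\beta_1$. Setting $\sigma=\ot(\beta_1\setminus\beta_0)$ and $\alpha=\sigma\cdot\omega$, I note that $\sigma\leq\beta_1<\kappa$; since $\kappa\in\R^\kappa([\kappa]^{<\kappa})^+$ forces $\kappa$ to be Ramsey and hence inaccessible, $\kappa$ is an uncountable limit cardinal, so $|\sigma\cdot\omega|=|\sigma|\cdot\aleph_0<\kappa$ and therefore $\alpha<\kappa$. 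The nontriviality hypothesis gives $\kappa\in\R^\kappa(\Pi^1_{\beta_1}(\kappa))^+$; since the iterates $\<\R^\eta(\Pi^1_{\beta_1}(\kappa))\st\eta\leq\kappa\>$ form an increasing chain (from $I\subseteq\R(I)$) and $\alpha\leq\kappa$, we have $\R^\alpha(\Pi^1_{\beta_1}(\kappa))\subseteq\R^\kappa(\Pi^1_{\beta_1}(\kappa))$ and hence $\kappa\in\R^\alpha(\Pi^1_{\beta_1}(\kappa))^+$. This is precisely the nontriviality assumption needed to invoke Theorem \ref{theorem_culmination}, which yields
\[\R^\alpha(\Pi^1_{\beta_0}(\kappa))=\R^\alpha(\Pi^1_{\beta_1}(\kappa)).\]

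The second ingredient is the identity $\R^{\alpha+\delta}(I)=\R^\delta(\R^\alpha(I))$, valid for every ideal $I\supseteq[\kappa]^{<\kappa}$ and all ordinals $\delta$. I would prove this by induction on $\delta$: the successor step is immediate from the definition $\R^{\eta+1}=\R\circ\R^\eta$, and the limit step uses that the ordinals $\{\alpha+\zeta\st\zeta<\delta\}$ are cofinal in the limit ordinal $\alpha+\delta$, together with the monotonicity of the chain, so that $\bigcup_{\zeta<\delta}\R^{\alpha+\zeta}(I)=\bigcup_{\eta<\alpha+\delta}\R^\eta(I)$. The only place requiring any care is exactly this limit case; everything else is bookkeeping with the increasing-chain property of the operator.

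With these in hand the conclusion is immediate. Since $\alpha<\kappa$ and $\kappa$ is a cardinal, $\alpha+\kappa=\kappa$ (each $\alpha+\eta$ with $\eta<\kappa$ has cardinality below $\kappa$ and so lies below $\kappa$, while these partial sums are cofinal in $\kappa$). Applying the shift identity with $\delta=\kappa$ and using the displayed equality gives
\[\R^\kappa(\Pi^1_{\beta_0}(\kappa))=\R^\kappa(\R^\alpha(\Pi^1_{\beta_0}(\kappa)))=\R^\kappa(\R^\alpha(\Pi^1_{\beta_1}(\kappa)))=\R^\kappa(\Pi^1_{\beta_1}(\kappa)),\]
as desired. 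Thus the main obstacle is not conceptual but is confined to verifying the limit clause of the shift identity and the elementary ordinal-arithmetic facts $\alpha<\kappa$ and $\alpha+\kappa=\kappa$.
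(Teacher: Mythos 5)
Your proposal is correct and follows the route the paper intends: the corollary is stated as a direct consequence of Theorem \ref{theorem_culmination}, obtained by noting that $\alpha=\ot(\beta_1\setminus\beta_0)\cdot\omega<\kappa$, that nontriviality of $\R^\kappa(\Pi^1_{\beta_1}(\kappa))$ gives the hypothesis $\kappa\in\R^\alpha(\Pi^1_{\beta_1}(\kappa))^+$, and that once the two increasing chains agree at stage $\alpha$ they agree at every later stage. Your shift identity $\R^{\alpha+\delta}(I)=\R^\delta(\R^\alpha(I))$ together with $\alpha+\kappa=\kappa$ is just a clean formalization of that last step, so there is nothing genuinely different here.
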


As a direct corollary of Theorem \ref{theorem_culmination} we derive the following, which is the analogue of Theorem \ref{theorem_proper_containments_in_finite_diagram} (1) for the ideals $\R^\alpha(\Pi^1_\beta(\kappa))$ when $\alpha>\omega$.\footnote{Below we will derive the analogue of Theorem \ref{theorem_proper_containments_in_finite_diagram} (2) for $\alpha>\omega$ as a consequence of Theorem \ref{theorem_infinite_ideal_diagram}.}

\begin{corollary}
Suppose $\beta_0<\beta_1$ are in $\{-1\}\cup\kappa$. If $\alpha<\ot(\beta_1\setminus\beta_0)\cdot\omega$ and $\kappa\in\R^\alpha(\Pi^1_{\beta_1}(\kappa))^+$, then $\R^\alpha(\Pi^1_{\beta_0}(\kappa))\subsetneq\R^\alpha(\Pi^1_{\beta_1}(\kappa))$.
\end{corollary}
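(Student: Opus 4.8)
The plan is to run the properness half of the proof of Theorem~\ref{theorem_culmination} directly at the single index $\alpha$, rather than reducing to a successor of a limit strictly below $\sigma\cdot\omega$ as was done there. This localization is exactly what allows the weaker hypothesis of the corollary, namely positivity $\kappa\in\R^\alpha(\Pi^1_{\beta_1}(\kappa))^+$ at $\alpha$ only, in place of positivity at $\sigma\cdot\omega$. Write $\sigma=\ot(\beta_1\setminus\beta_0)$, so that $\sigma\geq 1$. The containment $\R^\alpha(\Pi^1_{\beta_0}(\kappa))\subseteq\R^\alpha(\Pi^1_{\beta_1}(\kappa))$ is immediate from $\Pi^1_{\beta_0}(\kappa)\subseteq\Pi^1_{\beta_1}(\kappa)$ together with the monotonicity of $\R$; this same inclusion also yields $\kappa\in\R^\alpha(\Pi^1_{\beta_0}(\kappa))^+$, since a smaller ideal has more positive sets. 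It then remains to exhibit a single set witnessing properness.

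For the witness I would take $S=\{\xi<\kappa\st\xi\in\R^\alpha(\Pi^1_{\beta_0}(\xi))\}$. By Lemma~\ref{lemma_set_of_nons_is_positive} (applied at index $\alpha$ with the ideal $\Pi^1_{\beta_0}$, using $\kappa\in\R^\alpha(\Pi^1_{\beta_0}(\kappa))^+$) the set $S$ lies in $\R^\alpha(\Pi^1_{\beta_0}(\kappa))^+$, so $S\notin\R^\alpha(\Pi^1_{\beta_0}(\kappa))$. Thus it suffices to prove $S\in\R^\alpha(\Pi^1_{\beta_1}(\kappa))$. By Lemma~\ref{lemma_complexity} there is a $\Pi^1_{\gamma(\alpha,\beta_0)}$ sentence $\varphi$ such that, for all $\xi$ with $\max(\alpha,\beta_0)<\xi$, we have $\xi\in\R^\alpha(\Pi^1_{\beta_0}(\xi))^+$ if and only if $(V_\xi,\in,\xi)\models\varphi$; taking the distinguished predicate to be the ordinals, $(V_\kappa,\in,\kappa)\models\varphi$ because $\kappa\in\R^\alpha(\Pi^1_{\beta_0}(\kappa))^+$. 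The crux, addressed below, is the inclusion
\[\Pi^1_{\gamma(\alpha,\beta_0)}(\kappa)\subseteq\R^\alpha(\Pi^1_{\beta_1}(\kappa)).\]
Granting this, the positivity $\kappa\in\R^\alpha(\Pi^1_{\beta_1}(\kappa))^+$ gives $\kappa\in\Pi^1_{\gamma(\alpha,\beta_0)}(\kappa)^+$, so $\kappa$ is $\Pi^1_{\gamma(\alpha,\beta_0)}$-indescribable; the standard reflection argument then places the reflection set $\kappa\setminus S=\{\xi<\kappa\st\xi\in\R^\alpha(\Pi^1_{\beta_0}(\xi))^+\}$ into $\Pi^1_{\gamma(\alpha,\beta_0)}(\kappa)^*$, and the displayed inclusion gives $\Pi^1_{\gamma(\alpha,\beta_0)}(\kappa)^*\subseteq\R^\alpha(\Pi^1_{\beta_1}(\kappa))^*$. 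Hence $\kappa\setminus S\in\R^\alpha(\Pi^1_{\beta_1}(\kappa))^*$, that is $S\in\R^\alpha(\Pi^1_{\beta_1}(\kappa))$, completing the proof of properness.

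The main obstacle is the displayed inclusion, and this is precisely where the hypothesis $\alpha<\sigma\cdot\omega$ is used. I would split into the three cases of Lemma~\ref{lemma_complexity}. When $\alpha=m$ is finite, $\gamma(\alpha,\beta_0)=\beta_0+2m+1$, while Corollary~\ref{corollary_indescribability_in_finite_ramseyness} gives $\Pi^1_{\beta_1+2m}(\kappa)\subseteq\R^m(\Pi^1_{\beta_1}(\kappa))$; since $\beta_1=\beta_0+\sigma$ with $\sigma\geq 1$, one has $\beta_0+2m+1\leq\beta_0+\sigma+2m$, so the inclusion holds. When $\alpha$ is a limit, $\gamma(\alpha,\beta_0)=\beta_0+\alpha$ and Lemma~\ref{lemma_indescribability_from_ramseyness}(2) gives $\bigcup_{\eta<\beta_1+\alpha}\Pi^1_\eta(\kappa)\subseteq\R^\alpha(\Pi^1_{\beta_1}(\kappa))$, so it suffices that $\beta_0+\alpha<\beta_1+\alpha$, i.e.\ $\alpha<\sigma+\alpha$. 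When $\alpha=\bar\alpha+m_\alpha>\omega$ is a successor with $\bar\alpha$ its greatest limit part and $m_\alpha\geq 1$, we have $\gamma(\alpha,\beta_0)=\beta_0+\bar\alpha+2m_\alpha$, while Theorem~\ref{theorem_indescribability_in_infinite_ramseyness} (with base $\beta_1$, limit $\bar\alpha$, and the integer $m_\alpha-1$, using $\kappa\in\R^\alpha(\Pi^1_{\beta_1}(\kappa))^+$) gives $\Pi^1_{\beta_1+\bar\alpha+2m_\alpha-1}(\kappa)\subseteq\R^\alpha(\Pi^1_{\beta_1}(\kappa))$, so it suffices that $\beta_0+\bar\alpha+2m_\alpha\leq\beta_0+\sigma+\bar\alpha+2m_\alpha-1$, i.e.\ $\bar\alpha+1\leq\sigma+\bar\alpha$. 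In both infinite cases the needed inequality reduces to $\sigma+\bar\alpha>\bar\alpha$ (with $\bar\alpha=\alpha$ in the limit case). This is the one genuinely arithmetic point: since $\bar\alpha\leq\alpha<\sigma\cdot\omega$, there is a least $n$ with $\bar\alpha<\sigma\cdot n$, and writing $\bar\alpha=\sigma\cdot(n-1)+\rho$ with $\rho<\sigma$ one computes $\sigma+\bar\alpha=\sigma\cdot n+\rho$; left-cancelling $\sigma\cdot(n-1)$ reduces $\sigma+\bar\alpha>\bar\alpha$ to $\sigma+\rho>\rho$, which holds because $\rho<\sigma$. Finally, the finite case with $\sigma$ finite may alternatively be obtained by iterating Theorem~\ref{theorem_proper_containments_in_finite_diagram}(1), and the boundary values $\beta_0=-1$ and $\beta_0=0$ are absorbed by the conventions $\Pi^1_{-1}(\kappa)=[\kappa]^{<\kappa}$ and $\Pi^1_0(\kappa)=\NS_\kappa$.
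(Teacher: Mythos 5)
Your proof is correct, but it takes a genuinely more self-contained route than the paper, which simply cites this as a direct corollary of Theorem \ref{theorem_culmination}: there, properness at $\hat{\alpha}$ is reduced to properness at a successor of a limit $\bar{\alpha}+1$ with $\hat{\alpha}<\bar{\alpha}+1<\sigma\cdot\omega$, and Lemma \ref{lemma_set_of_nons_is_positive} is applied at that higher level using the positivity hypothesis $\kappa\in\R^{\sigma\cdot\omega}(\Pi^1_{\beta_1}(\kappa))^+$. You instead run the same template (the witness $S=\{\xi\st\xi\in\R^\alpha(\Pi^1_{\beta_0}(\xi))\}$, Lemma \ref{lemma_set_of_nons_is_positive} for positivity on one side, Lemma \ref{lemma_complexity} plus reflection for membership in the other ideal) directly at the index $\alpha$, at the cost of a three-way case split on the form of $\gamma(\alpha,\beta_0)$ — invoking Corollary \ref{corollary_indescribability_in_finite_ramseyness}, Lemma \ref{lemma_indescribability_from_ramseyness}(2), and Theorem \ref{theorem_indescribability_in_infinite_ramseyness} respectively — and the ordinal-arithmetic verification that $\sigma+\bar{\alpha}>\bar{\alpha}$ whenever $\bar{\alpha}<\sigma\cdot\omega$, which you carry out correctly. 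What this buys is real: the corollary as stated assumes only $\kappa\in\R^\alpha(\Pi^1_{\beta_1}(\kappa))^+$ at the single level $\alpha$, not at $\sigma\cdot\omega$, and your localization is exactly what makes the argument go through under that weaker hypothesis; in that sense your write-up is more faithful to the stated hypotheses than a literal appeal to Theorem \ref{theorem_culmination}. The only cosmetic quibbles are that Corollary \ref{corollary_indescribability_in_finite_ramseyness} is stated for $0<m<\omega$, so the degenerate case $\alpha=0$ should be dispatched by the trivial inequality $\beta_0+1\leq\beta_1$ rather than by citation, and that (as in the paper's own reflection arguments) one should restrict the reflecting set to inaccessible $\xi$ so that the equivalence of Lemma \ref{lemma_complexity} applies; neither affects correctness.
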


Next, we show that for $\omega\leq\alpha<\kappa$ and $\beta_0<\beta_1<\kappa$, the hypothesis $\kappa\in\R^\alpha(\Pi^1_{\beta_1}(\kappa))^+$ implies that there are many $\xi<\kappa$ which satisfy $\xi\in\R^\alpha(\Pi^1_{\beta_0}(\xi))^+$, \emph{assuming $\beta_0$ and $\beta_1$ are far enough apart}. Thus, the hypotheses of the form $\kappa\notin\R^\alpha(\Pi^1_\beta(\kappa))$ provide a strictly increasing refinement of Feng's original hierarchy (see \Figure \ref{figure_a_refinement_of_the_ramsey_hierarchy}). 

\begin{theorem}\label{theorem_hierarchy_result_for_infinite_alpha}
Suppose $\beta_0<\beta_1$ are in $\{-1\}\cup\kappa$ and $\alpha<\ot(\beta_1\setminus\beta_0)\cdot\omega$. If $\kappa\in\R^\alpha(\Pi^1_{\beta_1}(\kappa))^+$ then the set
\[\{\xi<\kappa\st\xi\in\R^\alpha(\Pi^1_{\beta_0}(\xi))^+\}\]
is in $\R^\alpha(\Pi^1_{\beta_1}(\kappa))^*$.
\end{theorem}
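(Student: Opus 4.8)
The plan is to prove the equivalent statement that the complement
\[S=\kappa\setminus T=\{\xi<\kappa\st \xi\in\R^\alpha(\Pi^1_{\beta_0}(\xi))\}\]
lies in the ideal $\R^\alpha(\Pi^1_{\beta_1}(\kappa))$. First I would note that $\beta_0<\beta_1$ gives $\R^\alpha(\Pi^1_{\beta_0}(\kappa))\subseteq\R^\alpha(\Pi^1_{\beta_1}(\kappa))$, so the hypothesis $\kappa\in\R^\alpha(\Pi^1_{\beta_1}(\kappa))^+$ yields $\kappa\in\R^\alpha(\Pi^1_{\beta_0}(\kappa))^+$. By Lemma \ref{lemma_complexity}, applied with the free second-order variable instantiated to $\delta$ itself (a definable subset of $V_\delta$), there is a single $\Pi^1_{\gamma(\alpha,\beta_0)}$ sentence $\varphi$, uniform in the ambient cardinal $\delta$, with $(V_\delta,\in)\models\varphi$ if and only if $\delta\in\R^\alpha(\Pi^1_{\beta_0}(\delta))^+$. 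Hence $(V_\kappa,\in)\models\varphi$ while $S=\{\xi<\kappa\st (V_\xi,\in)\models\lnot\varphi\}$.

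Writing $\gamma=\gamma(\alpha,\beta_0)$, the crux is the containment
\[\Pi^1_\gamma(\kappa)\subseteq\R^\alpha(\Pi^1_{\beta_1}(\kappa)).\]
Granting this, the argument finishes by the usual reflection property of the normal ideal $\Pi^1_\gamma(\kappa)$: were $S$ a $\Pi^1_\gamma$-indescribable subset of $\kappa$, then since $\varphi$ is $\Pi^1_\gamma$ and $(V_\kappa,\in)\models\varphi$, some $\xi\in S$ would satisfy $(V_\xi,\in)\models\varphi$, contradicting that every $\xi\in S$ satisfies $\lnot\varphi$. Thus $S\in\Pi^1_\gamma(\kappa)$, and the displayed containment gives $S\in\R^\alpha(\Pi^1_{\beta_1}(\kappa))$, completing the proof.

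Establishing the containment is where the hypothesis $\alpha<\ot(\beta_1\setminus\beta_0)\cdot\omega$ is used. Put $\sigma=\ot(\beta_1\setminus\beta_0)$, so that $\beta_1=\beta_0+\sigma$ and $\sigma\geq 1$. I would compare $\gamma(\alpha,\beta_0)$ with the degree of indescribability absorbed into $\R^\alpha(\Pi^1_{\beta_1}(\kappa))$, following the three cases of Lemma \ref{lemma_complexity}. If $\alpha=m<\omega$ with $m\geq 1$, Corollary \ref{corollary_indescribability_in_finite_ramseyness} gives $\Pi^1_{\beta_1+2m}(\kappa)\subseteq\R^m(\Pi^1_{\beta_1}(\kappa))$, and $\gamma=\beta_0+2m+1\leq\beta_0+\sigma+2m=\beta_1+2m$ since $\sigma\geq 1$. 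If $\alpha$ is a limit, Lemma \ref{lemma_indescribability_from_ramseyness}(2) gives $\bigcup_{\zeta<\beta_1+\alpha}\Pi^1_\zeta(\kappa)\subseteq\R^\alpha(\Pi^1_{\beta_1}(\kappa))$, and the needed inequality $\gamma=\beta_0+\alpha<\beta_0+\sigma+\alpha=\beta_1+\alpha$ reduces to $\alpha<\sigma+\alpha$. If $\alpha=\bar\alpha+m$ with $\bar\alpha$ a limit and $m\geq 1$, Theorem \ref{theorem_indescribability_in_infinite_ramseyness} gives $\Pi^1_{\beta_1+\bar\alpha+2m-1}(\kappa)\subseteq\R^\alpha(\Pi^1_{\beta_1}(\kappa))$, and $\gamma=\beta_0+\bar\alpha+2m\leq\beta_1+\bar\alpha+2m-1$ reduces to $\sigma+\bar\alpha\geq\bar\alpha+1$, immediate from $\sigma\geq 1$; the case $\alpha=0$ is just $\Pi^1_{\beta_0+1}(\kappa)\subseteq\Pi^1_{\beta_1}(\kappa)$.

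The main obstacle is the limit case, which hinges on the ordinal-arithmetic fact $\alpha<\sigma+\alpha$. This is precisely where $\alpha<\sigma\cdot\omega$ is indispensable: since $\sigma+\alpha=\alpha$ holds exactly when $\alpha\geq\sigma\cdot\omega$, the hypothesis $\alpha<\sigma\cdot\omega$ is equivalent to the strict inequality $\sigma+\alpha>\alpha$ needed above. Concretely, writing $\alpha=\sigma\cdot q+r$ with $q<\omega$ and $r<\sigma$, one has $\sigma+\alpha=\sigma\cdot(1+q)+r>\sigma\cdot q+r=\alpha$. The only routine loose ends are to confirm that $\kappa$ is in fact $\Pi^1_\gamma$-indescribable, which follows from the containment $\Pi^1_\gamma(\kappa)\subseteq\R^\alpha(\Pi^1_{\beta_1}(\kappa))$ together with $\kappa\in\R^\alpha(\Pi^1_{\beta_1}(\kappa))^+$, and to treat the convention $\beta_0=-1$ using the same conventions on $\Pi^1_{-1}(\kappa)=[\kappa]^{<\kappa}$ and ordinal arithmetic already employed in Lemma \ref{lemma_complexity}.
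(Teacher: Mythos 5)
Your proposal follows essentially the same route as the paper's proof: express $\kappa\in\R^\alpha(\Pi^1_{\beta_0}(\kappa))^+$ by a $\Pi^1_{\gamma(\alpha,\beta_0)}$ sentence via Lemma \ref{lemma_complexity}, establish $\Pi^1_{\gamma(\alpha,\beta_0)}(\kappa)\subseteq\R^\alpha(\Pi^1_{\beta_1}(\kappa))$, and conclude that the non-reflection set lies in the ideal. The paper only writes out the successor case and dispatches limits in one line, whereas you treat all four shapes of $\alpha$ explicitly; your identification of $\alpha<\sigma+\alpha\iff\alpha<\sigma\cdot\omega$ as the crux is exactly where the hypothesis enters in the paper as well.

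There is one incorrect justification, in the infinite-successor subcase $\alpha=\bar\alpha+m$ with $\bar\alpha$ a limit and $m\geq 1$. You reduce the needed inequality to $\sigma+\bar\alpha\geq\bar\alpha+1$ and call this ``immediate from $\sigma\geq 1$.'' That implication is false: for $\sigma=1$ and $\bar\alpha=\omega$ one has $\sigma+\bar\alpha=1+\omega=\omega<\omega+1=\bar\alpha+1$. The inequality $\sigma+\bar\alpha>\bar\alpha$ is \emph{not} a consequence of $\sigma\geq 1$ alone; it holds precisely when $\bar\alpha<\sigma\cdot\omega$, which is the same ordinal-arithmetic fact you correctly isolate and prove for the limit case. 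The repair is immediate — $\bar\alpha\leq\alpha<\sigma\cdot\omega$ is part of the theorem's hypothesis, so $\sigma+\bar\alpha>\bar\alpha$, and since $\sigma+\bar\alpha$ is then a limit ordinal exceeding $\bar\alpha$ it is at least $\bar\alpha+\omega$, which gives $\bar\alpha+2m\leq\sigma+\bar\alpha+2m-1$ — but as written the step is wrong, and it obscures the fact that the hypothesis $\alpha<\ot(\beta_1\setminus\beta_0)\cdot\omega$ is needed in the successor case too, not only at limits.
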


\begin{proof}
Suppose $\alpha$ is a successor. That is, $\alpha=\bar{\alpha}+m+1$ where $\bar{\alpha}$ is a limit ordinal and $m<\omega$. Since $\kappa\in\R^\alpha(\Pi^1_{\beta_1}(\kappa))^+$ and $\beta_0<\beta_1$ we have $\kappa\in\R^\alpha(\Pi^1_{\beta_0}(\kappa))^+$, which is expressible by a $\Pi^1_{\beta_0+\bar{\alpha}+2(m+1)}$ sentence $\varphi$ by Lemma \ref{lemma_complexity}. Since $\alpha=\bar{\alpha}+m+1<\ot(\beta_1\setminus\beta_0)\cdot\omega$ it follows that $\beta_0+\bar{\alpha}+2(m+1)<\beta_1+\bar{\alpha}+2m+1$.\footnote{This is because $\ot(\beta_1\setminus\beta_0)\cdot\omega$ is a limit ordinal, and thus adding any finite number of copies of $\ot(\beta_1\setminus\beta_0)$ to $\beta_0+\bar{\alpha}+2(m+1)$ will produce an ordinal which is less than $\beta_1$.} Now by Theorem \ref{theorem_indescribability_in_infinite_ramseyness}, we see that $\Pi^1_{\beta_1+\bar{\alpha}+2m+1}(\kappa)\subseteq\R^\alpha(\Pi^1_{\beta_1}(\kappa))$, and thus, the set
\[C=\{\xi<\kappa\st (V_\xi,\in)\models\varphi\}=\{\xi<\kappa\st\xi\in\R^\alpha(\Pi^1_{\beta_0}(\kappa))\}\]
is in $\R^\alpha(\Pi^1_{\beta_1}(\kappa))^*$. 

The fact that the result holds for successors easily implies that it holds for limits.
\end{proof}


\begin{figure}
\centering
\begin{tikzpicture}[x=0.75cm,y=0.75cm]
\tiny

\node (elli) at (0,-0.2) {};

\node[draw, rounded corners=0.1cm] (rwpi1-1) at (0,1) 
{$\Pi_\alpha$-Ramsey};

\node (rwpi1w) at (0,2) 
{$\alpha$-$\Pi^1_{\beta_0}$-Ramsey};

\node[right] (rwpi1wextra) at (rwpi1w.east) {$\longleftarrow$ choose $\beta_0<\kappa$ such that $\alpha<\ot(\beta_0)\cdot\omega$
};

\node (rwpi1w*2) at (0,3) 
{$\alpha$-$\Pi^1_{\beta_1}$-Ramsey};

\node[right] (rwpi1w*2extra) at (rwpi1w*2.east) {$\longleftarrow$ choose $\beta_1<\kappa$ such that $\alpha<\ot(\beta_1\setminus\beta_0)\cdot\omega$};

\node[draw, rounded corners=0.1cm] (rw+1pi1-1) at (0,4.5) {$\Pi_{\alpha+1}$-Ramsey};

\node (rw+1pi1w) at (0,6) {};

\draw[-...] (elli)--(rwpi1-1);
\draw (rwpi1-1)--(rwpi1w);
\draw (rwpi1w)--(rwpi1w*2);
\draw[-...] (rwpi1w*2)--(rw+1pi1-1);
\draw[-...] (rw+1pi1-1)--(rw+1pi1w);

\end{tikzpicture}
\caption{\tiny If $\omega\leq\alpha<\kappa$ or if $\alpha<\omega$ is even, by choosing $\beta$'s appropriately, the $\alpha$-$\Pi^1_\beta$-cardinals yield a hierarchy of hypotheses strictly between Feng's \cite{MR1077260} $\Pi_\alpha$-Ramsey and $\Pi_{\alpha+1}$-Ramsey cardinals (see Theorem \ref{theorem_hierarchy_result_for_infinite_alpha}).}\label{figure_a_refinement_of_the_ramsey_hierarchy}
\end{figure}
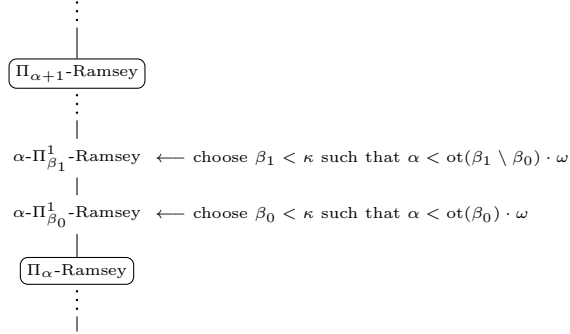

Next we use Theorem \ref{theorem_indescribability_in_infinite_ramseyness} to show that, if substantial care is taken, Theorem \ref{theorem_finite_ideal_diagram} can, in a sense, be extended to the ideals $\R^\alpha(\Pi^1_\beta(\kappa))$ for $\alpha>\omega$.

\begin{theorem}\label{theorem_infinite_ideal_diagram}
Suppose $\kappa$ is a cardinal, $\omega<\alpha<\kappa$ is a successor ordinal and $\beta<\kappa$ is an ordinal such that $\kappa\in\R^\alpha(\Pi^1_\beta(\kappa))^+$. Let $\delta$ be the greatest ordinal such that $\omega^\delta\leq\alpha$, let $m,n<\omega$ and $\gamma<\omega^\delta$ be the unique ordinals such that $\alpha=\omega^\delta m+\gamma+n+1$ where $\gamma$ is a limit ordinal.
\begin{enumerate}
\item If $m=1$ and $\gamma=0$ then
\[\R^\alpha(\Pi^1_\beta(\kappa))=\R^{\omega^\delta+n+1}(\Pi^1_\beta(\kappa))=\overline{\R_0(\R^{\omega^\delta+n}(\Pi^1_\beta(\kappa)))\cup\R^n(\Pi^1_{\beta+\omega^\delta+1}(\kappa))}.\]
\item Otherwise, if $m>1$ or $\gamma>0$, then
\begin{align*}\R^\alpha(\Pi^1_\beta(\kappa))&=\R^{\omega^\delta m+\gamma+n+1}(\Pi^1_\beta(\kappa))\\
	&=\overline{\R_0(\R^{\omega^\delta m+\gamma+n}(\Pi^1_\beta(\kappa)))\cup\R^{\omega^\delta (m-1)+\gamma+n+1}(\Pi^1_{\beta+\omega^\delta}(\kappa))}.
\end{align*}
\end{enumerate}
\end{theorem}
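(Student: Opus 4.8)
The plan is to reduce the statement to the ``pure indescribability'' decomposition already established in Theorem~\ref{theorem_indescribability_in_infinite_ramseyness}, and then to trade its $\Pi^1$-generator for the Ramsey-of-indescribability ideal $J$ occurring in the claim (where $J=\R^n(\Pi^1_{\beta+\omega^\delta+1}(\kappa))$ in case (1) and $J=\R^{\omega^\delta(m-1)+\gamma+n+1}(\Pi^1_{\beta+\omega^\delta}(\kappa))$ in case (2)). Two preliminary facts are needed. First, the composition law $\R^{\eta_1+\eta_2}(I)=\R^{\eta_2}(\R^{\eta_1}(I))$, proved by a routine induction on $\eta_2$ whose limit step uses that $\{\eta_1+\zeta\st\zeta<\eta_2\}$ is cofinal in $\eta_1+\eta_2$; the order of the sum is essential and will be used repeatedly. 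Second, writing $\lambda=\omega^\delta m+\gamma$ for the largest limit ordinal $\le\alpha$, we have $\alpha=\lambda+n+1$ and $\lambda+n=\alpha-1$, so Theorem~\ref{theorem_indescribability_in_infinite_ramseyness} (with limit part $\lambda$ and finite part $n$, legitimate since $\kappa\in\R^\alpha(\Pi^1_\beta(\kappa))^+$) gives
\[\R^\alpha(\Pi^1_\beta(\kappa))=\overline{\R_0(\R^{\alpha-1}(\Pi^1_\beta(\kappa)))\cup\Pi^1_{\beta+\lambda+2n+1}(\kappa)}.\]
Writing $G_0:=\R_0(\R^{\alpha-1}(\Pi^1_\beta(\kappa)))$ and $P:=\Pi^1_{\beta+\lambda+2n+1}(\kappa)$, note that $G_0$ is the first generator in both this decomposition and the claimed one; hence by Remark~\ref{remark_ideal_generated} it suffices to prove $J\subseteq\R^\alpha(\Pi^1_\beta(\kappa))$ and $P\subseteq J$, after which $\overline{G_0\cup P}=\overline{G_0\cup J}$ follows at once.

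I would prove $J\subseteq\R^\alpha(\Pi^1_\beta(\kappa))$ first, uniformly in the two cases. Applying Theorem~\ref{theorem_indescribability_in_infinite_ramseyness} with limit part $\omega^\delta$ and finite part $0$ (valid because $\R^{\omega^\delta+1}(\Pi^1_\beta(\kappa))\subseteq\R^\alpha(\Pi^1_\beta(\kappa))$ keeps $\kappa$ positive) yields $\Pi^1_{\beta+\omega^\delta+1}(\kappa)\subseteq\R^{\omega^\delta+1}(\Pi^1_\beta(\kappa))$. In case (1) I apply $\R^n$ and the composition law to get $J=\R^n(\Pi^1_{\beta+\omega^\delta+1}(\kappa))\subseteq\R^{(\omega^\delta+1)+n}(\Pi^1_\beta(\kappa))=\R^\alpha(\Pi^1_\beta(\kappa))$, using $(\omega^\delta+1)+n=\omega^\delta+n+1=\alpha$. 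In case (2) I first enlarge $\Pi^1_{\beta+\omega^\delta}(\kappa)\subseteq\Pi^1_{\beta+\omega^\delta+1}(\kappa)$ and then apply $\R^\rho$ with $\rho=\omega^\delta(m-1)+\gamma+n+1$, obtaining $J\subseteq\R^{(\omega^\delta+1)+\rho}(\Pi^1_\beta(\kappa))$; here the computation $(\omega^\delta+1)+\rho=\alpha$ relies on $1+\omega^\delta(m-1)=\omega^\delta(m-1)$ when $m\ge2$ and on $1+\gamma=\gamma$ when $m=1$ and $\gamma>0$. In either case $J\subseteq\R^\alpha(\Pi^1_\beta(\kappa))$, which in particular gives $\kappa\in J^+$.

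For $P\subseteq J$ I would decompose $J$ itself, which is now legitimate since $\kappa\in J^+$. In case (2) the Ramsey index $\rho=\omega^\delta(m-1)+\gamma+n+1$ of $J$ has nonzero limit part $\mu:=\omega^\delta(m-1)+\gamma$, so Theorem~\ref{theorem_indescribability_in_infinite_ramseyness} applied to $J$ (limit $\mu$, finite $n$, base $\Pi^1_{\beta+\omega^\delta}$) exhibits $\Pi^1_{(\beta+\omega^\delta)+\mu+2n+1}(\kappa)=\Pi^1_{\beta+\lambda+2n+1}(\kappa)=P$ as a generator of $J$, whence $P\subseteq J$. In case (1) the Ramsey index of $J$ is the finite ordinal $n$, so Corollary~\ref{corollary_indescribability_in_finite_ramseyness} (with base $\Pi^1_{\beta+\omega^\delta+1}$) gives $\Pi^1_{(\beta+\omega^\delta+1)+2n}(\kappa)=\Pi^1_{\beta+\lambda+2n+1}(\kappa)=P\subseteq J$ when $n\ge1$, while $P\subseteq J$ is trivial when $n=0$ since then $J=P$. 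This establishes $P\subseteq J$ and finishes the argument.

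The main obstacle is not conceptual but lies in the noncommutative ordinal bookkeeping: the sums $(\omega^\delta+1)+\rho$ and $(\beta+\omega^\delta)+\mu+2n+1$ must be manipulated in the correct order, which is exactly why the precise form $\R^{\eta_1+\eta_2}(I)=\R^{\eta_2}(\R^{\eta_1}(I))$ of the composition law is required. The case split (1)/(2) is forced by whether the Ramsey index of $J$ has a vanishing limit part: when it is purely finite one must absorb an extra $+1$ into the indescribability subscript (replacing $\Pi^1_{\beta+\omega^\delta}$ by $\Pi^1_{\beta+\omega^\delta+1}$) and invoke the finite Corollary~\ref{corollary_indescribability_in_finite_ramseyness}, whereas otherwise one invokes the transfinite Theorem~\ref{theorem_indescribability_in_infinite_ramseyness}. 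Finally, every application of these two results must be checked to occur under a valid nontriviality hypothesis, all of which reduce to $\kappa\in\R^\alpha(\Pi^1_\beta(\kappa))^+$ via monotonicity of the ideals in the iteration index and in the indescribability index.
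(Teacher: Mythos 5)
Your proof is correct, but it takes a genuinely different route from the paper's. The paper proves the theorem by induction on $\alpha$, verifying both containments directly: the forward direction uses Lemma \ref{lemma_complexity} to express membership in $\R^{\alpha-1}(\Pi^1_\beta(\kappa))^+$ and the definition of $\R_0$, and the reverse direction requires an auxiliary containment $\R^{\omega^\delta m+\gamma+n}(\Pi^1_\beta(\kappa))^+\subseteq\R^{\omega^\delta(m-1)+\gamma+n}(\Pi^1_{\beta+\omega^\delta}(\kappa))^+$ established via the inductive hypothesis applied to smaller successor ordinals. You instead argue ``softly'': you take the decomposition $\R^\alpha(\Pi^1_\beta(\kappa))=\overline{G_0\cup P}$ already provided by Theorem \ref{theorem_indescribability_in_infinite_ramseyness}, observe that $G_0$ is the common first generator, and reduce everything to the two containments $J\subseteq\R^\alpha(\Pi^1_\beta(\kappa))$ and $P\subseteq J$, each obtained from a further application of Theorem \ref{theorem_indescribability_in_infinite_ramseyness} (or Corollary \ref{corollary_indescribability_in_finite_ramseyness} in the purely finite case) together with monotonicity of $\R$, the composition law $\R^{\eta_1+\eta_2}(I)=\R^{\eta_2}(\R^{\eta_1}(I))$, and careful noncommutative ordinal arithmetic. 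I checked the delicate points: the composition law holds by the induction you sketch (the limit step needs that the iterates form an increasing chain, which follows from $I\subseteq\R(I)$); the ordinal identities $(\omega^\delta+1)+\rho=\alpha$ and $(\beta+\omega^\delta)+\mu+2n+1=\beta+\lambda+2n+1$ are correct (using $1+\mu=\mu$ for infinite $\mu$ and $\delta\geq 1$); and each nontriviality hypothesis is legitimately discharged, including $\kappa\in J^+$, which you rightly establish \emph{before} decomposing $J$. Your approach buys a shorter, induction-free argument that reuses the existing structural results rather than repeating the expressibility and reflection machinery; the paper's direct induction is more self-contained and yields the intermediate containment $(*)$ as a byproduct. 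One cosmetic remark: your appeal to Remark \ref{remark_ideal_generated} is not quite what that remark says --- the step $\overline{G_0\cup P}=\overline{G_0\cup J}$ follows from the elementary facts that $P\subseteq J$ and that $\overline{G_0\cup P}$ is an ideal containing $G_0\cup J$ --- but this does not affect correctness.
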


\begin{proof}
We proceed by induction on $\alpha$. The base case is $\alpha=\omega+1$. In this case $m=1$, $\gamma=0$ and $n=0$, so it suffices to show that 
\[\R^{\omega+1}(\Pi^1_\beta(\kappa))=\overline{\R_0(\R^\omega(\Pi^1_\beta(\kappa)))\cup\Pi^1_{\beta+\omega+1}(\kappa)},\]
but this follows directly from Theorem \ref{theorem_indescribability_in_infinite_ramseyness}.

We show that the result holds for $\alpha$ assuming it holds for all smaller successor ordinals. Suppose $\alpha=\omega^\delta m+\gamma+n+1$ as in the statement of the theorem. 

Let us show that (1) holds. Assume $m=1$ and $\gamma=0$. If $n=0$ then the result follows directly from Theorem \ref{theorem_indescribability_in_infinite_ramseyness}. Suppose $n\geq 1$. Let 
\[I=\overline{\R_0(\R^{\omega^\delta+n}(\Pi^1_\beta(\kappa)))\cup\R^n(\Pi^1_{\beta+\omega^\delta+1}(\kappa))}.\]
To prove that (1) holds we will show that $X\in\R^{\omega^\delta+n+1}(\Pi^1_\beta(\kappa))^+$ if and only if $X\in I^+$.

Suppose $X\in\R^{\omega^\delta+n+1}(\Pi^1_\beta(\kappa))^+$. By Remark \ref{remark_ideal_generated}, it will suffice to show that $X\in \R_0(\R^{\omega^\delta+n}(\Pi^1_\beta(\kappa)))^+$ and $X\in \R^n(\Pi^1_{\beta+\omega^\delta+1}(\kappa))^+$. By assumption, every regressive function $f:[X]^{<\omega}\to \kappa$ has a homogeneous set in $\R^{\omega^\delta+n}(\Pi^1_\beta(\kappa))^+$. By our inductive hypothesis we have
\[\R^{\omega^\delta+n}(\Pi^1_\beta(\kappa))=\overline{\R_0(\R^{\omega^\delta+n-1}(\Pi^1_\beta(\kappa)))\cup\R^{n-1}(\Pi^1_{\beta+\omega^\delta+1}(\kappa))}.\]
Thus every regressive function $f:[X]^{<\omega}\to \kappa$ has a homogeneous set in $\R^{n-1}(\Pi^1_{\beta+\omega^\delta+1}(\kappa))^+$, in other words, $X\in\R^n(\Pi^1_{\beta+\omega^\delta+1}(\kappa))^+$. Now let us show that $X\in\R_0(\R^{\omega^\delta+n}(\Pi^1_\beta(\kappa))^+$. Fix a regressive function $f:[X]^{<\omega}\to\kappa$ and a club $C\subseteq\kappa$. Since $X\in\R^{\omega^\delta+n+1}(\Pi^1_\beta(\kappa))^+$, there is a set $H\in\R^{\omega^\delta+n}(\Pi^1_\beta(\kappa))^+$ homogeneous for $f$. The fact that $H\in\R^{\omega^\delta+n}(\Pi^1_\beta(\kappa))^+$ is expressible over $(V_\kappa,\in,H)$ by a $\Pi^1_{\beta+\omega^\delta+2n}$ sentence $\varphi$. Since $X\cap C\in\R^{\omega^\delta+n+1}(\Pi^1_\beta(\kappa))^+$ and, by Theorem \ref{theorem_indescribability_in_infinite_ramseyness}, $\R^{\omega^\delta+n+1}(\Pi^1_\beta(\kappa))^+\subseteq\Pi^1_{\beta+\omega^\delta+2n+1}(\kappa)^+$, it follows that there is a $\xi\in X\cap C$ such that $H\cap \xi\in\R^{\omega^\delta+n}(\Pi^1_\beta(\xi))^+$. Hence $X\in\R_0(\R^{\omega^\delta+n}(\Pi^1_\beta(\kappa)))^+$.

Conversely, suppose $X\in I^+$. Let $f:[X]^{<\omega}\to \kappa$ be a regressive function. For the sake of contradiction, let us assume that every homogeneous set for $f$ is in $\R^{\omega^\delta+n}(\Pi^1_\beta(\kappa))$. By Lemma \ref{lemma_complexity}, this is expressible over $(V_\kappa,\in,X,f)$ by a $\Pi^1_{\beta+\omega^\delta+2n+1}$ sentence $\varphi$. Hence the set
\[C=\{\xi<\kappa\st (f\restrict\xi=f\cap V_\xi) \land (V_\xi,\in,X\cap V_\xi, f\cap V_\xi) \models\varphi\}\]
is in $\Pi^1_{\beta+\omega^\delta+2n+1}(\kappa)^*$. By Corollary \ref{corollary_indescribability_in_finite_ramseyness}, we have $\Pi^1_{\beta+\omega^\delta+2n+1}(\kappa)^*\subseteq\R^n(\Pi^1_{\beta+\omega^\delta+1}(\kappa))^*$, and so $C\in\R^n(\Pi^1_{\beta+\omega^\delta+1}(\kappa))^*$. Since $X\in I^+$ it follows that $X$ is not the union of a set in $\R_0(\R^{\omega^\delta+n}(\Pi^1_\beta(\kappa))$ and a set in $\R^n(\Pi^1_{\beta+\omega^\delta+1}(\kappa))$. Furthermore, since $X=(X\cap C)\cup(X\setminus C)$ and $X\setminus C\in\R^n(\Pi^1_{\beta+\omega^\delta+1}(\kappa))$, it follows that $X\cap C\in\R_0(\R^{\omega^\delta+n}(\Pi^1_\beta(\kappa))^+$. This implies that there is a $\xi\in X\cap C$ for which there is a set $H\subseteq X\cap C\cap\xi$ in $\R^{\omega^\delta+n}(\Pi^1_\beta(\xi))^+$ homogeneous for $f$. This contradicts the fact that $\xi\in C$. This establishes that (1) holds.

To show that (2) holds, suppose $m>1$ or $\gamma>0$. Let 
\[I=\overline{\R_0(\R^{\omega^\delta m+\gamma+n}(\Pi^1_\beta(\kappa)))\cup\R^{\omega^\delta (m-1)+\gamma+n+1}(\Pi^1_{\beta+\omega^\delta}(\kappa))}.\]
We will prove that $X\in\R^{\omega^\delta m+\gamma+n+1}(\Pi^1_\beta(\kappa))^+$ if and only if $X\in I^+$.

Suppose $X\in\R^{\omega^\delta m+\gamma+n+1}(\Pi^1_\beta(\kappa))^+$. This implies that every regressive function $f:[X]^{<\omega}\to \kappa$ has a homogeneous set in $\R^{\omega^\delta m+\gamma+n}(\Pi^1_\beta(\kappa))^+$. We will show that
\begin{align}
\R^{\omega^\delta m+\gamma+n}(\Pi^1_\beta(\kappa))^+\subseteq\R^{\omega^\delta (m-1)+\gamma+n}(\Pi^1_{\beta+\omega^\delta}(\kappa))^+.\tag{$*$}\label{equation_complicated_containment}
\end{align}
If $n\geq 1$ then by applying our inductive hypothesis to the successor ordinal $\alpha€™'=\omega^\delta m+\gamma+n<\alpha$, we obtain
\[\R^{\omega^\delta m+\gamma+n}(\Pi^1_\beta(\kappa))=\overline{\R_0(\R^{\omega^\delta m+\gamma+n-1}(\Pi^1_\beta(\kappa)))\cup \R^{\omega^\delta(m-1)+\gamma+n}(\Pi^1_\beta(\kappa))}\]
and thus (\ref{equation_complicated_containment}) holds. If $n=0$, to prove (\ref{equation_complicated_containment}) we must show that
\[\R^{\omega^\delta m+\gamma}(\Pi^1_\beta(\kappa))\supseteq\R^{\omega^\delta(m-1)+\gamma}(\Pi^1_{\beta+\omega^\delta}(\kappa)).\]
Choose $Z\in\R^{\omega^\delta (m-1)+\gamma}(\Pi^1_\beta(\kappa))$. Then there is a successor ordinal $\eta+k+1<\gamma$, where $\eta$ is a limit ordinal and $k<\omega$, such that $Z\in\R^{\omega^\delta(m-1)+\eta+k+1}(\Pi^1_{\beta+\omega^\delta}(\kappa))$. By our inductive hypothesis applied to the successor ordinal $\alpha'=\omega^\delta m+\eta+k+1<\alpha$, we have
\[\R^{\omega^\delta m+\eta+k+1}(\Pi^1_\beta(\kappa))=\overline{\R_0(\R^{\omega^\delta m+\eta+k}(\Pi^1_\beta(\kappa)))\cup\R^{\omega^\delta(m-1)+\eta+k+1}(\Pi^1_\beta(\kappa))}\]
and thus $Z\in\R^{\omega^\delta (m-1)+\eta+k+1}(\Pi^1_\beta(\kappa))\subseteq\R^{\omega^\delta m+\gamma}(\Pi^1_\beta(\kappa))$. This establishes (\ref{equation_complicated_containment}), which implies that every regressive function $f:[X]^{<\omega}\to \kappa$ has a homogeneous set in $\R^{\omega^\delta(m-1)+\gamma+n}(\Pi^1_{\beta+\omega^\delta}(\kappa))^+$, and hence $X\in\R^{\omega^\delta(m-1)+\gamma+n+1}(\Pi^1_\beta(\kappa))^+$.

Next, let us show that $X\in\R_0(\R^{\omega^\delta m+\gamma+n}(\Pi^1_\beta(\kappa)))^+$. Fix a regressive function $f:[X]^{<\omega}\to \kappa$ and a club $C\subseteq\kappa$. Since $X\in\R^{\omega^\delta m+\gamma+n+1}(\Pi^1_\beta(\kappa))^+$, there is a set $H\in\R^{\omega^\delta m+\gamma+n}(\Pi^1_\beta(\kappa))^+$ homogeneous for $f$. By Lemma \ref{lemma_complexity}, the fact that $H\in\R^{\omega^\delta m+\gamma+n}(\Pi^1_\beta(\kappa))^+$ can be expressed over $(V_\kappa,\in,X,f,H)$ by a $\Pi^1_{\beta+\omega^\delta m+\gamma+2n+1}$ sentence $\varphi$. Since $X\cap C\in\R^{\omega^\delta m+\gamma+n+1}(\Pi^1_\beta(\kappa))^+$ and, by Theorem \ref{theorem_indescribability_in_infinite_ramseyness}, 
\[\R^{\omega^\delta m+\gamma+n+1}(\Pi^1_\beta(\kappa))^+\subseteq\Pi^1_{\beta+\omega^\delta m +\gamma+2n+1}(\kappa)^+,\] it follows that there is a $\xi\in X\cap C$ such that $H\cap \xi\in\R^{\omega^\delta m +\gamma+n}(\Pi^1_\beta(\xi))^+$. This implies that $X\in\R_0(\R^{\omega^\delta m+\gamma+n}(\Pi^1_\beta(\kappa)))^+$. By Remark \ref{remark_ideal_generated}, this suffices to show that $X\in I^+$.

Conversely, suppose $X\in I^+$. Fix a regressive function $f:[X]^{<\omega}\to \kappa$. For the sake of contradiction, suppose every homogeneous set for $f$ is in $\R^{\omega^\delta m +\gamma+n}(\Pi^1_\beta(\kappa))$. This can be expressed over $(V_\kappa,\in,X,f)$ by a $\Pi^1_{\beta+\omega^\delta m+\gamma+2n+1}$ sentence $\varphi$. Hence, the set
\[C=\{\xi<\kappa\st (f\restrict\xi=f\cap V_\xi)\land(V_\xi,\in,X\cap V_\xi,f\cap V_\xi)\models\varphi\}\]
is in $\Pi^1_{\beta+\omega^\delta m+\gamma+2n+1}(\kappa)^*$. Since, by Theorem \ref{theorem_indescribability_in_infinite_ramseyness}, it follows that $\Pi^1_{\beta+\omega^\delta m +\gamma+2n+1}(\kappa)\subseteq\R^{\omega^\delta(m-1)+\gamma+n+1}(\Pi^1_{\beta+\omega^\delta}(\kappa))$, it follows that $C\in\R^{\omega^\delta(m-1)+\gamma+n+1}(\Pi^1_{\beta+\omega^\delta}(\kappa))^*$. Since $X=(X\cap C)\cup(X\setminus C)$ is not the union of a set in $\R_0(\R^{\omega^\delta m+\gamma+n}(\Pi^1_\beta(\kappa)))$ and a set in $\R^{\omega^\delta(m-1)+\gamma+n+1}(\Pi^1_{\beta+\omega^\delta}(\kappa))$, it follows that $X\cap C\in\R_0(\R^{\omega^\delta m+\gamma+n}(\Pi^1_\beta(\kappa)))^+$. This implies that there is a $\xi\in X\cap C$ for which there is a set $H\subseteq X\cap C\cap \xi$ in $\R^{\omega^\delta m +\gamma+n}(\Pi^1_\beta(\xi))^+$ homogeneous for $f$. This contradicts $\xi\in C$. This establishes (2).
\end{proof}

An argument similar to that of Theorem \ref{theorem_proper_containments_in_finite_diagram} can be used to show that the ideal containments suggested by the statement of Theorem \ref{theorem_infinite_ideal_diagram} are proper.

\begin{theorem}
Under the hypotheses of Theorem \ref{theorem_infinite_ideal_diagram}, the following hold.
\begin{enumerate}
\item If $m=1$ and $\gamma=0$ then 
\[\R^n(\Pi^1_{\beta+\omega^\delta+1}(\kappa))\subsetneq\R^{\omega^\delta+n+1}(\Pi^1_\beta(\kappa)).\]
\item If $m>0$ or $\gamma>0$ then 
\[\R^{\omega^\delta(m-1)+\gamma+n+1}(\Pi^1_{\beta+\omega^\delta}(\kappa))\subsetneq\R^{\omega^\delta m+\gamma+n+1}(\Pi^1_\beta(\kappa)).\]
\end{enumerate}
\end{theorem}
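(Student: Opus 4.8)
The plan is to mirror the proof of Theorem \ref{theorem_proper_containments_in_finite_diagram}(2): the containments $\subseteq$ asserted here are precisely those furnished by Theorem \ref{theorem_infinite_ideal_diagram}, so only properness remains, and in each case I would exhibit a single set lying in the larger ideal but positive for the smaller one. Write $I$ for the smaller ideal and $J$ for the larger ideal, so that $I=\R^{\alpha_I}(\Pi^1_{\beta_I}(\kappa))$ and $J=\R^{\alpha_J}(\Pi^1_\beta(\kappa))$, where $(\alpha_I,\beta_I)=(n,\beta+\omega^\delta+1)$ and $\alpha_J=\omega^\delta+n+1$ in case (1), while $(\alpha_I,\beta_I)=(\omega^\delta(m-1)+\gamma+n+1,\beta+\omega^\delta)$ and $\alpha_J=\omega^\delta m+\gamma+n+1$ in case (2). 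In both cases Theorem \ref{theorem_infinite_ideal_diagram} gives $I\subseteq J$, and since $\kappa\in J^+$ we get $\kappa\in I^+$.

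The witnessing set would be the ``set of nons'' of the smaller ideal,
\[S=\{\xi<\kappa\st \xi\in\R^{\alpha_I}(\Pi^1_{\beta_I}(\xi))\}.\]
By Lemma \ref{lemma_set_of_nons_is_positive} we have $S\in I^+$, so it remains to prove $S\in J$, equivalently $\kappa\setminus S=\{\xi<\kappa\st\xi\in\R^{\alpha_I}(\Pi^1_{\beta_I}(\xi))^+\}\in J^*$. Here I would invoke Corollary \ref{corollary_ramseyness_reflects_indescribability} at the Ramsey degree $\alpha_I$: it gives
\[R=\{\xi<\kappa\st(\forall\beta'<\xi)\ \xi\in\R^{\alpha_I}(\Pi^1_{\beta'}(\xi))^+\}\in\R^{\alpha_I+1}([\kappa]^{<\kappa})^*,\]
provided $\kappa\in\R^{\alpha_I+1}([\kappa]^{<\kappa})^+$, which holds because $\R^{\alpha_I+1}([\kappa]^{<\kappa})\subseteq J$ (see below) and $\kappa\in J^+$. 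Every $\xi\in R$ with $\xi>\beta_I$ satisfies $\xi\in\R^{\alpha_I}(\Pi^1_{\beta_I}(\xi))^+$ by instantiating $\beta'=\beta_I$, so $R\setminus(\beta_I+1)\subseteq\kappa\setminus S$; as $\beta_I<\kappa$, this yields $\kappa\setminus S\in\R^{\alpha_I+1}([\kappa]^{<\kappa})^*$.

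To finish I would push this into $J^*$ by monotonicity, for which it suffices that $\R^{\alpha_I+1}([\kappa]^{<\kappa})\subseteq J$; this follows from $\alpha_I+1\leq\alpha_J$ together with $\R^{\alpha_I+1}([\kappa]^{<\kappa})\subseteq\R^{\alpha_J}([\kappa]^{<\kappa})\subseteq\R^{\alpha_J}(\Pi^1_\beta(\kappa))=J$. The inequality $\alpha_I+1\leq\alpha_J$ is routine ordinal arithmetic: in case (1) it reads $n+1\leq\omega^\delta+n+1$, and in case (2), since $\gamma<\omega^\delta$ and $\omega^\delta$ is a limit ordinal we have $\gamma+n+2<\omega^\delta$, whence $\alpha_I+1=\omega^\delta(m-1)+(\gamma+n+2)<\omega^\delta(m-1)+\omega^\delta=\omega^\delta m\leq\alpha_J$. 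Thus $\kappa\setminus S\in J^*$, so $S\in J\cap I^+$ and $I\subsetneq J$.

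I expect the main subtlety to be the reliance on Corollary \ref{corollary_ramseyness_reflects_indescribability} rather than on a direct indescribability reflection. One might hope to express ``$\xi\in\R^{\alpha_I}(\Pi^1_{\beta_I}(\xi))^+$'' by a $\Pi^1_\zeta$ sentence via Lemma \ref{lemma_complexity} and reflect it using the $\Pi^1$-indescribability of $\kappa$ guaranteed by $\kappa\in J^+$; however, the complexity $\zeta$ produced by Lemma \ref{lemma_complexity} turns out to be exactly one level above the indescribability degree that Theorem \ref{theorem_indescribability_in_infinite_ramseyness} extracts from $\kappa\in J^+$, so this naive argument fails by one. Corollary \ref{corollary_ramseyness_reflects_indescribability}, which gains a full Ramsey level and reflects positivity of $\R^{\alpha_I}(\Pi^1_{\beta'}(\xi))$ simultaneously for \emph{all} $\beta'<\xi$, is precisely the tool that closes this one-level gap, and it is the only genuinely nonelementary ingredient in the argument.
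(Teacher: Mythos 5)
Your proposal is correct and follows essentially the same route as the paper: the paper also takes the ``set of nons'' of the smaller ideal, applies Lemma \ref{lemma_set_of_nons_is_positive} to get positivity for the smaller ideal, and then uses Corollary \ref{corollary_ramseyness_reflects_indescribability} together with $\R^{\alpha_I+1}([\kappa]^{<\kappa})\subseteq\R^{\alpha_J}(\Pi^1_\beta(\kappa))$ to place its complement in the dual filter of the larger ideal. The only difference is that you spell out the ordinal arithmetic for case (2), which the paper dismisses as ``similar.''
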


\begin{proof}
Since the containment follow easily from Theorem \ref{theorem_infinite_ideal_diagram}, it remains to show the properness of the containments.

For (1), let $S=\{\xi<\kappa\st\xi\in \R^n(\Pi^1_{\beta+\omega^\delta+1}(\kappa))\}$. By Lemma \ref{lemma_set_of_nons_is_positive}, $S\in\R^n(\Pi^1_{\beta+\omega^\delta+1}(\kappa))^+$. By Corollary \ref{corollary_ramseyness_reflects_indescribability}, it follows that $\kappa\setminus S\in \R^{n+1}([\kappa]^{<\kappa})^*\subseteq \R^{\omega^\delta+n+1}([\kappa]^{<\kappa})^*\subseteq \R^{\omega^\delta+n+1}(\Pi^1_\beta(\kappa))^*$. Thus $S\in\R^{\omega^\delta+n+1}(\Pi^1_\beta(\kappa))\setminus \R^n(\Pi^1_{\beta+\omega^\delta+1}(\kappa))$.

The argument for (2) is similar.
\end{proof}

\section{Generic embeddings}\label{section_generic_embeddings}

By considering properties of generic ultrapowers obtained by forcing with large cardinal ideals, we obtain characterizations of such ideals in terms of generic elementary embeddings. In what follows we obtain generic embedding characterizations of $\Pi^1_\beta$-indescribable as well as Ramsey subsets of cardinals. It should not be hard to find \emph{small embedding} \cite{MR3913154} characterizations of $\Pi^1_\beta$-indescribable sets which resemble our generic embeddings. However, it is not clear whether the characterization of Ramsey sets in Theorem \ref{theorem_generic_embedding_characterization_of_ramseyness} below can be rephrased in terms of small embeddings. Thus, the following question of \cite{MR3913154} remains open: can one characterize Ramsey cardinals using small embeddings?


Before providing a motivating example, let us recall a few basic facts about generic ultrapowers. If $\kappa$ is a regular uncountable cardinal and $I$ is an ideal on $\kappa$ and $S\in I^+$ then $I\restrict S=\{X\subseteq\kappa\st X\cap S\in I\}$ is an ideal on $\kappa$ extending $I$ and notice that $S\in (I\restrict S)^*$. We write $P(\kappa)/I$ to denote the usual atomless\footnote{We write $P(\kappa)/I$ when we really mean $P(\kappa)/I-\{[\emptyset]\}$.} boolean algebra obtained from $I$. If $G$ is $(V,P(\kappa)/I)$-generic then we let $U_G$ be the canonical $V$-ultrafilter obtained from $G$ extending the dual filter $I^*$. The appropriate version of {\L}os's Theorem can be easily verified, and thus we obtain a canonical generic elementary embedding $j:V\to V^\kappa/U_G$ in $V[G]$ where $j(x)=[\alpha\mapsto x]_U$. If $I$ is a normal ideal then the generic ultrafilter $U_G$ is $V$-normal and the critical point of the corresponding, possibly illfounded, generic ultrapower $j:V\to V^\kappa/U_G\subseteq V[G]$ is $\kappa$. When $I$ is a normal ideal, the corresponding generic ultrapower embedding $j$ is wellfounded on the ordinals up to $\kappa^+$. See \cite[Lemma 22.14]{MR1940513} or \cite[Section 2]{MR2768692} for more details.

\begin{definition}
When we say \emph{there is a generic elementary embedding $j:V\to M\subseteq V[G]$} we mean that there is some forcing poset $\P$ such that whenever $G$ is $(V,\P)$-generic then, in $V[G]$, there are definable classes $M$, $E$ and $j$ such that $j:(V,\in)\to (M,E)\subseteq V[G]$ is an elementary embedding, where $(M,E)$ is possibly not wellfounded.
\end{definition}

The following proposition is an easy application of generic ultrapowers obtained by forcing with $P(\kappa)/\NS_\kappa$. 

\begin{proposition}[Folklore]\label{proposition_stationarity_characterizations}
Suppose $\kappa>\omega$ is a regular cardinal. The following are equivalent.
\begin{enumerate}
\item $S\subseteq\kappa$ is stationary.
\item There is a generic elementary embedding $j:V\to M\subseteq V[G]$ with critical point $\kappa$ such that $\kappa \in j(S)$.
\end{enumerate}
\end{proposition}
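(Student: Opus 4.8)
The plan is to prove the equivalence of the two clauses via the standard generic ultrapower obtained by forcing with $P(\kappa)/\NS_\kappa$, relying on the basic facts recalled just before the statement: since $\NS_\kappa$ is a normal ideal, any generic filter $G$ yields a $V$-normal ultrafilter $U_G$ and a generic elementary embedding $j:V\to M\subseteq V[G]$ with critical point $\kappa$, wellfounded at least up through $\kappa^+$. First I would prove the direction $(1)\Rightarrow(2)$. Assuming $S$ is stationary, we have $S\in\NS_\kappa^+$, so $\NS_\kappa\restrict S=\{X\subseteq\kappa\st X\cap S\in\NS_\kappa\}$ is a proper normal ideal with $S\in(\NS_\kappa\restrict S)^*$. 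Force with $P(\kappa)/(\NS_\kappa\restrict S)$ and let $G$ be generic; the resulting generic ultrapower embedding $j:V\to M\subseteq V[G]$ has critical point $\kappa$. Because $S$ lies in the dual filter $(\NS_\kappa\restrict S)^*\subseteq U_G$, {\L}o\'s's theorem gives $\kappa=[\id]_{U_G}\in j(S)$, exactly as in the representation $\kappa\in j(S)\iff S\in U_G$.

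Next I would prove $(2)\Rightarrow(1)$ by contraposition. Suppose $S$ is not stationary, so there is a club $C\subseteq\kappa$ with $C\cap S=\emptyset$; equivalently $S\subseteq\kappa\setminus C$. Let $j:V\to M\subseteq V[G]$ be any generic elementary embedding with critical point $\kappa$. The key observation is that $\kappa$ is a limit point of $j(C)$ in $M$: for every $\alpha<\kappa$ we have $j(\alpha)=\alpha$ (as $\crit(j)=\kappa$), and by elementarity $j(C)$ is unbounded below $j(\kappa)>\kappa$ and closed, and $C$ is unbounded in $\kappa$ so $j(C)\cap\kappa=C$ is unbounded in $\kappa$; hence $\kappa\in j(C)$ by closure. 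Since $j(S)\cap j(C)=j(S\cap C)=j(\emptyset)=\emptyset$ by elementarity, it follows that $\kappa\notin j(S)$. Thus no such embedding can satisfy $\kappa\in j(S)$, which is the contrapositive of $(2)\Rightarrow(1)$.

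I expect the main obstacle to be verifying cleanly that $\kappa\in j(C)$ for clubs $C$; this is where the closure of $j(C)$ in the (possibly illfounded) model $M$ must be used, together with the fact that $j$ fixes ordinals below $\kappa$ pointwise, so that $j(C)\cap\kappa=C$ is genuinely cofinal in $\kappa$ inside $M$. One must take care that the argument only uses the wellfoundedness of $M$ up to $\kappa^+$, which is guaranteed since the embedding arises from a normal ideal, so that $\kappa$ and the relevant initial segment of $j(C)$ are honest ordinals rather than artifacts of illfoundedness. The forcing-side direction is routine given the recalled facts about $\NS_\kappa$-generic ultrapowers, so the bulk of the care goes into the converse.
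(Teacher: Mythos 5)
Your proof is correct, and it is essentially the argument the paper has in mind: the paper states this proposition as folklore without proof, describing it only as ``an easy application of generic ultrapowers obtained by forcing with $P(\kappa)/\NS_\kappa$,'' and your forward direction (forcing with $P(\kappa)/(\NS_\kappa\restrict S)$ so that $S\in(\NS_\kappa\restrict S)^*\subseteq U_G$ gives $\kappa=[\id]_{U_G}\in j(S)$) is exactly the template used in the paper's proof of the analogous Proposition \ref{proposition_generic_embedding_indescribability}. Your converse is the standard club argument; the only quibble is that your closing remark appeals to wellfoundedness up to $\kappa^+$ ``since the embedding arises from a normal ideal,'' whereas in direction $(2)\Rightarrow(1)$ the embedding is arbitrary --- what you actually need (and what your argument really uses) is only that $\kappa$ is a genuine ordinal of $M$ with $j\restrict\kappa=\id$, so that $j(C)\cap\kappa=C$ is unbounded in $\kappa$ from $M$'s point of view and closure of $j(C)$ forces $\kappa\in j(C)$.
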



It is natural to wonder: to what extent  can Proposition \ref{proposition_stationarity_characterizations} be generalized from the nonstationary ideal to other natural ideals, such as ideals associated to certain large cardinals?

\begin{proposition}\label{proposition_generic_embedding_indescribability}
Suppose $\kappa$ is a cardinal, $\beta<\kappa$ is an ordinal and $S\subseteq\kappa$. The following are equivalent.
\begin{enumerate}
\item $S$ is $\Pi^1_\beta$-indescribable.
\item There is a generic elementary embedding $j:V\to M\subseteq V[G]$ with critical point $\kappa$ such that $\kappa\in j(S)$ and for all $A\in V_{\kappa+1}^V$ and all $\Pi^1_\beta$ sentences $\varphi$ we have
\[((V_\kappa,\in,A)\models\varphi)^V\implies ((V_\kappa,\in,A)\models\varphi)^M.\]
\end{enumerate}
\end{proposition}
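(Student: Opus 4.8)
The plan is to prove $(1)\Rightarrow(2)$ by forcing with the quotient of $P(\kappa)$ by a restricted indescribability ideal, and $(2)\Rightarrow(1)$ by a direct elementarity argument. For $(1)\Rightarrow(2)$, first note that since $S\subseteq\kappa$ is $\Pi^1_\beta$-indescribable, the cardinal $\kappa$ is itself $\Pi^1_\beta$-indescribable, hence inaccessible and in particular regular; thus $\Pi^1_\beta(\kappa)$ is a nontrivial normal ideal and $S\in\Pi^1_\beta(\kappa)^+$. I would then force with $P(\kappa)/(\Pi^1_\beta(\kappa)\restrict S)$, and, letting $G$ be generic and $U=U_G$ the induced generic $V$-normal ultrafilter extending $(\Pi^1_\beta(\kappa)\restrict S)^*$, obtain the generic elementary embedding $j:V\to M\subseteq V[G]$ with $\crit(j)=\kappa$ described in the preliminaries. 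Since $S\in(\Pi^1_\beta(\kappa)\restrict S)^*\subseteq U$ and $\kappa=[\id]_U$, the standard equivalence $\kappa\in j(X)\iff X\in U$ (for $X\subseteq\kappa$) yields $\kappa\in j(S)$.

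The heart of this direction is the truth-transfer clause. Fix $A\in V_{\kappa+1}^V$ and a $\Pi^1_\beta$ sentence $\varphi$ with $((V_\kappa,\in,A)\models\varphi)^V$, and consider the reflection set $R=\{\alpha<\kappa\st(V_\alpha,\in,A\cap V_\alpha)\models\varphi\}$. I claim $R\in\Pi^1_\beta(\kappa)^*$: otherwise the complement $N=\kappa\setminus R$ would be $\Pi^1_\beta$-indescribable, and applying the indescribability of $N$ to the pair $(A,\varphi)$ would produce some $\alpha\in N$ with $(V_\alpha,\in,A\cap V_\alpha)\models\varphi$, contradicting $\alpha\in N$. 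Hence $R\in\Pi^1_\beta(\kappa)^*\subseteq(\Pi^1_\beta(\kappa)\restrict S)^*\subseteq U$, so $\kappa\in j(R)$. Because $\beta<\crit(j)$ and the satisfaction predicate for $\Pi^1_\beta$ formulas is uniformly definable, elementarity identifies $j(R)$ with $\{\alpha<j(\kappa)\st(V_\alpha,\in,j(A)\cap V_\alpha)\models\varphi\}^M$; using $V_\kappa^M=V_\kappa^V$ and $j(A)\cap V_\kappa=A$ (both consequences of $\crit(j)=\kappa$), the membership $\kappa\in j(R)$ reads exactly as $((V_\kappa,\in,A)\models\varphi)^M$, which is what (2) demands.

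For $(2)\Rightarrow(1)$, suppose $j:V\to M\subseteq V[G]$ is a witnessing embedding and fix $A\subseteq V_\kappa$ together with a $\Pi^1_\beta$ sentence $\varphi$ such that $((V_\kappa,\in,A)\models\varphi)^V$. The truth-transfer clause gives $((V_\kappa,\in,A)\models\varphi)^M$. Since $\crit(j)=\kappa$ we again have $V_\kappa^M=V_\kappa^V$ and $j(A)\cap V_\kappa=A$, so in $M$ the ordinal $\kappa$ witnesses that $\exists\alpha\in j(S)\,(V_\alpha,\in,j(A)\cap V_\alpha)\models\varphi$, using $\kappa\in j(S)$. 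As $j(\beta)=\beta$ and $j(\varphi)=\varphi$, pulling this existential statement back through $j$ by elementarity produces an $\alpha\in S$ with $(V_\alpha,\in,A\cap V_\alpha)\models\varphi$. Since $A$ and $\varphi$ were arbitrary, $S$ is $\Pi^1_\beta$-indescribable.

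The main obstacle is the claim that each reflection set $R$ lies in $\Pi^1_\beta(\kappa)^*$, which is precisely where $\Pi^1_\beta$-indescribability is genuinely invoked; everything else is bookkeeping with elementarity. A secondary point requiring care is that the generic ultrapower $M$ is possibly ill-founded, so I would rely on the preliminary facts recalled above — that a normal generic ultrapower has critical point $\kappa$ and is wellfounded on the ordinals up to $\kappa^+$ — to justify $V_\kappa^M=V_\kappa^V$, the equality $j(A)\cap V_\kappa=A$, and the transfer of the $\Pi^1_\beta$-satisfaction predicate under $j$. I would also remark that the argument is uniform in $\beta$: when $\beta=0$ the identification $\Pi^1_0(\kappa)=\NS_\kappa$ makes the reflection-set claim the familiar fact that the first-order reflection points of a true sentence contain a club.
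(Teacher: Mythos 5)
Your proposal is correct and follows essentially the same route as the paper: force with $P(\kappa)/(\Pi^1_\beta(\kappa)\restrict S)$, observe that the reflection set of each true $\Pi^1_\beta$ sentence lies in $\Pi^1_\beta(\kappa)^*$ and hence in the generic ultrafilter, and prove the converse by pulling the witness $\kappa\in j(S)$ back through $j$ by elementarity. The extra details you supply (why $R\in\Pi^1_\beta(\kappa)^*$, the identifications $V_\kappa^M=V_\kappa^V$ and $j(A)\cap V_\kappa=A$) are exactly what the paper leaves implicit.
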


\begin{proof}
Suppose $S$ is $\Pi^1_\beta$-indescribable. Let $G\subseteq P(\kappa)/(\Pi^1_\beta(\kappa)\restrict S)$ be generic over $V$ and let $j:V\to M:=V^\kappa/G$ be the corresponding generic ultrapower embedding. Since $S\in G$ we have $\kappa\in j(S)$. Fix $A\in V_{\kappa+1}^V$ and fix a $\Pi^1_\beta$ sentence $\varphi$ such that $((V_\kappa,\in,A)\models\varphi)^V$. Since the set
\[C:=\{\xi<\kappa\st(V_\xi,\in,A\cap V_\xi)\models\varphi\}\]
is in the filter $\Pi^1_\beta(\kappa)^*$, it follows that $S\cap C\in (\Pi^1_\beta(\kappa)\restrict S)^*\subseteq G$, and thus $\kappa\in j(C)$. This implies $((V_\kappa,\in,A)\models\varphi)^M$.

Conversely, suppose $j:V\to M$ is a generic elementary embedding satisfying $(2)$. Let us show that $S$ is $\Pi^1_\beta$-indescribable. Fix an $A\in V_{\kappa+1}^V$ and a $\Pi^1_\beta$ sentence $\varphi$ such that $((V_\kappa,\in,A)\models\varphi)^V$. By elementarity and by (2), there is some $\xi\in S$ such that $((V_\xi,\in,A\cap V_\xi)\models\varphi)^V$, thus $S$ is $\Pi^1_\beta$-indescribable.
\end{proof}

Let us show that the ideals $\R^m(\Pi^1_\beta(\kappa))$ can be characterized in terms of generic elementary embeddings. Taking $m=1$ and $\beta=-1$ in the following theorem yields a characterization of the Ramsey ideal and of Ramsey cardinals.

\begin{theorem}\label{theorem_generic_embedding_characterization_of_ramseyness}
Suppose $\kappa$ is a cardinal, $1\leq m<\omega$, $\beta\in\{-1\}\cup\kappa$ and $S\subseteq\kappa$. The following are equivalent.
\begin{enumerate}
\item $S\in\R^m(\Pi^1_\beta(\kappa))^+$
\item There is a generic elementary embedding $j:V\to M$ with critical point $\kappa$ such that $\kappa\in j(S)$ and the following properties hold.
\begin{enumerate}
\item For all $A\in V_{\kappa+1}^V$ and all $\Pi^1_{\beta+2m}$ sentences $\varphi$ we have
\[((V_\kappa,\in,A)\models\varphi)^V\implies ((V_\kappa,\in,A)\models\varphi)^M.\]
\item For all regressive functions $f:[S]^{<\omega}\to \kappa$ in $V$ we have
\[M\models(\exists H\in \R^{m-1}(\Pi^1_\beta(\kappa))^+)(\text{$H$ is homogeneous for $f$}).\]
\end{enumerate}
\end{enumerate}
\end{theorem}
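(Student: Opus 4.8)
The plan is to prove both implications using the generic ultrapower obtained by forcing with the ideal $\R^m(\Pi^1_\beta(\kappa))$ itself, in direct analogy with the proof of Proposition \ref{proposition_generic_embedding_indescribability}. For $(1)\Rightarrow(2)$, I would first note that $S\in\R^m(\Pi^1_\beta(\kappa))^+$ forces $\kappa\in\R^m(\Pi^1_\beta(\kappa))^+$, so $\kappa$ is inaccessible and, since $m\geq 1$, Feng's result that $\R(I)$ is normal makes $\R^m(\Pi^1_\beta(\kappa))$ a normal ideal. Let $G\subseteq P(\kappa)/(\R^m(\Pi^1_\beta(\kappa))\restrict S)$ be generic and let $j\colon V\to M=V^\kappa/U_G$ be the associated generic ultrapower, where $U_G$ is the induced normal $V$-ultrafilter with $S\in U_G$. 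The facts recalled before this theorem give $\crit(j)=\kappa$, wellfoundedness of $M$ below $\kappa^+$ (so $V_{\kappa+1}^M=V_{\kappa+1}^V$ and $j\restrict V_\kappa=\id$), and $\kappa\in j(S)$ because $S\in U_G$.

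To establish (a), given $A\in V_{\kappa+1}^V$ and a $\Pi^1_{\beta+2m}$ sentence $\varphi$ with $(V_\kappa,\in,A)\models\varphi$ in $V$, the reflection set $C=\{\xi<\kappa\st(V_\xi,\in,A\cap V_\xi)\models\varphi\}$ lies in $\Pi^1_{\beta+2m}(\kappa)^*$. Since $\kappa\in\R^m(\Pi^1_\beta(\kappa))^+$, Corollary \ref{corollary_indescribability_in_finite_ramseyness} gives $\Pi^1_{\beta+2m}(\kappa)\subseteq\R^m(\Pi^1_\beta(\kappa))$, hence $C\in\R^m(\Pi^1_\beta(\kappa))^*$ and $S\cap C\in U_G$, so $\kappa\in j(C)$. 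Applying $j$ to the definition of $C$ and using $j(A)\cap V_\kappa=A$ (which holds as $A\subseteq V_\kappa$ and $\crit(j)=\kappa$) yields $((V_\kappa,\in,A)\models\varphi)^M$. For (b), fix a regressive $f\colon[S]^{<\omega}\to\kappa$ in $V$; by (1) there is $H\subseteq S$ in $\R^{m-1}(\Pi^1_\beta(\kappa))^+$ homogeneous for $f$. By Lemma \ref{lemma_complexity} the predicate ``$H\in\R^{m-1}(\Pi^1_\beta(\kappa))^+$'' is $\Pi^1_{\beta+2m-1}$ over $(V_\kappa,\in,H)$; by the same reflection argument as for (a) (now using $\Pi^1_{\beta+2m-1}(\kappa)\subseteq\Pi^1_{\beta+2m}(\kappa)\subseteq\R^m(\Pi^1_\beta(\kappa))$) this predicate transfers to $M$, and since homogeneity of $H$ for $f$ is absolute, $M$ sees the witness $H$, giving (b).

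For $(2)\Rightarrow(1)$ I would argue by contradiction, which conveniently avoids pulling a positive witness back from the possibly illfounded $M$ into $V$. If $S\notin\R^m(\Pi^1_\beta(\kappa))^+$, then some regressive $f\colon[S]^{<\omega}\to\kappa$ in $V$ has every homogeneous set inside $\R^{m-1}(\Pi^1_\beta(\kappa))$; by Lemma \ref{lemma_complexity}, exactly as in the proof of Theorem \ref{theorem_finite_ideal_diagram}, this is a true $\Pi^1_{\beta+2m}$ assertion over $(V_\kappa,\in,f)$, with $S$ coded into the parameter. Clause (a) pushes it up to $M$, so $M$ believes every homogeneous set for $f$ lies in $\R^{m-1}(\Pi^1_\beta(\kappa))$; but clause (b), applied to this very $f$, supplies an $M$-homogeneous set for $f$ in $\R^{m-1}(\Pi^1_\beta(\kappa))^+$ (necessarily a subset of $S$), a contradiction.

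The main obstacle is the complexity bookkeeping behind clause (a): one must line up the degree $\beta+2m$ produced by Lemma \ref{lemma_complexity} with the inclusion $\Pi^1_{\beta+2m}(\kappa)\subseteq\R^m(\Pi^1_\beta(\kappa))$ from Corollary \ref{corollary_indescribability_in_finite_ramseyness} on both sides, and verify that $\kappa\in j(C)$ genuinely yields \emph{internal} satisfaction in $M$ via $j(A)\cap V_\kappa=A$ together with the agreement $V_{\kappa+1}^M=V_{\kappa+1}^V$ coming from wellfoundedness below $\kappa^+$. Once (a) is secured at the correct degree, the forward construction of (b) and the contradiction format of the converse both go through without any delicate transfer of $\R^{m-1}(\Pi^1_\beta(\kappa))$-positive sets from $M$ back to $V$.
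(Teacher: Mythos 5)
Your proposal is correct and follows essentially the same route as the paper: forcing with $P(\kappa)/(\R^m(\Pi^1_\beta(\kappa))\restrict S)$, using Corollary \ref{corollary_indescribability_in_finite_ramseyness} to get $\Pi^1_{\beta+2m}(\kappa)^*\subseteq G$ for clause (a), transferring the witness $H$ to $M$ via Lemma \ref{lemma_complexity} for clause (b), and running the converse by contradiction exactly as the paper does. The only quibble is your invocation of $V_{\kappa+1}^M=V_{\kappa+1}^V$, which need not hold (the forcing can add subsets of $\kappa$) and is not needed: $\kappa\in j(C)$ together with $j(A)\cap V_\kappa=A$ already gives $((V_\kappa,\in,A)\models\varphi)^M$ by elementarity.
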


\begin{proof}
Suppose $S\in\R^m(\Pi^1_\beta(\kappa))^+$. Let $G\subseteq P(\kappa)/(\R^m(\Pi^1_\beta(\kappa))\restrict S)$ be generic over $V$ and let $j:V\to M:=V^\kappa/G$ be the corresponding generic ultrapower. Since $S\in G$ we have $\kappa\in j(S)$. By Corollary \ref{corollary_indescribability_in_finite_ramseyness}, we have
\[\R^m(\Pi^1_\beta(\kappa))=\overline{\R_0(\R^{m-1}(\Pi^1_\beta(\kappa)))\cup\Pi^1_{\beta+2m}(\kappa)}.\]
Since $\Pi^1_{\beta+2m}(\kappa)^*\subseteq\R^m(\Pi^1_\beta(\kappa))^*\subseteq G$ it follows, by an argument similar to that in the proof of Proposition \ref{proposition_generic_embedding_indescribability}, that (a) holds. Fix a regressive function $f:[S]^{<\omega}\to \kappa$ in $V$. Since $S\in\R^m(\Pi^1_\beta(\kappa))^+$ there is a set $H\in P(S)\cap\R^{m-1}(\Pi^1_\beta(\kappa))^+\cap V$ which is homogeneous for $f$. Clearly $H=j(H)\cap \kappa$ and $f=j(f)\cap (\kappa\times\kappa)$ are in $M$, and $M$ thinks that $H$ is homogeneous for $f$. By Lemma \ref{lemma_complexity} the fact that $H\in\R^{m-1}(\Pi^1_\beta(\kappa))^+$ is expressible by a $\Pi^1_{\beta+2m}$ sentence over $V_\kappa$, and thus by (a) we see that $M\models$ ``$H\in\R^{m-1}(\Pi^1_\beta(\kappa))^+$''.

Conversely, suppose (2) holds. Fix a regressive function $f:[S]^{<\omega}\to \kappa$ in $V$. For the sake of contradiction suppose that in $V$, every subset of $S$ which is homogeneous for $f$ is in the ideal $\R^{m-1}(\Pi^1_\beta(\kappa))$. By Lemma \ref{lemma_complexity}, this can be expressed by a $\Pi^1_{\beta+2m}$ sentence over $V_\kappa$, thus by (2)(a), $M$ thinks that every homogeneous set for $f$ is in the ideal $\R^{m-1}(\Pi^1_\beta(\kappa))$. This contradicts (2)(b).
\end{proof}

Using Lemma \ref{lemma_complexity} and Theorem \ref{theorem_indescribability_in_infinite_ramseyness}, an argument similar to that of Theorem \ref{theorem_generic_embedding_characterization_of_ramseyness} gives a generic embedding characterization of certain ideals of the form $\R^\alpha(\Pi^1_\beta(\kappa))$ for $\alpha>\omega$.


\end{document}